\def\rr{{\mathbb R}}
\def\rn{{{\rr}^n}}
\def\rnz{{{\rr}^{n+1}_+}}
\def\zz{{\mathbb Z}}
\def\nn{{\mathbb N}}
\def\cc{{\mathbb C}}
\def\cn{{\mathbb N}}
\def\nn{{\mathcal N}}
\def\cs{{\mathcal S}}
\def\cm{{\mathcal M}}
\def\car{{\mathcal R}}
\def\ca{{\mathcal A}}
\def\fz{\infty}
\def\az{\alpha}
\def\supp{{\mathop\mathrm{\,supp\,}}}
\def\dist{{\mathop\mathrm {\,dist\,}}}
\def\loc{{\mathop\mathrm{\,loc\,}}}
\def\lz{\lambda}
\def\dz{\delta}
\def\ez{\epsilon}
\def\bz{\beta}
\def\ro{\rho}
\def\gz{{\gamma}}
\def\oz{{\omega}}
\def\vz{\varphi}
\def\sz{\sigma}
\def\pa{\partial}
\def\wz{\widetilde}
\def\nz{\nabla}
\def\hs{\hspace{0.3cm}}
\def\ls{\lesssim}
\def\bbmo{{\mathop\mathrm{BMO}}}
\def\bmo{{\mathop\mathrm{BMO}_{\ro,L}(\rn)}}
\def\bmoz{{\mathop\mathrm{BMO}_{\ro,L^\ast}(\rn)}}
\def\bmoo{{\mathop\mathrm{BMO}^M_{\ro,L}(\rn)}}
\def\bmoq{{\mathop\mathrm{BMO}^{q,M}_{\ro,L}(\rn)}}
\def\bmos{{\mathop\mathrm{BMO}^{q,M}_{\ro,L^\ast}(\rn)}}
\def\bmol{{\mathop\mathrm{BMO}^M_{\ro,L^\ast}(\rn)}}
\def\com{\complement}
\def\r{\right}
\def\lf{\left}
\def\lfr{\lfloor}
\def\rf{\rfloor}
\def\la{\langle}
\def\ra{\rangle}
\newtheorem{thm}{Theorem}[section]
\newtheorem{lem}{Lemma}[section]
\newtheorem{prop}{Proposition}[section]
\newtheorem{rem}{Remark}[section]
\newtheorem{cor}{Corollary}[section]
\newtheorem{defn}{Definition}[section]
\numberwithin{equation}{section}
\begin{document}
\arraycolsep=1pt
\title{{\vspace{-5cm}\small\hfill\bf J. Funct. Anal., to appear}\\
\vspace{4cm}\bf New Orlicz-Hardy spaces associated with divergence
form elliptic operators}
\author{Renjin Jiang\vspace{0.3cm}\footnote{Present
Adress: Department of Mathematics and Statistics,
University of Jyv\"{a}skyl\"{a}, P.O. Box 35 (MaD), 40014, Finland}
\, and Dachun Yang\footnote{Corresponding author.\endgraf
\hspace{0.25cm}{\it E-mail addresses:} rj-jiang@mail.bnu.edu.cn (R. Jiang),
dcyang@bnu.edu.cn (D. Yang)}\\
\footnotesize\it School of Mathematical Sciences, Beijing Normal University,
Laboratory of Mathematics\\
\footnotesize\it and Complex Systems, Ministry of Education,
Beijing 100875, People's Republic of China\\\\
\small Dedicated to Professor Lizhong Peng in celebration of his 66th birthday
}
\date{ }
\maketitle

\noindent$\line(1,0){425}$

\medskip

{\noindent{{\bf Abstract}

\bigskip

\small

Let $L$ be the divergence form
elliptic operator with complex
bounded measurable coefficients, $\omega$ the positive concave
function on $(0,\infty)$ of strictly critical lower type $p_\oz\in
(0, 1]$ and $\rho(t)={t^{-1}}/\omega^{-1}(t^{-1})$ for $t\in
(0,\infty).$ In this paper, the authors study the Orlicz-Hardy space
$H_{\omega,L}({\mathbb R}^n)$ and its dual space
$\mathrm{BMO}_{\rho,L^\ast}({\mathbb R}^n)$, where $L^\ast$ denotes
the adjoint operator of $L$ in $L^2({\mathbb R}^n)$. Several
characterizations of $H_{\omega,L}({\mathbb R}^n)$, including the
molecular characterization, the Lusin-area function characterization
and the maximal function characterization, are established. The
$\rho$-Carleson measure characterization and the John-Nirenberg
inequality for the space $\mathrm{BMO}_{\rho,L}({\mathbb R}^n)$ are
also given. As applications, the authors show that the Riesz
transform $\nabla L^{-1/2}$ and the Littlewood-Paley $g$-function
$g_L$ map $H_{\omega,L}({\mathbb R}^n)$ continuously into
$L(\omega)$. The authors further show that the Riesz transform
$\nabla L^{-1/2}$ maps $H_{\omega,L}({\mathbb R}^n)$ into the classical
Orlicz-Hardy space $H_{\omega}({\mathbb R}^n)$ for $p_\omega\in (\frac{n}{n+1},1]$ and
the corresponding fractional integral $L^{-\gamma}$ for certain
$\gamma>0$ maps $H_{\omega,L}({\mathbb R}^n)$ continuously into
$H_{\widetilde{\omega},L}({\mathbb R}^n)$, where $\widetilde{\omega}$
is determined by $\omega$ and $\gamma$, and satisfies the same property as $\omega$.
All these results are new even when $\omega(t)=t^p$ for all $t\in
(0,\infty)$ and $p\in (0,1)$.}

\bigskip

\noindent{\small {\it Keywords:} divergence
form elliptic operator; Gaffney estimate;
Orlicz-Hardy space; Lusin-area function; maximal function;
molecule; Carleson measure; John-Nirenberg inequality; dual; BMO;
Riesz transform; fractional integral}}

\smallskip

\noindent$\line(1,0){425}$

\vspace{0.15cm}

\section{Introduction\label{s1}}

\hskip\parindent Ever since Lebesgue's theory of
integration has taken a center stage in concrete problems
of analysis, the need for more inclusive classes of
function spaces than the $L^p(\rn)$-families naturally arose.
It is well known that the Hardy spaces $H^p(\rn)$ when $p\in (0,1]$
is a good substitute of $L^p(\rn)$ when studying the boundedness
of operators, for example, the Riesz operator is bounded on
$H^p(\rn)$, but not on $L^p(\rn)$ when $p\in (0,1]$.
The theory of Hardy spaces $H^p$ on the Euclidean space
$\rn$ was initially developed by Stein and Weiss \cite{sw}.
Later, Fefferman and Stein \cite{fs}
systematically developed a real-variable theory for the Hardy spaces
$H^p(\rn)$ with $p\in (0,1]$, which
now plays an important role in various fields of analysis and
partial differential equations; see, for example,
\cite{s93,clms,g2,m94, s94}. A key feature of the classical
Hardy spaces is their atomic decomposition characterizations,
which were obtained by Coifman \cite{co} when $n=1$
and Latter \cite{la} when $n>1$. On the other hand,
as another generalization of $L^p(\rn)$,
the Orlicz space was introduced by Birnbaum-Orlicz
in \cite{bo} and Orlicz in \cite{o32}, since then, the theory of the Orlicz
spaces themselves has been well developed and the
spaces have been widely used in probability, statistics,
potential theory, partial differential equations,
as well as harmonic analysis and some
other fields of analysis; see, for example,
\cite{rr91,rr00, byz08,mw, aikm, io}. Moreover,
the Orlicz-Hardy spaces are also good substitutes
of the Orlicz spaces in dealing with many
problems of analysis, say, the boundedness
of operators. In particular, Str\"omberg \cite{s79}
and Janson \cite{ja}
introduced generalized Hardy spaces $H_\oz(\rn)$,
via replacing the norm $\|\cdot\|_{L^p(\rn)}$ by the Orlicz-norm
$\|\cdot\|_{L(\oz)}$ in the definition
of $H^p(\rn)$, where $\oz$ is an Orlicz
function on $[0, \fz)$ satisfying some control conditions. Viviani \cite{v} further
characterized these spaces $H_\oz$ on spaces of homogeneous type via atoms.
The dual spaces of these spaces were also studied
in \cite{s79,ja,v,hsv}. All theories of these spaces are
intimately connected with properties of harmonic analysis and
of the Laplacian operator on $\rn$.

In recent years, function spaces, especially Hardy spaces and BMO spaces,
associated with different operators inspire great interests;
see, for example, \cite{adm,amr,ar,dxy,dy1,dy2,hm1,ya1,jyz,hlmmy}
and their references. In particular, Auscher, Duong and McIntosh \cite{adm}
first introduced the Hardy space $H^1_L(\rn)$
associated with an operator $L$ whose heat kernel satisfies a pointwise
Poisson type upper bound by means of a corresponding variant of the Lusin-area
function, and established its molecular characterization.
Duong and Yan \cite{dy1,dy2}
introduced its dual space $\mathrm{BMO\,}_L(\rn)$
and established the dual relation between $H_L^1(\rn)$
and $\mathrm{BMO\,}_L(\rn)$. Yan \cite{ya1} further
generalized these results to the Hardy spaces $H^p_L(\rn)$ with certain
$p\le 1$ and their dual spaces.
Also, Auscher and Russ \cite{ar} studied the Hardy space $H^1_L$ on
strongly Lipschitz domains associated with a divergence
form elliptic operator $L$ whose heat kernels have
the Gaussian upper bounds and regularity.
Very recently, Auscher, McIntosh and Russ \cite{amr}
treated the Hardy space $H^p$ with $p\in [1,\fz]$ associated to Hodge
Laplacian on a Riemannian manifold with
doubling measure, and Hofmann and Mayboroda \cite{hm1}
further studied the Hardy space $H_L^1(\rn)$ and its dual space adapted to a
second order divergence form elliptic operator $L$ on $\rn$ with bounded
complex coefficients and these operators may
not have the pointwise heat kernel bounds.

Motivated by \cite{hm1,ja,v}, in this paper, we study Orlicz-Hardy
spaces  $H_{\oz,L}(\rn)$ associated to the divergence form elliptic
operator $L$ in \cite{hm1} and their dual space
$\mathrm{BMO}_{\ro,L^\ast}(\rn)$, where $L^\ast$ denotes the adjoint
operator of $L$ in $L^2(\rn)$, the positive function
$\omega$ on $(0,\infty)$ is concave and of strictly critical lower type
$p_\oz\in (0, 1]$ and $\rho(t)={t^{-1}}/\omega^{-1}(t^{-1})$ for all $t\in (0,\infty).$
A typical example of such Orlicz functions is $\oz(t)=t^p$ for all
$t\in (0,\fz)$ and $p\in (0,1]$. As applications, we obtain the boundedness
of the Riesz transform, the Littlewood-Paley $g$-function
and the fractional integral associated with $L$ on $H_{\oz,L}(\rn)$,
which may not be bounded on the classical Orlicz-Hardy space $H_\oz(\rn)$
or the Orlicz space $L(\oz)$. Thus, it is necessary to introduce and
study the Orlicz-Hardy space $H_{\oz,L}(\rn)$.

Recall that the classical
$\mathop{\mathrm{BMO}}\,(\rn)$ was originally introduced and
studied by John and Nirenberg \cite{jn} in the context of partial
differential equations, which has been identified as the
dual space of $H^1(\rn)$ in the work by Fefferman and Stein
\cite{fs}. Also, the generalized
space $\mathop{\mathrm{BMO}}_\ro(\rn)$ was introduced and studied
in \cite{s79,ja,v,hsv} and it was proved therein
to be the dual space of $H_\oz(\rn)$.

To state the main content of this paper, we first recall
some notation and  known facts on
second order divergence form elliptic operators on $\rn$ with
bounded complex coefficients from \cite{a1, hm1}.
Let $A$ be an $n\times n$ matrix with entries
$\{a_{j,k}\}_{j,\,k=1}^n\subset L^\fz(\rn,\cc)$
satisfying the ellipticity conditions, namely,
there exist constants $0 <\lz_A\le \Lambda_A<\fz$ such
that for all $\xi,\,\zeta\in\cc^n$,
\begin{equation}\label{1.1}
\lz_A|\xi|^2\le\car e \la A\xi,\xi\ra \ \ \mbox{and} \ \ \ |\la A\xi,\zeta\ra|
\le \Lambda_A |\xi||\zeta|.
\end{equation}
Then the second order divergence form operator is given by
\begin{equation}\label{1.2}
 L f \equiv \mathop\mathrm{div}(A\nz f ),
 \end{equation}
interpreted in the weak sense via a sesquilinear form. Following \cite{hm1}, set
\begin{equation*}
  p_L\equiv \inf\lf\{p\ge 1:\,\sup_{t>0}\|e^{-tL}\|_{L^p(\rn)\to L^p(\rn)}<\fz\r\}
\end{equation*}
and
\begin{equation*}
 \wz p_L\equiv \sup\lf\{p\le \fz:\,\sup_{t>0}\|e^{-tL}\|_{L^p(\rn)\to L^p(\rn)}<\fz\r\}.
\end{equation*}
It was proved by Auscher \cite{a1} that
if $n=1,\,2$, then $p_L=1$ and $\wz p_L=\fz$,
and if $n\ge 3$, then $p_L<2n/(n+2)$ and $\wz p_L>2n/(n-2)$.
Moreover, thanks to a counterexample given by Frehse \cite{f},
this range is also sharp, which was pointed out to us by Professor
Pascal Auscher.

For all $f\in L^2(\rn)$ and $x\in\rn$, define
\begin{equation}\label{1.3}
  \cs_Lf(x)\equiv \lf(\iint_{\Gamma(x)}|t^2Le^{-t^2L}f(y)|^2
  \frac{\,dy\,dt}{t^{n+1}}\r)^{1/2}.
  \end{equation}
The space $H_{\oz,L}(\rn)$ is defined to be the completion of the set
$\{ f\in L^2(\rn):\, \cs_Lf\in L(\oz)\}$ with respect to the quasi-norm
\begin{equation*}
  \|f\|_{H_{\oz,L}(\rn)}\equiv \|\cs_Lf\|_{L(\oz)}=\inf\lf\{\lz>0:\,
  \int_{\rn}\oz\lf(\frac{\cs_L f(x)}{\lz}\r)\,dx\le 1\r\}.
\end{equation*}
If $p\le 1$ and $\oz(t)=t^p$ for all $t\in (0,\fz)$, we then denote
the Hardy space $H_{\oz,L}(\rn)$ by $H_L^p(\rn)$. The Hardy space
$H_L^1(\rn)$ was studied by Hofmann and Mayboroda in \cite{hm1}
(see also \cite{hm1c} for a corrected version).

In this paper, we first obtain the molecular decomposition
of the Orlicz-Hardy space $H_{\oz,L}(\rn)$. Using
this molecular decomposition, we then establish the
dual relation between the spaces $H_{\oz,L}(\rn)$ and
$\mathrm{BMO}_{\ro,L^\ast}(\rn)$, and the molecular characterization
of $H_{\oz,L}(\rn)$. Characterizations via the Lusin-area function
associated to the Poisson semigroup and the maximal functions are also
obtained. We also establish the $\ro$-Carleson measure characterization
and the John-Nirenberg inequality for the space $\bmo$.
As applications, we show that the Riesz transform $\nz
L^{-1/2}$ and the Littlewood-Paley $g$-function $g_L$ map
$H_{\oz,L}(\rn)$ continuously
into $L(\oz)$; in particular, $\nz L^{-1/2}$ maps $H_{\oz,L}(\rn)$ into
the classical Orlicz-Hardy space $H_\oz(\rn)$ for $p_\oz\in (\frac{n}{n+1},1]$.
Moreover, we show that the corresponding fractional integral $L^{-\gz}$ for
all $\gz\in (0, \frac n2(\frac 1{p_L}-\frac1 {\wz p_L}))$
maps $H_{\omega,L}({\mathbb R}^n)$ continuously into
$H_{\widetilde{\omega},L}({\mathbb R}^n)$, where $\widetilde{\omega}$
is determined by $\omega$ and $\gamma$, and satisfies the same property as $\omega$.
All these results are new even when $\oz(t)=t^p$
for all $t\in (0,\fz)$ and $p\in (0,1)$. When $p=1$ and $\oz(t)=t$
for all $t\in (0,\fz)$, some of results are also new.

The key step of the above approach is to establish a molecular
characterization of the Orlicz-Hardy space $H_{\oz,L}(\rn)$.
To this end, a main difficulty encountered
is the convergence problem of the summation of molecules, i.\,e.,
in what sense does the molecular characterization hold? In Theorem
\ref{t5.1} below, we prove that our molecular characterization
holds in the dual of $\bmoz$. This is quite different from the cases
for the Hardy space $H^1_L(\rn)$ in \cite{hm1} and the Hardy
space $H^1(\Lambda T^\ast M)$ in \cite{amr}, which only need
that the molecular characterizations hold pointwise; see \cite[(1.11)]{hm1}
(or its corrected version in \cite{hm1c}) and \cite[Definition 6.1]{amr}.
Recall that $M$ denotes a complete Riemannian manifold and
$$\Lambda T^\ast M\equiv\bigoplus_{0\le k\le\dim M}\Lambda ^kT^\ast M$$
the bundle over $M$ whose
fibre at each $x\in M$ is given by $\Lambda T^\ast_x M$,
the complex exterior algebra over the cotangent space
$T^\ast_x M$; see \cite[p.\,194]{amr}. In this paper,
to obtain the molecular characterization of $H_{\oz,L}(\rn)$,
we first need to show that the dual space of $H_{\oz,L}(\rn)$ is
$\bmoz$ in Theorem \ref{t4.1} below. The key ingredients used
in the proof of Theorem \ref{t4.1} is the Calder\'on
reproducing formula (Lemma \ref{l4.3} below) and the atomic
decomposition of the tent space $T_{\oz}(\rnz)$ (Theorem \ref{t3.1}
below). We point out that the dual space of $H^1_L(\rn)$
was already obtained in \cite[Theorems 8.2 and 8.6]{hm1}
by a different, but more complicated, approach, without invoking the atomic
decomposition of the tent space. Also, the dual space
of $H^1(\Lambda T^\ast M)$ was obtained in \cite{amr} as a direct
corollary of the dual theorem on the corresponding tent space;
see \cite[Theorem 5.8]{amr}.

Another key tool used in this paper to obtain the maximal
function characterizations of $H_{\oz,L}(\rn)$ and their applications
in boundedness of operators is Lemma \ref{l5.1} below,
which gives a sufficient condition
for the boundedness of linear or non-negative sublinear
operators from $H_{\oz,L}(\rn)$ to $L(\oz)$. Such
a condition for the molecular Hardy space
in $H^1_L(\rn)$ case was also given in
\cite[Lemma 3.3]{hm1}, which is a direct corollary
of the definition of the molecular Hardy space; see its corrected version in
\cite{hm1c}. To obtain Lemma \ref{l5.1},
we need the following important observation that for all $f\in H_{\oz,L}(\rn)\cap
L^2(\rn)$, since $t^2Le^{-t^2L}f\in T_2^2(\rnz)\cap T_{\oz}(\rnz)$,
by Proposition \ref{p3.1} below, the atomic decomposition of $t^2Le^{-t^2L}f$ holds in
both $T_{\oz}(\rnz)$ and $T_2^p(\rnz)$ for all $p\in [1,2]$.
Then by the fact that the operator $\pi_{L,M}$, which is introduced in \cite{dy2}
and initially defined on $F\in L^2(\rnz)$ with compact support by
\begin{equation}\label{1.4}
  \pi_{L,M}F\equiv C_M\int_0^\fz (t^2L)^{M+1}e^{-t^2L}F(\cdot,t)\frac{\,dt}{t},
\end{equation}
is bounded from $T_2^p(\rnz)$ to $L^p(\rn)$ for $p\in (p_L,\wz p_L)$
(see Proposition \ref{p4.1} below),
we further obtain the $L^p(\rn)$-convergence with $p\in (p_L,2]$
and the $H_{\oz, L}(\rn)$-convergence of the corresponding molecular
decomposition for functions in $H_{\oz,L}(\rn)\cap L^2(\rn)$
in Proposition \ref{p4.2} below. These convergences are
necessary and play a fundamental
role in the whole paper, which is totally different from
the $\dz$-representation used in \cite{hm1, hm1c, hlmmy}. Here and in what follows,
$M\in\cn$ and
$$C_M\int_0^\fz t^{2(M+2)}e^{-2t^2}\frac{\,dt}{t}=1.$$
We remark that the convergence of the atomic decomposition
of the tent spaces was also already carefully dealt with in
\cite{amr} (We thank Professor Pascal Auscher to point out
this to us). To be precise, in \cite[pp.\,209-210]{amr},
Auscher, McIntosh and Russ proved that for any functions
$F$ in the intersection of the tent spaces
$T^{1,2}(\Lambda T^\ast M)$ and $T^{2,2}(\Lambda T^\ast M)$
with the support $M\times[\ez,\fz)$ for some $\ez>0$,
$F_n\equiv F\chi_{B(x_0,n)\times(1/n,n)}$ for any $x_0\in M$
has an atomic decomposition which converges in both $T^{1,2}(\Lambda T^\ast M)$
and $T^{2,2}(\Lambda T^\ast M)$;
see \cite[(4.5)]{amr}. Observe that the compact support
of $F_n$ plays an important role in establishing
the convergence of its atomic decomposition in \cite{amr}. However,
Proposition \ref{p3.1} below are true for
all functions in $T_{\oz}(\rnz)\cap T_2^p(\rnz)$
without assuming the compact supports.
To obtain this proposition, we need to subtly use the construction
of the supports of atoms in the atomic decomposition
of tent spaces $T_{\oz}(\rnz)$ in Theorem \ref{t3.1} below
and the Lebesgue dominated convergence theorem.

This paper is organized as follows.

In Section 2, we recall some notions and known results concerning
operators associated with $L$ and describe some
basic assumptions on the Orlicz function $\oz$ considered in this paper.
We point out that throughout the whole paper, we always assume that
$\omega$ on $(0,\infty)$ is concave and of strictly critical lower type
$p_\oz\in (0, 1]$. These restrictions are necessary for the Orlicz-Hardy space
$H_{\oz, L}(\rn)$ to have the molecular characterization;
see Theorem 5.1 below. Thus, under these restrictions,
the Orlicz-Hardy space $H_{\oz, L}(\rn)$ behaves more closely like
the classical Hardy space. We leave the study on the Orlicz-Hardy
space with a Young function in a forthcoming paper, which may have some
properties similar to those of the spaces $H^p(\Lambda T^\ast M)$
with $p\in (1,\fz]$ as in \cite{amr}.

In Section 3, we introduce the tent spaces $T_\oz(\rnz)$ associated to $\oz$
and establish its atomic characterization; see Theorem \ref{t3.1} below.
By the proof of Theorem \ref{t3.1}, we observe that if a function
$F\in T_\oz(\rnz)\cap T_2^p(\rnz)$, $p\in (0,\fz)$, then there exists
an atomic decomposition of $F$ which converges in both
$T_\oz(\rnz)$ and $T_2^p(\rnz)$; see Proposition
\ref{p3.1} below. As a consequence, we prove that if $F\in T_\oz(\rnz)\cap T_2^2(\rnz)$,
then there exists an atomic decomposition of $F$ which converges in
both $T_\oz(\rnz)$ and $T_2^p(\rnz)$ for all $p\in [1,2]$;
see Corollary \ref{c3.1} below.

In Section 4, we first introduce the Orlicz-Hardy space
$H_{\oz,\,L}(\rn)$, and then prove that the operator $\pi_{L,M}$
in \eqref{1.4} maps the tent space $T_2^p(\rnz)$ continuously into $L^p(\rn)$ for
$p\in (p_L,\wz p_L)$ and $T_\oz(\rnz)$ continuously into $H_{\oz,L}(\rn)$
(see Proposition \ref{p4.1} below). Combined this with Corollary \ref{c3.1},
we obtain a molecular decomposition for elements in $H_{\oz,L}(\rn)\cap L^2(\rn)$
which converges in $L^p(\rn)$ for $p\in (p_L,2]$; see Proposition \ref{p4.2} below.
Via this molecular decomposition of $H_{\oz,\,L}(\rn)$, we further obtain
the duality between $H_{\oz,\,L}(\rn)$ and $\bbmo_{\rho,\,L^\ast}(\rn)$ (see Theorem
\ref{t4.1} below). We also remark that the proof of Theorem \ref{t4.1} is
much simpler than the proof of \cite[Theorem 8.2]{hm1}.

In Section 5, we introduce the molecular Hardy space, where the
summation of molecules converges in the space
$(\mathrm{BMO}_{\ro,L^\ast}(\rn))^\ast$, the dual space of
$\mathrm{BMO}_{\ro,L^\ast}(\rn)$. Then we show that the molecular
Hardy space is equivalent to the Orlicz-Hardy space $H_{\oz,L}(\rn)$
with equivalent norms; see Theorem \ref{t5.1} below. Furthermore, we
characterize  $H_{\oz,L}(\rn)$ via the Lusin-area function associated to
the Poisson semigroup, and the maximal functions; see Theorem \ref{t5.2}
below. We also point out that a sufficient condition
for the boundedness of linear or non-negative sublinear
operators from $H_{\oz,L}(\rn)$ to
$L(\oz)$ is also given in Lemma \ref{l5.1} below, which plays a key role in
the proof of Theorem \ref{t5.2} and is very useful in applications
(see Section \ref{s7} of this paper). This condition is also necessary
if $\oz(t)=t^p$ for all $t\in (0,\fz)$ and $p\in (0, 1]$.

Section 6 is devoted to establish the $\ro$-Carleson measure characterization
(see Theorem \ref{t6.1} below) and the John-Nirenberg inequality
(see Theorem \ref{t6.2} below) for the space $\bmo$.

In Section 7, as an application, we give some
sufficient conditions which guarantee the boundedness of linear
or non-negative sublinear operators from $H_{\oz,L}(\rn)$
to $L(\oz)$; in particular, we show that the Riesz transform $\nz
L^{-1/2}$ and the Littlewood-Paley $g$-function $g_L$
map $H_{\oz,L}(\rn)$ continuously into $L(\oz)$; see Theorem \ref{t7.1} below.
A fractional variant of Theorem \ref{t7.1} is
also given in this section; see Theorem \ref{t7.2} below.
Using Theorem \ref{t7.2}, we prove that the fractional integral $L^{-\gz}$ for
all $\gz\in (0, \frac n2(\frac 1{p_L}-\frac1 {\wz p_L}))$ maps
$H_{\omega,L}({\mathbb R}^n)$ continuously into
$H_{\widetilde{\omega},L}({\mathbb R}^n)$, where $\widetilde{\omega}$
is determined by $\omega$ and $\gamma$ and satisfies the
same property as $\omega$; see Theorem \ref{t7.3} below.
In particular, $L^{-\gz}$ maps $H_L^p(\rn)$ continuously into
$H_L^q(\rn)$ for $0<p<q\le 1$ and $n/p-n/q=2\gz$; see Remark \ref{r7.3} below.
Applying Theorems \ref{t7.1} and \ref{t7.3},
we further show that $\nz L^{-1/2}$ maps $H_{\oz,L}(\rn)$
continuously into $H_\oz(\rn)$ for $p_\oz\in (\frac{n}{n+1},1]$,
and in particular, $H_L^p(\rn)$ into the classical Hardy space
$H^p(\rn)$ for $p\in (\frac{n}{n+1},1]$; see Theorem \ref{t7.4}
below. Moreover, we show that
$H_{\oz,L}(\rn)\subset H_\oz(\rn)$ for all $p_\oz\in (\frac{n}{n+1},1]$
in Remark \ref{r7.4} below. It was also pointed out by Hofmann
and Mayboroda in \cite{hm1}
that $H_L^1(\rn)$ is a proper subspace of $H^1(\rn)$ for certain $L$
as in \eqref{1.2}. We remark that if $L=-\Delta+V$ with $V\in L^1_\loc(\rn)$
is the Schr\"odinger operator on $\rn$,
then it was proved in \cite{hlmmy} that
$\nz L^{-1/2}$ maps $H_L^1(\rn)$ into the classical Hardy space
$H^1(\rn)$.

We point out that this paper is strongly motivated by Hofmann and Mayboroda
\cite{hm1}, and we also directly use some estimates from \cite{hm1}
which simplify the proofs of some theorems of this paper.

Finally, we make some conventions. Throughout the whole paper,
$L$ always denotes the second order divergence form operator
as in \eqref{1.2}. We denote by $C$ a positive constant which
is independent of the main parameters, but it may vary from line
to line. The symbol $X \ls Y$ means that there exists a positive
constant $C$ such that $X \le CY$; the symbol
$\lfr\,\az\,\rf$ for $\az\in\rr$ denotes the maximal
integer no more than $\az$; $B(z_B,\,r_B)$ denotes an open ball with
center $z_B$ and radius $r_B$ and $CB(z_B,\,r_B)\equiv
B(z_B,\,Cr_B).$ Set $\cn\equiv\{1,2,\cdots\}$ and
$\zz_+\equiv\cn\cup\{0\}.$ For any subset $E$ of $\rn$, we denote by
$E^\com$ the set $\rn\setminus E.$

\section{Preliminaries\label{s2}}

\hskip\parindent  In this section, we recall some notions and notation on the
divergence form elliptic operator, and present some basic properties on
Orlicz functions and also describe some basic assumptions on them.

\subsection{Some notions on the divergence form elliptic operator $L$}

\hskip\parindent  In this subsection, we present some known facts
about the operator $L$ considered in this paper.

A family $\{S_t\}_{t>0}$ of operators is said to satisfy the $L^2$
off-diagonal estimates, which is also called the Gaffney estimates
(see \cite{hm1}), if there exist positive constants $c,\,C$ and $\bz$
such that for arbitrary closed sets $E,\,F\subset \rn$,
\begin{equation*}
   \|S_tf\|_{L^2(F)}\le Ce^{-(\frac{\dist(E,F)^2}{ct})^\bz}\|f\|_{L^2(E)}
\end{equation*}
for every $t>0$ and every $f\in L^2(\rn)$ supported in $E$. Here and in what
follows, for any $p\in (0,\fz]$ and $E\subset \rn$,
$\|f\|_{L^p(E)}\equiv \|f\chi_E\|_{L^p(\rn)}$; for any sets $E,\,F\subset \rn$,
$\dist(E,F)\equiv \inf\{|x-y|:\,x\in E,\,y\in F\}.$

The following results were obtained in \cite{a1,ahlmt,hm1,hm2}.

\begin{lem}[\cite{hm2}] \label{l2.1} If two families of operators, $\{S_t\}_{t>0}$
and $\{T_t\}_{t>0}$,
satisfy Gaffney estimates, then so does $\{S_tT_t\}_{t>0}$.
Moreover, there exist positive constants $c,\,C,$ and $\bz$
 such that for arbitrary closed
sets $E,\,F \subset \rn$,
\begin{equation*}
\|S_sT_tf\|_{L^2(F)}\le Ce^{-(\frac{\dist(E,F)^2}
{c\max\{s,t\}})^\bz}\|f\|_{L^2(E)}
\end{equation*}
for every $s,\,t>0$ and every $f\in L^2(\rn)$ supported in $E$.
\end{lem}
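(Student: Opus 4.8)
The plan is to reduce the claim to the single most standard trick available for Gaffney (off-diagonal) estimates: splitting the domain $\rn$ into the "near" part and the "far" part relative to the sets $E$ and $F$, inserting an intermediate cube/ball, and then using the off-diagonal decay of $\{S_s\}$ on one factor and of $\{T_t\}$ on the other. First I would fix closed sets $E,F\subset\rn$ and $f\in L^2(\rn)$ supported in $E$, set $d\equiv\dist(E,F)$, and introduce the "midway" set
\[
  G\equiv\lf\{x\in\rn:\,\dist(x,E)\le \tfrac d2\r\},
\]
so that $\dist(E,G^\com)\ge d/2$ and $\dist(G,F)\ge d/2$ (the latter because any $x\in G$ has $\dist(x,F)\ge \dist(E,F)-\dist(x,E)\ge d/2$). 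Then I would write $g\equiv T_tf$ and estimate $\|S_sg\|_{L^2(F)}$ by splitting $g=g\chi_G+g\chi_{G^\com}$, so that
\[
  \|S_sT_tf\|_{L^2(F)}\le \|S_s(g\chi_G)\|_{L^2(F)}+\|S_s(g\chi_{G^\com})\|_{L^2(F)}.
\]

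For the first term, I apply the Gaffney estimate for $\{S_s\}_{s>0}$ with the pair of closed sets $(\overline G,F)$, which have distance at least $d/2$: this gives a bound by $Ce^{-(d^2/(4cs))^\bz}\|g\chi_G\|_{L^2(\rn)}\le Ce^{-(d^2/(4cs))^\bz}\|T_tf\|_{L^2(\rn)}$, and since $\{T_t\}$ (being Gaffney) is in particular uniformly $L^2$-bounded, $\|T_tf\|_{L^2(\rn)}\ls\|f\|_{L^2(E)}$. For the second term, I use that $\{S_s\}$ is uniformly $L^2$-bounded to peel it off, $\|S_s(g\chi_{G^\com})\|_{L^2(F)}\ls\|T_tf\|_{L^2(G^\com)}$, and then apply the Gaffney estimate for $\{T_t\}_{t>0}$ with the pair $(E,G^\com)$, whose distance is at least $d/2$: this yields $\ls e^{-(d^2/(4ct))^\bz}\|f\|_{L^2(E)}$. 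Combining, $\|S_sT_tf\|_{L^2(F)}\ls\lf(e^{-(d^2/(4cs))^\bz}+e^{-(d^2/(4ct))^\bz}\r)\|f\|_{L^2(E)}\ls e^{-(d^2/(4c\max\{s,t\}))^\bz}\|f\|_{L^2(E)}$, which is the desired estimate after adjusting the constant $c$; the first assertion (taking $s=t$) is the special case. One minor point to check is that $\overline G$ and $G^\com$ are closed, which is immediate, and that the Gaffney hypothesis is being applied to genuine closed sets containing the relevant supports — for the first term $\supp(g\chi_G)\subset\overline G$, for the second the off-diagonal estimate for $T_t$ needs $\supp f\subset E$, which holds by hypothesis.

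I do not expect a serious obstacle here; the only thing that requires a little care is bookkeeping the constants so that the two exponential terms combine into a single one with $\max\{s,t\}$ in the denominator (using that for $a,b\ge 0$ one has $e^{-a}+e^{-b}\le 2e^{-\min\{a,b\}}$ and that $\min\{d^2/(cs),d^2/(ct)\}=d^2/(c\max\{s,t\})$), and possibly replacing $\beta$ or $c$ by the smaller/larger of the constants coming from the two families. If one wants the constant $\beta$ in the conclusion to equal $\min\{\beta_S,\beta_T\}$, one uses the elementary fact that $e^{-r^{\beta_1}}\ls_\gz e^{-r^{\gz}}$ for $0<\gz\le\beta_1$ and $r\ge 0$ to downgrade both exponents to the common smaller one before combining.
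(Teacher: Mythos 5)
Your argument is correct, and it is the standard ``buffer region'' proof one finds in the literature for composing families with off-diagonal decay. Note that the paper itself supplies no proof of Lemma~\ref{l2.1}: it is stated with a citation to Hofmann--Martell \cite{hm2}, where exactly this kind of splitting of $\rn$ into the set $G=\{x:\dist(x,E)\le\dist(E,F)/2\}$ and its complement, combined with uniform $L^2$-boundedness (a consequence of Gaffney with $E=F=\rn$) on whichever factor does not see a gap, is used. Two small points worth flagging for completeness, both of which you essentially addressed: first, $G$ is already closed, and for the far piece one can harmlessly replace $G^\complement$ by its closure $\overline{G^\complement}$ without decreasing the distance to $E$ (this uses continuity of $\dist(\cdot,E)$); second, the elementary inequality $e^{-r^{\beta_1}}\le e\cdot e^{-r^{\gamma}}$ for all $r\ge 0$ whenever $0<\gamma\le\beta_1$ is the clean way to reconcile possibly different exponents $\beta_S$, $\beta_T$ before taking the minimum of the two exponential factors.
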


\begin{lem}[\cite{ahlmt,hm2}] \label{l2.2} The families,
\begin{equation}\label{2.1}
 \{e^{-tL}\}_{t>0}, \ \ \{tLe^{-tL}\}_{t>0}, \ \ \{t^{1/2}\nz e^{-tL}\}_{t>0},
\end{equation}
as well as
\begin{equation}\label{2.2}
 \{(I+tL)^{-1}\}_{t>0}, \ \ \{t^{1/2}\nz(I+tL)^{-1}\}_{t>0},
\end{equation}
are bounded on $L^2(\rn)$ uniformly in $t$
and satisfy the Gaffney estimates with positive constants $c,\,C$
 depending on $n,\,\lz_A,\,\Lambda_A$ as in \eqref{1.1} only.
  For the operators in \eqref{2.1}, $\bz=1$, while in \eqref{2.2}, $\bz=1/2$.
\end{lem}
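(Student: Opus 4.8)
The plan is to reduce everything to two inputs that are already classical for complex divergence-form operators: the analyticity of the semigroup $\{e^{-tL}\}_{t>0}$ on $L^2(\rn)$ (equivalently, bounds on the holomorphic functional calculus of $L$ coming from \eqref{1.1} via the Lax--Milgram sesquilinear form), and the Gaffney (i.e. $L^2$ off-diagonal) estimates for the basic families. Concretely, I would first recall that $L$, being maximal accretive with $\car e\la Lf,f\ra\gtrsim\|\nz f\|_{L^2}^2$, generates a bounded analytic semigroup on $L^2(\rn)$; hence $\|e^{-tL}\|_{L^2\to L^2}\le 1$, $\|tLe^{-tL}\|_{L^2\to L^2}\ls 1$, and, using the ellipticity lower bound applied to $f=e^{-tL}g$ together with $\frac{d}{dt}\|e^{-tL}g\|_{L^2}^2=-2\car e\la Le^{-tL}g,e^{-tL}g\ra$, one gets $\|t^{1/2}\nz e^{-tL}g\|_{L^2}\ls\|g\|_{L^2}$ by integrating in $t$. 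For the resolvent family \eqref{2.2}, accretivity gives $\|(I+tL)^{-1}\|_{L^2\to L^2}\le 1$ directly, and the analogous energy estimate $\car e\la (I+tL)f,f\ra=\|f\|_{L^2}^2+t\,\car e\la Lf,f\ra$ with $f=(I+tL)^{-1}g$ yields $\|t^{1/2}\nz(I+tL)^{-1}g\|_{L^2}\ls\|g\|_{L^2}$.

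Next I would establish the Gaffney estimates. For $\{e^{-tL}\}_{t>0}$ the standard device is the Davies exponential perturbation: fix $f\in L^2(E)$, a bounded Lipschitz weight $\phi$ with $\|\nz\phi\|_\fz\le 1$, and study $u_\rho(t)\equiv e^{\rho\phi}e^{-tL}(e^{-\rho\phi}f)$ (equivalently the semigroup generated by the twisted form). Differentiating $\|u_\rho(t)\|_{L^2}^2$ and using \eqref{1.1} to absorb the first-order terms produces a differential inequality giving $\|e^{\rho\phi}e^{-tL}(e^{-\rho\phi}f)\|_{L^2}\ls e^{C\rho^2 t}\|f\|_{L^2}$; choosing $\phi$ to be (a truncation of) $\dist(\cdot,E)$ and optimizing in $\rho\sim\dist(E,F)/t$ gives the claimed bound with $\bz=1$. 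The estimates for $\{tLe^{-tL}\}$ then follow from the $e^{-tL}$ case by writing $tLe^{-tL}=tLe^{-(t/2)L}\cdot e^{-(t/2)L}$ and applying Lemma \ref{l2.1} together with analyticity (or by a Cauchy-integral representation of $tLe^{-tL}$ in terms of $e^{-zL}$ over a contour). For $t^{1/2}\nz e^{-tL}$ one combines the Gaffney estimate for $e^{-(t/2)L}$ with the localized Caccioppoli-type inequality $\|\nz e^{-(t/2)L}g\|_{L^2(F)}\ls t^{-1/2}\|g\|_{L^2(\wz F)}+\dots$ obtained from testing the equation against $\phi^2\, e^{-(t/2)L}g$ and using ellipticity; splitting $g$ according to the support again yields $\bz=1$. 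For the resolvent family \eqref{2.2} one repeats the Davies twist on the sesquilinear identity $\la (I+tL)u_\rho,v\ra$ instead of a differential inequality; here the gain is only Gaussian in $\dist(E,F)/\sqrt t$ raised to the power one, i.e. $\bz=1/2$, which is why the statement distinguishes the two cases. The Caccioppoli argument applied to $(I+tL)^{-1}$ gives the gradient version $t^{1/2}\nz(I+tL)^{-1}$.

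The main obstacle is purely a matter of bookkeeping rather than deep difficulty: handling the first-order cross terms that appear when one differentiates $\|e^{\rho\phi}e^{-tL}(e^{-\rho\phi}f)\|_{L^2}^2$ (or expands the twisted resolvent form), since $\car e\la A\nz(e^{\rho\phi}u),\nz(e^{\rho\phi}u)\ra$ generates terms linear in $\nz\phi$ that must be absorbed into the coercive term $\lz_A\|\nz u\|_{L^2}^2$ at the cost of a $C\rho^2$-term; controlling the constant $c$ so that it depends only on $n,\lz_A,\Lambda_A$ requires care with Young's inequality. Since all of this is classical (Auscher \cite{a1}, \cite{ahlmt}, Hofmann--Mayboroda \cite{hm1,hm2}), I would present the energy/analyticity facts in one short paragraph, carry out the Davies perturbation for $e^{-tL}$ in detail, and then deduce the remaining four families by the composition lemma (Lemma \ref{l2.1}), analyticity, and Caccioppoli, merely indicating the adjustments needed for the resolvent family that produce $\bz=1/2$.
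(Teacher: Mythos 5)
The paper does not actually prove Lemma \ref{l2.2}; it is stated as a known result and cited from \cite{ahlmt,hm2}, with the surrounding paragraph also pointing to \cite{a1,hm1}. So there is no ``paper's proof'' to compare against; what I can say is that your plan reproduces, in outline, the standard proof from those references: form-accretivity and analyticity give the uniform $L^2$ bounds, the Davies exponential twist $e^{\ro\phi}e^{-tL}e^{-\ro\phi}$ gives the Gaffney estimate for $\{e^{-tL}\}_{t>0}$, and composition (Lemma \ref{l2.1}), analyticity, and Caccioppoli propagate the estimate to the remaining four families. Your accounting of why $\bz=1$ for \eqref{2.1} but $\bz=1/2$ for \eqref{2.2} is also correct: for the semigroup one optimizes $e^{C\ro^2 t-\ro\,\dist(E,F)}$ in $\ro$, while for the resolvent the twist only absorbs a single power of $\ro$, leaving $\dist(E,F)/\sqrt t$ in the exponent.

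One small imprecision worth fixing. For the uniform $L^2$ bound on $t^{1/2}\nz e^{-tL}$, merely ``integrating in $t$'' the energy identity $\frac{d}{dt}\|e^{-tL}g\|_{L^2}^2=-2\,\car e\la A\nz e^{-tL}g,\nz e^{-tL}g\ra$ controls $\int_0^t\|\nz e^{-sL}g\|_{L^2}^2\,ds$, not the pointwise-in-$t$ quantity $\|\nz e^{-tL}g\|_{L^2}$; extracting the latter from the integral bound runs into a circularity. The cleaner route is to combine ellipticity with analyticity: $\lz_A\|\nz f\|_{L^2}^2\le \car e\la Lf,f\ra\le\|Lf\|_{L^2}\|f\|_{L^2}$ applied to $f=e^{-tL}g$, together with the already established bound $\|tLe^{-tL}g\|_{L^2}\ls\|g\|_{L^2}$, gives $\|t^{1/2}\nz e^{-tL}g\|_{L^2}\ls\|g\|_{L^2}$ at once. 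The analogous step for the resolvent is fine exactly as you wrote it, since there the identity $\car e\la(I+tL)f,f\ra=\|f\|_{L^2}^2+t\,\car e\la Lf,f\ra$ is algebraic rather than differential. Apart from this, your outline is correct and complete.
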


\begin{lem}[\cite{a1,hm1}] \label{l2.3} There exist $p_L\in [1,\frac{2n}{n+2})$,
 $\wz p_L\in (\frac{2n}{n-2},\fz]$ and $c,C\in (0,\fz)$
 such that

 ${\rm (i)}$ for every $p$ and $q$ with $p_L < p\le q < \wz p_L$,
 the families $\{e^{-tL}\}_{t>0}$ and $\{tLe^{-tL}\}_{t>0}$ satisfy
 $L^p-L^q$ off-diagonal estimates, i.\,e.,
for arbitrary closed sets $E,\,F\subset \rn$,
\begin{equation*}
  \|e^{-tL}f\|_{L^q(F)}+\|tLe^{-tL}f\|_{L^q(F)}\le
  Ct^{\frac n2(\frac{1}{q}-\frac{1}{p})}e^{-\frac{\dist(E,F)^2}{ct}}
  \|f\|_{L^p(E)}
\end{equation*}
for every $t > 0$ and every $f\in  L^p(\rn)$ supported in E.
The operators $\{e^{-tL}\}_{t>0}$ and $\{tLe^{-tL}\}_{t>0}$ are bounded
from $L^p(\rn)$ to $L^q(\rn)$ with the norm
$Ct^{\frac n2(\frac{1}{q}-\frac{1}{p})}$;

${\rm (ii)}$ for every $p\in (p_L,\wz p_L)$,
the family $\{(I+tL)^{-1}\}_{t>0}$  satisfies
$L^p-L^p$ off-diagonal estimates, i.\,e.,
for arbitrary closed sets $E,\,F\subset \rn$,
\begin{equation*}
  \|(I+tL)^{-1}f\|_{L^q(F)}\le Ct^{\frac n2(\frac{1}{q}-\frac{1}{p})}
  e^{-\frac{\dist(E,F)}{ct^{1/2}}}
  \|f\|_{L^p(E)}
\end{equation*}
for every $t > 0$ and every $f\in  L^p(\rn)$ supported in E.
\end{lem}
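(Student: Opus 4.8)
The plan is to reduce the statement to the $L^p$–$L^q$ mapping properties of the heat semigroup due to Auscher \cite{a1}, combined with the $L^2$ exponential-perturbation (Davies--Gaffney) bounds underlying Lemma \ref{l2.2}, and then to deduce (ii) from (i) by subordination. I would take as external input from \cite{a1}: (a) for every $p\in(p_L,\wz p_L)$ the families $\{e^{-tL}\}_{t>0}$ and $\{tLe^{-tL}\}_{t>0}$ are bounded on $L^p(\rn)$ uniformly in $t$ --- for the first this is just the definition of $p_L,\wz p_L$ together with the fact, recalled in Section \ref{s1}, that this interval is non-empty and contains $(\frac{2n}{n+2},\frac{2n}{n-2})$, and the second follows because $L$ is maximal accretive, so $e^{-tL}$ is a bounded analytic semigroup on each such $L^p(\rn)$; and (b) the sharp smoothing bound $\|e^{-tL}\|_{L^p(\rn)\to L^q(\rn)}+\|tLe^{-tL}\|_{L^p(\rn)\to L^q(\rn)}\ls t^{\frac n2(\frac1q-\frac1p)}$ for $p_L<p\le q<\wz p_L$, which \cite{a1} obtains from (a) and the uniform $L^r$-bounds for $\{t^{1/2}\nz e^{-tL}\}_{t>0}$ by a Sobolev-embedding (Nash/Moser-type) self-improvement.

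The main step is to upgrade (b) to the off-diagonal estimates of (i), and here I would use the exponential-perturbation (Davies) method. Let $\phi$ be a real Lipschitz function with $|\nz\phi|\le1$ and let $\mu>0$: multiplication by $e^{\pm\mu\phi}$ is an isometry on every $L^r(\rn)$, and --- this is essentially how Lemma \ref{l2.2} is proved, the quadratic growth in $\mu$ being exactly the content of the ellipticity \eqref{1.1} --- the conjugated operators $e^{\mu\phi}e^{-tL}e^{-\mu\phi}$ and $e^{\mu\phi}(tLe^{-tL})e^{-\mu\phi}$ are bounded on $L^2(\rn)$ with norm $\ls e^{C\mu^2t}$. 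Interpolating (Stein) this weighted $L^2$-bound against the \emph{unweighted} $L^{p_1}$--$L^{q_1}$-bound from (b), for suitable $p_1\le p\le q\le q_1$ strictly interior to $(p_L,\wz p_L)$, gives $\|e^{\mu\phi}e^{-tL}e^{-\mu\phi}\|_{L^p\to L^q}\ls t^{\frac n2(\frac1q-\frac1p)}e^{C\mu^2t}$, and likewise for $tLe^{-tL}$. Now fix closed sets $E,F$ with $d\equiv\dist(E,F)$, choose $\phi$ with $\phi\equiv0$ on $E$, $\phi\ge d$ on $F$, $|\nz\phi|\le1$, and observe that $e^{\mu\phi}f=f$ for $f$ supported in $E$; hence
$$\|e^{-tL}f\|_{L^q(F)}\le e^{-\mu d}\big\|e^{\mu\phi}e^{-tL}e^{-\mu\phi}(e^{\mu\phi}f)\big\|_{L^q(\rn)}\ls e^{-\mu d}\,t^{\frac n2(\frac1q-\frac1p)}e^{C\mu^2t}\|f\|_{L^p(E)},$$
and optimizing over the free parameter $\mu=d/(2Ct)$ yields exactly the $L^p$--$L^q$ off-diagonal estimate of (i), with Gaussian decay ($\bz=1$); the same computation handles $\{tLe^{-tL}\}_{t>0}$.

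For (ii) I would instead use the subordination identity $(I+tL)^{-1}=\int_0^\fz e^{-s}e^{-stL}\,ds$: inserting the estimate from (i) for $e^{-stL}$ and using the elementary bound $\int_0^\fz e^{-s}s^{\kz}e^{-a/s}\,ds\ls e^{-c\sqrt a}$ (valid for $\kz\in\rr$ and $a>0$ by Laplace's method, since the exponent $-s-a/s$ peaks at $s=\sqrt a$) with $a=d^2/(Ct)$ gives
$$\|(I+tL)^{-1}f\|_{L^q(F)}\ls t^{\frac n2(\frac1q-\frac1p)}e^{-d/(C't^{1/2})}\|f\|_{L^p(E)},$$
which is the Poisson-type decay ($\bz=1/2$) asserted in (ii). The genuinely hard ingredient in this whole scheme is Auscher's fact (b) --- equivalently, that $(p_L,\wz p_L)$ is non-empty with the asserted size --- which rests on self-improving uniform $L^p$-boundedness to $L^p$--$L^q$ smoothing and on $L^p$-bounds for $t^{1/2}\nz e^{-tL}$ valid on a restricted but adequate range of $p$ around $2$; I would cite \cite{a1,hm1} for it rather than reprove it. Granting (b), the perturbation argument is routine, the only subtlety being that the weight parameter $\mu$ must be allowed to run over all of $(0,\fz)$ --- which is why the interpolation is carried out against an unweighted bound with $p_1,q_1$ strictly interior to $(p_L,\wz p_L)$ --- in order to recover Gaussian rather than merely exponential decay in (i).
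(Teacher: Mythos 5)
The paper states Lemma \ref{l2.3} as a citation to \cite{a1,hm1} and gives no proof, so there is no in-text argument to compare your proposal against; what you have written is instead a reconstruction of the argument in those references, and it is essentially correct. The route you describe --- obtain the weighted $L^2$ bound $\|e^{\mu\phi}e^{-tL}e^{-\mu\phi}\|_{L^2\to L^2}\lesssim e^{C\mu^2 t}$ from ellipticity via the Davies exponential-perturbation trick (and its analytic-semigroup consequence for $tL_\mu e^{-tL_\mu}$), run Stein interpolation for the analytic family $T_z=e^{z\mu\phi}e^{-tL}e^{-z\mu\phi}$ between the weighted $L^2$--$L^2$ bound and the unweighted $L^{p_1}$--$L^{q_1}$ smoothing bound (Auscher's input (b)), choose $\phi$ vanishing on $E$ and $\ge d$ on $F$ and optimize in $\mu\sim d/t$ to recover Gaussian decay, and then deduce the resolvent estimate (ii) by plugging the semigroup bound into the subordination formula $(I+tL)^{-1}=\int_0^\infty e^{-s}e^{-stL}\,ds$ and applying Laplace's method --- is the standard one and all the steps go through. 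Two remarks, neither a gap in your argument but worth recording against the statement as printed: the header of (ii) reads ``$L^p$--$L^p$ off-diagonal estimates'' while the display involves a distinct exponent $q$, so (ii) is really an $L^p$--$L^q$ estimate exactly as in (i), which is what you prove; and, as your Stein-interpolation step makes explicit, the Gaussian constant you obtain is proportional to the interpolation parameter $\theta$ and hence degenerates as $p\to p_L$ or $q\to\wz p_L$, so the constants $c,C$ in the lemma must be read as depending on the pair $(p,q)$ rather than as chosen uniformly before $p,q$ as the phrasing of the lemma might suggest.
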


\begin{lem}[\cite{hm1}] \label{l2.4} Let $k\in\cn$ and $p\in (p_L,\wz p_L)$. Then the
operator given by for any $f\in L^p(\rn)$ and $x\in\rn$,
\begin{equation*}
\cs_L^kf(x)\equiv \lf(\iint_{\Gamma(x)}|(t^2L)^ke^{-t^2L}f(y)|^2
\frac{\,dy\,dt}{t^{n+1}}\r)^{1/2},
\end{equation*}
is bounded on $L^p(\rn)$.
\end{lem}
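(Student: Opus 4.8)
The plan is to prove the $L^p(\rn)$-bound separately on the three ranges $p=2$, $p_L<p<2$ and $2<p<\wz p_L$, and then to recover the whole interval $(p_L,\wz p_L)$ by interpolating each off-$2$ endpoint with the $p=2$ estimate; the case $p=2$ is the backbone, and in the remaining ranges the Gaffney/off-diagonal bounds of Lemmas \ref{l2.2} and \ref{l2.3} are the essential quantitative input. For $p=2$, Fubini's theorem gives
\begin{equation*}
\|\cs_L^kf\|_{L^2(\rn)}^2=c\int_0^\fz\|(t^2L)^ke^{-t^2L}f\|_{L^2(\rn)}^2\,\frac{dt}{t},
\end{equation*}
where $c$ is the volume of the unit ball of $\rn$. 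Since $L$ is a one-to-one maximal accretive operator on $L^2(\rn)$, it admits a bounded $H^\fz$-functional calculus on $L^2(\rn)$ together with the attendant quadratic estimate; applying this to the admissible holomorphic function $\psi(z)\equiv z^ke^{-z}$ yields $\int_0^\fz\|\psi(t^2L)f\|_{L^2(\rn)}^2\frac{dt}{t}\ls\|f\|_{L^2(\rn)}^2$, which settles $p=2$.

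Next, for $p_L<p<2$, I would establish the weak type $(p,p)$ bound via a Calder\'on--Zygmund decomposition of $f\in L^2(\rn)\cap L^p(\rn)$ adapted to $L$, in the spirit of Blunck--Kunstmann and Auscher \cite{a1}: at each height $\lz>0$ write $f=g+\sum_Q b_Q$, where $Q$ ranges over a Whitney covering of $\{x\in\rn:\,[\cm(|f|^p)(x)]^{1/p}>\lz\}$, the good part satisfies $\|g\|_{L^2(\rn)}^2\ls\lz^{2-p}\|f\|_{L^p(\rn)}^p$, and each $b_Q$ has $L^p$-size controlled by $\lz$ on $Q$. The good part is handled by the $p=2$ estimate just proved. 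For the bad part one splits $\cs_L^k(\sum_Q b_Q)$ into a local part and a tail by inserting suitable powers of the heat semigroup $e^{-r_Q^2L}$ ($r_Q$ the side length of $Q$): the local part is bounded by combining the $L^2$ bound with the $L^p$-size of $b_Q$, while the tail is summed over dyadic annuli around each $Q$ using the $L^p$--$L^2$ off-diagonal estimates of Lemma \ref{l2.3}(i), whose decay beats the volume growth of the annuli. This gives weak type $(p,p)$ for every such $p$, and Marcinkiewicz interpolation with $p=2$ yields $L^p(\rn)$-boundedness for all $p\in(p_L,2)$.

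Finally, for $2<p<\wz p_L$ — which I expect to be the main obstacle, since $\cs_L^k$ is only sublinear so no naive duality is available — I would combine the following reduction with the extrapolation scheme of Auscher \cite{a1}. As $p/2>1$, duality in $L^{p/2}(\rn)$ produces a non-negative $h$ with $\|h\|_{L^{(p/2)'}(\rn)}\le1$ such that, after bounding the average of $h$ over $B(y,t)$ by $\cm h(y)$,
\begin{equation*}
\|\cs_L^kf\|_{L^p(\rn)}^2\ls\iint_{\rnz}|(t^2L)^ke^{-t^2L}f(y)|^2\,\cm h(y)\,\frac{dy\,dt}{t}.
\end{equation*}
By the Rubio de Francia iteration one has $\cm h\ls w$ for a weight $w\in A_1(\rn)$ with $[w]_{A_1}$ bounded by an absolute constant and $\|w\|_{L^{(p/2)'}(\rn)}\ls1$, so it suffices to prove the weighted quadratic estimate
\begin{equation*}
\iint_{\rnz}|(t^2L)^ke^{-t^2L}f(y)|^2\,w(y)\,\frac{dy\,dt}{t}\ls\|f\|_{L^p(\rn)}^2\,\|w\|_{L^{(p/2)'}(\rn)}.
\end{equation*}
This I would establish directly as in Auscher's treatment of square functions: decompose $f$ by a stopping-time argument on the level sets of $[\cm(|f|^2)]^{1/2}$ and control the resulting pieces with the $p=2$ estimate together with the $L^2$--$L^q$ off-diagonal estimates of Lemma \ref{l2.3}(i) for a fixed $q\in(2,\wz p_L)$, exploiting the $A_1$ structure of $w$ on the level sets rather than pulling $w$ out by H\"older. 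The room between $q$ and $\wz p_L$ is precisely what forces $p<\wz p_L$, and balancing the inherent loss $t^{\frac n2(\frac1q-\frac1p)}$ in these off-diagonal estimates against the geometry of the cones $\Gamma(x)$ (equivalently, against the weight $\cm h$) is where the real work of the proof lies; by comparison the cases $p=2$ and $p_L<p<2$ are routine once Lemmas \ref{l2.2} and \ref{l2.3} are in hand.
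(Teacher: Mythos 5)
Here the paper simply cites \cite{hm1} for this lemma, and \cite{hm1} in turn leans on the $L^p$-theory of square functions developed in Auscher's memoir \cite{a1}, so the right comparison is with Auscher's proof. Your three-stage plan --- the $H^\fz$-functional-calculus quadratic estimate at $p=2$, a Calder\'on--Zygmund/Blunck--Kunstmann decomposition with weak $(p,p)$ plus Marcinkiewicz interpolation for $p_L<p<2$, and an $A_1$-weight (Rubio de Francia) argument for $2<p<\wz p_L$ --- matches that architecture, and the two lower cases are indeed routine once Lemmas \ref{l2.2} and \ref{l2.3} are in hand. The upper range is where you correctly locate all the difficulty, but your treatment there is a sketch rather than a proof: the bilinear estimate
\begin{equation*}
\iint_{\rnz}|(t^2L)^ke^{-t^2L}f(y)|^2\,w(y)\,\frac{dy\,dt}{t}\ls\|f\|_{L^p(\rn)}^2\,\|w\|_{L^{(p/2)'}(\rn)}
\end{equation*}
is asserted, not derived, and deriving it is precisely the content of Auscher's argument. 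One subtlety worth stressing: a cleaner-looking weighted $L^2$ estimate for the vertical square function against every $A_1$ weight, with constant depending only on $[w]_{A_1}$, cannot hold whenever $\wz p_L<\fz$ (Rubio de Francia would then give $L^p$-boundedness for all $p>2$), so the dependence on $\|w\|_{L^{(p/2)'}(\rn)}$ in your formulation is doing real work, and the stopping-time decomposition must combine the $L^2$--$L^q$ off-diagonal decay ($q<\wz p_L$) with the $A_1$ structure of $w$ in a way that actually builds in the constraint $p<\wz p_L$. That is the step you would need to spell out to turn this outline into a proof.
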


\subsection{Orlicz functions \label{s2.2}}

\hskip\parindent Let $\omega$ be a positive function defined on
$\rr_+\equiv(0,\,\fz).$ The function $\omega$ is said to be of upper
type $p$ (resp. lower type $p$) for certain $p\in[0,\,\fz)$,  if there
exists a positive constant $C$ such that for all $t\geq 1$ (resp.
$t\in (0, 1]$) and $s\in (0,\fz)$,
\begin{equation}\label{2.3}
\omega(st)\le Ct^p \omega(s).
\end{equation}

Obviously, if $\oz$ is of lower type $p$ for certain $p>0$, then
$\lim_{t\to0+}\oz(t)=0.$ So for the sake of convenience, if it is
necessary, we may assume that $\oz(0)=0.$ If $\oz$ is of both upper
type $p_1$ and lower type $p_0$, then $\oz$ is said to be of type
$(p_0,\,p_1).$ Let
\begin{equation*}
p_\oz^+\equiv\inf\{ p>0:\ \mathrm{there\ exists} \ C>0 \ \mathrm{such \ that }
\ \eqref{2.3} \ \mathrm{holds\ for\ all}\ t\in[1,\fz),\ s\in (0,\fz)\},
\end{equation*}
and
\begin{equation*}
p_\oz^-\equiv\sup\{ p>0:\ \mathrm{there\ exists} \ C>0 \ \mathrm{such \ that }
\ \eqref{2.3} \ \mathrm{holds\ for\ all}\
t\in(0,1],\ s\in (0,\fz)\}.
\end{equation*}
The function $\oz$ is said to be of strictly lower type $p$ if for all $t\in(0,1)$
and $s\in (0,\fz)$, $\omega(st)\le t^p \omega(s),$ and define
\begin{equation*}
p_\oz\equiv\sup\{ p>0: \omega(st)\le t^p \omega(s) \ \mathrm{holds\ for\
all}\ s\in (0,\fz)\ \mathrm{and}\ t\in(0,1)\}.
\end{equation*}
It is easy to see that $p_\oz\le p_\oz^{-}\le{p_\oz^+}$ for all $\oz.$
In what follows, $p_\oz$, $p_\oz^-$ and ${p_\oz^+}$ are called to be the
strictly critical lower type index, the critical lower
type index and the critical upper type index of $\oz$, respectively.

\begin{rem}\label{r2.1}\rm
We claim that if $p_\oz$ is defined as above,
then $\oz$ is also of strictly lower type $p_\oz$.
In other words, $p_\oz$ is attainable.
In fact, if this is not the case, then there exist
certain $s\in (0,\fz)$ and $t\in (0,1)$ such that
$\oz(st)>t^{p_\oz}\oz(s)$. Hence there exists $\ez\in(0,p_\oz)$
small enough such that $\oz(st)>t^{p_\oz-\ez }\oz(s)$, which is
contrary to the definition of $p_\oz$. Thus, $\oz$ is of strictly
lower type $p_\oz$.
\end{rem}

Throughout the whole paper, we always assume that $\oz$ satisfies
the following assumption.

\begin{proof}[\bf Assumption (A)] Let $p_\oz$ be defined as above.
Suppose that $\oz$ is a positive Orlicz function
on $\rr_+$ with $p_\oz\in (0,1]$, which is continuous, strictly
increasing and concave.
\end{proof}

Notice that if $\oz$ satisfies Assumption (A), then $\oz(0)=0$
and $\oz$ is obviously of upper type 1. Since $\oz$ is concave,
it is subadditive. In fact, let $0<s<t$, then
$$\oz(s+t)\le \frac{s+t}{t}\oz(t)\le \oz(t)
+\frac{s}{t}\frac{t}{s}\oz(s)=\oz(s)+\oz(t).$$
For any concave function $\oz$ of strictly lower type $p$, if we set
$\wz\oz(t)\equiv\int_0^t\oz(s)/s\,ds$ for $t\in [0,\fz)$, then by
\cite[Proposition 3.1]{v}, $\wz\oz$ is equivalent to $\oz$, namely,
there exists a positive constant $C$ such that $C^{-1}\oz(t)\le
\wz\oz(t)\le C\oz(t)$ for all $t\in [0,\fz)$; moreover, $\wz\oz$
is strictly increasing, concave and continuous function of
strictly lower type $p.$ Since all our results are invariant on equivalent
functions, we always assume that $\oz$ satisfies Assumption (A);
otherwise, we may replace $\oz$ by $\wz\oz.$

\newtheorem{conveni}{}
\renewcommand\theconveni{}

\begin{proof}[\bf Convention (C)] From Assumption (A), it follows that
$0<p_\oz\le p_\oz^{-}\le {p_\oz^+}\le 1.$ In what follows,
if \eqref{2.3} holds for ${p_\oz^+}$ with $t\in [1,\fz)$,
then we choose ${\wz p_\oz}\equiv{p_\oz^+}$; otherwise  $p_\oz^+<1$ and
we choose ${\wz p_\oz}\in (p_\oz^+,1)$ to be close enough to
$p_\oz^+$.
\end{proof}

For example, if $\oz(t)=t^p$ with $p\in (0, 1]$, then $p_\oz=p_\oz^+=\wz p_\oz=p$;
if $\oz(t)=t^{1/2}\ln(e^4+t)$, then $p_\oz=p_\oz^+=1/2$, but $1/2<\wz p_\oz<1$.

Let $\oz$ satisfy Assumption (A). A measurable function $f$ on
$\rn$ is said to be in the Lebesgue type space $L(\oz)$ if
$$\int_{\rn}\oz(|f(x)|)\,dx< \fz.$$
Moreover, for any $f\in L(\oz)$, define
$$\|f\|_{L(\oz)}\equiv\inf\lf\{\lz>0:\ \int_{\rn}\oz\lf(\frac{|f(x)|}
{\lz}\r)\,dx\le 1\r\}.$$

Since $\oz$ is strictly increasing, we define the function $\ro(t)$
on $\rr_+$ by setting, for all $t\in (0,\fz)$,
\begin{equation}\label{2.4}
\ro(t)\equiv\frac{t^{-1}}{\oz^{-1}(t^{-1})},
\end{equation}
where and in what follows, $\oz^{-1}$ denotes the inverse function of
$\oz.$ Then the types of $\oz$ and $\rho$ have the following
relation; see \cite{v} for its proof.

\begin{prop}\label{p2.1}
Let $0<p_0\le  p_1\le1$ and $\oz$ be an increasing function. Then $\oz$
is of type $(p_0,\, p_1)$ if and only if $\ro$ is of type
$(p_1^{-1}-1,\,p_0^{-1}-1).$
\end{prop}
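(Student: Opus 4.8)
The plan is to prove Proposition~\ref{p2.1} by a direct translation of the defining inequality \eqref{2.3} for $\oz$ into the corresponding inequality for $\ro$, using only the fact that $\oz$ is increasing together with the identity $\ro(t)=t^{-1}/\oz^{-1}(t^{-1})$. The core observation is that saying $\oz$ is of upper type $p_1$ and lower type $p_0$ is equivalent to a two-sided control: there is a constant $C\ge 1$ such that for all $s\in(0,\fz)$ and $t\in(0,\fz)$,
\begin{equation*}
C^{-1}\min\{t^{p_0},t^{p_1}\}\,\oz(s)\le\oz(st)\le C\min\{t^{p_0},t^{p_1}\}^{-1}\max\{t^{p_0},t^{p_1}\}\,\oz(s),
\end{equation*}
or, cleaner, that $\oz(st)\approx t^{p_0}\oz(s)$ for $t\le 1$ and $\oz(st)\approx t^{p_1}\oz(s)$ for $t\ge1$. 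First I would record this reformulation and, symmetrically, note that $\ro$ being of type $(p_1^{-1}-1,\,p_0^{-1}-1)$ means $\ro(\tau\lz)\ls\tau^{p_1^{-1}-1}\ro(\lz)$ for $\tau\ge1$ and $\ro(\tau\lz)\ls\tau^{p_0^{-1}-1}\ro(\lz)$ for $\tau\le 1$ (with the matching lower bounds built in once we prove the statement in both directions, since the two exponents for $\ro$ are again ordered $p_1^{-1}-1\le p_0^{-1}-1$).

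The key computational step is to invert the upper/lower type condition. Suppose $\oz(st)\le Ct^{p}\oz(s)$ for all $s>0$ and $t\ge 1$. Writing $u=st$ and applying $\oz^{-1}$: if $\oz(u)=v$ then $u=\oz^{-1}(v)$, and the inequality $\oz(u)\le Ct^p\oz(u/t)$ rearranges to $\oz(u/t)\ge C^{-1}t^{-p}\oz(u)=C^{-1}t^{-p}v$; apply $\oz^{-1}$ (increasing) to get $u/t\ge \oz^{-1}(C^{-1}t^{-p}v)$, hence $\oz^{-1}(C^{-1}t^{-p}v)\le t^{-1}\oz^{-1}(v)$. Absorbing the constant $C$ (here one uses that $\oz^{-1}$ itself is of some controlled type, or more simply replaces $C^{-1}t^{-p}$ by a new variable and re-expresses things), this says $\oz^{-1}$ satisfies a dilation bound: $\oz^{-1}(\lz w)\ls\lz^{1/p}\oz^{-1}(w)$ for $\lz\le1$, roughly speaking. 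Then I would plug this into \eqref{2.4}: for $\tau\ge1$,
\begin{equation*}
\ro(\tau t)=\frac{(\tau t)^{-1}}{\oz^{-1}((\tau t)^{-1})}=\frac1\tau\cdot\frac{t^{-1}}{\oz^{-1}(\tau^{-1}t^{-1})}\ls\frac1\tau\cdot\frac{t^{-1}}{\tau^{-1/p}\oz^{-1}(t^{-1})}=\tau^{\frac1p-1}\ro(t),
\end{equation*}
where the exponent $\frac1p-1$ is exactly what the proposition predicts (lower type $p_0$ of $\oz$ feeds the $\tau\ge1$ behavior of $\ro$ with exponent $p_0^{-1}-1$, since $\tau^{-1}\le 1$ pairs with the lower-type inequality for $\oz$). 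The other three cases — upper type of $\oz$ giving the $\tau\le1$ behavior of $\ro$, and the two converse implications obtained by running the same manipulations backward — are entirely analogous.

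The main obstacle, and the only place requiring care, is handling the constant $C$ in \eqref{2.3} cleanly when inverting: $\oz^{-1}(C^{-1}t^{-p}v)\le t^{-1}\oz^{-1}(v)$ is not literally of the form "$\oz^{-1}$ is of type $1/p$" because the constant sits inside $\oz^{-1}$ rather than outside. The fix is to use that $\oz$ (hence $\oz^{-1}$) is itself of some genuine type on all of $\rr_+$ — e.g. using $p_\oz^-\le p_\oz^+\le 1$ and Remark~\ref{r2.1}, or simply the hypothesis that $p_0\le p_1$ — to pull the constant out at the cost of enlarging $C$; alternatively one substitutes $w=C^{-1}t^{-p}v$ and re-solves for $t$ in terms of $w$ and $v$, which turns the constant into a harmless multiplicative factor after taking the ratio in $\ro$. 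Since Viviani's paper \cite{v} is cited for the proof and all our results are invariant under equivalent Orlicz functions, I would present this argument at the level of "$\approx$" throughout, verifying the two implications (type of $\oz$ $\Rightarrow$ type of $\ro$, and conversely, which follows by symmetry since $\oz$ and $\ro$ play dual roles under $t\mapsto t^{-1}$ up to the reciprocal-and-subtract-one map on exponents) and leaving the bookkeeping of constants to the reader.
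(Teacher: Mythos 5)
The paper does not prove Proposition~\ref{p2.1}; it cites \cite{v}. Your plan --- invert \eqref{2.3} through $\oz^{-1}$ and substitute into $\ro(t)=t^{-1}/\oz^{-1}(t^{-1})$ --- is the natural direct computation and, once the bookkeeping is straightened out, it works. Two slips in the write-up should be fixed. First, in the ``core observation'' the displayed upper-bound factor $\min\{t^{p_0},t^{p_1}\}^{-1}\max\{t^{p_0},t^{p_1}\}$ is wrong: the two-sided control that type $(p_0,p_1)$ gives is
\begin{equation*}
C^{-1}\min\{t^{p_0},t^{p_1}\}\,\oz(s)\le\oz(st)\le C\max\{t^{p_0},t^{p_1}\}\,\oz(s),
\end{equation*}
and your one-sided paraphrase mislabels which exponent governs which bound in each regime (for $t\le1$ the upper bound has exponent $p_0$, the lower bound exponent $p_1$). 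Since this sentence is not used downstream, it is only cosmetic.

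More substantively: you derive the dilation bound $\oz^{-1}(\lz w)\ls\lz^{1/p}\oz^{-1}(w)$ for $\lz\le1$ starting from the \emph{upper} type of $\oz$ (the $t\ge1$ case of \eqref{2.3}). But the display that follows needs the \emph{lower} bound $\oz^{-1}(\tau^{-1}t^{-1})\gs\tau^{-1/p}\oz^{-1}(t^{-1})$ in the denominator, which comes from the \emph{lower} type of $\oz$: running the same inversion with $t\le1$ gives $\oz^{-1}(\mu w)\ls\mu^{1/p_0}\oz^{-1}(w)$ for $\mu\ge1$, equivalently $\oz^{-1}(\lz w)\gs\lz^{1/p_0}\oz^{-1}(w)$ for $\lz\le1$. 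Your parenthetical after the display correctly states that the lower type of $\oz$ governs the $\tau\ge1$ behaviour of $\ro$, so you clearly know the pairing --- but the derivation immediately preceding it sets up the opposite-direction inequality and therefore would not justify the display as written. Redo the inversion from the lower type (and handle $\tau\in(C^{-1},1]$ trivially, as you already indicate via the substitution trick), and the bound $\ro(\tau\lz)\le C\tau^{p_0^{-1}-1}\ro(\lz)$ for $\tau\ge1$ follows cleanly; the other three implications are the same computation. Finally, the converse is not a literal symmetry (there is no involution exchanging $\oz$ and $\ro$), but the identity $\oz^{-1}(s)=s/\ro(1/s)$ lets you run the same inversion backward, so your closing remark is right in spirit.
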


\section{Tent spaces associated to Orlicz functions\label{s3}}
 \hskip\parindent In this section, we study the tent spaces associated
to Orlicz functions. We first recall some notions.

For any $\nu>0$ and  $x\in\rn$, let $\rr^{n+1}_+\equiv \rn\times (0,\fz)$ and
$$\Gamma_\nu(x)\equiv\{(y,t)\in\rr^{n+1}_+:\,|x-y|<\nu t\}$$ denoting the  cone of
aperture $\nu$ with vertex $x\in\rn$. For any closed set $F$ of $\rn$,
denote by $\car_\nu{F}$ the union of all cones with vertices in $F$,
i.\,e., $\car_\nu{F}\equiv\cup_{x\in F}\Gamma_\nu(x)$; and for any open set
$O$ in $\rn$, denote the tent over $O$ by $T_\nu(O)$, which is
defined as $T_\nu(O)\equiv[\car_\nu(O^\com)]^\com.$ Notice that
$$T_\nu(O)=\{(x,t)\in\rn\times(0,\fz):\,\dist(x,O^\com)\ge \nu t\}.$$
In what follows, we denote $\Gamma_1(x)$, $\car_1(F)$ and $T_1(O)$
simply by $\Gamma(x)$, $\car(F)$ and $\widehat O$, respectively.

Let $F$ be a closed subset of $\rn$ and $O\equiv F^\com$. Assume that
$|O|<\fz$. For any fixed $\gz\in(0,1)$, we say that $x\in\rn$ has
the global $\gz$-density with respect to $F$ if
\begin{equation*}
\frac{|B(x,r)\cap F|}{|B(x,r)|}\ge \gz
\end{equation*}
for all $r>0$. Denote by $F^\ast$ the set of all such $x$. Obviously, $F^\ast$
is a closed subset of $F$. Let $O^\ast\equiv (F^\ast)^\com$. Then it is easy to
see that $O\subset O^\ast$. In fact, we have
$$O^\ast = \{x\in\rn:\, \cm(\chi_O)(x) > 1-\gz\} ,$$
where $\cm$ denotes the Hardy-Littlewood maximal function on $\rn$.
As a consequence, by the weak type $(1,1)$ of $\cm$, we have
$|O^\ast| \le  C(\gz)|O|,$ where and in what follows, $C(\gz)$ denotes a positive
constant depending on $\gz$.

 The proof of the following lemma is
similar to that of \cite[Lemma 2]{cms}; we omit the details.

\begin{lem}\label{l3.1}
Let $\nu,\ \eta\in(0,\fz)$. Then there exist positive constants
 $\gz\in(0,1)$ and $C(\gz,\nu,\eta)$
 such that for any closed subset $F$ of $\rn$
whose complement has finite measure and any non-negative measurable
function $H$ on $\rr^{n+1}_+$,
$$\iint_{\car_{\nu}(F^\ast)} H(y,t)t^n\,dy\,dt\le
C(\gz,\nu, \eta)\int_F\lf\{\iint_{\Gamma_{\eta}
(x)}H(y,t)\,dy\,dt\r\}\,dx,$$
where $F^\ast$ denotes the set of
points in $\rn$ with global $\gz$-density with respect to $F$.

\end{lem}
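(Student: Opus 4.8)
The plan is to reduce the assertion to the classical Coifman--Meyer--Stein tent-space lemma (\cite[Lemma 2]{cms}), whose proof strategy carries over essentially verbatim once the right geometric observations are in place. The core idea is a Fubini-type interchange of the order of integration: on the left we integrate $H(y,t)t^n$ over the region $\car_\nu(F^\ast)=\bigcup_{z\in F^\ast}\Gamma_\nu(z)$, while on the right we integrate over $F$ the quantity $\iint_{\Gamma_\eta(x)}H(y,t)\,dy\,dt$. Writing the right-hand side as $\iint_{\rnz}H(y,t)\lf(\int_F\chi_{\Gamma_\eta(x)}(y,t)\,dx\r)dy\,dt$, I would observe that $\int_F\chi_{\Gamma_\eta(x)}(y,t)\,dx=|F\cap B(y,\eta t)|$. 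So the lemma is equivalent to the pointwise estimate
\begin{equation*}
\chi_{\car_\nu(F^\ast)}(y,t)\,t^n\le C(\gz,\nu,\eta)\,|F\cap B(y,\eta t)|
\quad\text{for a.e.\ }(y,t)\in\rnz.
\end{equation*}
Thus the whole lemma collapses to: \emph{if $(y,t)\in\Gamma_\nu(z)$ for some $z\in F^\ast$, then $|F\cap B(y,\eta t)|\gtrsim t^n$.}

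First I would establish this pointwise density bound. Suppose $(y,t)\in\car_\nu(F^\ast)$, so there is $z\in F^\ast$ with $|z-y|<\nu t$. By the definition of global $\gz$-density, for the radius $r\equiv(\nu+\eta)t$ we have $|B(z,r)\cap F|\ge\gz|B(z,r)|=\gz\, c_n\,(\nu+\eta)^n t^n$, where $c_n=|B(0,1)|$. Since $|z-y|<\nu t$, the ball $B(z,r)$ is contained in $B(y,\nu t+r)$; but I actually want the reverse-type inclusion, so instead I note $B(z,\eta t)\subset B(y,(\nu+\eta)t)$ is not quite what is needed either — the clean choice is to take $r=\eta t$ only if $\eta>\nu$, so in general I pick $r\equiv(\nu+\eta)t$ and use $B(z,r)\supset B(y,\eta t)$? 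No: with $|z-y|<\nu t$ one has $B(y,\eta t)\subset B(z,\eta t+\nu t)=B(z,r)$, hence $F\cap B(y,\eta t)\subset F\cap B(z,r)$, which is the wrong direction. The correct route is: choose $r_0\equiv\eta t$ if this already dominates the displacement, i.e.\ split into the case $\eta\ge 2\nu$ (handle directly: $B(z,\eta t/2)\subset B(y,\eta t)$ and $|F\cap B(z,\eta t/2)|\ge\gz c_n(\eta t/2)^n$) and the case $\eta<2\nu$ (apply the $\eta\ge2\nu$ conclusion with $\eta$ replaced by $2\nu$, noting that shrinking the aperture $\eta$ on the right only decreases the integral, so it suffices to prove the inequality for the larger aperture $\max\{\eta,2\nu\}$, then absorb the loss into the constant using that $H\ge0$). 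In all cases one extracts $|F\cap B(y,\eta t)|\ge c(\gz,\nu,\eta)\,t^n$, which is exactly the pointwise bound above.

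Second, with the pointwise bound in hand, the lemma follows by Tonelli's theorem applied to the non-negative function $H(y,t)t^{-1}$ (or $H$ itself against the appropriate measure):
\begin{equation*}
\iint_{\car_\nu(F^\ast)}H(y,t)t^n\,dy\,dt
\le C(\gz,\nu,\eta)\iint_{\rnz}H(y,t)\,|F\cap B(y,\eta t)|\,dy\,dt
=C(\gz,\nu,\eta)\int_F\iint_{\Gamma_\eta(x)}H(y,t)\,dy\,dt\,dx,
\end{equation*}
where the last equality is the Fubini interchange $|F\cap B(y,\eta t)|=\int_F\chi_{\{|x-y|<\eta t\}}\,dx$ together with the equivalence $|x-y|<\eta t\iff(y,t)\in\Gamma_\eta(x)$. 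I should also remember to record why $\gz$ can be chosen in $(0,1)$ at all: any fixed $\gz\in(0,1)$ works for the density step, and the statement merely asserts existence of such a $\gz$ (the same $\gz$ that appears in the definition of $F^\ast$), so no delicate optimization is required here — the role of $\gz<1$ is only to guarantee, elsewhere in the theory, that $|O^\ast|\le C(\gz)|O|$ via the weak-$(1,1)$ bound already noted before the lemma.

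The main obstacle is essentially bookkeeping rather than conceptual: keeping the geometric inclusions between balls straight when $\eta$ and $\nu$ are in an unfavorable ratio, and making sure the reduction "it suffices to prove it for a larger aperture" is valid — which it is, precisely because $H\ge0$ forces $\iint_{\Gamma_\eta(x)}H\le\iint_{\Gamma_{\eta'}(x)}H$ whenever $\eta\le\eta'$, so enlarging the cone on the right only helps. Since this is exactly the mechanism in \cite[Lemma 2]{cms}, I would simply cite that reference for the remaining routine details, as the authors do.
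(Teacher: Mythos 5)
Your overall plan---the Tonelli reduction of the lemma to the pointwise estimate $\chi_{\car_\nu(F^\ast)}(y,t)\,t^n\ls|F\cap B(y,\eta t)|$, combined with the identity $\int_F\chi_{\Gamma_\eta(x)}(y,t)\,dx=|F\cap B(y,\eta t)|$---is exactly the Coifman--Meyer--Stein strategy that the paper cites, and the Tonelli step itself is fine. The genuine gap is in your handling of the case $\eta<2\nu$ and in the parenthetical claim that ``any fixed $\gz\in(0,1)$ works for the density step.'' Both are wrong. The proposed reduction---prove the inequality with aperture $\max\{\eta,2\nu\}$ on the right and ``absorb the loss''---goes the wrong direction: since $\Gamma_\eta(x)\subset\Gamma_{\eta'}(x)$ for $\eta\le\eta'$ and $H\ge0$, one has $\iint_{\Gamma_\eta(x)}H\le\iint_{\Gamma_{\eta'}(x)}H$, so a bound of the left-hand side by $C\int_F\iint_{\Gamma_{\eta'}(x)}H$ is a \emph{weaker} assertion than what is wanted, not a sufficient one; positivity of $H$ lets you enlarge the right side, not shrink it. Moreover the pointwise bound genuinely fails if $\gz$ is fixed independently of $\nu/\eta$: take $n=1$, $\gz=1/2$, $\nu=10$, $\eta=1$, $F=(-\fz,0]\cup[1,\fz)$, $z=-1/10$ (one checks $z\in F^\ast$), $t=1/20$, $y=3/10$. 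Then $|z-y|=2/5<\nu t=1/2$, but $B(y,\eta t)=(1/4,7/20)\subset(0,1)=F^\com$, so $|F\cap B(y,\eta t)|=0$; supporting $H$ near this $(y,t)$ then annihilates the right-hand side of the lemma while the left-hand side stays positive.

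The repair, and what \cite[Lemma~2]{cms} actually does, is to choose $\gz$ close to $1$ depending on $\nu,\eta,n$ and replace your ball-containment step by a subtraction. Since $|z-y|<\nu t$ gives $B(y,\eta t)\subset B(z,(\nu+\eta)t)$, with $c_n\equiv|B(0,1)|$ one has
\begin{align*}
|F\cap B(y,\eta t)|&\ge|F\cap B(z,(\nu+\eta)t)|-\bigl|B(z,(\nu+\eta)t)\setminus B(y,\eta t)\bigr|\\
&\ge c_n t^n\bigl[\gz(\nu+\eta)^n-\bigl((\nu+\eta)^n-\eta^n\bigr)\bigr],
\end{align*}
and the bracket is a positive constant once $\gz>1-\bigl(\eta/(\nu+\eta)\bigr)^n$. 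With that choice of $\gz$ there is no case split at all, and the rest of your Tonelli argument goes through verbatim.
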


Let $\nu\in (0,\fz)$. For all measurable functions $g$ on
${\rr}^{n+1}_+$ and all $x\in\rn$, let
\begin{equation*}
\ca_\nu(g)(x)\equiv
\lf(\iint_{\Gamma_\nu(x)}|g(y,t)|^2\frac{\,dy\,dt}{t^{n+1}}\r)^{1/2},
\end{equation*}
and denote $\ca_1(g)$ simply by $\ca(g)$.

Coifman, Meyer and Stein \cite{cms} introduced the tent space
$T_2^p(\rnz)$ for $p\in(0,\fz)$, which is defined as the space of all
measurable functions $g$ such that
$\|g\|_{T_2^p(\rnz)}\equiv\|\ca(g)\|_{L^p(\rn)}<\fz.$

On the other hand, let $\oz$ satisfy Assumption (A).
Harboure, Salinas and Viviani \cite{hsv} defined the
tent space $T_\oz(\rnz)$ associated to the function $\oz$
as the space of measurable functions $g$ on $\rr^{n+1}_+$ such that
$\ca(g)\in L(\oz)$ with the norm defined by
$$\|g\|_{T_\oz(\rnz)}\equiv \|\ca(g)\|_{L(\oz)}=\inf\lf\{\lz>0:\ \int_{\rn} \oz
\lf(\frac{\ca(g)(x)}{\lz}\r) \,dx\le 1\r\}.$$

\begin{lem}\label{l3.2}
Let $\eta,\,\nu\in(0,\fz)$. Then there exists a positive
constant $C$, depending on $\eta$ and $\nu$, such that
for all measurable functions $H$ on $\rr^{n+1}_+$,
\begin{equation}\label{3.1}
C^{-1}\int_{\rn} \oz(\ca_\eta (H)(x))\,dx\le
\int_{\rn} \oz(\ca_\nu (H)(x))\,dx\le
C\int_{\rn} \oz(\ca_\eta (H)(x))\,dx.
\end{equation}
\end{lem}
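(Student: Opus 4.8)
The plan is to reduce the Orlicz-norm comparison to a pointwise (or near-pointwise) comparison of the area functions $\ca_\eta(H)$ and $\ca_\nu(H)$, after first reducing to the case where one aperture is smaller than the other. By symmetry it suffices to prove the second inequality in \eqref{3.1}, and we may assume $\nu<\eta$ (the reverse direction being trivial up to a constant since $\Gamma_\nu(x)\subset\Gamma_\eta(x)$ already gives $\ca_\nu(H)\le\ca_\eta(H)$ pointwise, whence monotonicity of $\oz$ settles it). So the real content is: when the aperture increases, the integral of $\oz\circ\ca$ does not increase by more than a constant factor.

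The key tool is the classical change-of-aperture estimate for the area function, which one can phrase via a good-$\lambda$/distribution function argument or, more slickly here, via Lemma \ref{l3.1}. First I would record the elementary pointwise fact that for $\nu<\eta$ and any $x$,
\begin{equation*}
[\ca_\eta(H)(x)]^2=\iint_{\Gamma_\eta(x)}|H(y,t)|^2\frac{\,dy\,dt}{t^{n+1}}
\le C\iint_{\car_\nu(\{|\cdot -x|<c\eta t\})} \cdots,
\end{equation*}
more precisely that the level set $\{x:\ \ca_\eta(H)(x)>\lambda\}$ is contained, up to the global-$\gz$-density enlargement, in a fixed dilate of $\{x:\ \ca_\nu(H)(x)>\lambda/C\}$; this is exactly where Lemma \ref{l3.1} enters, applied with $F$ the complement of an open set $O_\lambda\supset\{\ca_\nu(H)>\lambda\}$. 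Concretely, one shows that if $x\notin O_\lambda^\ast$ (so $x$ has $\gz$-density with respect to $F=O_\lambda^\com$), then $\ca_\eta(H)(x)\le C\lambda$, by covering the wide cone $\Gamma_\eta(x)$ by narrow cones $\Gamma_\nu(z)$ with $z$ ranging over the large-density set $F$ and using that a fixed fraction of nearby points lie in $F$. This yields the distributional inequality
\begin{equation*}
\big|\{\ca_\eta(H)>C\lambda\}\big|\le \big|O_\lambda^\ast\big|\le C(\gz)\,\big|O_\lambda\big|
= C(\gz)\,\big|\{\ca_\nu(H)>\lambda\}\big|.
\end{equation*}

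Given this distributional bound, the Orlicz estimate follows by integrating against $d\oz$: writing $\int_{\rn}\oz(\ca_\eta(H)(x))\,dx=\int_0^\fz |\{\ca_\eta(H)>\lambda\}|\,d\oz(\lambda)$, substituting $\lambda\mapsto C\lambda$, applying the distributional inequality, and then using that $\oz$ is of upper type $1$ (so $\oz(C\lambda)\le C\oz(\lambda)$, which holds since $\oz$ satisfies Assumption (A) and is concave) to absorb the dilation constant. This gives $\int_{\rn}\oz(\ca_\eta(H))\,dx\le C\int_{\rn}\oz(\ca_\nu(H))\,dx$, and combined with the trivial reverse inequality and the symmetric roles of $\eta$ and $\nu$, we obtain \eqref{3.1}.

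I expect the main obstacle to be the geometric covering step — verifying carefully that a wide cone over a point of high $\gz$-density is controlled by narrow cones over a set of positive proportion of nearby points, so that Lemma \ref{l3.1} can be invoked with the right parameters $\gz,\nu,\eta$. This is the standard but slightly fiddly "change of aperture" argument of Coifman–Meyer–Stein; since the paper explicitly says the proof of Lemma \ref{l3.1} mirrors \cite[Lemma 2]{cms} and omits details, I would likewise present this step concisely, citing \cite{cms}, and devote the written proof mainly to the passage from the distributional inequality to the Orlicz-norm inequality, where the upper type $1$ property of $\oz$ is the one genuinely new ingredient compared with the $L^p$ case.
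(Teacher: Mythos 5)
There is a genuine gap in the central distributional step. You claim that if $x\notin O_\lambda^\ast$ (so $x$ has global $\gz$-density with respect to $F_\lambda=\{\ca_\nu(H)\le\lambda\}$) then $\ca_\eta(H)(x)\le C\lambda$, arguing by covering the wide cone $\Gamma_\eta(x)$ with narrow cones over points of $F_\lambda$. This pointwise conclusion is what one gets for \emph{nontangential maximal functions} (a supremum over a wide cone is dominated by the supremum over the union of narrow cones), but it does not hold for the Lusin area function. Covering $\Gamma_\eta(x)$ by the cones $\{\Gamma_\nu(z):z\in F_\lambda \text{ near } x\}$ only lets you dominate $[\ca_\eta(H)(x)]^2$ by an \emph{average} of the quantities $\iint_{\Gamma_\nu(z)}|H(y,t)|^2\,dy\,dt/t^{2n+1}$ over such $z$; the $t^{-n}$ that appears when you trade a lower bound $|B(y,\nu t)\cap F_\lambda|\gs t^n$ for an integral $\int\,dz$ changes the $t$-weight, and what comes out is precisely the content of Lemma \ref{l3.1}, i.e.\ the $L^2$-integrated estimate $\int_{F_\lambda^\ast}[\ca_\eta(H)]^2\,dx\ls\int_{F_\lambda}[\ca_\nu(H)]^2\,dx$, not a pointwise bound. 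Consequently the pure distributional inequality $|\{\ca_\eta(H)>C\lambda\}|\le C|\{\ca_\nu(H)>\lambda\}|$ is not established by your argument, and it is not what the Coifman--Meyer--Stein aperture machinery gives you.

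What the paper does instead is apply Chebyshev on $F_\lambda^\ast$: since $\ca_\eta(H)\le\lambda$ need not hold there, one writes
\begin{equation*}
\sigma_{\ca_\eta(H)}(\lambda)\le |O_\lambda^\ast|+\frac{1}{\lambda^2}\int_{F_\lambda^\ast}[\ca_\eta(H)(x)]^2\,dx
\ls |O_\lambda|+\frac{1}{\lambda^2}\int_{F_\lambda}[\ca_\nu(H)(x)]^2\,dx,
\end{equation*}
and then observes that on $F_\lambda$ one has $\ca_\nu(H)\le\lambda$, so the last integral equals $2\int_0^\lambda t\,\sigma_{\ca_\nu(H)}(t)\,dt$. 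This produces a \emph{weak-type} distributional inequality with an extra smoothing term $\lambda^{-2}\int_0^\lambda t\,\sigma_{\ca_\nu(H)}(t)\,dt$, which is the genuinely different object your write-up is missing. The passage to the Orlicz bound then needs one more Fubini step: $\int_0^\infty\oz'(\lambda)\lambda^{-2}\int_0^\lambda t\,\sigma_{\ca_\nu(H)}(t)\,dt\,d\lambda=\int_0^\infty t\,\sigma_{\ca_\nu(H)}(t)\int_t^\infty\frac{\oz(u)}{u^3}\,du\,dt$, and here concavity (upper type $1$) of $\oz$ is used to absorb the inner integral into $\oz(t)/t$. Your instinct that the upper-type-$1$ property is the key new ingredient is correct, but it enters to control this smoothing term, not merely to absorb a dilation constant; with the proof as you wrote it, there is nothing for it to control because the correction term never appears.
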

\begin{proof}[\bf Proof.]
By the symmetry, we only need to establish the first inequality
in \eqref{3.1}. To this end, let
$\lz\in(0,\fz)$ and $O_\lz\equiv \{x\in\rn:\, \ca_\nu(H)(x)>\lz\}.$
If $|O_\lz|=\fz$, then $\int_{\rn} \oz(\ca_\nu (H)(x))\,dx=\fz$ and
the inequality automatically holds. Now, assume that
$|O_\lz|<\fz$. Applying Lemma \ref{l3.1} with
$F_\lz\equiv (O_\lz)^\com$, we have
\begin{eqnarray*}
  \iint_{\car_\eta(F_\lz^\ast)}|H(y,t)|^2\frac{\,dy\,dt}{t}&&
  \ls \int_{F_\lz}
  \iint_{\Gamma_\nu(x)}|H(y,t)|^2\frac{\,dy\,dt}{t^{n+1}}\,dx
  \ls \int_{F_\lz}[\ca_\nu(H)(x)]^2\,dx.
\end{eqnarray*}
Here and in what follows, we denote $(F_\lz)^\ast$ and
$(O_\lz)^\ast=((F_\lz)^\ast)^\com$ simply by $F_\lz^\ast$
and $O_\lz^\ast$, respectively. Observe that
$$\int_{F_\lz^\ast}\iint_{\Gamma_\eta(x)}
|H(y,t)|^2\frac{\,dy\,dt}{t^{n+1}}\,dx\ls
\iint_{\car_\eta(F_\lz^\ast)}|H(y,t)|^2\frac{\,dy\,dt}{t},$$
which implies that
$$\int_{F_\lz^\ast}[\ca_\eta(H)(x)]^2\,dx\ls
\int_{F_\lz}[\ca_\nu(H)(x)]^2\,dx.$$
Here and in what follows, for a measurable function $g$ on $\rn$ and $\lz>0$,
let $\sz_g(\lz)$ denote the distribution of $g$, namely,
$\sz_g(\lz)=|\{x\in\rn:\, |g(x)|>\lz\}|$. Hence, we have
\begin{eqnarray*}
  \sz_{ \ca_\eta(H)}(\lz)&&\le |O_{\lz}^\ast|+\frac{1}{\lz^2}
  \int_{(O_{\lz})^\com}[\ca_\nu(H)(x)]^2\,dx
  \ls |O_{\lz}|+\frac{1}{\lz^2}\int_0^\lz t\sz_{\ca_\nu(H)}(t)\,dt.
\end{eqnarray*}
Since $\oz$ is of upper type 1 and lower type $p_\oz\in (0,1]$, we have
\begin{equation}\label{3.2}
\oz(t)\sim \int_0^t\frac{\oz(u)}{u}\,du
\end{equation}
for each $t\in (0,\fz)$, which further implies that
\begin{eqnarray*}
\int_{\rn} \oz(\ca_\eta (H)(x))\,dx&&\sim \int_{\rn}
\int_0^{\ca_\eta (H)(x)}\frac{\oz(t)}{t}\,dt\,dx
\sim \int_0^\fz \sz_{\ca_\eta (H)}(t)\frac{\oz(t)}{t}\,dt\\
&&\ls \int_0^\fz \sz_{\ca_\nu (H)}(t)\frac{\oz(t)}{t}\,dt+
\int_0^\fz \frac{\oz(t)}{t^3}\int_0^t s\sz_{\ca_\nu(H)}(s)\,ds\,dt\\
&&\ls\int_0^\fz \sz_{\ca_\nu (H)}(t)\frac{\oz(t)}{t}\,dt+
\int_0^\fz s\sz_{\ca_\nu(H)}(s)\int_s^\fz\frac{\oz(t)}{t^3}\,dt \,ds\\
&&\ls\int_{\rn} \oz(\ca_\nu (H)(x))\,dx.
\end{eqnarray*}
This proves the first inequality in \eqref{3.1}, and hence,
 finishes the proof of Lemma \ref{l3.2}.
\end{proof}

We next give the atomic characterization of the tent space $T_\oz(\rnz)$.
Let $p\in (1,\fz)$. A function $a$ on $\rr^{n+1}_+$ is called an $(\oz,p)$-atom if

 (i) there exists a ball $B\subset
\rn$ such that $\supp a\subset \widehat{B};$

 (ii) $\|a\|_{T_2^p(\rnz)}\le |B|^{1/p-1}[\ro(|B|)]^{-1}.$

Since $\oz$ is concave, by the Jensen inequality, it is
easy to see that for all $(\oz,p)$-atoms $a$, we have
$\|a\|_{T_{\oz}(\rnz)}\le 1.$

Furthermore, if $a$ is an $(\oz, p)$-atom for all $p\in (1,\fz)$, we then
call $a$ an $(\oz,\fz)$-atom.

\begin{thm}\label{t3.1}
Let $\oz$ satisfy Assumption (A). Then for any $f\in
T_\oz(\rnz)$, there exist $(\oz,\fz)$-atoms $\{a_j\}_{j=1}^\fz$
and numbers $\{\lz_j\}_{j=1}^\fz\subset \cc$ such that
for almost every $(x,t)\in\rnz$,
\begin{equation}\label{3.3}
  f(x,t)=\sum_{j=1}^\fz\lz_ja_j(x,t).
\end{equation}
Moreover, there exists a positive constant $C$
such that for all $f\in T_\oz(\rnz)$,
\begin{equation}\label{3.4}
\Lambda(\{\lz_ja_j\}_j)\equiv\inf\lf\{\lz>0:\,\sum_{j=1}^\fz|B_j|\oz\lf(\frac{|\lz_j|}
{\lz |B_j|\ro(|B_j|)}\r)\le1\r\}\le
C\|f\|_{T_\oz(\rnz)},
\end{equation}
where $\widehat{B_j}$ appears as the support of $ a_j$.
\end{thm}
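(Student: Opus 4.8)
The plan is to mimic the classical Coifman--Meyer--Stein atomic decomposition of $T_2^p(\rnz)$, but organized so that the numerical coefficients are controlled by the Orlicz "norm" rather than an $\ell^p$ sum. First I would perform the standard Whitney-type stopping-time construction on the level sets of the area function. For each $k\in\zz$, set $O_k\equiv\{x\in\rn:\ \ca(f)(x)>2^k\}$; this is open, and since $f\in T_\oz(\rnz)$ one checks $|O_k|<\fz$ for every $k$. Apply the global-$\gz$-density refinement to get $O_k^\ast$ with $|O_k^\ast|\le C(\gz)|O_k|$, and decompose $O_k^\ast$ into a Whitney cube family $\{Q_{k,i}\}_i$ (or the associated balls $B_{k,i}$ with comparable radii). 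Then set
$$
T_{k,i}\equiv \widehat{B_{k,i}}\cap\Bigl(\car(F_k^\ast)\setminus\car(F_{k+1}^\ast)\Bigr)\cap\Bigl(\widehat{B_{k,i}}\text{-localization}\Bigr),
$$
more precisely the piece of $\car(F_k^\ast)^\com$-complement that sits over $B_{k,i}$ and between levels $k$ and $k+1$; these are pairwise disjoint and cover the support of $f$ up to a null set (this is exactly where Lemma~\ref{l3.1} does the work). Put $a_{k,i}\equiv \lz_{k,i}^{-1}f\chi_{T_{k,i}}$ and $\lz_{k,i}\equiv 2^k|B_{k,i}|\ro(|B_{k,i}|)$. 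That $a_{k,i}$ is supported in $\widehat{B_{k,i}}$ is immediate; the size condition $\|a_{k,i}\|_{T_2^p(\rnz)}\le |B_{k,i}|^{1/p-1}[\ro(|B_{k,i}|)]^{-1}$ for every $p\in(1,\fz)$ is the familiar duality/$L^2$-localization estimate: one pairs $(\ca f\chi_{T_{k,i}})^2$ against a test function and uses that on $T_{k,i}$ the points lie outside $\car(F_{k+1}^\ast)$, so the local area integral is bounded by $2^{k+1}$ off a set of controlled measure; iterating over $p$ gives an $(\oz,\fz)$-atom after normalization. This part is routine and I would cite \cite{cms} for the bulk of it.

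The pointwise identity \eqref{3.3}, i.e. $f=\sum_{k,i}\lz_{k,i}a_{k,i}$ a.e., follows since the $T_{k,i}$ partition $\rnz$ modulo a set where $f=0$ a.e.: for a.e.\ $(x,t)$ with $f(x,t)\neq 0$ there is a largest $k$ with $(x,t)\in\car(F_k^\ast)$, and then $(x,t)$ lies in exactly one $T_{k,i}$. Some care is needed to see that $\bigcap_k\car(F_k^\ast)$ and $\bigl(\bigcup_k\car(F_k^\ast)\bigr)^\com$ both meet $\{f\neq0\}$ only in null sets; the first because $\ca f<\fz$ a.e., the second because $\ca f>0$ on a positive-measure set wherever $f\neq0$ on a neighborhood. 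I would spell this out briefly.

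The coefficient estimate \eqref{3.4} is the genuinely new ingredient and the place I expect the main obstacle. I need
$$
\sum_{k,i}|B_{k,i}|\,\oz\!\Bigl(\frac{|\lz_{k,i}|}{\lz|B_{k,i}|\ro(|B_{k,i}|)}\Bigr)
=\sum_{k,i}|B_{k,i}|\,\oz\!\Bigl(\frac{2^k}{\lz}\Bigr)\le 1
$$
for $\lz\sim\|f\|_{T_\oz(\rnz)}$. The two sums over $i$ collapse because $\{B_{k,i}\}_i$ has bounded overlap and $\sum_i|B_{k,i}|\lesssim|O_k^\ast|\lesssim|O_k|$; so it suffices to show $\sum_k|O_k|\,\oz(2^k/\lz)\lesssim \int_{\rn}\oz(\ca f(x)/\lz)\,dx$, which by homogenizing ($\lz=1$, say, after rescaling $f$) reduces to $\sum_k 2^{\,?}\dots$ — more honestly, to the layer-cake comparison $\sum_k|O_k|\oz(2^k)\sim\int_0^\fz \sz_{\ca f}(t)\frac{\oz(t)}{t}\,dt\sim\int_{\rn}\oz(\ca f)$. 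The first equivalence uses that $\oz$ is increasing and that, by \eqref{3.2}, $\oz(2^{k})\sim\int_{2^{k-1}}^{2^{k}}\frac{\oz(t)}{t}\,dt$ up to the geometric factor, together with $|O_k|=\sz_{\ca f}(2^k)$ being monotone in $k$; summing by parts in $k$ converts $\sum_k |O_k|\,(\oz(2^{k})-\oz(2^{k-1}))$ into $\sum_k (|O_{k-1}|-|O_k|)\oz(2^{k})$, which is a Riemann sum for $\int_0^\fz\oz(t)\,d(-\sz_{\ca f}(t))$. Upper and lower type of $\oz$ are exactly what make these Riemann sums two-sided comparable to the integral, so $p_\oz>0$ and the upper type $1$ bound from Assumption~(A) are both used here. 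Finally one inserts $\lz$ and invokes the definition of $\|\ca f\|_{L(\oz)}$ to get $\Lambda(\{\lz_{k,i}a_{k,i}\})\le C\|f\|_{T_\oz(\rnz)}$, completing the proof. The delicate points to get right are the bounded-overlap/disjointness bookkeeping of the $T_{k,i}$ and the summation-by-parts estimate with the Orlicz weight; everything else is standard.
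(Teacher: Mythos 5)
Your overall strategy matches the paper's: a Coifman--Meyer--Stein stopping time on the level sets $O_k=\{\ca(f)>2^k\}$, a Whitney decomposition of $O_k^\ast$, atoms $a_{k,i}$ built by restricting $f$ to slabs $T_{k,i}$ with coefficients $\lz_{k,i}=2^k|B_{k,i}|\ro(|B_{k,i}|)$, and a layer-cake argument based on \eqref{3.2} for \eqref{3.4}. That last step is equivalent to what the paper does (you phrase it as Abel summation; the paper interchanges the $k$-sum with the $x$-integral), and both land on $\int_{\rn}\oz(\ca f(x)/\lz)\,dx$.

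There is, however, a concrete sign error in the slab construction that, as written, breaks the argument. Since $O_{k+1}\subset O_k$, one has $F_k^\ast\subset F_{k+1}^\ast$ and hence $\car(F_k^\ast)\subset\car(F_{k+1}^\ast)$; your displayed set $\car(F_k^\ast)\setminus\car(F_{k+1}^\ast)$ is therefore \emph{empty}, so $T_{k,i}=\emptyset$. What is needed (and what your parenthetical gloss ``the piece of $\car(F_k^\ast)^\com$'' suggests you actually intend) is the complementary slab
$$A_{k,i}\equiv\widehat{B_{k,i}}\cap\bigl(Q_{k,i}\times(0,\fz)\bigr)\cap\bigl(\widehat{O_k^\ast}\setminus\widehat{O_{k+1}^\ast}\bigr),$$
using $\widehat{O_k^\ast}=\bigl(\car(F_k^\ast)\bigr)^\com$. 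The same confusion then infects the size estimate: you say points of $T_{k,i}$ lie \emph{outside} $\car(F_{k+1}^\ast)$, but the mechanism that drives the duality estimate is that $A_{k,i}\subset(\widehat{O_{k+1}^\ast})^\com=\car(F_{k+1}^\ast)$, so Lemma~\ref{l3.1} reduces the pairing $\la a_{k,i},h\ra$ to an integral over $F_{k+1}=(O_{k+1})^\com$, where $\ca(f)\le2^{k+1}$; if the points were outside $\car(F_{k+1}^\ast)$ that reduction would be unavailable. Likewise ``the largest $k$ with $(x,t)\in\car(F_k^\ast)$'' is not well-posed (those sets increase with $k$); one wants the largest $k$ with $(x,t)\in\widehat{O_k^\ast}$. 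Once the complements are flipped consistently throughout, your outline recovers the paper's proof.
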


\begin{proof}[\bf Proof.] We prove this theorem by borrowing some
ideas from the proof of Theorem 1 in Coifman, Meyer and Stein \cite{cms}.
Let $f\in T_\oz(\rnz)$. For any $k\in\zz$, let
$O_k\equiv \{x\in\rn:\,\ca(f)(x)>2^k\}$
and $F_k\equiv (O_k)^\com$. Since $f\in T_\oz(\rnz)$, for each $k$, $O_k$
is an open set and $|O_k|<\fz$.

Since $\oz$ is of upper type $1$, by Lemma \ref{l3.1},
for $k\in\zz$ and $k\le 0$, we have
\begin{eqnarray*}
  \iint_{\car(F_k^\ast)}|f(y,t)|^2\frac{\,dy\,dt}{t}&&\ls
  \int_{F_k}\iint_{\Gamma (x)}|f(y,t)|^2\frac{\,dy\,dt}{t^{n+1}}\,dx\\
  &&\ls \int_{F_k}\ca(f)(x)^2\,dx
  \ls\int_{F_k}\oz(\ca(f)(x))dx\to 0,
\end{eqnarray*}
as $k\to -\fz$, which implies that $f=0$ almost everywhere in
$\cap_{k\in\zz}\car(F_k^\ast)$, and hence,
$\supp f\subset \{\cup_{k\in\zz} \widehat{O_k^\ast}\cup E\},$
where $E\subset \rnz$ and $\iint_{E}\,\frac {dx\,dt}t=0$.

Thus, for each $k$, by applying the Whitney decomposition to the set $O_k^\ast$,
we obtain a set $I_k$ of indices and a family $\{Q_{k,j}\}_{j\in I_k}$  of disjoint cubes such that

(i) $\cup_{j\in I_k}Q_{k,j}=O_k^\ast$, and if $i\neq j$, then $Q_{k,j}\cap Q_{k,i}=\emptyset$,

(ii) $\sqrt n\ell(Q_{k,j})\le \dist(Q_{k,j}, (O_k^\ast)^\com)\le 4\sqrt n\ell(Q_{k,j})$,
where $\ell(Q_{k,j})$ denotes the side-length of $Q_{k,j}$.

Next, for each $j\in I_k$, we choose a ball $B_{k,j}$ with the same center as
$Q_{k,j}$ and with radius $\frac {11}{2}\sqrt n$-times $\ell(Q_{k,j})$. Let
$A_{k,j}\equiv \widehat{B_{k,j}}\cap(Q_{k,j}\times (0,\fz))\cap (\widehat{O_k^\ast}
\setminus \widehat{O_{k+1}^\ast}),$
$$a_{k,j}\equiv  2^{-k}|B_{k,j}|^{-1}[\ro(|B_{k,j}|)]^{-1}f\chi_{A_{k,j}}$$
 and $\lz_{k,j}\equiv 2^k|B_{k,j}|\ro(|B_{k,j}|).$
Notice that $\{(Q_{k,j}\times (0,\fz))\cap (\widehat{O_k^\ast}
\setminus \widehat{O_{k+1}^\ast})\}\subset \widehat{B_{k,j}}$. From this,
we conclude that $f=\sum_{k\in \zz}\sum_{j\in I_k}\lz_{k,j}a_{k,j}$ almost everywhere.

Let us show that for each $k\in\zz$ and $j\in I_k$,
$a_{k,j}$ is an $(\oz,\fz)$-atom supported
in $\widehat B_{k,j}$. Let $p\in (1,\fz)$, $q\equiv p'$ be the conjugate index
of $p$, i.\,e., $1/q+1/p=1$, and $h\in T_2^q(\rnz)$
 with $\|h\|_{T_2^q(\rnz)}\le 1$. Since
$A_{k,j}\subset (\widehat{O_{k+1}^\ast})^\com=\car(F_{k+1}^\ast)$, by Lemma
\ref{l3.1} and the H\"older inequality, we have
\begin{eqnarray*}
  |\la a_{k,j}, h\ra |&&\le \iint_{\rnz} |(a_{k,j}\chi_{A_{k,j}})
  (y,t)h(y,t)|\frac{\,dy\,dt}{t}\\
  &&\ls \int_{F_{k+1}} \iint_{\Gamma(x)}|a_{k,j}(y,t)h(y,t)|
  \frac{\,dy\,dt}{t^{n+1}}\,dx
 \ls \int_{(O_{k+1})^\com} \ca(a_{k,j})(x)\ca(h)(x)\,dx\\
  &&\ls 2^{-k}|B_{k,j}|^{-1}[\ro(|B_{k,j}|)]^{-1}
  \lf(\int_{B_{k,j}\cap O_{k+1}^\com} [\ca(f)(x)]^p\,dx
  \r)^{1/p}\|h\|_{T_2^q(\rnz)}\\
  &&\ls |B_{k,j}|^{1/p-1}[\ro(|B_{k,j}|)]^{-1},
  \end{eqnarray*}
which implies that $a_{k,j}$ is an $(\oz,p)$-atom supported in $\widehat B_{k,j}$
for all $p\in (1,\fz)$, hence, an $(\oz,\fz)$-atom.

By \eqref{3.2}, for any $\lz>0$, we further obtain
\begin{eqnarray}\label{3.5}
&&\sum_{k\in\zz}\sum_{j\in I_k}|B_{k,j}|\oz\lf(\frac{|\lz_{k,j}|}
{ \lz|B_{k,j}|\ro(|B_{k,j}|)}\r)\nonumber\\
&&\hs\ls \sum_{k\in\zz}\sum_{j\in I_k} |Q_{k,j}|\oz\lf(\frac {2^k}{\lz}\r)
\ls \sum_{k\in\zz}|O_{k}^\ast|\oz\lf(\frac {2^k}{\lz}\r)\ls \sum_{k\in\zz}|O_{k}|
\oz\lf(\frac {2^k}{\lz}\r)\nonumber\\
&&\hs\ls \sum_{k\in\zz}\int_{O_k}\oz\lf(\frac {2^k}{\lz}\r)\,dx\ls \int_{\rn}
\sum_{k<\log_2[\ca(f)(x)]}\oz\lf(\frac {2^k}{\lz}\r)\,dx\nonumber\\
&&\hs\ls \int_{\rn} \sum_{k<\log_2[\ca(f)(x)]}\int_{2^{k}}^{2^{k+1}}
\oz\lf(\frac {t}{\lz}\r)\frac{\,dt}{t}\,dx\nonumber\\
&&\hs\ls\int_{\rn} \int_{0}^{\frac{2\ca(f)(x)}{\lz}}\oz(t)\frac{\,dt}{t}\,dx
\ls\int_{\rn}\oz\lf(\frac{\ca(f)(x)}{\lz}\r)\,dx,
\end{eqnarray}
which implies that \eqref{3.4} holds, and hence, completes the
proof of Theorem \ref{t3.1}.
\end{proof}

\begin{rem}\label{r3.1}\rm
(i) Notice that the definition $\Lambda(\{\lz_ja_j\}_j)$ in \eqref{3.4}
is different from \cite{hsv,v}. In fact,
if $p\in (0,1]$ and $\oz(t)=t^p$ for all $t\in (0,\fz)$, then
$\Lambda(\{\lz_ja_j\}_j)$ here coincides with $(\sum_j |\lz_j|^p)^{1/p}$, which
seems to be natural.

(ii) Let $\{\lz_j^i\}_{i,j}\subset \cc$ and $\{a_j^i\}_{i,j}$ be $(\oz,p)$-atoms
for certain $p\in (1,\fz)$, where $i=1,2$.
If $\sum_j\lz_j^1a_j^1,\,\sum_j\lz_j^2a_j^2\in T_\oz(\rnz)$, then
by the fact that $\oz$ is subadditive and of strictly lower type $p_\oz$,
we have
$$[\Lambda(\{\lz_j^ia_j^i\}_{i,j})]^{p_\oz}\le \sum_{i=1}^2
[\Lambda(\{\lz_j^ia_j^i\}_j)]^{p_\oz}.$$

(iii) Since $\oz$ is concave, it is of upper type 1. Then, with
the same notation as in Theorem \ref{t3.1}, we have
$\sum_{j=1}^\fz|\lz_j|\le C\Lambda(\{\lz_ja_j\}_j) \le C\|f\|_{T_\oz(\rnz)}.$
\end{rem}

Let $p\in (0,1]$ and $q\in (p,\fz)\cap [1,\fz)$. Recall that
a function $a$ on $\rr^{n+1}_+$ is called a $(p,q)$-atom if

 (i) there exists a ball $B\subset
\rn$ such that $\supp a\subset \widehat{B};$

 (ii) $\|a\|_{T_2^q(\rnz)}\le |B|^{1/q-1/p}.$

We have the following convergence result.
\begin{prop}\label{p3.1}
  Let $\oz$ satisfy Assumption (A) and $p\in (0,\fz)$.
  If $f\in (T_\oz(\rnz)\cap T_{2}^p(\rnz))$, then the decomposition
  \eqref{3.3} holds in both $T_\oz(\rnz)$ and $T_{2}^p(\rnz)$.
\end{prop}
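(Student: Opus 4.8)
The plan is to revisit the atomic decomposition constructed in the proof of Theorem \ref{t3.1} and show that, for $f\in T_\oz(\rnz)\cap T_2^p(\rnz)$, the partial sums converge in both norms. Recall from that proof that $f=\sum_{k\in\zz}\sum_{j\in I_k}\lz_{k,j}a_{k,j}$ with $a_{k,j}=2^{-k}|B_{k,j}|^{-1}[\ro(|B_{k,j}|)]^{-1}f\chi_{A_{k,j}}$, where the sets $A_{k,j}$ are pairwise disjoint and their union (up to a set of measure zero with respect to $\frac{dx\,dt}{t}$) is all of $\rnz$. Hence $\lz_{k,j}a_{k,j}=f\chi_{A_{k,j}}$, and for any finite set $S$ of indices, $\sum_{(k,j)\in S}\lz_{k,j}a_{k,j}=f\chi_{E_S}$ where $E_S\equiv\bigcup_{(k,j)\in S}A_{k,j}$. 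Therefore $f-\sum_{(k,j)\in S}\lz_{k,j}a_{k,j}=f\chi_{(E_S)^\com}$, and the task reduces to showing that $\|f\chi_{(E_S)^\com}\|_{T_\oz(\rnz)}\to0$ and $\|f\chi_{(E_S)^\com}\|_{T_2^p(\rnz)}\to0$ as $S$ increases to the whole index set.

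The key tool will be the Lebesgue dominated convergence theorem applied at the level of the area functionals. For the $T_2^p$ convergence: since the $A_{k,j}$ exhaust $\rnz$, for each fixed $x\in\rn$ we have $\ca(f\chi_{(E_S)^\com})(x)\to0$ pointwise as $S\uparrow$ (the integrand $|f(y,t)|^2\chi_{(E_S)^\com}(y,t)\frac{1}{t^{n+1}}$ over $\Gamma(x)$ decreases to zero), while $\ca(f\chi_{(E_S)^\com})(x)\le \ca(f)(x)\in L^p(\rn)$ uniformly in $S$. Dominated convergence gives $\|\ca(f\chi_{(E_S)^\com})\|_{L^p(\rn)}\to0$, i.e. $T_2^p$-convergence. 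For the $T_\oz$ convergence the argument is parallel: $\ca(f\chi_{(E_S)^\com})\le\ca(f)$ and $\int_\rn\oz(\ca(f)(x))\,dx<\fz$ since $f\in T_\oz(\rnz)$; because $\oz$ is continuous with $\oz(0)=0$, $\oz(\ca(f\chi_{(E_S)^\com})(x))\to0$ pointwise, dominated by the integrable function $\oz(\ca(f)(x))$, so $\int_\rn\oz(\ca(f\chi_{(E_S)^\com})(x))\,dx\to0$. Together with the homogeneity of $\oz$'s lower type this yields $\|f\chi_{(E_S)^\com}\|_{T_\oz(\rnz)}\to0$; alternatively one uses that $\int\oz(\ca(g))\to0$ forces $\|g\|_{T_\oz}\to0$ directly.

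There is one subtlety that I expect to be the main technical point: to legitimately speak of "partial sums" one must fix an enumeration of the countable double-indexed family $\{(k,j):k\in\zz,\,j\in I_k\}$, and the pointwise decrease of $\ca(f\chi_{(E_S)^\com})(x)$ holds for the nested sequence $E_{S_1}\subset E_{S_2}\subset\cdots$ of exhausting sets regardless of the enumeration; so the conclusion is enumeration-independent, but this should be noted. A second point is that one must confirm the claim from the proof of Theorem \ref{t3.1} that $\bigcup_{k,j}A_{k,j}$ differs from $\rnz$ only by a set on which $\iint\frac{dx\,dt}{t}=0$ — this was established there using that $\supp f\subset\{\cup_k\widehat{O_k^\ast}\cup E\}$ with $\iint_E\frac{dx\,dt}{t}=0$, together with the Whitney covering $\cup_{j\in I_k}Q_{k,j}=O_k^\ast$ and the layering by $\widehat{O_k^\ast}\setminus\widehat{O_{k+1}^\ast}$. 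Once these two bookkeeping facts are in hand, both convergences follow from dominated convergence with essentially no further computation, since every estimate needed was already carried out in the proof of Theorem \ref{t3.1}.
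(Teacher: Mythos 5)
Your proof is correct, and it takes a genuinely simpler route than the paper's. The observation that drives it — that $\lz_{k,j}a_{k,j}=f\chi_{A_{k,j}}$, the sets $A_{k,j}$ are pairwise disjoint, and $\bigcup_{k,j}A_{k,j}$ differs from $\supp f$ only by a set that is null for $\frac{dy\,dt}{t^{n+1}}$ — reduces every tail $f-\sum_{(k,j)\in S}\lz_{k,j}a_{k,j}$ to $f\chi_{(E_S)^\com}$, and two applications of dominated convergence (first on $\Gamma(x)$ for a.e.\ $x$, using that $\ca(f)(x)<\fz$ a.e., then in $x$, dominated by $\ca(f)^p$ or $\oz(\ca(f))$) finish the job in one uniform stroke for every $p\in(0,\fz)$ and for $T_\oz$ alike. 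The paper also records the identity $\sum_{k,j}|f\chi_{A_{k,j}}|=|f|$ (its \eqref{3.8}) and invokes dominated convergence, but it does not exploit the identity globally: for $p\in(0,1]$ it re-normalizes the same pieces as $(p,q)$-atoms and sums coefficients, and for $p\in(1,\fz)$ it splits the tail into three pieces $\mathrm{H}_{N,1},\mathrm{H}_{N,2},\mathrm{H}_{N,3}$, estimates each by duality and Lemma~\ref{l3.1}, and applies DCT separately to each. Your version removes the case split in $p$, removes the three-fold decomposition, needs no use of Lemma~\ref{l3.1} or Jensen's inequality, and (as you note) is automatically independent of the enumeration of the index set. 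The only slip is terminological: in passing from $\int_\rn\oz(\ca(f\chi_{(E_S)^\com}))\,dx\to0$ to $\|f\chi_{(E_S)^\com}\|_{T_\oz(\rnz)}\to0$ one uses the \emph{upper} type $1$ of $\oz$ (i.e.\ $\oz(s/\ez)\le C\ez^{-1}\oz(s)$ for $\ez<1$), not its lower type; this is exactly the final step the paper performs after its \eqref{3.6}, and it does not affect the validity of your argument.
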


\begin{proof}[\bf Proof.] We use the same notation as in the proof of Theorem \ref{t3.1}.
We first show that \eqref{3.3} holds in $T_\oz(\rnz)$. In fact, since $\oz$
is concave and $\oz^{-1}$ is convex, by the Jensen inequality and the H\"older
inequality, for each $k\in\zz$ and $j\in I_k$, we have
\begin{eqnarray*}
  \oz^{-1}\lf(\frac{1}{|B_{k,j}|}\int_\rn \oz(\ca(\lz_{k,j}a_{k,j})(x))\,dx\r)
&&\le \frac{|\lz_{k,j}|}{|B_{k,j}|}\int_\rn \ca(\lz_{k,j}a_{k,j})(x)\,dx\\
&&\le \frac{|\lz_{k,j}|}{|B_{k,j}|^{1/2}}\|a_{k,j}\|_{T_2^2(\rnz)}\le
\frac{|\lz_{k,j}|}{|B_{k,j}|\ro(|B_{k,j}|)}.
\end{eqnarray*}
From this and the continuity of $\oz$ together with the subadditive property
of $\oz$ and $\ca$, it follows that
\begin{eqnarray}\label{3.6}
  &&\int_\rn \oz \lf(\ca\lf(f-\sum_{|k|+|j|\le N}\lz_{k,j}a_{k,j}\r)(x)\r)\,dx\nonumber\\
&&\hs\le \sum_{|k|+|j|> N}\int_\rn \oz \lf(\ca(\lz_{k,j}a_{k,j})(x)\r)\,dx
\ls \sum_{|k|+|j|> N}|B_{k,j}|
\oz\lf(\frac{|\lz_{k,j}|}{|B_{k,j}|\ro(|B_{k,j}|)}\r)\to 0,\quad\quad
\end{eqnarray}
as $N\to \fz$, by \eqref{3.5}. Now for any $\ez>0$, by the fact that
$\oz$ is of upper type $1$ and \eqref{3.6},
there exists $N_0\in\cn$ such that when $N>N_0$,
$$\int_\rn \oz \lf(\frac 1\ez
\ca\lf[f-\sum_{|k|+|j|\le N}\lz_{k,j}a_{k,j}\r](x)\r)\,dx\le 1,$$
which implies that when $N>N_0$,
$\|f-\sum_{|k|+|j|\le N}\lz_{k,j}a_{k,j}\|_{T_\oz(\rnz)}\le \ez$. Thus, \eqref{3.3}
holds in $T_\oz(\rnz)$.

We now prove that \eqref{3.3} holds in $T_2^p(\rnz)$.
For the case $p\in (0,1]$, notice that $\{A_{k,j}\}_{k\in\zz,j\in I_k}$
are independent of $\oz$. In this case, letting $\wz a_{k,j}\equiv  2^{-k}
|B_{k,j}|^{-1/p}f\chi_{A_{k,j}}$
and $\wz\lz_{k,j}\equiv 2^k|B_{k,j}|^{1/p},$ we then have
that $\{a_{k,j}\}_{k\in\zz,j\in I_k}$ are $(p,q)$-atoms, where $q\in (p,\fz)\cap [1,\fz)$, and
$\sum_{k\in\zz}\sum_{j\in I_k}|\wz \lz_{k,j}|^p\ls \|f\|_{T_2^p(\rnz)}^p$,
which combined with the fact
that $\lz_{k,j}a_{k,j}=\wz\lz_{k,j}\wz a_{k,j}$ implies that \eqref{3.3} holds
in $T_2^p(\rnz)$ in this case.

Let us now consider the case $p\in (1,\fz)$. To prove that \eqref{3.2} holds in
$T_2^p(\rnz)$, it suffices to show that for any $\bz>0$,
there exists $N_0\in \cn$ such that if $N>N_0$, then
\begin{equation}\label{3.7}
 \lf\|\sum_{|k|+|j|>N}\lz_{k,j}a_{k,j}\r\|_{T_2^p(\rnz)}=
 \lf\|\sum_{|k|+|j|>N}f\chi_{A_{k,j}}\r\|_{T_2^p(\rnz)}<\bz.
 \end{equation}

To see this, noticing that $\{A_{k,j}\}_{k\in\zz,j\in I_k}$ are disjoint, hence, we have
\begin{equation}\label{3.8}
\sum_{k\in\zz}\sum_{j\in I_k}|f\chi_{A_{k,j}}|=|f|.
\end{equation}
Write $ \mathrm{H}_{N,1}\equiv\sum_{k<-N,\,j\in I_k}f\chi_{A_{k,j}}$ and
$\mathrm{H}_{N,2}\equiv\sum_{k>N,\,j\in I_k}f\chi_{A_{k,j}}$.
To estimate the term  $\mathrm{H}_{N,1}$, let $q$ be the conjugate index of $p$
and $h\in T_2^q(\rnz)$ with $\|h\|_{T_2^q(\rnz)}\le 1$. Notice that for each $k<-N$,
$A_{k,j}\subset (\widehat{O_{-N}^\ast})^\com$, and hence
$\supp \mathrm{H}_{N,1}\subset (\widehat{O_{-N}^\ast})^\com= \car(F_{-N}^\ast)$.
From this, \eqref{3.8}, Lemma \ref{l3.1} and the H\"older inequality, we deduce that
\begin{eqnarray*} |\la \mathrm{H}_{N,1}, h\ra |&&\le
\iint_{\car(F_{-N}^\ast)} \lf|\sum_{k<-N,\,j\in I_k}(f\chi_{A_{k,j}})(y,t) h(y,t)
\r|\frac{\,dy\,dt}{t}\\
&&\ls \int_{F_{-N}}\iint_{\Gamma(x)}\lf|\sum_{k<-N,\,j\in I_k}(f\chi_{A_{k,j}})(y,t) h(y,t)
\r|\frac{\,dy\,dt}{t^{n+1}}\,dx\\
&&\ls \int_{F_{-N}}\ca(f)(x)\ca(h)(x)\,dx\ls
\lf(\int_{F_{-N}}[\ca(f)(x)]^p\,dx\r)^{1/p},
\end{eqnarray*}
which implies that
$$\|\mathrm{H}_{N,1}\|_{T_2^p(\rnz)}\ls \lf(\int_{F_{-N}}[\ca(f)(x)]^p
\,dx\r)^{1/p}.$$
 Then by the Lebesgue dominated convergence theorem, we have
$$\lim_{N\to \fz}\|\mathrm{H}_{N,1}\|_{T_2^p(\rnz)}=0,$$
which implies that there exists $N_1\in \cn$ such that if $N\ge N_1$,
 then $\|\mathrm{H}_{N,1}\|_{T_2^p(\rnz)}<\bz/3.$

For the term  $\mathrm{H}_{N,2}$, notice that for each $k>N$,
$A_{k,j}\subset \widehat{O_{N}^\ast}$ and hence,
$\supp \mathrm{H}_{N,2}\subset \widehat{O_{N}^\ast}$,
which together with \eqref{3.8} implies that
\begin{eqnarray*} \|\mathrm{H}_{N,2}\|_{T_2^p(\rnz)}^p&&=
\int_{\rn}\lf[\ca\lf(\sum_{k>N,\,j\in I_k}f\chi_{A_{k,j}}\r)(x)\r]^p\,dx
\le\int_{O_{N}^\ast}[\ca(f)(x)]^p\,dx.
\end{eqnarray*}
Since $|O_{N}^\ast|\ls |O_N|\to 0$ as $N\to\fz$,
by the continuity of Lebesgue integrals (or the Lebesgue dominated
convergence theorem in measures), we have
$$\lim_{N\to \fz}\|\mathrm{H}_{N,2}\|_{T_2^p(\rnz)}=0,$$
which implies that there exists $N_2\in \cn$ such that if $N\ge N_2$,
then $\|\mathrm{H}_{N,2}\|_{T_2^2(\rnz)}<\bz/3.$

Now let $ \mathrm{H}_{N,3}\equiv\sum_{-N_1\le k\le N_2,\, |k|+|j|>N}f\chi_{A_{k,j}}.$
Since $A_{k,j}\subset \widehat{B_{k,j}}$,  by \eqref{3.8}, we obtain
\begin{eqnarray*} \|\mathrm{H}_{N,3}\|_{T_2^p(\rnz)}^p&&=
\int_{\rn}\lf[\ca\lf(\sum_{-N_1\le k\le N_2,\, |k|+|j|>N}f
\chi_{A_{k,j}}\r)(x)\r]^p\,dx\\
&&\le\int_{\bigcup_{-N_1\le k\le N_2,\, |k|+|j|>N}B_{k,j}}[\ca(f)(x)]^p\,dx.
\end{eqnarray*}
From the Whitney decomposition, it follows that for each fixed $k$,
$$\sum_{j\in I_k} |B_{k,j}|\ls\sum_{j\in I_k} |Q_{k,j}|\ls |O_k^\ast|\ls |O_k| <\fz,$$
and hence, $\lim_{N\to\fz}\sum_{\{j\in I_k:\,|j|>N\}}|B_{k,j}|=0,$
which implies that
$$\lim_{N\to \fz}\lf|\bigcup_{-N_1\le k\le N_2,\,|k|+|j|>N}B_{k,j}\r|
\le \lim_{N\to \fz}\sum_{-N_1\le k\le N_2}\sum_{|j|+|k|>N}|B_{k,j}|=0.$$
Applying the continuity of Lebesgue integrals (or the Lebesgue dominated
convergence theorem in measures) again, we obtain
$$\lim_{N\to \fz}\|\mathrm{H}_{N,3}\|_{T_2^p(\rnz)}=0,$$
which implies that there exists $N_3\in \cn$ such that if $N\ge N_3$,
then $\|\mathrm{H}_{N,3}\|_{T_2^p(\rnz)}<\bz/3.$

Letting $N_0\equiv \max\{N_1,N_2,N_3\}$ and noticing that when $N>N_0$,
$$\lf\|\sum_{|k|+|j|>N}f\chi_{A_{k,j}}\r\|_{T_2^p(\rnz)}=
\lf\|\sum_{|k|+|j|>N}|f\chi_{A_{k,j}}|\r\|_{T_2^p(\rnz)}\le
\sum_{i=1}^3\|H_{N_i,i}\|_{T_2^p(\rnz)}<\bz,$$
we then obtain \eqref{3.7},
which completes the proof of Proposition \ref{p3.1}.
\end{proof}

As a consequence of Proposition \ref{p3.1}, we have the following
corollary which plays an important role in this paper.

\begin{cor}\label{c3.1}
  Let $\oz$ satisfy Assumption (A). If $f\in T_{\oz}(\rnz)\cap
  T_2^2(\rnz)$, then $f\in T_{2}^p(\rnz)$ for all $p\in [1,2]$, and hence,
  the decomposition \eqref{3.3}  holds in $T_{2}^p(\rnz)$.
\end{cor}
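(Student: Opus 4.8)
The plan is to deduce Corollary \ref{c3.1} directly from Proposition \ref{p3.1} together with an interpolation-type argument for tent spaces. First I would show the inclusion $T_\oz(\rnz)\cap T_2^2(\rnz)\subset T_2^p(\rnz)$ for all $p\in[1,2]$. The case $p=2$ is trivial; for $p\in[1,2)$, given $f\in T_\oz(\rnz)\cap T_2^2(\rnz)$, I would invoke Theorem \ref{t3.1} to write $f=\sum_j\lz_ja_j$ with $(\oz,\fz)$-atoms $a_j$ supported in $\widehat{B_j}$, and with $\Lambda(\{\lz_ja_j\}_j)\ls\|f\|_{T_\oz(\rnz)}$. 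Since each $(\oz,\fz)$-atom $a_j$ satisfies $\|a_j\|_{T_2^p(\rnz)}\le|B_j|^{1/p-1}[\ro(|B_j|)]^{-1}$, after rescaling into $(p,q)$-atoms the natural sequence norm $(\sum_j|\wz\lz_j|^p)^{1/p}$ is controlled; here one uses Proposition \ref{p2.1} (the type relation between $\oz$ and $\ro$) together with the fact that $\oz$ is of strictly lower type $p_\oz$ and upper type $1$ to convert the $\Lambda$-functional into the $\ell^p$-sum of the rescaled coefficients. Then $\|f\|_{T_2^p(\rnz)}\le(\sum_j|\wz\lz_j|^p)^{1/p}<\fz$ gives $f\in T_2^p(\rnz)$.

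Once membership $f\in T_2^p(\rnz)$ is established, the convergence of \eqref{3.3} in $T_2^p(\rnz)$ is immediate from Proposition \ref{p3.1}: that proposition says precisely that for $f\in T_\oz(\rnz)\cap T_2^p(\rnz)$ with $p\in(0,\fz)$, the canonical decomposition constructed in Theorem \ref{t3.1} converges in both $T_\oz(\rnz)$ and $T_2^p(\rnz)$. So for each fixed $p\in[1,2]$, apply Proposition \ref{p3.1} with that value of $p$ to the function $f$, which we have just shown lies in $T_\oz(\rnz)\cap T_2^p(\rnz)$, and conclude that \eqref{3.3} holds in $T_2^p(\rnz)$.

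The main obstacle I anticipate is the quantitative step in showing $f\in T_2^p(\rnz)$: one must pass from the $\Lambda$-functional in \eqref{3.4}, which is built from the Orlicz function $\oz$, to a plain $\ell^p$ bound on the rescaled coefficients. This requires a computation very much in the spirit of \eqref{3.5}, using that $\oz$ is concave (hence of upper type $1$) and of strictly lower type $p_\oz$, possibly with the restriction $p\ge p_\oz$ handled by noting $p\ge 1\ge p_\oz$, and using $\rho(|B|)=|B|^{-1}\oz^{-1}(|B|^{-1})^{-1}$ from \eqref{2.4}. In fact, inspecting the proof of Theorem \ref{t3.1}, the sets $A_{k,j}$ and balls $B_{k,j}$ are independent of $\oz$, and the proof of Proposition \ref{p3.1} in the case $p\in(0,1]$ already carries out exactly this rescaling; for $p\in(1,2]$ one argues similarly, or more cheaply observes that once $f\in T_2^2(\rnz)$ one also has, by Hölder's inequality on each localized region together with the disjointness $\sum_{k,j}|f\chi_{A_{k,j}}|=|f|$ and the open-set estimates $|O_k^\ast|\ls|O_k|$, that $\|f\|_{T_2^p(\rnz)}^p\ls\sum_k 2^{kp}|O_k^\ast|\ls\sum_k 2^{kp}|O_k|\ls\|\ca(f)\|_{L^p(\rn)}^p$ — but this last chain is circular, so the clean route is the atomic one. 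Thus the corollary follows by combining Theorem \ref{t3.1}, Proposition \ref{p2.1} and Proposition \ref{p3.1}, and the write-up should be short, essentially a pointer to Proposition \ref{p3.1} once the membership $T_\oz(\rnz)\cap T_2^2(\rnz)\subset\bigcap_{p\in[1,2]}T_2^p(\rnz)$ is noted.
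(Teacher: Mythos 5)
Your overall plan — first establish $f\in T_2^p(\rnz)$, then invoke Proposition \ref{p3.1} for the convergence — matches the paper's structure, and the second half is fine. The problem is the first half: your route to membership in $T_2^p(\rnz)$ for $p\in(1,2]$ has a genuine gap. You rescale the atoms of Theorem \ref{t3.1} into $(p,q)$-atoms $\wz a_j$ with coefficients $\wz\lz_j$ and then assert $\|f\|_{T_2^p(\rnz)}\le(\sum_j|\wz\lz_j|^p)^{1/p}$. For $p>1$ the quasi-norm $\|\cdot\|_{T_2^p(\rnz)}$ is not $p$-subadditive, so this inequality is simply false; the triangle inequality gives only $\|f\|_{T_2^p}\le\sum_j|\wz\lz_j|\,\|\wz a_j\|_{T_2^p}$, which requires $\ell^1$ control of the rescaled coefficients, and that is not available (indeed $\sum_{k,j}2^k|B_{k,j}|^{1/p}$ is not finite in general). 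You also acknowledge that the alternative chain you sketch via $\sum_k 2^{kp}|O_k|$ is circular, but then fall back on "the clean route is the atomic one" — which is the route that just failed. In short, the atomic decomposition of Theorem \ref{t3.1} is not the right tool to prove the inclusion $T_\oz(\rnz)\cap T_2^2(\rnz)\subset T_2^p(\rnz)$.

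The paper's proof of the inclusion is a short direct computation that bypasses atoms entirely: split $\int_\rn[\ca(f)(x)]^p\,dx$ over the two level sets $\{\ca(f)<1\}$ and $\{\ca(f)\ge1\}$. On the first set, $p\ge1$ gives $[\ca(f)]^p\le\ca(f)$, and the upper type $1$ of $\oz$ gives $\ca(f)\ls\oz(\ca(f))$ there, so this piece is controlled by $\int_\rn\oz(\ca(f))\,dx<\fz$; on the second set, $p\le2$ gives $[\ca(f)]^p\le[\ca(f)]^2$, controlled by $\|f\|_{T_2^2(\rnz)}^2$. No interpolation of atomic decompositions, no conversion of the $\Lambda$-functional, and no appeal to Proposition \ref{p2.1} is needed. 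Once $f\in T_2^p(\rnz)$ is in hand, Proposition \ref{p3.1} applies verbatim, just as you say.
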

\begin{proof}[\bf Proof.]
Observing that $\oz$ is of upper type $1$, we have
\begin{eqnarray*}
\int_{\rn}[\ca(f)(x)]^p\,dx&&\le\int_{\{x\in\rn:\,\ca(f)(x)< 1 \}}\ca(f)(x)\,dx
  +\int_{\{x\in\rn:\,\ca(f)(x)\ge 1 \}}[\ca(f)(x)]^2\,dx\\
  &&\ls \int_{\{x\in\rn:\,\ca(f)(x)< 1 \}}\oz(\ca(f)(x))\,dx
  +\|f\|_{T_2^2(\rnz)}^2<\fz,
\end{eqnarray*}
which implies that $f\in T_2^p(\rnz).$ Then by  Proposition \ref{p3.1},
we have that the decomposition \eqref{3.3} holds in $T_{2}^p(\rnz)$,
which completes the proof of Corollary \ref{c3.1}.
\end{proof}

In what follows, let $T_\oz^c(\rnz)$ and $T^{p,c}_2(\rnz)$ denote the set
of all functions in $T_\oz(\rnz)$ and $T^{p}_2(\rnz)$ with compact supports,
respectively, where $p\in (0,\fz)$.

\begin{lem}\label{l3.3}
(i) For all $p\in (0,\,\fz)$,
$T^{p,c}_2(\rnz)\subset T_2^{2,c}(\rnz).$
In particular, if $p\in (0,2]$, then
$T^{p,c}_2(\rnz)$ coincides with $T_2^{2,c}(\rnz)$.

(ii) Let $\oz$ satisfy Assumption (A). Then
$T^c_\oz(\rnz)$ coincides with $T_2^{2,c}(\rnz).$
\end{lem}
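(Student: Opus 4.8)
The plan is to prove the two containments in part (i) first and then deduce part (ii) from part (i) together with the computations already established in Corollary \ref{c3.1}. For part (i), the key point is that any $F\in T_2^{p,c}(\rnz)$ has support inside a fixed bounded set, so that $\ca(F)$ is supported in a bounded subset $K$ of $\rn$; more precisely, if $\supp F\subset B(x_0,r)\times(a,b)$ with $0<a<b<\fz$, then $\ca(F)(x)=0$ whenever $\dist(x,B(x_0,r))\ge b$, so $\ca(F)$ is supported in a fixed ball $\widetilde B$. Hence for any two exponents $p_1,p_2\in(0,\fz)$ one only has to control $\int_{\widetilde B}[\ca(F)(x)]^{p_2}\,dx$ in terms of $\int_{\widetilde B}[\ca(F)(x)]^{p_1}\,dx$. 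When $p_2\le p_1$ this is immediate from Hölder's inequality on the finite-measure set $\widetilde B$. The remaining direction, and in particular the inclusion $T_2^{p,c}(\rnz)\subset T_2^{2,c}(\rnz)$ for $p<2$, requires an $L^\fz$-type bound: I would show that for $F$ with support in $B(x_0,r)\times(a,b)$ one has $\ca(F)\in L^\fz(\rn)$ with $\|\ca(F)\|_{L^\fz(\rn)}\ls a^{-(n+1)/2}\|F\|_{L^2(B(x_0,r)\times(a,b))}$, simply because the integral defining $\ca(F)(x)^2$ is taken over a region on which $t\ge a$, so $\iint_{\Gamma(x)}|F(y,t)|^2 t^{-n-1}\,dy\,dt\le a^{-n-1}\|F\|_{L^2(\rnz,\,dy\,dt)}^2<\fz$; and $F\in T_2^{p,c}(\rnz)$ together with the support condition forces $F\in L^2$ of that region (this uses that $\ca(F)\in L^p$ of a bounded set already controls a genuine $L^2$-norm of $F$ over the truncated region via Fubini, as in the standard tent-space identity $\int_\rn\ca(F)(x)^2\,dx\sim\iint_{\rnz}|F(y,t)|^2\,dy\,dt/t$). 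Combining the $L^\fz$ bound with the support in $\widetilde B$ gives membership in every $T_2^{q,c}(\rnz)$, $q\in(0,\fz)$; this proves the first sentence of (i), and the ``in particular'' when $p\in(0,2]$ is then just the two inclusions together.

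For part (ii), the inclusion $T_\oz^c(\rnz)\subset T_2^{2,c}(\rnz)$ follows by exactly the argument already used in the proof of Corollary \ref{c3.1}: since $F\in T_\oz^c(\rnz)$ has $\ca(F)$ supported in a ball $\widetilde B$ and $\oz$ is of upper type $1$, one splits $\int_{\widetilde B}[\ca(F)(x)]^2\,dx$ over $\{\ca(F)<1\}$ and $\{\ca(F)\ge1\}$ and bounds the first piece by $\int_{\widetilde B}\oz(\ca(F)(x))\,dx<\fz$ (using $t\le\oz(t)$ for $t\le1$ up to the type-$1$ constant, or more carefully $t\ls\oz(t)$ on $(0,1]$ by the upper-type-$1$ property) and the second by $\int_{\widetilde B}[\ca(F)(x)]^2\,dx$ restricted to where $\ca(F)\ge1$, which is controlled since on a set of finite measure $\ca(F)\in L^\fz$ would be needed --- here instead I would note $F\in T_\oz^c$ with the support condition already gives $F\in L^2$ of the truncated region directly by the pointwise bound $\ca(F)\ls a^{-(n+1)/2}\|F\|_{L^2}$ reasoning combined with $|\widetilde B|<\fz$, circularity being avoided because one first observes $\ca(F)<\fz$ a.e.\ implies (again by Fubini on the bounded truncated region) $F\in L^2$. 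The reverse inclusion $T_2^{2,c}(\rnz)\subset T_\oz^c(\rnz)$ is the easy direction: if $F\in T_2^{2,c}(\rnz)$ then $\ca(F)\in L^2(\widetilde B)\subset L(\oz)$ because on the bounded set $\widetilde B$ the concavity (hence upper type $1$, hence $\oz(t)\ls t$ for $t\ge1$ and $\oz(t)\le\oz(1)$ for $t\le1$) gives $\int_{\widetilde B}\oz(\ca(F)(x))\,dx\ls\int_{\widetilde B}(1+\ca(F)(x)^2)\,dx<\fz$, so $F\in T_\oz(\rnz)$ with the same compact support.

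The main obstacle is the $L^\fz$ estimate $\|\ca(F)\|_{L^\fz(\rn)}\ls a^{-(n+1)/2}\|F\|_{L^2}$ for $F$ supported in $B(x_0,r)\times(a,b)$ and, relatedly, making sure no circular reasoning creeps in when passing between ``$F\in T_2^{p,c}$'' and ``$F\in L^2$ of the truncated region.'' The clean way to handle this is to first record the elementary identity/inequality: for $F$ supported in $\rn\times(a,b)$,
\begin{equation*}
\iint_{\rnz}|F(y,t)|^2\,\frac{dy\,dt}{t}\ =\ \int_\rn\lf(\iint_{\Gamma(x)}|F(y,t)|^2\,\frac{dy\,dt}{t^{n+1}}\r)dx\ =\ \int_\rn[\ca(F)(x)]^2\,dx,
\end{equation*}
which holds by Fubini (the inner region $\{x:|x-y|<t\}$ has measure $c_n t^n$), and is finite precisely when $\ca(F)\in L^2$. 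So membership in any one $T_2^{q,c}$ with the support condition, via this identity on the bounded set $\widetilde B$ (using Hölder if $q<2$), already pins down $\|F\|_{L^2(\rn\times(a,b),\,dy\,dt/t)}<\fz$, hence $\|F\|_{L^2(\rn\times(a,b),\,dy\,dt)}<\fz$ since $t\in(a,b)$; and then the pointwise bound $\ca(F)(x)^2\le a^{-(n+1)}\|F\|_{L^2(\rnz,\,dy\,dt)}^2$ gives $\ca(F)\in L^\fz$, which combined with $|\widetilde B|<\fz$ yields $\ca(F)\in L^q$ for all $q$. Once this single estimate is in place, every inclusion in the lemma reduces to comparing $\int_{\widetilde B}(\ca(F))^{q_1}$ with $\int_{\widetilde B}(\ca(F))^{q_2}$ on a finite-measure set and to the elementary bounds on $\oz$ coming from concavity, so the rest is routine.
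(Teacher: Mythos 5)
Your overall strategy is sound and in fact more self-contained than the paper's: you try to reprove the CMS local inclusion from scratch via an $L^\fz$-bound on $\ca(F)$, where the paper simply cites (1.3) of \cite{cms} for the forward inclusion in (i). But there is a genuine gap in the one step that carries all the weight.

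The crux of (i) is the inclusion $T_2^{p,c}\subset T_2^{2,c}$ for $p<2$, and your route is: (a) deduce $\|F\|_{L^2(dy\,dt)}<\fz$ from $\ca(F)\in L^p$ on a bounded set, then (b) use the pointwise bound $\ca(F)(x)^2\le a^{-n-1}\|F\|_{L^2(dy\,dt)}^2$ and the compact support of $\ca(F)$. Step (b) is fine, but your justification of step (a) does not work as written. You invoke the Fubini identity $\int_\rn[\ca(F)(x)]^2\,dx = c_n\iint|F|^2\,dy\,dt/t$ ``using H\"older if $q<2$.'' That identity only converts $\|\ca(F)\|_{L^2}$ into $\|F\|_{L^2(dy\,dt/t)}$, so it is useless until you already know $\ca(F)\in L^2$; and H\"older on the finite-measure set $\widetilde B$ gives $L^2(\widetilde B)\subset L^p(\widetilde B)$ for $p<2$, i.e., it runs in the wrong direction. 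Likewise the remark in (ii) that ``$\ca(F)<\fz$ a.e.\ implies (again by Fubini on the bounded truncated region) $F\in L^2$'' is the same circular step: a.e.\ finiteness of $\ca(F)$ gives no quantitative bound through Fubini alone.

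The claim in step (a) is nevertheless true, but it needs a different argument. Since $\supp F\subset B(x_0,r)\times(a,b)$ with $a>0$, every point $(y,t)$ of $\supp F$ has $t\ge a$, so a finite set of points $\{x_j\}_{j=1}^N\subset\rn$ that is $(a/2)$-dense in $\overline{B(x_0,r)}$ covers $\supp F$ by the cones $\Gamma(x_j)$; because $\ca(F)\in L^p$ forces $\ca(F)<\fz$ a.e., one may perturb each $x_j$ within the full-measure set $\{\ca(F)<\fz\}$ while keeping the covering property, and then $\iint_{\rnz}|F|^2\,dy\,dt/t^{n+1}\le\sum_{j}\ca(F)(x_j)^2<\fz$; since $t\in(a,b)$, this gives $\|F\|_{L^2(dy\,dt)}<\fz$ and your step (b) finishes. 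With this replacement your proof is correct and genuinely different from the paper's, which outsources exactly this step to \cite[p.\,306, (1.3)]{cms} and then derives (ii) from (i) by the same split of $\int\oz(\ca(f))$ over $\{\ca(f)<1\}$ and $\{\ca(f)\ge1\}$ that you use, using $T_2^{p_\oz,c}$ as the intermediate space rather than the $L^\fz$ endpoint.
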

\begin{proof}[\bf Proof.]  By (1.3) in \cite[p.\,306]{cms}, we have
$T^{p,c}_2(\rnz)\subset T_2^{2,c}(\rnz)$
for all $p\in (0,\,\fz)$. If $p\in (0,2]$, then
from the H\"older inequality, it is easy to follow that
$ T_2^{2,c}(\rnz)\subset T^{p,c}_2(\rnz)$. Thus, (i) holds.

Let us prove (ii).
To prove $T^c_\oz(\rnz)\subset T_2^{2,c}(\rnz)$, by
(i), it suffices to show that  $T^c_\oz(\rnz) \subset
T^{p,c}_2(\rnz)$ for certain $p\in(0,\fz).$ Suppose that
$f\in T^c_\oz(\rnz)$ and $\supp f\subset K$, where $K$ is a
compact set in ${\rr}^{n+1}_+.$ Let $B$ be a ball in $\rn$
such that $K\subset \widehat B$. Then $\supp\ca(f)\subset B$.
This, together with the lower type property of $\oz$, yields that
\begin{eqnarray*}
\int_{\rn}
[\ca(f)(x)]^{p_\oz}\,dx&&=\int_{\{x\in{\rn}:\,\ca(f)(x)< 1\}}
[\ca(f)(x)]^{p_\oz}\,dx+\int_{\{x\in\rn:\,\ca(f)(x)\ge 1\}}
\cdots\\
&&\ls |B|+\int_{\rn}\oz(\ca(f)(x))\,dx<\fz.
\end{eqnarray*}
That is, $f\in T_2^{p_\oz,c}(\rnz)\subset T_2^{2,c}(\rnz)$.

Conversely, let $f\in T_2^{1,c}(\rnz)$ supporting in a compact set
$K$ in ${\rr}^{n+1}_+.$ Then there exists a ball $B$ such that $K\subset \widehat B$
and $\supp \ca(f)\subset B$. This, together with the upper type property of
 $\oz$, yields that
\begin{eqnarray*}
\int_{\rn}
\oz(\ca(f)(x))\,dx&&\ls \int_{\{x\in{\rn}:\,\ca(f)(x)< 1\}}
\oz(1)\,dx+\int_{\{x\in\rn:\,\ca(f)(x)\ge 1\}}
\ca(f)(x)\,dx\\
&&\ls  |B|+\|f\|_{T_2^1(\rnz)}<\fz,
\end{eqnarray*}
which implies that $f\in  T^c_\oz(\rnz)$, and hence,
 completes the proof of Lemma \ref{l3.3}.
\end{proof}

\section{Orlicz-Hardy spaces and their dual spaces\label{s4}}

\hskip\parindent In this section, we always assume that the Orlicz
function $\oz$ satisfies Assumption (A). We introduce the
Orlicz-Hardy space associated to $L$ via the Lusin-area function and
establish its duality. Let us begin with some notions and notation.

Let $\cs_L$ be the same as in \eqref{1.3}.
It follows from Lemma \ref{l2.4} that the operator $\cs_L$ is bounded
on $L^p(\rn)$ for $p\in (p_L,\wz p_L)$.
Hofmann and Mayboroda \cite{hm1} introduced the Hardy space
$H_L^1(\rn)$ associated to $L$ as
the completion of $\{f\in L^2(\rn):\,\cs_L f\in L^1(\rn)\}$ with
respect to the norm
$\|f\|_{H_L^1(\rn)}\equiv\|\cs_L f\|_{L^1(\rn)}.$

Using some ideas from \cite{dy2,hm1}, we now introduce the
Orlicz-Hardy space $H_{\oz,L}(\rn)$ associated to $L$ and $\oz$ as
follows.
\begin{defn}\label{d4.1}
Let $\oz$ satisfy Assumption (A). A function $f\in L^2(\rn)$ is said to be in $\wz
H_{\oz,L}(\rn)$ if $\cs_L f\in L(\oz)$; moreover, define
$$\|f\|_{H_{\oz,L}(\rn)}\equiv \|\cs_L f\|_{L(\oz)}=\inf\lf\{\lz>0:\int_{\rn}\oz\lf
(\frac{\cs_L f(x)}{\lz}\r)\,dx\le 1\r\}.$$ The Orlicz-Hardy space
$H_{\oz,L}(\rn)$ is defined to be the  completion of $\wz
H_{\oz,L}(\rn)$  in the norm $\|\cdot\|_{H_{\oz,L}(\rn)}.$
\end{defn}

In what follows, for a ball $B\equiv B(x_B,r_B)$, we let $U_0(B)\equiv B$, and
for $j\in \cn$, $U_j(B)\equiv B(x_B,2^{j}r_B)\setminus B(x_B,2^{j-1}r_B)$.

\begin{defn}\label{d4.2}
 Let $q\in (p_L, \wz p_L)$, $M\in\cn$ and $\ez\in(0,\fz)$.
 A function $\az\in L^q(\rn)$ is called an $(\oz,q,M,\ez)$-molecule
adapted to $B$ if there exists a ball $B$ such that

{\rm (i)} $\|\az\|_{L^q(U_j(B))}\le 2^{-j\ez}|2^jB|^{1/q-1}\ro(|2^jB|)^{-1}$,
$j\in {\zz}_+$;

{\rm (ii)} for every $k=1,\cdots,M$ and $j\in {\zz}_+$, there holds
$$\|(r_B^{-2}L^{-1})^{k}\az\|_{L^q(U_j(B))}\le
2^{-j\ez}|2^jB|^{1/q-1}[\ro(|2^jB|)]^{-1}.$$
Finally, if $\az$ is an $(\oz,q,M,\ez)$-molecule for all
$q\in (p_L, \wz p_L)$, then $\az$ is called an $(\oz,\fz,M,\ez)$-molecule.
\end{defn}

\begin{rem}\label{r4.1}\rm
(i) Since $\oz$ is of strictly lower type $p_\oz$, we have that
for all $f_1,\,f_2\in H_{\oz,L}(\rn)$,
$$\|f_1+f_2\|_{H_{\oz,L}(\rn)}^{p_\oz} \le \|f_1\|_{H_{\oz,L}(\rn)}^{p_\oz}
 +\|f_2\|_{H_{\oz,L}(\rn)}^{p_\oz}.$$
In fact, if letting $\lz_1\equiv \|f_1\|_{H_{\oz,L}(\rn)}^{p_\oz}$ and
$\lz_2\equiv \|f_2\|_{H_{\oz,L}(\rn)}^{p_\oz}$, by the subadditivity,
the continuity and the lower type $p_\oz$ of $\oz$, we have
\begin{eqnarray*}
  \int_{\rn}\oz\lf(\frac{\cs_L(f_1+f_2)(x)}{(\lz_1+\lz_2)^{1/p_\oz}}\r)\,dx
&&\le \sum_{i=1}^2\int_{\rn}\oz\lf(\frac{\cs_L(f_i)(x)}{(\lz_1+\lz_2)^{1/p_\oz}}\r)\,dx\\
&&\le \sum_{i=1}^2\frac{\lz_i}{\lz_1+\lz_2}
\int_{\rn}\oz\lf(\frac{\cs_L(f_i)(x)}{\lz_1^{1/p_\oz}}\r)\,dx\le 1,
\end{eqnarray*}
which implies $\|f_1+f_2\|_{H_{\oz,L}(\rn)}\le (\|f_1\|_{H_{\oz,L}(\rn)}^{p_\oz}
+\|f_2\|_{H_{\oz,L}(\rn)}^{p_\oz})^{1/p_\oz}$, and hence, the desired conclusion.

(ii) From the theorem of completion of Yosida \cite[p.\,56]{yo}, it
follows that
  $\wz H_{\oz,\,L}(\rn)$  is dense in $H_{\oz,\,L}(\rn)$,
  namely, for any $f\in H_{\oz,\,L}(\rn)$, there exists a Cauchy sequence
   $\{f_k\}^{\fz}_{k=1}\subset \wz H_{\oz,\,L}(\rn)$
 such that $\lim_{k\to\fz}\|f_k-f\|_{H_{\oz,\,L}(\rn)}=0.$
 Moreover, if $\{f_k\}^{\fz}_{k=1}$ is a Cauchy sequence in
$\wz H_{\oz,\,L}(\rn)$, then there uniquely exists $f\in
H_{\oz,\,L}(\rn)$ such that
$\lim_{k\to\fz}\|f_k-f\|_{H_{\oz,\,L}(\rn)}=0.$

(iii) If $\oz(t)= t$, then the space $H_{\oz,\,L}(\rn)$ is just the
space $H^1_L(\rn)$ introduced by  Hofmann and Mayboroda \cite{hm1}.
Furthermore, when $\oz(t)\equiv t^p$ for all $t\in(0,\fz)$
with $p\in (0,1]$, we then denote the space $H_{\oz,L}(\rn)$ simply
by $H^p_L(\rn)$.
\end{rem}

\subsection{Molecular decompositions of $H_{\oz,L}(\rn)$\label{s4.1}}

\hskip\parindent In what follows, let $L^2_c(\rnz)$ denote the set of all functions
in $L^2(\rnz)$ with compact supports.
Recall that $\oz$ is a concave function of strictly lower type $p_\oz$, where
$p_\oz\in (0, 1]$.

\begin{prop}\label{p4.1} Let $\oz$ satisfy Assumption (A),
$M\in\cn$ and $M>\frac n2 (\frac1{p_\oz}-\frac 12)$, and $\pi_{L,M}$ be
as in \eqref{1.4}.

{\rm (i)} The operator $\pi_{L,M}$, initially defined on $T_2^{p,c}(\rnz)$, extends to
a bounded linear operator from $T_2^{p}(\rnz)$ to $L^p(\rn),$ where $p\in (p_L,\wz p_L)$.

{\rm (ii)} The operator $\pi_{L,M}$,  initially defined on $T_\oz^{c}(\rnz)$,
extends to a bounded linear operator from $T_\oz(\rnz)$ to $H_{\oz,L}(\rn).$
\end{prop}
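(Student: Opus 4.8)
The plan is to prove (i) first via a duality argument combined with the Gaffney estimates, and then derive (ii) by testing $\pi_{L,M}F$ against $\mathrm{BMO}$-type functions after first reducing to atoms. For (i), fix $p\in(p_L,\wz p_L)$, let $q\equiv p'$, and take $F\in T_2^{p,c}(\rnz)$ and $g\in L^q(\rn)$. By a Fubini argument and the definition \eqref{1.4} of $\pi_{L,M}$, one writes $\la \pi_{L,M}F,g\ra$ as an integral over $\rnz$ of $F(y,t)$ against $C_M\,\overline{(t^2L^\ast)^{M+1}e^{-t^2L^\ast}g(y)}\,\frac{dy\,dt}{t}$, and then applies the Cauchy--Schwarz inequality in $\Gamma(x)$ followed by integration in $x$ to bound this by $\|F\|_{T_2^p(\rnz)}\,\|\mathcal{G}_{L^\ast,M+1}g\|_{L^q(\rn)}$, where $\mathcal{G}_{L^\ast,M+1}$ is the Lusin-area operator $\cs_{L^\ast}^{M+1}$ from Lemma \ref{l2.4} (with $L$ replaced by $L^\ast$, noting $p_{L^\ast},\wz p_{L^\ast}$ are the conjugate-dual exponents of those of $L$, so $q\in(p_{L^\ast},\wz p_{L^\ast})$). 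Lemma \ref{l2.4} then gives $\|\cs_{L^\ast}^{M+1}g\|_{L^q(\rn)}\ls\|g\|_{L^q(\rn)}$, so $|\la\pi_{L,M}F,g\ra|\ls\|F\|_{T_2^p(\rnz)}\|g\|_{L^q(\rn)}$; taking the supremum over $g$ yields $\|\pi_{L,M}F\|_{L^p(\rn)}\ls\|F\|_{T_2^p(\rnz)}$. Since $T_2^{p,c}(\rnz)$ is dense in $T_2^p(\rnz)$, $\pi_{L,M}$ extends boundedly, proving (i).

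For (ii), by Theorem \ref{t3.1} it suffices, by a standard density and sublinearity argument using Remark \ref{r3.1}(ii) and the lower type $p_\oz$ of $\oz$, to show that for every $(\oz,\fz)$-atom $a$ supported in $\widehat B$ with $B\equiv B(x_B,r_B)$, the function $\alpha\equiv\pi_{L,M}a$ is (up to a uniform multiplicative constant) an $(\oz,2,M,\ez)$-molecule adapted to $B$ for some $\ez>0$ depending only on $M$ and $n$, so that $\|\pi_{L,M}a\|_{H_{\oz,L}(\rn)}\ls1$. Concretely: first, using part (i) with $p=2$ and the support condition $\supp a\subset\widehat B$ together with the $T_2^2$-normalization of $a$, one gets $\|\alpha\|_{L^2(U_0(B))}\le\|\alpha\|_{L^2(\rn)}\ls\|a\|_{T_2^2(\rnz)}\ls|B|^{-1/2}[\ro(|B|)]^{-1}$. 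For the annuli $U_j(B)$ with $j\ge2$, one exploits that for $(y,t)\in\widehat B$ one has $t<r_B$, so the operators $(t^2L)^{M+1}e^{-t^2L}$ applied to functions supported near $B$ and then restricted to $U_j(B)$ enjoy the Gaffney/off-diagonal decay of Lemmas \ref{l2.1} and \ref{l2.2}: splitting the $t$-integral in \eqref{1.4} dyadically and using $\dist(B,U_j(B))\sim2^jr_B$ produces a factor $e^{-(c4^jr_B^2/t^2)^\beta}$ which, integrated against $t<r_B$, gives decay $2^{-j\ez}$ with $\ez$ as large as we like (controlled by $M$). This handles Definition \ref{d4.2}(i). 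For Definition \ref{d4.2}(ii), one observes that $(r_B^{-2}L^{-1})^k\pi_{L,M}a=r_B^{2k}\pi_{L,M-k}\!\big((t/r_B)^{\,?}\cdots\big)$ — more precisely, $(r_B^{-2}L^{-1})^k\alpha=C_M r_B^{2k}\int_0^\fz t^{-2k}(t^2L)^{M+1-k}e^{-t^2L}(a(\cdot,t))\,\frac{dt}t$, and since $M>\frac n2(\frac1{p_\oz}-\frac12)\ge k$ for $k\le M$ this is again of the same form with $M$ replaced by $M-k$ and an extra harmless factor $(t/r_B)^{-2k}$ which is bounded on $\widehat B$; the same $L^2$-boundedness and Gaffney arguments then give the required bounds on $U_j(B)$. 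Finally, having shown $\alpha/C$ is an $(\oz,2,M,\ez)$-molecule, the molecular bound $\|\alpha\|_{H_{\oz,L}(\rn)}\ls1$ follows from the (standard, and used repeatedly in this paper) estimate that molecules have uniformly bounded $H_{\oz,L}$-norm, which reduces via the definition of $\cs_L$ to summing the Gaffney-type tail estimates against $\oz$; then $\|\pi_{L,M}F\|_{H_{\oz,L}(\rn)}\ls\Lambda(\{\lz_ja_j\})\ls\|F\|_{T_\oz(\rnz)}$ by \eqref{3.4}.

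The main obstacle is the bookkeeping in (ii): one must carefully track the interplay between the two scales $t<r_B$ (from $\supp a\subset\widehat B$) and $\dist(B,U_j(B))\sim2^jr_B$, and verify that the resulting Gaffney exponent $\beta$ (which is $1$ for the heat-semigroup operators of \eqref{2.1} but only $1/2$ for the resolvents of \eqref{2.2}) still produces summable decay $2^{-j\ez}$ after integration over $t\in(0,r_B)$ and, crucially, that the condition $M>\frac n2(\frac1{p_\oz}-\frac12)$ is exactly what is needed to absorb the $|2^jB|^{1/q-1}\ro(|2^jB|)^{-1}$ normalizing factors when converting the annular $L^2$-estimates into the statement that $\alpha$ is a genuine molecule — i.e., that enough vanishing moments (encoded in the number $M$ of powers of $L$) are available to beat the growth of $\ro$ at infinity (recall by Proposition \ref{p2.1} that $\ro$ has upper type $p_\oz^{-1}-1$). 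The density/approximation step extending $\pi_{L,M}$ from $T_\oz^c(\rnz)$ to all of $T_\oz(\rnz)$ also requires Proposition \ref{p3.1}, so that the atomic decomposition of $F$ converges in $T_\oz(\rnz)$ and the (bounded, by the atomic estimate just proved) operator $\pi_{L,M}$ may be applied termwise.
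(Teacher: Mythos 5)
Your proof of part (i) is the paper's proof: pair $\pi_{L,M}f$ with $g\in L^q(\rn)$, transfer the $(M+1)$ powers of $t^2L$ onto $L^\ast$, apply Cauchy--Schwarz on cones to bound the pairing by $\|\ca(f)\|_{L^p(\rn)}\|\cs_{L^\ast}^{M+1}g\|_{L^q(\rn)}$, invoke Lemma \ref{l2.4} for $L^\ast$ on the dual range, and conclude by density. The overall structure of your argument for (ii) also matches the paper: decompose $f\in T^c_\oz(\rnz)$ into $(\oz,\fz)$-atoms by Theorem \ref{t3.1}, show that $\pi_{L,M}$ maps each atom $a$ supported in $\widehat B$ to a multiple of an $(\oz,2,M,\ez)$-molecule (using part (i) on the near annuli and Gaffney off-diagonal decay on the far ones, with the factor $(t/r_B)^{2k}\le 1$ on $\widehat B$ to absorb the $k$-th power of $(r_B^{-2}L^{-1})$), and then pass from the atomic decomposition (which converges in $T_\oz(\rnz)$ and in $T_2^p(\rnz)$ by Proposition \ref{p3.1} / Corollary \ref{c3.1}) to a bound on $\|\pi_{L,M}f\|_{H_{\oz,L}(\rn)}$.

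The genuine gap is the uniform molecular norm bound. You invoke ``the (standard, and used repeatedly in this paper) estimate that molecules have uniformly bounded $H_{\oz,L}$-norm,'' but this estimate --- in the quantitative form
\[
\int_\rn\oz\bigl(|\lz|\cs_L\az(x)\bigr)\,dx\ls|B|\,\oz\!\Bigl(\frac{|\lz|}{|B|\ro(|B|)}\Bigr)
\]
for every $(\oz,2,M,\ez)$-molecule $\az$ adapted to $B$ --- is not proved anywhere earlier in the paper. It is precisely the claim \eqref{4.3} established \emph{inside} the proof of this proposition (and only \emph{later} reused, in Lemma \ref{l5.1} and Theorems \ref{t5.1}, \ref{t5.2}). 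Treating it as given makes the argument circular. Your one-phrase reduction to ``summing the Gaffney-type tail estimates against $\oz$'' leaves out the actual work: the splitting $\az=[I-e^{-r_B^2L}]^M\az + (I-[I-e^{-r_B^2L}]^M)\az$, the double expansion over annuli $U_j(B)$ and $U_k(2^jB)$, Jensen's inequality to push $\oz$ inside $L^1$-averages, the off-diagonal decay of $\cs_L$ borrowed from \cite[Lemma 4.2]{hm1}, and the two numerical conditions $2Mp_\oz>n(1-p_\oz/2)$ (i.e.\ $M>\tfrac n2(\tfrac1{p_\oz}-\tfrac12)$) and $\ez>n(\tfrac1{p_\oz}-\tfrac1{\wz p_\oz})$ (together with the lower type $\tfrac1{\wz p_\oz}-1$ of $\ro$) that make the geometric sums converge. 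Note also that this is where the hypothesis $M>\tfrac n2(\tfrac1{p_\oz}-\tfrac12)$ actually enters --- not, as your last paragraph suggests, in verifying that $\az$ is a molecule (the Gaffney decay there beats any polynomial, so the molecule estimate holds for every $\ez>0$ regardless of $M$).

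Two further minor slips: your formula $(r_B^{-2}L^{-1})^k\az=C_Mr_B^{2k}\int_0^\fz t^{-2k}(t^2L)^{M+1-k}e^{-t^2L}a(\cdot,t)\,\tfrac{dt}t$ has the exponents reversed --- the correct prefactor is $r_B^{-2k}t^{2k}=(t/r_B)^{2k}$, which is $\le1$ on $\widehat B$; as written, your factor $(t/r_B)^{-2k}$ blows up as $t\to0$. And the operators appearing in $\pi_{L,M}$ are all of heat type, so only the Gaffney exponent $\bz=1$ from \eqref{2.1} is relevant; the $\bz=1/2$ case for resolvents in \eqref{2.2} plays no role here.
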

\begin{proof}[\bf Proof.] Let $k\in\cn$. By Lemma \ref{l2.4} and a duality argument, we know that
the operator $\cs_{L^\ast}^k$ is bounded on $L^p(\rn)$ for
$p\in (p_{L^\ast},\wz p_{L^\ast})$,
 where  $\frac 1 {p_{L^\ast}}+\frac 1 {\wz p_{L}}=1
=\frac 1 {p_{L}}+\frac 1 {\wz p_{L^\ast}}$.

Let $f\in T_2^{p,c}(\rnz)$, where $p\in (p_L,\wz p_L)$. For any $g\in L^q(\rn)
\cap L^2(\rn)$, where
$\frac 1 {p}+\frac 1 {q}=1$, by the H\"older inequality, we have
\begin{eqnarray*}
\lf|\int_\rn \pi_{L,M}(f)(x)g(x)\,dx\r|&&\ls \lf|\iint_{\rr^{n+1}_+}f(y,t)
(t^2L^\ast)^{M+1}e^{-t^2L^\ast}g(y)\frac{\,dy\,dt}{t}\r|\\
&&\ls \int_\rn \ca(f)(x)\cs_{L^\ast}^{M+1}g(x)\,dx
\ls \|\ca(f)\|_{L^p(\rn)}\|\cs_{L^\ast}^{M+1}g\|_{L^q(\rn)}\\
&&\ls\|f\|_{T^p_2(\rnz)}\|g\|_{L^q(\rn)},
\end{eqnarray*}
which implies that $\pi_{L,M}$ maps $T_2^{p,c}(\rnz)$ continuously into
$L^p(\rn).$ Then by a density argument, we obtain that $\pi_{L,M}$ is
bounded from $T_2^{p}(\rnz)$ to $L^p(\rn)$. This proves (i).

Let us prove (ii). Assume that
$f\in T_\oz^c(\rnz)$. By Theorem \ref{t3.1}, we have
$f=\sum_{j=1}^\fz \lz_ja_j$ pointwise, where $\{\lz_j\}_{j=1}^\fz$
and $\{a_j\}_{j=1}^\fz$ are
as in Theorem \ref{t3.1} and $\Lambda(\{\lz_ja_j\}_j)\ls \|f\|_{T_\oz(\rnz)}$.
From Lemma \ref{l3.3} (ii), it follows that $f\in T_2^{2,c}(\rnz)$,
which together with (i) and Corollary \ref{c3.1} further implies that
\begin{equation*}
\pi_{L,M}(f)=\sum_{j=1}^\fz\lz_j\pi_{L,M}(a_j)\equiv\sum_{j=1}^\fz\lz_j\az_j
\end{equation*}
in $L^p(\rn)$ for $p\in (p_L,2]$.

On the other hand, notice that the operator $\cs_L$ is bounded on
 $L^p(\rn)$, which together with the subadditivity and the continuity of $\oz$
 yields that
\begin{equation}\label{4.1}
\int_\rn \oz(\cs_L(\pi_{L,M}(f))(x))\,dx\le
\sum_{j=1}^\fz\int_\rn \oz(|\lz_j|\cs_L(\az_j)(x))\,dx.
\end{equation}

We claim that for any fixed $\ez\in(0,\fz)$, $\az_j=\pi_{L,M}(a_j)$
is a multiple of an $(\oz,\fz,M,\ez)$-molecule adapted to $B_j$ for each $j$.

In fact, assume that $a$ is an $(\oz,\fz)$-atom supported
in the ball $B\equiv B(x_B,r_B)$
and $q\in (p_L,\wz p_L)$. Since for $q\in (p_L,2)$,
each $(\oz,2,M,\ez)$-molecule is also an
$(\oz,q,M,\ez)$-molecule, to prove the above claim, it suffices to show that
$\az\equiv \pi_{L,M}(a)$ is a multiple of an $(\oz,q,M,\ez)$-molecule adapted to
$B$ with $q\in [2,\wz p_L)$.

 By (i), for $i=0,\,1,\,2$, we have
$$\|\az\|_{L^q(U_i(B))}=\|\pi_{L,M}(a)\|_{L^q(U_i(B))}\ls \|a\|_{T^q_2(\rnz)}\ls
|B|^{1/q-1}[\ro(|B|)]^{-1}.$$
For $i\ge 3$, let $q'\in (1,2]$ being the conjugate number of $q$ and
$h\in L^{q'}(\rn) $ satisfying $\|h\|_{L^{q'}(\rn)}\le 1$ and $\supp h\subset U_i(B)$.
By the H\"older inequality and Lemmas \ref{l2.1} and \ref{l2.3},
we have
\begin{eqnarray}\label{4.2}
  &&|\la\pi_{L,M}(a),h\ra|\nonumber\\
  &&\hs\ls \int_0^{r_B}\int_B|a(x,t)(t^2L^\ast)^{M+1}
  e^{-t^2L^\ast}(h)(x)|\frac{\,dx\,dt}{t}\nonumber\\
&&\hs\ls \|\ca(a)\|_{L^q(\rn)}\|\ca(\chi_{\widehat B}(t^2L^\ast)^{M+1}
  e^{-t^2L^\ast}(h))\|_{L^{q'}(\rn)}\nonumber\\
  &&\hs\ls \|a\|_{T_2^q(\rnz)}|B|^{1/q'-1/2}\lf(\int_{\widehat B}|(t^2L^\ast)^{M+1}
  e^{-t^2L^\ast}(h)(x,t)|^2\frac{\,dx\,dt}{t}\r)^{1/2}\nonumber\\
  &&\hs\ls \|a \|_{T_2^q(\rnz)}|B|^{1/q'-1/2}\lf(\int_{0}^{r_B}\lf[t^{n(1/2-1/q')}
  \exp\lf\{-\frac{\dist(B,U_i(B))^2}{ct^2}\r\}\r]^2\frac{\,dt}{t}\r)^{1/2}\nonumber\\
  &&\hs\ls |B|^{-1/2}\ro(|B|)^{-1}\lf(\int_{0}^{r_B}t^{n(1-2/q')}\lf[\frac{t}{2^ir_B}
  \r]^{2(\ez+n/p_\oz-n/q)}  \frac{\,dt}{t}\r)^{1/2}\nonumber\\
  &&\hs\ls 2^{-i\ez}|2^iB|^{1/q-1}[\ro(|2^iB|)]^{-1},
\end{eqnarray}
which implies that $\az$ satisfies Definition \ref{d4.2} (i).

We now show that $\az$ also satisfies Definition \ref{d4.2} (ii).
Let $k\in \{1,\,\cdots,\,M\}$. If $i=0,\,1,\,2,$ let $h$ be the same as in the proof
of \eqref{4.2}; similarly to the proof of \eqref{4.1}, we have
\begin{eqnarray*}
  \lf|\la (r_{B}^{-2}L^{-1})^k\pi_{L,M}(a), h\ra\r|
&&\ls \int_0^{r_B}\int_B\lf(\frac{t}{r_B}\r)^{2k}|a(x,t)(t^2L^\ast)^{M+1-k}
  e^{-t^2L^\ast}(h)(x)|\frac{\,dx\,dt}{t}\nonumber\\
&&\ls \|\ca(a)\|_{L^q(\rn)}\|\cs_{L^\ast}^{M+1-k}(h)\|_{L^{q'}(\rn)}\\
&&\ls \|a\|_{T_2^q(\rnz)}\ls|B|^{1/q-1}[\ro(|B|)]^{-1},
\end{eqnarray*}
which is the desired estimate, where we used the  H\"older inequality
and Lemma \ref{l2.4} by noticing that $q'\in (p_{L^\ast},2]$.
If $i\ge 3$, an argument similar to that used in the estimate of \eqref{4.2}
 also yields the desired estimate.
Thus, $\az=\pi_{L,M}(a)$ is a multiples of an $(\oz,q,M,\ez)$-molecule
adapted to $B$ with $q\in [2,\wz p_L)$, and the claim is proved.

Let $q\in (p_L,\wz p_L)$ and $\ez>n(\frac{1}{p_\oz}-\frac{1}{\wz p_\oz})$,
where $\wz p_\oz$ is as in Convention (C).
We now claim that for all $(\oz,q,M,\ez)$-molecules
$\az$ adapted to the ball $B\equiv B(x_B,r_B)$ and
 $\lz\in\cc$,
 \begin{equation}\label{4.3}
\int_\rn \oz(|\lz|\cs_L(\az)(x))\,dx\ls |B|\oz\lf(\frac{|\lz|}{|B|\ro(|B|)}\r).
 \end{equation}

Once this is proved, then we have $\|\az\|_{H_{\oz,L}(\rn)}\ls 1$, which together
with \eqref{4.1} further implies that for all $f\in T_\oz^c(\rnz)$,
$$\int_\rn \oz(\cs_L(\pi_{L,M}(f))(x))\,dx\ls
\sum_{j=1}^\fz|B_j|\oz\lf(\frac{|\lz_j|}{|B_j|\ro(|B_j|)}\r).$$
Thus, for all $f\in T_\oz^c(\rnz)$, we have
$$\|\pi_{L,M}(f)\|_{H_{\oz,L}(\rn)}\ls \Lambda(\{\lz_ja_j\}_j)\ls
\|f\|_{T_{\oz}(\rnz)},$$
which combined with a density argument implies (ii).

Now, let us prove the claim \eqref{4.3}. Observe that if $q>2$,
then an $(\oz,q,M,\ez)-$molecule is also an
$(\oz,2,M,\ez)-$molecule. Thus, to prove the claim \eqref{4.3}, it suffices to show
\eqref{4.3} for $q\in(p_L,2]$. To this end, write
\begin{eqnarray*}&&\int_{\rn}\oz(|\lz|\cs_L(\az)(x))\,dx\\
&&\hs\le \int_{\rn}\oz(|\lz|\cs_L([I-e^{-r^2_BL}]^M\az)(x))\,dx
+\int_{\rn}\oz(|\lz|\cs_L((I-[I-e^{-r^2_BL}]^M)\az)(x))\,dx\\
&&\hs\ls\sum_{j=0}^\fz \int_{\rn}\oz(|\lz|\cs_L([I-e^{-r^2_BL}]^M
(\az\chi_{U_j(B)}))(x))\,dx\\
&&\hs\hs+\sum_{j=0}^\fz \sup_{1\le k\le M}
\int_{\rn}\oz\lf(|\lz|\cs_L\lf\{\lf[\frac{k}{M}r_B^{2}Le^{-\frac{k}{M}r^2_BL}\r]^M
(\chi_{U_j(B)}(r_B^{-2}L^{-1})^{M}\az)\r\}(x)\r)\,dx\\
&&\hs\equiv\sum_{j=0}^\fz \mathrm{H}_j+\sum_{j=0}^\fz\mathrm{I}_j.
\end{eqnarray*}

For each $j\ge 0$, let $B_j\equiv 2^jB$.
Since $\oz$ is concave, by the Jensen inequality and the H\"older inequality, we obtain
\begin{eqnarray*}
\mathrm{H}_j&&\ls \sum_{k=0}^\fz \int_{U_k(B_j)}\oz(|\lz|
\cs_L([I-e^{-r^2_BL}]^M(\az\chi_{U_j(B)}))(x))\,dx\\
&&\sim \sum_{k=0}^\fz \int_{2^kB_j}\oz(|\lz|\chi_{U_k(B_j)}(x)
\cs_L([I-e^{-r^2_BL}]^M(\az\chi_{U_j(B)}))(x))\,dx\\
&&\ls \sum_{k=0}^\fz |2^kB_j|\oz\lf(\frac{|\lz|}{|2^kB_j|}\int_{U_k(B_j)}
\cs_L([I-e^{-r^2_BL}]^M(\az\chi_{U_j(B)}))(x)\,dx\r)\\
&&\ls \sum_{k=0}^\fz |2^kB_j|\oz\lf(\frac{|\lz|}{|2^kB_j|^{1/q}}
\|\cs_L([I-e^{-r^2_BL}]^M(\az\chi_{U_j(B)}))\|_{L^q(U_k(B_j))}\r).
\end{eqnarray*}
By the proof of \cite[Lemma 4.2]{hm1} (see \cite[(4.22) and (4.27)]{hm1}),
we have that
for $k=0,\,1,\,2$,
$$\|\cs_L([I-e^{-r^2_BL}]^M(\az\chi_{U_j(B)}))\|_{L^q(U_k(B_j))}
\ls \|\az\|_{L^q(U_j(B))},$$
and for $k\ge 3$,
$$\|\cs_L([I-e^{-r^2_BL}]^M(\az\chi_{U_j(B)}))\|^2_{L^q(U_k(B_j))}\ls
k\lf(\frac{1}{2^{k+j}}\r)^{4M+2(n/2-n/q)}\|\az\|_{L^q(U_j(B))}^2,$$
which, together with Definition \ref{d4.2}, $2Mp_\oz>n(1-p_\oz/2)$ and Assumption (A),
implies that
\begin{eqnarray*}
\mathrm{H}_j&&\ls |B_j|\oz\lf(\frac{|\lz|2^{-j\ez}}{|B_j|\ro(|B_j|) }\r)+
\sum_{k=3}^\fz |2^kB_j|\oz\lf(\frac{|\lz|\sqrt k 2^{-{(2M+n/2-n/q)(j+k)}-j\ez}}
{|2^kB_j|^{1/q} |B_j|^{1-1/q}\ro(|B_j|) }\r)\\
&&\ls 2^{-jp_\oz\ez}\lf\{1+\sum_{k=3}^\fz \sqrt k2^{kn(1-p_\oz/q)}
2^{-{p_\oz(2M+n/2-n/q)(j+k)}}\r\}
|B_j|\oz\lf(\frac{|\lz|}{|B_j|\ro(|B_j|) }\r)\\
&&\ls 2^{-jp_\oz\ez}
|B_j|\oz\lf(\frac{|\lz|}{|B_j|\ro(|B_j|) }\r).
\end{eqnarray*}
Since $\ro$ is of lower type $1/\wz p_\oz-1$ and $\ez>n(1/p_\oz-1/\wz p_\oz)$, we further have
\begin{eqnarray}\label{4.4}
\sum_{j=0}^\fz \mathrm{H}_j&&\ls\sum_{j=0}^\fz 2^{-jp_\oz\ez}
|B_j|\lf\{\frac{|B|\ro(|B|)}{|B_j|\ro(|B_j|)}\r\}^{p_\oz}
\oz\lf(\frac{|\lz|}{|B|\ro(|B|) }\r)\nonumber\\
&& \ls\sum_{j=0}^\fz 2^{-jp_\oz\ez}|B_j|\lf\{\frac{|B|}{|B_j|}\r\}^{p_\oz/\wz p_\oz}
\oz\lf(\frac{|\lz|}{|B|\ro(|B|) }\r)\nonumber\\
&&\ls\sum_{j=0}^\fz 2^{-jp_\oz\ez} 2^{jn(1-p_\oz/\wz p_\oz)}|B|
\oz\lf(\frac{|\lz|}{|B|\ro(|B|) }\r)
\ls |B|\oz\lf(\frac{|\lz|}{|B|\ro(|B|) }\r).
\end{eqnarray}
Similarly, we have
$$\sum_{j=0}^\fz \mathrm{I}_j\ls |B|\oz\lf(\frac{|\lz|}{|B|\ro(|B|)) }\r),$$
which completes the proof of \eqref{4.3}, and hence,
the proof of Proposition \ref{p4.1}.
\end{proof}

\begin{prop}\label{p4.2}
Let $\oz$ satisfy Assumption (A), $\ez>n(1/p_\oz-1/p_\oz^+)$
and $M> \frac n2 (\frac1{p_\oz}-\frac 12)$. If
$f\in H_{\oz,L}(\rn)\cap L^2(\rn)$, then $f\in L^p(\rn)$ for all $p\in (p_L,2]$ and
there exist $(\oz,\fz,M,\ez)$-molecules $\{\az_j\}_{j=1}^\fz$  and
numbers $\{\lz_j\}_{j=1}^\fz\subset \cc$ such that
\begin{equation}\label{4.5}
  f=\sum_{j=1}^\fz\lz_j\az_j
\end{equation}
in both $H_{\oz,L}(\rn)$ and $L^p(\rn)$ for all $p\in (p_L,2]$.
Moreover, there exists a positive constant $C$ independent of $f$ such that
for all $f\in H_{\oz,L}(\rn)\cap L^2(\rn)$,
\begin{equation}\label{4.6}
\Lambda(\{\lz_j\az_j\}_j)\equiv\inf\lf\{\lz>0:\,\sum_{j=1}^\fz
|B_j|\oz\lf(\frac{|\lz_j|}{\lz
|B_j|\ro(|B_j|)}\r)\le1\r\}\le C\|f\|_{H_{\oz,L}(\rn)},
\end{equation}
where for each $j$, $\az_j$ is adapted to the ball $B_j$.
\end{prop}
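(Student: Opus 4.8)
The plan is to realize $f$ via a Calder\'on reproducing formula as $\pi_{L,M}$ applied to the function $F(x,t)\equiv t^2Le^{-t^2L}f(x)$ on $\rnz$, and then transport through $\pi_{L,M}$ the atomic decomposition of $F$ furnished by Corollary \ref{c3.1}. First I would check that $F\in T_\oz(\rnz)\cap T_2^2(\rnz)$: comparing \eqref{1.3} with the definition of $\ca$ shows $\ca(F)=\cs_Lf$ pointwise, so $\|F\|_{T_\oz(\rnz)}=\|\cs_Lf\|_{L(\oz)}=\|f\|_{H_{\oz,L}(\rn)}<\fz$; and Lemma \ref{l2.4} (with $k=1$, $p=2$, noting $2\in(p_L,\wz p_L)$) gives $\|F\|_{T_2^2(\rnz)}=\|\cs_Lf\|_{L^2(\rn)}\ls\|f\|_{L^2(\rn)}<\fz$.

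Next I would prove that $\pi_{L,M}(F)=f$ in $L^2(\rn)$. Substituting $F$ into \eqref{1.4} gives $\pi_{L,M}(F)=C_M\int_0^\fz(t^2L)^{M+2}e^{-2t^2L}f\,\frac{dt}{t}$; by the change of variables underlying the normalization $C_M\int_0^\fz t^{2(M+2)}e^{-2t^2}\,\frac{dt}{t}=1$ one has the scalar identity $C_M\int_0^\fz(t^2z)^{M+2}e^{-2t^2z}\,\frac{dt}{t}=1$ for $z$ in the relevant sector, so the bounded holomorphic functional calculus of $L$ on $L^2(\rn)$ (equivalently, a Calder\'on reproducing formula) shows that the $L^2$-convergent operator integral equals $f$. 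Since $F\in T_2^2(\rnz)$, Proposition \ref{p4.1}(i) at $p=2$ identifies this operator integral with the value at $F$ of the extension of $\pi_{L,M}$ there, obtained by approximating $F$ by its restrictions to compact subsets of $\rnz$; hence $\pi_{L,M}(F)=f$.

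Then, by Corollary \ref{c3.1} applied to $F$, there are $(\oz,\fz)$-atoms $\{a_j\}_j$, numbers $\{\lz_j\}_j\subset\cc$, and balls $\{B_j\}_j$ with $\supp a_j\subset\widehat{B_j}$ such that $F=\sum_j\lz_ja_j$ with convergence in $T_\oz(\rnz)$ and in $T_2^p(\rnz)$ for every $p\in[1,2]$, while Theorem \ref{t3.1} gives $\Lambda(\{\lz_ja_j\}_j)\ls\|F\|_{T_\oz(\rnz)}=\|f\|_{H_{\oz,L}(\rn)}$. Applying the operator $\pi_{L,M}$, which by Proposition \ref{p4.1} is bounded from $T_2^p(\rnz)$ to $L^p(\rn)$ for $p\in(p_L,2]$ and from $T_\oz(\rnz)$ to $H_{\oz,L}(\rn)$, and using the previous step, I obtain
\begin{equation*}
f=\pi_{L,M}(F)=\sum_j\lz_j\pi_{L,M}(a_j)\equiv\sum_j\lz_j\az_j
\end{equation*}
with convergence in $H_{\oz,L}(\rn)$ and in $L^p(\rn)$ for all $p\in(p_L,2]$; in particular $f\in L^p(\rn)$ for each such $p$, since every molecule $\az_j$ belongs to $L^p(\rn)$. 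By the claim proved inside Proposition \ref{p4.1} (valid for every $\ez\in(0,\fz)$, hence for the $\ez$ fixed here, where by Convention (C) one may take $\wz p_\oz$ close enough to $p_\oz^+$ that $\ez>n(\frac1{p_\oz}-\frac1{\wz p_\oz})$), each $\az_j=\pi_{L,M}(a_j)$ is a constant multiple, with constant independent of $j$, of an $(\oz,\fz,M,\ez)$-molecule adapted to $B_j$. Finally, since $\Lambda$ is positively homogeneous in its coefficients and depends only on the coefficients and the adapting balls, $\Lambda(\{\lz_j\az_j\}_j)\ls\Lambda(\{\lz_ja_j\}_j)\ls\|f\|_{H_{\oz,L}(\rn)}$, which is \eqref{4.6}.

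I expect the main obstacle to be the second step: one must make sure the extension of $\pi_{L,M}$ (which is a priori defined only by density, starting from compactly supported functions) really sends $F$ to $f$, reconciling the operator integral defining $\pi_{L,M}(F)$ with the $L^2$ identity coming from the functional calculus. The remaining steps are essentially assembly, but they rest decisively on Corollary \ref{c3.1} (hence on Proposition \ref{p3.1}): unlike the compactly supported decompositions used in \cite{amr,hm1}, it produces a single atomic decomposition of $F$ converging simultaneously in $T_\oz(\rnz)$ and in every $T_2^p(\rnz)$ with $p\in[1,2]$, which is exactly what yields the simultaneous $H_{\oz,L}(\rn)$- and $L^p(\rn)$-convergence of the molecular decomposition of $f$.
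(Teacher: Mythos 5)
Your proposal is correct and follows the paper's route in all essentials: realize $f=\pi_{L,M}(F)$ with $F=t^2Le^{-t^2L}f$ via the Calder\'on reproducing formula, apply Corollary \ref{c3.1} to decompose $F$ into tent-space atoms converging simultaneously in $T_\oz(\rnz)$ and $T_2^p(\rnz)$, push forward by $\pi_{L,M}$ to get molecules, and read off \eqref{4.6} from the equality $\Lambda(\{\lz_j\az_j\}_j)=\Lambda(\{\lz_ja_j\}_j)$. The one place your packaging diverges is the $H_{\oz,L}(\rn)$-convergence of \eqref{4.5}: you invoke the bounded extension $\pi_{L,M}\colon T_\oz(\rnz)\to H_{\oz,L}(\rn)$ of Proposition \ref{p4.1}(ii), whereas the paper re-derives the tail bound directly, using \eqref{4.3} and \eqref{3.5} to show $\int_\rn\oz(\cs_L(f-\sum_{j\le N}\lz_j\az_j)(x))\,dx\ls\sum_{j>N}|B_j|\oz(|\lz_j|/(|B_j|\ro(|B_j|)))\to0$. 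The two are almost the same computation (Proposition \ref{p4.1}(ii) itself rests on \eqref{4.3}), but the paper's version is deliberately more concrete about the identification: your argument gives convergence of the partial sums to the abstract completion element $\pi_{L,M}^{T_\oz}(F)\in H_{\oz,L}(\rn)$, while the $L^2$-functional-calculus step only identifies $\pi_{L,M}^{T_2^2}(F)=f$. To conclude $\pi_{L,M}^{T_\oz}(F)=f$ as an element of $H_{\oz,L}(\rn)$ one still needs a short bridging argument --- e.g.\ the paper's direct tail estimate on $\|\cs_L(f-\sum_{j\le N}\lz_j\az_j)\|_{L(\oz)}$, or a Fatou-type argument exploiting the already-established $L^p$-convergence of the partial sums to $f$ together with the $H_{\oz,L}$-Cauchy property. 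You flag the analogous identification subtlety for the $L^2$ step but pass over it silently for the $H_{\oz,L}$ step; this is the gap the paper's proof is explicitly constructed to close (cf.\ the discussion in the introduction of the ``convergence problem''), and it is worth spelling out.
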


\begin{proof}[\bf Proof.]
Let $f\in H_{\oz,L}(\rn)\cap L^2(\rn)$. For each $N\in\cn$, define
$O_N\equiv\{(x,t)\in\rnz:\, |x|< N, \,1/N<t<N\}$. Then by the
$L^2(\rn)$-functional calculi for $L$, we have
\begin{equation*}
f=C_{M}\int^\fz_0 (t^2L)^{M+2}e^{-2t^2L}f\frac{\,dt}{t}=
\lim_{N\to\fz}\pi_{L,M}(\chi_{O_N}(t^2Le^{-t^2L}f))
\end{equation*}
 in $L^2(\rn),$ where $M\in\cn$, $\pi_{L,M}$ and $C_M$ are as in \eqref{1.4}.

 On the other hand, by Definition \ref{d4.1} and Lemma \ref{2.4}, we have
 $t^2Le^{-t^2L}f\in T_2^2(\rnz)\cap T_\oz(\rnz)$. An application of
 Corollary \ref{c3.1} shows that $t^2Le^{-t^2L}f\in T_2^p(\rnz)$,
 which together with Proposition \ref{p4.1} (i) implies that
$\{\pi_{L,M}(\chi_{O_N}(t^2Le^{-t^2L}f))\}_N$ is a Cauchy sequence
in $L^p(\rn)$. Then via taking subsequence, we have
\begin{equation*}
f=\lim_{N\to\fz}\pi_{L,M}(\chi_{O_N}(t^2Le^{-t^2L}f))
\end{equation*}
in $L^p(\rn)$.

Now applying Theorem \ref{t3.1} and  Proposition \ref{p3.1} to $t^2Le^{-t^2L}f$,
we obtain $(\oz,\fz)$-atoms $\{a_j\}_{j=1}^\fz$ and numbers
$\{\lz_j\}_{j=1}^\fz\subset \cc$ such that $t^2Le^{-t^2L}f=\sum_{j=1}^\fz\lz_ja_j$
in $T_2^p(\rnz)$ and
$\Lambda(\{\lz_ja_j\}_j)\ls \|t^2Le^{-t^2L}f\|_{T_{\oz}(\rnz)}$,
which combined with Proposition \ref{p4.1} (i) further yields that
\begin{equation}\label{4.7}
f=\pi_{L,M}(t^2Le^{-t^2L}f)=\sum_{j=1}^\fz\lz_j\pi_{L,M}(a_j) \equiv
\sum_{j=1}^\fz\lz_j\az_j
\end{equation} in  $L^p(\rn)$ for $p\in (p_L,2]$.
By the proof of Proposition \ref{p4.1}, we know that $\az_j$
is a multiple of an $(\oz,\fz,M,\ez)$-molecule for any $\ez>0$,
and $M\in\cn$ and $M>\frac n2 (\frac1{p_\oz}-\frac 12)$.
Notice that $\Lambda(\{\lz_j\az_j\}_j)=\Lambda(\{\lz_ja_j\}_j)$. We therefore
obtain \eqref{4.6}.

To finish the proof of Proposition \ref{p4.2}, it remains to show that
\eqref{4.5} holds in $H_{\oz,L}(\rn)$. In fact, by Lemma \ref{l2.4},
\eqref{3.5}, \eqref{4.3} and \eqref{4.7} together with the continuity
and the subadditivity of $\oz$, we have
\begin{eqnarray*}
  \int_\rn \oz\lf(\cs_L\lf(f-\sum_{j=1}^N\lz_j\az_j\r)(x)\r)\,dx&&
  \le \sum_{j=N+1}^\fz \int_\rn \oz\lf(\cs_L(\lz_j\az_j)(x)\r)\,dx\\
  &&\ls \sum_{j=N+1}^\fz |B_j|\oz\lf(\frac{|\lz_j|}{|B_j|\ro(|B_j|)}\r)\to 0,
\end{eqnarray*}
as $N\to\fz$. We point out that here, in the last inequality, to use
\eqref{4.3}, we need to choose $\wz p_\oz$ as in Convention (C)
such that $\ez>n(1/p_\oz-1/\wz p_\oz),$ which is guaranteed by the assumption
$\ez>n(1/p_\oz-1/p_\oz^+)$. This combined with an argument similar to the
proof of Proposition \ref{p3.1} yields that $f=\sum_{j=1}^\fz\lz_j\az_j$
in $H_{\oz,L}(\rn)$, which completes the proof of Proposition \ref{p4.2}.
\end{proof}

\begin{cor}\label{c4.1}
Let $\oz$ satisfy Assumption (A), $\ez>n(1/p_\oz-1/p_\oz^+)$, $q\in (p_L,\wz p_L)$
and $M> \frac n2 (\frac1{p_\oz}-\frac 12)$. Then for every
$f\in H_{\oz,L}(\rn)$, there exist $(\oz,q,M,\ez)$-molecules $\{\az_j\}_{j=1}^\fz$
and numbers $\{\lz_j\}_{j=1}^\fz\subset \cc$ such that
$  f=\sum_{j=1}^\fz\lz_j\az_j$ in $H_{\oz,L}(\rn)$. Furthermore, if letting
$\Lambda(\{\lz_j\az_j\}_j)$ be as in \eqref{4.6}, then there exists a
positive constant $C$ independent of $f$
 such that $\Lambda(\{\lz_j\az_j\}_j)\le C\|f\|_{H_{\oz,L}(\rn)}.$
\end{cor}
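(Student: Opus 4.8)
The plan is to deduce Corollary \ref{c4.1} from Proposition \ref{p4.2} by a density argument, using that every $(\oz,\fz,M,\ez)$-molecule is in particular an $(\oz,q,M,\ez)$-molecule for the given $q\in(p_L,\wz p_L)$. If $f\in\wz H_{\oz,L}(\rn)$, Proposition \ref{p4.2} already furnishes the desired decomposition together with $\Lambda(\{\lz_j\az_j\}_j)\le C\|f\|_{H_{\oz,L}(\rn)}$, so only the general $f\in H_{\oz,L}(\rn)$ requires work. Assuming $f\ne0$, I would use Remark \ref{r4.1}(ii) to pick $\{f_k\}_{k\in\cn}\subset\wz H_{\oz,L}(\rn)$ with $f_k\to f$ in $H_{\oz,L}(\rn)$ and then pass to a subsequence (still denoted $\{f_k\}_k$) so that $\|f_1-f\|_{H_{\oz,L}(\rn)}\le\|f\|_{H_{\oz,L}(\rn)}$ and $\|f_{k+1}-f_k\|_{H_{\oz,L}(\rn)}\le 2^{-k}\|f\|_{H_{\oz,L}(\rn)}$ for all $k\ge1$. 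Setting $g_0\equiv f_1$ and $g_k\equiv f_{k+1}-f_k$ for $k\ge1$, the partial sums $\sum_{k=0}^Ng_k=f_{N+1}$ telescope, so $f=\sum_{k=0}^\fz g_k$ in $H_{\oz,L}(\rn)$; moreover, by the subadditivity of $\|\cdot\|_{H_{\oz,L}(\rn)}^{p_\oz}$ from Remark \ref{r4.1}(i), which also gives $\|f_1\|_{H_{\oz,L}(\rn)}\le 2^{1/p_\oz}\|f\|_{H_{\oz,L}(\rn)}$, one has $\sum_{k=0}^\fz\|g_k\|_{H_{\oz,L}(\rn)}^{p_\oz}\ls\|f\|_{H_{\oz,L}(\rn)}^{p_\oz}$.

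Next, apply Proposition \ref{p4.2} to each $g_k\in\wz H_{\oz,L}(\rn)$ to obtain $(\oz,q,M,\ez)$-molecules $\{\az_i^{(k)}\}_{i}$ and $\{\lz_i^{(k)}\}_i\subset\cc$ with $g_k=\sum_i\lz_i^{(k)}\az_i^{(k)}$ in $H_{\oz,L}(\rn)$ and $\Lambda_k\equiv\Lambda(\{\lz_i^{(k)}\az_i^{(k)}\}_i)\le C\|g_k\|_{H_{\oz,L}(\rn)}$, so $\sum_k\Lambda_k^{p_\oz}<\fz$. I would then re-index the countable collection $\{\lz_i^{(k)}\az_i^{(k)}\}_{k\in\zz_+,\,i\in\cn}$ as a single sequence $\{\lz_j\az_j\}_j$, arranging that within each block $k$ the molecules occur in the order $i=1,2,\dots$. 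To see $\sum_j\lz_j\az_j=f$ in $H_{\oz,L}(\rn)$, note that for the $N$-th partial sum $S_N$ of the re-indexed series one has $f-S_N=\sum_k r_k(N)$, where $r_k(N)\equiv\sum_{i>j_k(N)}\lz_i^{(k)}\az_i^{(k)}$ is a tail of the $k$-th block with $j_k(N)\to\fz$ as $N\to\fz$ for each $k$. By $p_\oz$-subadditivity, $\|f-S_N\|_{H_{\oz,L}(\rn)}^{p_\oz}\le\sum_k\|r_k(N)\|_{H_{\oz,L}(\rn)}^{p_\oz}$; using the uniform-in-$N$ bound $\|r_k(N)\|_{H_{\oz,L}(\rn)}\ls\Lambda_k$, which follows from \eqref{4.1} and \eqref{4.3} applied to the finite partial sums of the $k$-th molecular series, the sum over $k>K$ is $\ls\sum_{k>K}\Lambda_k^{p_\oz}$, hence small for $K$ large, while for each of the finitely many $k\le K$ one has $\|r_k(N)\|_{H_{\oz,L}(\rn)}\to0$ as $N\to\fz$ since the $k$-th block converges in $H_{\oz,L}(\rn)$; thus $\|f-S_N\|_{H_{\oz,L}(\rn)}\to0$.

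Finally, the countable analogue of Remark \ref{r3.1}(ii) — obtained by applying the subadditivity and strict lower type $p_\oz$ of $\oz$ to the infimum defining $\Lambda$ — gives $[\Lambda(\{\lz_j\az_j\}_j)]^{p_\oz}\le\sum_k\Lambda_k^{p_\oz}\ls\sum_k\|g_k\|_{H_{\oz,L}(\rn)}^{p_\oz}\ls\|f\|_{H_{\oz,L}(\rn)}^{p_\oz}$, which is the required estimate. The extraction of the rapidly convergent subsequence, the re-indexing, and this countable subadditivity of $\Lambda$ are routine; the point deserving the most care — and the main obstacle — is verifying that after re-indexing the single series still converges to $f$ in $H_{\oz,L}(\rn)$, not merely that each block converges, which is handled by combining the completeness of $H_{\oz,L}(\rn)$, the $p_\oz$-subadditivity of its quasi-norm, the summability $\sum_k\|g_k\|_{H_{\oz,L}(\rn)}^{p_\oz}<\fz$, and the uniform-in-$N$ block-tail estimate coming from \eqref{4.1}--\eqref{4.3}.
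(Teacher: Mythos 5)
Your argument follows the same route as the paper's: both reduce to Proposition \ref{p4.2} by choosing a sequence $\{f_k\}\subset\wz H_{\oz,L}(\rn)$ converging to $f$ with geometrically decaying increments, apply Proposition \ref{p4.2} to the telescoping differences, re-index the resulting double-indexed family of molecules, and invoke the countable $p_\oz$-subadditivity of $\Lambda$ from Remark \ref{r3.1}(ii). The only difference is that the paper states the $H_{\oz,L}(\rn)$-convergence of the re-indexed series without comment, whereas you spell out the block-tail interchange-of-limits justification using the uniform tail bound from \eqref{4.1}--\eqref{4.3}; this is a helpful elaboration but not a different method.
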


\begin{proof}[\bf Proof.]
If $f\in H_{\oz,L}(\rn)\cap L^2(\rn)$, then it immediately follows from
Proposition \ref{p4.2} that all results hold.

Otherwise, there exist $\{f_k\}_{k=1}^\fz\subset (H_{\oz,L}(\rn)\cap L^2(\rn))$
such that for all $k\in\cn$,
$$\|f-f_k\|_{H_{\oz,L}(\rn)}\le 2^{-k}\|f\|_{H_{\oz,L}(\rn)}.$$
 Set $f_0\equiv0.$ Then
$f=\sum_{k=1}^\fz (f_k-f_{k-1})$ in $H_{\oz,L}(\rn).$ By Proposition
\ref{p4.2}, we have that for all $k\in\cn$, $f_k-f_{k-1}=\sum_{j=1}^\fz
\lz_j^k\az_j^k$ in $H_{\oz,L}(\rn)$ and $\Lambda(\{\lz_j^k
a_j^k\}_j)\ls \|f_k-f_{k-1}\|_{H_{\oz,L}(\rn)}$, where for all $j$ and $k$,
$\az_j^k$ is an $(\oz,q,M,\ez)$-molecule.
Thus, $f=\sum_{k,j=1}^\fz\lz_j^k\az_j^k$ in $H_{\oz,L}(\rn)$,
and it further follows from Remark \ref{r3.1} (ii) that
$$[\Lambda(\{\lz_j^k \az_j^k\}_{k,j})]^{p_\oz}\le\sum_{k=1}^\fz
[\Lambda(\{\lz_j^k a_j^k\}_j)]^{p_\oz}\ls \sum_{k=1}^\fz\|f_k-
f_{k-1}\|_{H_{\oz,L}(\rn)}^{p_\oz}\ls
\|f\|_{H_{\oz,L}(\rn)}^{p_\oz},$$
which completes the proof of Corollary \ref{c4.1}.
\end{proof}

Let $H_{\oz,\rm fin}^{q,M,\ez}(\rn)$ denote the set of all finite
combinations of $(\oz,q,M,\ez)$-molecules. From Corollary
\ref{c4.1}, we immediately deduce the following density result.
\begin{cor}\label{c4.2}
Let $\oz$ satisfy Assumption (A), $\ez>n(1/p_\oz-1/p_\oz^+)$
and $M> \frac n2 (\frac1{p_\oz}-\frac 12)$. Then the space $H_{\oz,\rm
fin}^{q,M,\ez}(\rn)$ is dense in the space $H_{\oz,L}(\rn)$.
\end{cor}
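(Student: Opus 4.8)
The goal is to show that $H_{\oz,\rm fin}^{q,M,\ez}(\rn)$, the set of finite linear combinations of $(\oz,q,M,\ez)$-molecules, is dense in $H_{\oz,L}(\rn)$. The plan is to read off this statement from Corollary \ref{c4.1} together with a routine truncation argument on the molecular series. First I would fix $f\in H_{\oz,L}(\rn)$ and apply Corollary \ref{c4.1} to obtain $(\oz,q,M,\ez)$-molecules $\{\az_j\}_{j=1}^\fz$ and coefficients $\{\lz_j\}_{j=1}^\fz\subset\cc$ with $f=\sum_{j=1}^\fz\lz_j\az_j$ in $H_{\oz,L}(\rn)$ and $\Lambda(\{\lz_j\az_j\}_j)\ls\|f\|_{H_{\oz,L}(\rn)}$, where $\az_j$ is adapted to a ball $B_j$.

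Next I would fix $N\in\cn$ and observe that the partial sum $\sum_{j=1}^N\lz_j\az_j$ is a finite combination of $(\oz,q,M,\ez)$-molecules, hence lies in $H_{\oz,\rm fin}^{q,M,\ez}(\rn)$. It remains to show $\|f-\sum_{j=1}^N\lz_j\az_j\|_{H_{\oz,L}(\rn)}\to0$ as $N\to\fz$. This is exactly the tail estimate that appears at the end of the proof of Proposition \ref{p4.2}: using the subadditivity and continuity of $\oz$ together with the molecular estimate \eqref{4.3} (choosing $\wz p_\oz$ as in Convention (C) so that $\ez>n(1/p_\oz-1/\wz p_\oz)$, which the hypothesis $\ez>n(1/p_\oz-1/p_\oz^+)$ guarantees), one has
\begin{eqnarray*}
\int_\rn\oz\lf(\cs_L\lf(\sum_{j=N+1}^\fz\lz_j\az_j\r)(x)\r)\,dx
\le\sum_{j=N+1}^\fz\int_\rn\oz(\cs_L(\lz_j\az_j)(x))\,dx
\ls\sum_{j=N+1}^\fz|B_j|\oz\lf(\frac{|\lz_j|}{|B_j|\ro(|B_j|)}\r),
\end{eqnarray*}
and the right-hand side tends to $0$ as $N\to\fz$ because the full series $\sum_{j=1}^\fz|B_j|\oz(|\lz_j|/(|B_j|\ro(|B_j|)))$ converges (it is finite by the definition of $\Lambda(\{\lz_j\az_j\}_j)$ and the fact $\Lambda(\{\lz_j\az_j\}_j)\ls\|f\|_{H_{\oz,L}(\rn)}<\fz$). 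From here a standard argument using the upper type $1$ property of $\oz$ (rescale by $1/\bz$ for arbitrary $\bz>0$, as in the proof of Proposition \ref{p3.1}) converts this to $\|f-\sum_{j=1}^N\lz_j\az_j\|_{H_{\oz,L}(\rn)}\to0$, which is the desired density.

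I do not expect any real obstacle here: the corollary is essentially immediate once Corollary \ref{c4.1} is in hand, since the only content is that the molecular decomposition converges in the $H_{\oz,L}(\rn)$-norm and its partial sums are by construction finite molecular combinations. The one point requiring a little care — and the closest thing to a "hard part" — is making sure the quasi-norm convergence of the tail is deduced correctly from the convergence of $\int_\rn\oz(\cs_L(\cdot))$, but this is handled exactly as in Remark \ref{r4.1}(i) and the closing paragraph of Proposition \ref{p4.2}, so I would simply cite those. Thus the proof amounts to: invoke Corollary \ref{c4.1}, truncate, and quote the tail estimate from Proposition \ref{p4.2}.
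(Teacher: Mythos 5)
Your proposal is correct and matches the paper's approach: the paper gives no separate argument, stating only that the density result is ``immediately deduced'' from Corollary \ref{c4.1}, and indeed that corollary already asserts $f=\sum_{j}\lz_j\az_j$ \emph{in} $H_{\oz,L}(\rn)$, meaning the partial sums --- which are by definition finite molecular combinations in $H^{q,M,\ez}_{\oz,\rm fin}(\rn)$ --- converge to $f$ in the quasi-norm. Your additional re-derivation of the tail estimate is therefore superfluous, and it is also slightly circular: for general $f$ in the abstract completion $H_{\oz,L}(\rn)$ the expression $\cs_L\bigl(f-\sum_{j\le N}\lz_j\az_j\bigr)$ is not a priori meaningful as a pointwise-defined function, so the displayed bound you write does not apply directly (that issue is precisely what Proposition \ref{p4.2} and the double-series argument in Corollary \ref{c4.1} resolve). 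The clean proof is simply to cite the convergence clause of Corollary \ref{c4.1} and observe that partial sums lie in $H^{q,M,\ez}_{\oz,\rm fin}(\rn)$; nothing further is needed.
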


\subsection{Dual spaces of $H_{\oz,L}(\rn)$\label{s4.2}}

\hskip\parindent In this subsection, we study the dual space of the
Orlicz-Hardy
 space $H_{\oz,L}(\rn)$. We begin with some notions.

Following \cite{hm1}, for $\ez>0$ and $M\in\cn$, we introduce
the space
$$\cm_\oz^{M,\ez}(L)\equiv \{\mu\in L^2(\rn):\,
\|\mu\|_{\cm_\oz^{M,\ez}(L)}<\fz\},$$
where
$$\|\mu\|_{\cm_\oz^{M,\ez}(L)}\equiv \sup_{j\ge 0}
\lf\{2^{j\ez}|B(0,2^j)|^{1/2}
\ro(|B(0,2^j)|)\sum_{k=0}^M\|L^{-k}\mu\|_{L^2(U_j(B(0,1)))}\r\}.$$

Notice that if $\phi\in \cm_\oz^{M,\ez}(L)$ with norm 1, then $\phi$ is an
$(\oz,2,M,\ez)$-molecule adapted to $B(0,1)$. Conversely, if $\az$
is an $(\oz,2,M,\ez)$-molecule adapted to certain ball, then $\az\in
\cm_\oz^{M,\ez}(L)$.

Let $A_t$ denote either $(I+t^2L)^{-1}$ or $e^{-t^2L}$ and $f\in
(\cm_\oz^{M,\ez}(L))^\ast$, the dual of $\cm_\oz^{M,\ez}(L)$. We
claim that $(I-A_t^\ast)^Mf\in L^2_{\loc}(\rn)$ in the sense of distributions.
 In fact, for any ball $B$, if $\psi\in L^2(B)$, then it follows from the Gaffney
estimates via Lemmas \ref{l2.1} and \ref{l2.2} that $(I-A_t)^M\psi\in
\cm_\oz^{M,\ez}(L)$ for all $\ez>0$ and any fixed $t\in(0,\fz)$. Thus,
\begin{eqnarray*}
  |\la (I-A_t^\ast)^Mf,\psi\ra|\equiv|\la f,(I-A_t)^M\psi\ra|\le
  C(t,r_B,\dist(B,0))\|f\|_{(\cm_\oz^{M,\ez}(L))^\ast}\|\psi\|_{L^2(B)},
\end{eqnarray*}
which implies that $(I-A_t^\ast)^Mf\in L^2_{\loc}(\rn)$  in the sense of distributions.

Finally, for any $M\in \cn$, define
$$\cm_{\oz,L^\ast}^M(\rn)\equiv \bigcap_{\ez>n(1/p_\oz-1/p_\oz^+)}
(\cm_\oz^{M,\ez}(L))^\ast.$$

\begin{defn}\label{d4.4}
Let $q\in(p_L,\wz p_L)$, $\oz$ satisfy Assumption (A),
 $\ro$ be as in \eqref{2.4} and  $M> \frac n2 (\frac1{p_\oz}-\frac 12)$.
 A functional  $f\in\cm_{\oz,L}^M(\rn)$ is said to be in $\bmoq$ if
\begin{equation*}
\|f\|_{\bmoq}\equiv\sup_{B\subset\rn}\frac{1}{\ro(|B|)}\lf[\frac{1}{|B|}\int_B
|(I-e^{-r_B^2L})^Mf(x)|^q \,dx\r]^{1/q}< \fz,
\end{equation*}
where the supremum is taken over all balls $B$ of $\rn.$
\end{defn}

In what follows, when $q=2$, we denote $\bmoq$ simply by $\bmoo$.
The proofs of following Lemmas \ref{l4.1} and \ref{l4.2} are similar
to those of Lemmas 8.1 and 8.3 of \cite{hm1}, respectively; we omit the details.

\begin{lem}\label{l4.1}
Let $\oz$, $\ro$, $q$ and $M$ be as in Definition \ref{d4.4}.
A functional $f\in\cm_{\oz,L}^M(\rn)\subset\bmoq$ if and only if
\begin{equation*}
\sup_{B\subset\rn}\frac{1}{\ro(|B|)}\lf[\frac{1}{|B|}\int_B
|(I-(I+r_B^2L)^{-1})^Mf(x)|^q \,dx\r]^{1/q}<\fz.
\end{equation*}
Moreover, the quantity appeared in the left-hand side of the above formula
is equivalent to $\|f\|_{\bmoq}$.
\end{lem}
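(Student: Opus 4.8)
The plan is to prove the two inequalities implicit in the asserted equivalence. Write $\mathrm{A}(f)\equiv\sup_{B\subset\rn}\frac{1}{\ro(|B|)}\lf[\frac{1}{|B|}\int_B|(I-(I+r_B^2L)^{-1})^Mf(x)|^q\,dx\r]^{1/q}$ for the resolvent average appearing on the left-hand side of the displayed formula in the statement. Since $\{e^{-t^2L}\}_{t>0}$ and $\{(I+t^2L)^{-1}\}_{t>0}$ enter Lemmas \ref{l2.1}, \ref{l2.2} and \ref{l2.3} in essentially symmetric ways, I would first establish $\|f\|_{\bmoq}\ls\mathrm{A}(f)$. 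Fix $f\in\cm_{\oz,L}^M(\rn)$ and a ball $B=B(x_B,r_B)$; by the duality $(L^q(B))^\ast=L^{q'}(B)$ ($1/q+1/q'=1$), reduced by density to $\vz\in L^2(B)$ with $\|\vz\|_{L^{q'}(B)}\le1$, one has
$$\int_B(I-e^{-r_B^2L})^Mf\,\vz\,dx=\la f,\,(I-e^{-r_B^2L^\ast})^M(\chi_B\vz)\ra .$$
Because $I-e^{-r_B^2L^\ast}=r_B^2L^\ast\int_0^1e^{-sr_B^2L^\ast}\,ds$, the operator $(I-e^{-r_B^2L^\ast})^M$ carries $M$ powers of $L^\ast$ and, being a combination of heat operators, satisfies Gaussian off-diagonal bounds by Lemmas \ref{l2.1} and \ref{l2.2}; together with Lemma \ref{l2.3} this forces $(I-e^{-r_B^2L^\ast})^M(\chi_B\vz)$ to be a fixed multiple $c_\vz$, with $|c_\vz|\ls|B|^{1/q}\ro(|B|)$, of an $(\oz,q,M,\ez)$-molecule adapted to $B$ (for any $\ez$). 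Thus everything reduces to the pairing estimate
$$|\la f,\az\ra|\ls\mathrm{A}(f)\qquad\text{for every }(\oz,q,M,\ez)\text{-molecule }\az .$$

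For the pairing estimate, let $\az$ be adapted to $B'=B(x_{B'},r_{B'})$. Using $\bigl(I-(I+r_{B'}^2L^\ast)^{-1}\bigr)^{-1}=I+(r_{B'}^2L^\ast)^{-1}$, write $\az=\bigl(I-(I+r_{B'}^2L^\ast)^{-1}\bigr)^M\dz$ with $\dz\equiv\sum_{k=0}^M\binom{M}{k}\bigl(r_{B'}^{-2}(L^\ast)^{-1}\bigr)^k\az$; by Definition \ref{d4.2} (with $L$ replaced by $L^\ast$), each summand obeys the molecular size bounds, so $\|\dz\|_{L^q(U_j(B'))}\ls2^{-j\ez}|2^jB'|^{1/q-1}[\ro(|2^jB'|)]^{-1}$ for $j\in\zz_+$. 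Decompose $\dz=\sum_{j\ge0}\dz\chi_{U_j(B')}$ and move $\bigl(I-(I+r_{B'}^2L^\ast)^{-1}\bigr)^M$ onto $f$ (legitimate because each $\dz\chi_{U_j(B')}\in L^2$ has compact support, and this is precisely how the $L^2_{\loc}$ function $(I-(I+r_{B'}^2L)^{-1})^Mf$ is defined, as recorded before the lemma), to get
$$|\la f,\az\ra|\le\sum_{j\ge0}\|(I-(I+r_{B'}^2L)^{-1})^Mf\|_{L^q(U_j(B'))}\,\|\dz\|_{L^{q'}(U_j(B'))}.$$
The key observation is that $(I-(I+r_{B'}^2L)^{-1})^M$ has scale exactly $r_{B'}$: covering $2^jB'$ by $O(2^{jn})$ balls of radius $r_{B'}$ and invoking the definition of $\mathrm{A}(f)$ on each of them — where no distortion of $\ro$ occurs, since all these balls share the radius $r_{B'}$ — gives $\|(I-(I+r_{B'}^2L)^{-1})^Mf\|_{L^q(U_j(B'))}\ls2^{jn/q}|B'|^{1/q}\ro(|B'|)\mathrm{A}(f)$. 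Combining this with the molecular decay of $\dz$, passing from $L^q$ to $L^{q'}$ on $U_j(B')$, and using the type estimates for $\ro$ (Proposition \ref{p2.1}) makes the $j$-th summand $\ls2^{-j\ez}\mathrm{A}(f)$, and summation yields $|\la f,\az\ra|\ls\mathrm{A}(f)$. Back-substituting, $\|(I-e^{-r_B^2L})^Mf\|_{L^q(B)}\ls|B|^{1/q}\ro(|B|)\mathrm{A}(f)$, that is, $\|f\|_{\bmoq}\ls\mathrm{A}(f)$. For the reverse inequality $\mathrm{A}(f)\ls\|f\|_{\bmoq}$ one proceeds the same way, using the bounded, zero-free holomorphic multiplier $\Psi(\lz)\equiv(1+\lz)(1-e^{-\lz})/\lz$ on a sector to write $(I-(I+r_B^2L^\ast)^{-1})^M=(I-e^{-r_B^2L^\ast})^M\,\Psi(r_B^2L^\ast)^{-M}$, so that the relevant pairing is again expressed through the heat operator and the quantity $\|f\|_{\bmoq}$.

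The main obstacle is the accumulated quantitative bookkeeping rather than any isolated step: checking that $(I-e^{-r_B^2L^\ast})^M(\chi_B\vz)$ and $(I-(I+r_B^2L^\ast)^{-1})^M(\chi_B\vz)$ really are controlled multiples of molecules adapted to $B$ (repeated applications of the Gaffney/off-diagonal estimates of Lemmas \ref{l2.1}--\ref{l2.3} and careful counting of the $M$ powers of $L^\ast$), tracking the exponents of $\ro$ and of $|2^jB'|$ so that the final series converges — this is where the strict lower type $p_\oz$ and Assumption (A) are used, through $\ez>n(1/p_\oz-1/p_\oz^+)$ — justifying each distributional pairing with $f\in\cm_{\oz,L}^M(\rn)$, and handling the cases $q\ge2$ and $q<2$ (hence $q'\le2$ and $q'>2$) in the H\"older and $L^{q'}$--$L^q$ steps via Lemma \ref{l2.3}. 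Since all of this runs parallel to the proof of \cite[Lemma 8.1]{hm1}, with the power function there replaced by $\ro$ and with a general $q\in(p_L,\wz p_L)$ in place of $2$, I would, as the authors do, omit the details once the structure above is in place.
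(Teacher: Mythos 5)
The paper gives no proof: it says only that Lemma \ref{l4.1} is proved in the same way as \cite[Lemma 8.1]{hm1} and omits the details, so there is no argument in the paper to compare line by line. Your route is recognisably different in structure: \cite[Lemma 8.1]{hm1} compares $(I-e^{-r_B^2L})^Mf$ and $(I-(I+r_B^2L)^{-1})^Mf$ directly, writing one as a bounded holomorphic function of $r_B^2L$ applied to the other (equivalently, telescoping $A^M-B^M$), decomposing into annuli and invoking Gaffney. You instead dualise $L^q(B)$ against $L^{q'}(B)$, observe that the adjoint test function is a controlled multiple of a molecule, and then estimate the pairing using the resolvent inversion $\az=(I-(I+r_{B'}^2L^\ast)^{-1})^M\dz$, $\dz=(I+(r_{B'}^2L^\ast)^{-1})^M\az$. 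That inversion step, together with your observation that covering $U_j(B')$ by $O(2^{jn})$ balls of the \emph{same} radius $r_{B'}$ avoids any $\ro$-distortion, is a clean way to get $\|(I-(I+r_{B'}^2L)^{-1})^Mf\|_{L^q(U_j(B'))}\ls 2^{jn/q}|B'|^{1/q}\ro(|B'|)\mathrm{A}(f)$, and the exponents then cancel exactly ($n/q+n/q'-n=0$) so the $j$-th summand is $\ls 2^{-j\ez}\ro(|B'|)/\ro(|2^jB'|)\,\mathrm{A}(f)\le 2^{-j\ez}\mathrm{A}(f)$. So the skeleton of your argument is sound and essentially equivalent in content to the cited one.

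Two slips in the write-up should be corrected, because as stated they break the computation for part of the range $q\in(p_L,\wz p_L)$. First, since $\vz\in L^{q'}(B)$, the function $(I-e^{-r_B^2L^\ast})^M(\chi_B\vz)$ has Gaffney decay in $L^{q'}$, so it is a controlled multiple of an \emph{$(\oz,q',M,\ez)$}-molecule, not an $(\oz,q,M,\ez)$-molecule; correspondingly $\dz$ obeys $L^{q'}(U_j(B'))$ molecular bounds. With that correction, $\|(I-(I+r_{B'}^2L)^{-1})^Mf\|_{L^q(U_j(B'))}\cdot\|\dz\|_{L^{q'}(U_j(B'))}$ matches H\"older exponents directly and there is nothing to ``pass from $L^q$ to $L^{q'}$''; your proposed passage $\|\dz\|_{L^{q'}(U_j(B'))}\le|U_j(B')|^{1/q'-1/q}\|\dz\|_{L^q(U_j(B'))}$ only holds when $q\ge2$, so the uncorrected version genuinely fails when $q<2$. (The same index correction also repairs the claim $\dz\chi_{U_j(B')}\in L^2$: in your actual application $\vz\in L^2(B)$, so $\az$ and $\dz$ are honestly $L^2$, and the pairing with $f\in\cm_{\oz,L}^M(\rn)$ is defined by the remarks preceding Definition \ref{d4.4}; for an arbitrary $(\oz,q',M,\ez)$-molecule with $q'<2$ this would not be automatic.) Second, $\Psi(\lz)=(1+\lz)(1-e^{-\lz})/\lz$ is \emph{not} bounded on a sector (it grows like $\lz$); what you need, and use, is that $\Psi^{-1}$ is bounded and holomorphic with decay at $0$ and $\fz$ after subtracting $(1+\lz)^{-1}$, which is what makes $\Psi(r_B^2L^\ast)^{-M}$ enjoy off-diagonal bounds. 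With these two points fixed, the proposal is a coherent alternative to the omitted proof and at a comparable level of detail.
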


\begin{lem}\label{l4.2}
Let $\oz$, $\ro$ and $M$ be as in Definition \ref{d4.4}. Then
there exists a positive constant $C$ such that for all $f\in \bmoo$,
$$\sup_{B\subset \rn}\frac{1}{\ro(|B|)}\lf[\frac{1}{|B|}\iint_{\widehat B}
|(t^2L)^Me^{-t^2L}f(x)|^2 \frac{\,dx\,dt}{t}\r]^{1/2}\le
C\|f\|_{\bmoo}.$$
\end{lem}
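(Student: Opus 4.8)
The plan is to reduce the square-function estimate for the ``solid'' Carleson-type average of $(t^2L)^Me^{-t^2L}f$ over a tent $\widehat B$ to the analogous estimate in Definition \ref{d4.4} (or its resolvent version in Lemma \ref{l4.1}) for the $L^2$-average of $(I-e^{-r_B^2L})^Mf$ over dilates of $B$. The crucial algebraic fact is that, by the $L^2$-functional calculus for $L$, one can write, for each $t\in(0,r_B]$,
\begin{equation*}
(t^2L)^Me^{-t^2L}=(t^2L)^Me^{-t^2L}\bigl(I-e^{-r_B^2L}\bigr)^M\bigl(I-e^{-r_B^2L}\bigr)^{-M}
\end{equation*}
is not directly usable, so instead I would use the identity $(I-e^{-r_B^2L})^M=\sum_{\ell}c_\ell\,e^{-\ell r_B^2L}$ expanded binomially together with the reproducing relation $(t^2L)^Me^{-t^2L}=\varphi(t^2L)$ for a suitable holomorphic $\varphi$, to express $(t^2L)^Me^{-t^2L}f$ as an average against $(I-e^{-r_B^2L})^Mf$ plus a remainder controlled by the off-diagonal decay. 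Concretely, following the proof of \cite[Lemma 8.3]{hm1}, I expect to split $f$ spatially as $f=\sum_{j\ge 0}f\chi_{U_j(\widetilde B)}$ for an enlarged ball $\widetilde B$, treat the local part $j=0,1,2$ by the definition of $\|f\|_{\bmoo}$ applied to a controlled number of dilates of $B$, and treat the far parts $j\ge 3$ by the Gaffney/off-diagonal estimates of Lemmas \ref{l2.1} and \ref{l2.2}, which give exponential decay $e^{-c(2^jr_B/t)^2}$ that is integrable in $t\in(0,r_B)$ against $dt/t$ after accounting for the polynomial factors $t^{-n/2}$ coming from $L^1$-to-$L^2$ mapping.

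First I would fix a ball $B=B(x_B,r_B)$ and, using $L^2$-boundedness of $(t^2L)^Me^{-t^2L}$ uniformly in $t$ (Lemma \ref{l2.2}), reduce to estimating $\iint_{\widehat B}|(t^2L)^Me^{-t^2L}(f\chi_{U_j(2B)})(x)|^2\,dx\,dt/t$ for each $j$. For $j\in\{0,1,2\}$ I would invoke Lemma \ref{l4.1} to replace $e^{-t^2L}$-averages by $(I-(I+t^2L)^{-1})$-type quantities, or more simply observe that $\iint_{\widehat B}|(t^2L)^Me^{-t^2L}g|^2\,dx\,dt/t\lesssim\|g\|_{L^2(cB)}^2$ whenever $\supp g\subset cB$, by Lemma \ref{l2.4} (boundedness of $\mathcal S_L^M$ on $L^2$); then bound $\|f\chi_{U_j(2B)}\|_{L^2}$ crudely by decomposing $f$ through a telescoping sum $\sum_k (I-e^{-r^2L})^M f$-blocks — this is exactly where $\|f\|_{\bmoo}$ and $\rho(|B|)$ enter, since the definition controls precisely the $L^2(B)$-norm of $(I-e^{-r_B^2L})^Mf$ by $\rho(|B|)|B|^{1/2}$. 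For $j\ge 3$, $\dist(U_j(2B),B)\sim 2^jr_B$, so Lemma \ref{l2.2} gives
\begin{equation*}
\|(t^2L)^Me^{-t^2L}(f\chi_{U_j(2B)})\|_{L^2(B)}\lesssim e^{-c(2^jr_B/t)^2}\|(I-e^{-r_B^2L})^M f\|_{L^2(U_j(2B))}
\end{equation*}
after again telescoping to bring in the $(I-e^{-r_B^2L})^M$ factor; squaring, integrating $dx\,dt/t$ over $\widehat B$, and summing the geometric series in $j$ using $\int_0^{r_B}e^{-c(2^jr_B/t)^2}\frac{dt}{t}\lesssim 2^{-jN}$ for any $N$, together with the lower-type property of $\rho$ (Proposition \ref{p2.1}) to absorb the factors $\rho(|2^jB|)/\rho(|B|)$, yields the claimed bound $\lesssim[\rho(|B|)]^2|B|\,\|f\|_{\bmoo}^2$.

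The main obstacle I anticipate is the telescoping/comparison step that converts an $e^{-t^2L}$-average (or a far-away piece of $f$) into an expression involving $(I-e^{-r_B^2L})^Mf$, since $f$ is only a functional in $\mathcal M^M_{\omega,L^\ast}(\rn)$ and the manipulations must be justified in the sense of distributions against molecules in $\mathcal M_\omega^{M,\ez}(L)$, as in the claim preceding Definition \ref{d4.4}; one must be careful that all the operators applied to $f$ land in $L^2_{\mathrm{loc}}$ and that the binomial expansions and semigroup identities are legitimate pairings. This is handled in \cite{hm1} by writing $(t^2L)^Me^{-t^2L}=(t^2L)^Me^{-t^2L}(I-e^{-r_B^2L})^{-M}(I-e^{-r_B^2L})^M$ only formally and instead using a resolvent/heat-semigroup identity of the form $\varphi(t^2L)=\int\psi_{t,r_B}(L)\,(I-e^{-r_B^2L})^M(\cdot)$ with $\psi_{t,r_B}$ enjoying Gaffney bounds; I would follow that route verbatim, citing Lemmas \ref{l2.1} and \ref{l2.2} for the needed off-diagonal estimates, and omit the routine computations exactly as the paper proposes to do ("we omit the details").
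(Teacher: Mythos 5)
Your proposal is correct and takes the same route as the paper, which simply refers to the proof of \cite[Lemma 8.3]{hm1} and omits the details; your sketch accurately reconstructs that argument (factor $(t^2L)^Me^{-t^2L}$ through a Gaffney-bounded holomorphic function of $L$ composed with $(I-e^{-r_B^2L})^M$, decompose spatially into annuli, apply off-diagonal estimates to the far pieces, and absorb the $\rho$-factors by the type properties of $\rho$ coming from Proposition \ref{p2.1}). One small slip: to control $\rho(|2^jB|)/\rho(|B|)$ from above for $j\ge 0$ you need the \emph{upper}-type property of $\rho$ (equivalently the strict lower type $p_\omega$ of $\omega$), giving $\rho(|2^jB|)/\rho(|B|)\lesssim 2^{jn(1/p_\omega-1)}$, rather than its lower type, though this does not affect the argument since the exponential off-diagonal decay dominates this polynomial growth in either case.
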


The following lemma is a slight variant of Lemma 8.4 and Remark of Section 9 in
\cite{hm1}.
\begin{lem}\label{l4.3}
Let $\oz$, $\ro$ and $M$ be as in Definition \ref{d4.4}, $q\in (p_{L^\ast},2]$,
$\ez,\,\ez_1>0$ and $\wz M>M+\ez_1+\frac n4$. Suppose
that $f\in \cm_{\oz,L^\ast}^M(\rn)$ satisfies
\begin{equation}\label{4.8}
\int_{\rn}\frac{|(I-(I+L^\ast)^{-1})^Mf(x)|^q}{1+|x|^{n+\ez_1}}\,dx<\fz.
\end{equation}
Then for every $(\oz,q',\wz M,\ez)$-molecule $\az$,
\begin{equation*}
\la f, \az\ra =\wz C_M\iint_{\rnz} (t^2L^\ast)^Me^{-t^2L^\ast}f(x)
\overline{t^2Le^{-t^2L}\az(x)}\frac{\,dx\,dt}{t},
\end{equation*}
where $q'\in [2,\fz)$ satisfying
$1/q+1/q'=1$ and $\wz C_M$ is a positive constant satisfying $\wz C_M\int_0^\fz
t^{2(M+1)}e^{-2t^2}\frac{\,dt}{t}=1.$
\end{lem}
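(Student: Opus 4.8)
The plan is to establish the Calder\'on reproducing formula of Lemma \ref{l4.3} by reducing it to a known identity and then justifying the manipulations under the stated hypotheses. First I would recall the crucial normalization: the constant $\wz C_M$ is chosen so that $\wz C_M\int_0^\fz t^{2(M+1)}e^{-2t^2}\frac{\,dt}{t}=1$, which means that, at the level of the $L^2(\rn)$-functional calculus, $\wz C_M\int_0^\fz (t^2L^\ast)^M e^{-t^2L^\ast}\,(t^2L)e^{-t^2L}\,\frac{\,dt}{t}$ acts as the identity operator on $L^2(\rn)$. Since an $(\oz,q',\wz M,\ez)$-molecule $\az$ belongs to $L^{q'}(\rn)\subset L^2(\rn)$ (note $q'\ge 2$ and the molecular decay forces $\az\in L^2$), the formal computation
\begin{equation*}
\la f,\az\ra=\lf\la f,\wz C_M\int_0^\fz (t^2L)^Me^{-t^2L}(t^2L)e^{-t^2L}\az\,\frac{\,dt}{t}\r\ra
=\wz C_M\iint_{\rnz}(t^2L^\ast)^Me^{-t^2L^\ast}f(x)\,\overline{t^2Le^{-t^2L}\az(x)}\,\frac{\,dx\,dt}{t}
\end{equation*}
is what we want; the whole content of the lemma is to make sense of the pairing with $f\in\cm_{\oz,L^\ast}^M(\rn)$ (which is only a functional, not an $L^2$ function) and to legitimize moving $f$ inside the $t$-integral.

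The key steps, in order, are as follows. Step one: truncate the $t$-integral to $[\dz,R]$ with $0<\dz<R<\fz$, so that the truncated operator $T_{\dz,R}\az\equiv\wz C_M\int_\dz^R (t^2L)^{\wz M}e^{-t^2L}(t^2L)^{\wz M+1-(\wz M-M)}\cdots$ — more precisely the operator built from $(t^2L)^{M+1}e^{-2t^2L}$-type kernels applied to $\az$ — is a well-defined $L^2(\rn)$ function with, by the Gaffney estimates (Lemmas \ref{l2.1}, \ref{l2.2}), controlled spatial decay; this lets us pair it legitimately against $f\in(\cm_\oz^{M,\ez}(L^\ast))^\ast$ by checking $T_{\dz,R}\az\in\cm_\oz^{M,\ez}(L)$ up to a constant depending on $\dz,R$. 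Step two: on this truncated level, Fubini's theorem and the self-adjointness of $e^{-t^2L^\ast}$ relative to $e^{-t^2L}$ give the identity with $\int_0^\fz$ replaced by $\int_\dz^R$. Step three: let $\dz\to0$ and $R\to\fz$. On the right-hand side this requires dominated convergence for the double integral $\iint_{\rnz}(t^2L^\ast)^Me^{-t^2L^\ast}f\cdot\overline{t^2Le^{-t^2L}\az}\,\frac{\,dx\,dt}{t}$; here one splits $\rn$ into the dyadic annuli $U_j(B)$ around the ball $B$ adapted to $\az$, uses the molecular cancellation and decay of $\az$ together with Gaffney bounds for $t^2Le^{-t^2L}\az$ (small $t$ and large $t$ behavior), and uses hypothesis \eqref{4.8} — the weighted integrability $\int_\rn|(I-(I+L^\ast)^{-1})^Mf(x)|^q(1+|x|)^{-n-\ez_1}\,dx<\fz$ — to control the growth of $(t^2L^\ast)^Me^{-t^2L^\ast}f$ via the representation of this quantity in terms of $(I-(I+L^\ast)^{-1})^Mf$; the condition $\wz M>M+\ez_1+\frac n4$ is exactly what makes the $t$-decay of $t^2Le^{-t^2L}\az$ (there are $\wz M$ cancellations available) beat the at-most-polynomial growth allowed by \eqref{4.8}. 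On the left-hand side one checks that $\la f,\az-T_{\dz,R}\az\ra\to0$, which follows once $T_{\dz,R}\az\to\az$ in $\cm_\oz^{M,\ez}(L)$-sense or at least weakly against $f$, again via Gaffney estimates and dominated convergence.

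The main obstacle I expect is Step three, specifically the justification of dominated convergence and of the convergence $\la f,T_{\dz,R}\az\ra\to\la f,\az\ra$ when $f$ is merely a functional on $\cm_\oz^{M,\ez}(L^\ast)$ rather than a genuine function: one cannot directly integrate $f$ against anything, so all pairings must be routed through the molecule space, and the growth/decay bookkeeping between the $\wz M$-fold cancellation of $\az$, the $M$-fold structure of $f$, and the weight $(1+|x|)^{-n-\ez_1}$ in \eqref{4.8} must be tight. This is precisely where the inequality $\wz M>M+\ez_1+\frac n4$ enters and must be used carefully; I would isolate this as the technical heart of the proof. Since this lemma is stated to be a slight variant of Lemma 8.4 and the Remark in Section 9 of \cite{hm1}, I would model the truncation-and-limit argument on that source, adapting the weight $\ro(|B|)$ and the Orlicz molecular norms in place of the $L^1$-adapted quantities there, and invoke Lemmas \ref{l2.1}--\ref{l2.3} and the definition of $\cm_{\oz,L^\ast}^M(\rn)$ for the off-diagonal estimates at each stage.
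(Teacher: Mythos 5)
Your plan is mathematically sound, but it reconstructs the entire truncation-and-limit argument that lives inside the cited reference, whereas the paper's own proof is a one-line reduction that sidesteps all of that. The paper observes that for $\wz\oz(t)\equiv t$ the function $\wz\ro\equiv 1$, so an $(\oz,q',\wz M,\ez)$-molecule $\az$ adapted to $B$ becomes, after multiplication by the constant $\ro(|B|)$, an $(\wz\oz,q',\wz M,\ez)$-molecule (i.e.\ a molecule in the sense of \cite{hm1}, up to a harmless constant since $\ro$ is of nonnegative lower type so $\ro(|B|)/\ro(|2^jB|)\lesssim 1$). The hypothesis \eqref{4.8} and the constraint $\wz M>M+\ez_1+\frac n4$ do not involve $\oz$ at all, so the already-established identity of Lemma 8.4 and the Remark of Section 9 in \cite{hm1} applies to $\ro(|B|)\az$ verbatim; both sides of the reproducing formula are linear in the molecule, so one divides by $\ro(|B|)$ to get the statement for $\az$. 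Your Steps one through three essentially re-prove the content of the cited Hofmann--Mayboroda lemma -- truncating in $t$, pairing the truncation against $f$ inside $\cm_\oz^{M,\ez}(L)$, and passing to the limit via Gaffney estimates and \eqref{4.8} -- and you correctly pinpoint that the balance between $\wz M$-fold cancellation and the weight $(1+|x|)^{-n-\ez_1}$ is the technical heart. That route is valid, but it buys nothing here: the generalization from $\oz(t)=t$ to a general Orlicz $\oz$ is purely a rescaling of the molecule, and the rest of the work has already been done in the reference. If you keep your approach, you would in effect be proving the $\oz(t)=t$ case from scratch (with the $\ro(|B|)$ factor carried along cosmetically), which is more than the lemma asks for.
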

\begin{proof}[\bf Proof.]
If we let $\oz(t)\equiv t$ for all $t\in (0,\fz)$, then this lemma are just
Lemma 8.4 and Remark of Section 9 in \cite{hm1}.

Otherwise, let $\az$ be an $(\oz,q',\wz M,\ez)$-molecule adapted to a ball $B$.
Then from Definition \ref{d4.2}, it is easy to see that
$\ro(|B|)\az$ is an $(\wz \oz,q',\wz M,\ez)$-molecule, where
$\wz \oz(t)\equiv t$ for all $t\in (0,\fz)$, and
hence Lemma \ref{l4.3} holds for $\ro(|B|)\az$, which implies the desired
conclusion and hence, completes the proof of Lemma \ref{l4.3}.
\end{proof}

From Lemma \ref{l4.1}, it is easy to follow that all $f\in \bmoq$
satisfy \eqref{4.8} for all $\ez_1\in (0,\fz)$, and hence, Lemma \ref{l4.3}
holds for all $f\in \bmoq$.

Now, let us give the main result of this section.
\begin{thm}\label{t4.1}
Let $\oz$ satisfy Assumption (A), $\ro$ be as in \eqref{2.4},
$\ez>n(1/p_\oz-1/p_\oz^+)$,
$M> \frac n2 (\frac1{p_\oz}-\frac 12)$ and $\wz M>M+\frac n4$. Then
$(H_{\oz,L}(\rn))^\ast$, the dual space of $H_{\oz,L}(\rn)$,
coincides with $\mathrm{BMO}^{M}_{\ro,L^\ast}(\rn)$
 in the following sense:

(i) Let  $g\in \mathrm{BMO}^{M}_{\ro,L^\ast}(\rn)$. Then the linear
functional $\ell$, which is initially defined on $H^{2,\wz M,\ez}
_{\oz,\rm fin}(\rn)$ by
\begin{equation}\label{4.9}
\ell(f)\equiv \la g, f \ra,
\end{equation}
has a unique extension to $H_{\oz,L}(\rn)$ with
$\|\ell\|_{(H_{\oz,L}(\rn))^\ast}\le
C\|g\|_{\mathrm{BMO}^{s}_{\ro,L^\ast}(\rn)},$ where $C$ is a positive
constant independent of $g.$

(ii) Conversely,  for any
$\ell\in (H_{\oz,L}(\rn))^\ast$, then $\ell \in
\mathrm{BMO}^{M}_{\ro,L^\ast}(\rn)$, \eqref{4.9} holds for all
$f\in H^{2,M,\ez}_{\oz,\rm fin}(\rn)$ and
$\|\ell\|_{\mathrm{BMO}^{M}_{\ro,\,L^\ast}(\rn)}\le
C\|\ell\|_{(H_{\oz,L}(\rn))^\ast}$, where $C$ is a positive
constant independent of $\ell.$
\end{thm}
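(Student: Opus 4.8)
The plan is to prove the two inclusions separately, exploiting the molecular decomposition (Proposition \ref{p4.2}, Corollaries \ref{c4.1}, \ref{c4.2}) together with the Calder\'on reproducing formula of Lemma \ref{l4.3}. For part (i), suppose $g\in\mathrm{BMO}^M_{\ro,L^\ast}(\rn)$. I would first show that the pairing $\la g,f\ra$ in \eqref{4.9} is absolutely convergent for $f\in H^{2,\wz M,\ez}_{\oz,\rm fin}(\rn)$; this is where Lemma \ref{l4.3} enters, rewriting $\la g,\az\ra$ as $\wz C_M\iint_{\rnz}(t^2L^\ast)^Me^{-t^2L^\ast}g(x)\,\overline{t^2Le^{-t^2L}\az(x)}\,\frac{dx\,dt}{t}$ for each molecule $\az$. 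Using Lemma \ref{l4.2} to control the $\ro$-Carleson norm of $(t^2L^\ast)^Me^{-t^2L^\ast}g$, and the tent-space bound $\|t^2Le^{-t^2L}\az\|_{T_\oz(\rnz)}\ls 1$ that comes from the molecular estimates in Definition \ref{d4.2} (the same annular decomposition as in the proof of Proposition \ref{p4.1}, estimate \eqref{4.3}), one gets $|\la g,\az\ra|\ls \|g\|_{\mathrm{BMO}^M_{\ro,L^\ast}(\rn)}$ for a single molecule, with the correct $|B|$–$\ro(|B|)$ normalization so that, summing over a molecular decomposition $f=\sum_j\lz_j\az_j$ with $\Lambda(\{\lz_j\az_j\})\ls\|f\|_{H_{\oz,L}(\rn)}$, the series converges and $|\ell(f)|\ls\|g\|_{\mathrm{BMO}^M_{\ro,L^\ast}(\rn)}\|f\|_{H_{\oz,L}(\rn)}$. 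Since $H^{2,\wz M,\ez}_{\oz,\rm fin}(\rn)$ is dense in $H_{\oz,L}(\rn)$ by Corollary \ref{c4.2}, $\ell$ extends uniquely with the stated bound.

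For the Carleson-type estimate just invoked, the key quantitative step is: for a molecule $\az$ adapted to $B=B(x_B,r_B)$,
\[
\iint_{\rnz}|(t^2L^\ast)^Me^{-t^2L^\ast}g(x)|\,|t^2Le^{-t^2L}\az(x)|\,\frac{dx\,dt}{t}
\ls\|g\|_{\mathrm{BMO}^M_{\ro,L^\ast}(\rn)},
\]
which I would prove by splitting $\rnz$ into the Carleson box $\widehat{2B}$ and the annular regions over $U_j(B)$, $j\ge1$; on $\widehat{2B}$ apply Cauchy--Schwarz with Lemma \ref{l4.2} and the $T^2_2$-normalization of $\az$, and on the far annuli use the Gaffney/off-diagonal decay of $t^2Le^{-t^2L}$ from Lemmas \ref{l2.1}--\ref{l2.3} against the molecular decay $2^{-j\ez}$, choosing $\ez$ large (as in Convention (C), $\ez>n(1/p_\oz-1/\wz p_\oz)$) to sum the geometric series. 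This mirrors \eqref{4.4} in the proof of Proposition \ref{p4.1}.

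For part (ii), let $\ell\in(H_{\oz,L}(\rn))^\ast$. The claim is that $\ell$, regarded as a functional on the molecules (which all lie in $H_{\oz,L}(\rn)\cap L^2(\rn)$), defines an element of $\cm^M_{\oz,L^\ast}(\rn)$: since each $(\oz,2,M,\ez)$-molecule adapted to a ball normalized by $\ro(|B|)$ has $H_{\oz,L}(\rn)$-norm $\ls1$, $\ell$ restricted to $\cm_\oz^{M,\ez}(L)$ is bounded, i.e. $\ell\in(\cm_\oz^{M,\ez}(L))^\ast$ for every admissible $\ez$, hence $\ell\in\cm^M_{\oz,L^\ast}(\rn)$. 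It then remains to estimate $\|\ell\|_{\mathrm{BMO}^M_{\ro,L^\ast}(\rn)}$: fix a ball $B$, and test against functions $\psi\in L^2(B)$ with $\|\psi\|_{L^2(B)}\le1$; the function $(I-e^{-r_B^2L})^M\psi$, suitably normalized by $|B|^{1/2}\ro(|B|)$, is (a fixed multiple of) an $(\oz,2,M,\ez)$-molecule adapted to $B$ by the Gaffney estimates (Lemmas \ref{l2.1}, \ref{l2.2}), so $|\la\ell,(I-e^{-r_B^2L})^M\psi\ra|\ls\ro(|B|)|B|^{1/2}\|\ell\|_{(H_{\oz,L}(\rn))^\ast}$. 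By definition of the adjoint, $\la(I-e^{-r_B^2L^\ast})^M\ell,\psi\ra=\la\ell,(I-e^{-r_B^2L})^M\psi\ra$, so taking the supremum over such $\psi$ gives $\|(I-e^{-r_B^2L^\ast})^M\ell\|_{L^2(B)}\ls\ro(|B|)|B|^{1/2}\|\ell\|_{(H_{\oz,L}(\rn))^\ast}$, which is exactly the $\mathrm{BMO}^M_{\ro,L^\ast}(\rn)$ bound. Finally, that \eqref{4.9} holds for $f\in H^{2,M,\ez}_{\oz,\rm fin}(\rn)$ with this identification follows by unwinding the definitions and the fact that, on the dense finite-molecular subspace, the action of $\ell$ is determined by its values on molecules.

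The main obstacle I expect is the first step of part (i): making the pairing $\la g,f\ra$ well defined and proving the single-molecule Carleson estimate with the precise $|B|\ro(|B|)$ normalization needed for the series $\sum_j\lz_j\la g,\az_j\ra$ to converge and be controlled by $\Lambda(\{\lz_j\az_j\})$. This requires combining Lemma \ref{l4.3} (to pass to the integral form), Lemma \ref{l4.2} (to bring in the $\mathrm{BMO}^M_{\ro,L^\ast}(\rn)$ norm via its Carleson-measure incarnation), the off-diagonal bounds for the far annuli, and the careful bookkeeping of the $\ro$–weights exactly as in the passage \eqref{4.2}--\eqref{4.4}; the interplay between the concavity/lower-type-$p_\oz$ of $\oz$ and the lower-type of $\ro$ (Proposition \ref{p2.1}) is what forces the choice $\ez>n(1/p_\oz-1/p_\oz^+)$ and $\wz M>M+\frac n4$.
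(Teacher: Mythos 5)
Your part (ii) is essentially the paper's argument: you test the functional against $L^2(B)$ functions mapped through a power of a Gaffney-type operator at scale $r_B$ (you choose $(I-e^{-r_B^2L})^M$; the paper uses the resolvent $(I-(I+r_B^2L)^{-1})^M$ and passes back via Lemma~\ref{l4.1}, but the two are interchangeable) and appeal to $\|\wz\az\|_{H_{\oz,L}(\rn)}\ls 1$ from the molecular estimate \eqref{4.3}.

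Your part (i), however, takes a genuinely different route from the paper's. You decompose $f$ into \emph{molecules}, apply Lemma~\ref{l4.3} to each molecule $\az$, and then estimate the resulting bilinear integral $\iint_{\rnz}|(t^2L^\ast)^Me^{-t^2L^\ast}g|\,|t^2Le^{-t^2L}\az|\,\frac{dx\,dt}{t}$ by splitting $\rnz$ into the Carleson box $\widehat{2B}$ and far annular regions, using Lemma~\ref{l4.2} on the box and Gaffney decay against molecular decay on the annuli. This is essentially the Hofmann--Mayboroda strategy from \cite[Theorem 8.2]{hm1}, and the authors explicitly call it out as a ``different, but more complicated, approach.'' It does work, given the hypotheses on $M$ and $\ez$ and a double-annular decomposition of $\az$ itself (the bookkeeping mirrors \eqref{4.2}--\eqref{4.4}), but you need to be careful that the region $\widehat{2^{j+1}B}\setminus\widehat{2^jB}$ contains points with $x$ inside $2^jB$ but $t$ comparable to $2^jr_B$, so the decay cannot come from the molecule's spatial localization alone; you must also use the square-function bound $\|\cs_L((r_B^2L)^{-k}\az\chi_{U_i(B)})\|_{L^2}$ on each piece. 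The paper instead applies Lemma~\ref{l4.3} to $f$ once and then decomposes the \emph{tent-space function} $t^2Le^{-t^2L}f$ into $(\oz,\fz)$-atoms $a_j$ via Theorem~\ref{t3.1}. Because each $a_j$ is supported in a Carleson box $\widehat{B_j}$, the pairing with $(t^2L^\ast)^Me^{-t^2L^\ast}g$ localizes for free and Cauchy--Schwarz plus Lemma~\ref{l4.2} gives $|\la g,a_j\ra|\ls\|g\|_{\bmol}$ with no annular sums at all; one then finishes with $\sum_j|\lz_j|\ls\|t^2Le^{-t^2L}f\|_{T_\oz(\rnz)}=\|f\|_{H_{\oz,L}(\rn)}$ from Remark~\ref{r3.1}(iii). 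What the tent-space atomic decomposition buys is precisely the elimination of the far-annulus bookkeeping that your plan flags as the ``main obstacle''; what your plan buys is that it does not require the tent-space atomic theory (Theorem~\ref{t3.1} and Remark~\ref{r3.1}), only the molecular decomposition and off-diagonal estimates already used elsewhere.
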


\begin{proof}[\bf Proof.]
Let $g\in \mathrm{BMO}^{M}_{\ro,L^\ast}(\rn)$.
For any $f\in H^{2,\wz M,\ez}_{\oz,\rm fin}(\rn)\subset H_{\oz,L}(\rn)$,
we have that $f\in L^2(\rn)$ and hence, $t^2Le^{-t^2L}f\in
(T_{\oz}(\rnz)\cap T_2^2(\rnz))$
by Lemma \ref{l2.4}. By Theorem \ref{t3.1}, there exist
$\{\lz_j\}_{j=1}^\fz\subset \cc$ and $(\oz,\fz)$-atoms $\{a_j\}_{j=1}^\fz$
 supported in $\{\widehat B_j\}_{j=1}^\fz$ such that
\eqref{3.4} holds. Notice that $g$ satisfies \eqref{4.8} with $q=2$ (by Lemma \ref{l4.1}),
which, together with Lemmas \ref{l4.2} and \ref{l4.3}, the H\"older inequality
and Remark \ref{r3.1} (iii), yields that
\begin{eqnarray}\label{4.10}
|\la g,f\ra|&&=\lf|C_{\wz M}\iint_\rnz(t^2L^\ast)^Me^{-t^2L^\ast}g(x)
\overline{t^2Le^{-t^2L}f(x)}\frac{\,dx\,dt}{t}\r|\nonumber\\
&&\ls\sum_{j=1}^\fz|\lz_j|\iint_\rnz|(t^2L^\ast)^Me^{-t^2L^\ast}g(x)
\overline{a_j(x,t)}|\frac{\,dx\,dt}{t}\nonumber\\
&&\ls\sum_{j=1}^\fz|\lz_j|\|a_j\|_{T^2_2(\rnz)}
\lf(\iint_{\widehat B_j}|(t^2L^\ast)^Me^{-t^2L^\ast}g(x)|^2
\frac{\,dx\,dt}{t}\r)^{1/2}\nonumber\\
&&\ls\sum_{j=1}^\fz|\lz_j|\|g\|_{\bmol}
\ls \|t^2Le^{-t^2L}f\|_{T_\oz(\rnz)}\|g\|_{\bmol}\nonumber\\
&&\sim \|f\|_{H_{\oz,L}(\rn)}\|g\|_{\bmol}.
\end{eqnarray}
Then by a density argument via Corollary \ref{c4.2}, we obtain (i).

Conversely, let $\ell\in (H_{\oz,L}(\rn))^\ast$.
For any $(\oz,2,M,\ez)$-molecule $\az$,
it follows from \ref{4.3} that $\|\az\|_{H_{\oz,L}(\rn)}\ls 1$.
Thus $|\ell(\az)|\ls \|\ell\|_{(H_{\oz,L}(\rn))^\ast}$,
which implies that $\ell\in\cm_{\oz,L^\ast}^M(\rn)$.

To finish the proof of (ii), we still need to show that
$\ell\in \bmol$. To this end, for any ball $B$, let $\phi\in L^2(B)$ with
$\|\phi\|_{L^2(B)}\le \frac{1}{|B|^{1/2}\ro(|B|)}$ and
$\wz \az\equiv (I-[I+r_B^2L]^{-1})^M\phi$.
Then from Lemma \ref{l2.3}, we deduce that
for each $j\in \zz_+$ and $k =0,1,\,\cdots,M$,
\begin{eqnarray*}\|(r_B^2L)^{-k} \wz \az\|_{L^2(U_j(B))}&&=
\|(I-[I+r_B^2L]^{-1})^{M-k}(I+r_B^2L)^{-k}\phi\|_{L^2(U_j(B))}\\
&&\ls \exp\lf\{-\frac{\dist(B,U_j(B))}{cr_B}\r\}\|\phi\|_{L^2(B)}\nonumber\\
&&\ls 2^{-2j(M+\ez)}2^{jn(1/p_\oz-1/2)}\frac{1}{|2^jB|^{1/2}\ro(|2^jB|)}
\ls 2^{-2j\ez}\frac{1}{|2^jB|^{1/2}\ro(|2^jB|)},\nonumber
\end{eqnarray*}
where $c$ is as in Lemma \ref{2.3} and $2M>n(1/p_\oz-1/2)$.
Thus $\wz \az$ is a multiple of an
$(\oz,2,M,\ez)$-molecule. Since $(I-([I+t^2L]^{-1})^\ast)^M\ell$ is
well defined and belongs to $L^2_{\loc}(\rn)$ for any fixed
$t>0$. Thus, we have
\begin{eqnarray*}
|\la (I-[(I+r_B^2L)^{-1}]^\ast)^M\ell,\phi\ra|
=|\la \ell,(I-[I+r_B^2L]^{-1})^M\phi\ra|
=|\la \ell,\wz \az\ra|\ls \|\ell\|_{(H_{\oz,L}(\rn))^\ast},
\end{eqnarray*}
which further implies that
\begin{eqnarray*}
&&\frac{1}{\ro(|B|)}\lf(\frac{1}{|B|}\int_{B}|(I-[(I+r_B^2L)^{-1}]^\ast)^M\ell(x)|^2
\,dx\r)^{1/2}\\
&&\hs=\sup_{\|\phi\|_{L^2(B)}\le 1}\lf|\lf\la \ell,(I-[I+r_B^2L]^{-1})^M
\frac{\phi}{|B|^{1/2}\ro(|B|)}\r\ra\r|\ls\|\ell\|_{(H_{\oz,L}(\rn))^\ast}.
\end{eqnarray*}
Thus, $\ell\in \bmol$ and $\|\ell\|_{\bmol}\ls \|\ell\|_{(H_{\oz,L}(\rn))^\ast}$,
which completes the proof of Theorem \ref{t4.1}.
\end{proof}

\begin{rem}\label{r4.2}\rm
It follows from Theorem \ref{t4.1} that the spaces $\bmoo$
for all $M> \frac n2 (\frac1{p_\oz}-\frac 12)$ coincide with equivalent norms.
Thus, in what follows, we denote $\bmoo$ simply by $\bmo$.
\end{rem}

\section{Several equivalent characterizations of $H_{\oz,L}(\rn)$\label{s5}}

\hskip\parindent In this section, we establish several equivalent characterizations of
the Orlicz-Hardy spaces. Let us begin with some notions.

\begin{defn}\label{d5.1} Let $q\in(p_L,\wz p_L)$, $\oz$ satisfy Assumption (A),
 $M>\frac n2 (\frac1{p_\oz}-\frac 12)$ and $\ez>n(1/p_\oz-1/p_\oz^+)$. A distribution $f\in (\bmoz)^\ast$ is said to be in
the space $H_{\oz,L}^{q,M,\ez}(\rn)$
 if there exist $\{\lz_j\}_{j=1}^\fz\subset \cc$ and $(\oz,q,M,\ez)-$molecules
  $\{\az_j\}_{j=1}^\fz$ such that
$f=\sum_{j=1}^\fz\lz_j\az_j$ in $(\bmoz)^\ast$ and
$$\Lambda(\{\lz_j\az_j\}_j)=\inf\lf\{\lz>0:\, \sum_{j=1}^\fz|B_j|\oz
\lf(\frac{|\lz_j|}{\lz|B_j|\ro(|B_j|)}\r)\le1\r\}<\fz,$$
where for each $j$, $\az_j$ is adapted to the ball $B_j$.

If $f\in H_{\oz,L}^{q,M,\ez}(\rn)$, then define its norm by
$$\|f\|_{H_{\oz,L}^{q,M,\ez}(\rn)}\equiv \inf \Lambda(\{\lz_j\az_j\}_j),$$
where the infimum is taken over all possible decompositions of $f$ as above.
\end{defn}

For any $f\in L^2(\rn)$ and $x\in\rn$, define the Lusin-area
function associated to the Poisson semigroup as follows,
\begin{equation}\label{5.1}
  \cs_Pf(x)\equiv \lf(\iint_{\Gamma(x)}|t\nabla e^{-t\sqrt L}f(y)|^2
 \frac{\,dy\,dt}{t}\r)^{1/2}.
\end{equation}

Let $\bz\in(0,\fz)$. Following \cite{hm1}, we define
nontangential the maximal operators by
setting, for all $f\in L^2(\rn)$ and $x\in\rn$,
\begin{equation}\label{5.2}
\nn_h^\bz g(x)\equiv \sup_{(y,t)\in \Gamma _\bz(x)}\lf(\frac{1}{(\bz t)^n}
\int_{B(y,\bz t)}|e^{-t^2L}g(z)|^2\,dz\r)^{1/2}
\end{equation}
and
\begin{equation}\label{5.3}
\nn_P^\bz g(x)\equiv \sup_{(y,t)\in \Gamma _\bz(x)}\lf(\frac{1}{(\bz t)^n}
\int_{B(y,\bz t)}|e^{-t\sqrt L}g(z)|^2\,dz\r)^{1/2}.
\end{equation}
In what follows, we denote  $\nn_h^1$ and $\nn_P^1$ simply by
$\nn_h$ and $\nn_P$.

We also define the radial maximal functions by
setting, for all $f\in L^2(\rn)$ and $x\in\rn$,
\begin{equation}\label{5.4}
  \car_h f(x)\equiv \sup_{t>0}\lf(\frac{1}{t^n}\int_{B(x,t)}
  |e^{-t^2L}f(y)|^2\,dy\r)^{1/2}
\end{equation}
and
\begin{equation}\label{5.5}
  \car_P f(x)\equiv \sup_{t>0}\lf(\frac{1}{t^n}\int_{B(x,t)}
  |e^{-t \sqrt L}f(y)|^2\,dy\r)^{1/2}.
\end{equation}

Similarly to Definition \ref{d4.1}, we define the space
$H_{\oz,\cs_P}(\rn)$ as follows.

\begin{defn}\label{d5.2}
Let $\oz$ satisfy Assumption (A). A function $f\in L^2(\rn)$ is
said to be in $\wz H_{\oz,\,\cs_P}(\rn)$ if $\cs_P(f)\in L(\oz)$;
moreover, define
$$\|f\|_{H_{\oz,\cs_P}(\rn)}\equiv \|\cs_P(f)\|_{L(\oz)}=
\inf\lf\{\lz>0:\int_{\rn}\oz\lf
(\frac{\cs_P(f)(x)}{\lz}\r)\,dx\le 1\r\}.$$ The Orlicz-Hardy
space $H_{\oz,\cs_P}(\rn)$ is defined to be the completion of $\wz
H_{\oz,\cs_P}(\rn)$  in the norm $\|\cdot\|_{H_{\oz,\cs_P}(\rn)}.$
\end{defn}

The spaces $H_{\oz,\nn_h}(\rn)$, $H_{\oz,\nn_P}(\rn)$,
$H_{\oz,\car_h}(\rn)$ and $H_{\oz,\car_P}(\rn)$ are defined
in a similar way. We now show that all the spaces $H_{\oz,L}(\rn)$,
$H_{\oz,L}^{q,M,\ez}(\rn)$,
$H_{\oz,\cs_P}(\rn)$, $H_{\oz,\nn_h}(\rn)$, $H_{\oz,\nn_P}(\rn)$,
$H_{\oz,\car_h}(\rn)$ and $H_{\oz,\car_P}(\rn)$ coincide with equivalent norms.

\subsection{The molecular characterization\label{s5.1}}

\hskip\parindent In this subsection, we establish the molecular characterization
of the Orlicz-Hardy spaces, which gives some understanding
of the ``distributions" in $H_{\oz, L}(\rn)$
as elements of the dual of $\bmoz$. We start with the following auxiliary result.

\begin{prop}\label{p5.1} Let $\oz$ satisfy Assumption (A).
Fix $t\in (0,\fz)$ and $\wz B\equiv B(x_0,R)$ for some $x_0\in\rn$
and $R>0$. Then there exists a positive
constant $C(t,R)$ such that for all $\phi\in L^2(\rn)$ supported in $\wz B$,
$t^2L e^{-t^2L}\phi\in\bmo$ and
$$\|t^2L e^{-t^2L}\phi\|_{\bmo}\le C(t,R)\|\phi\|_{L^2(\wz B)}.$$
\end{prop}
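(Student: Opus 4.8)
The plan is to verify directly that $u\equiv t^2Le^{-t^2L}\phi$ satisfies the defining estimate of $\bmo=\bmoo$ with $q=2$, i.e. that for every ball $B=B(x_B,r_B)$ one has
$$\frac{1}{\ro(|B|)}\lf(\frac{1}{|B|}\int_B|(I-e^{-r_B^2L})^Mu(x)|^2\,dx\r)^{1/2}\le C(t,R)\|\phi\|_{L^2(\wz B)},$$
with the constant uniform in $B$. The key point is that $(I-e^{-r_B^2L})^M(t^2Le^{-t^2L})$ is a composition of operators each of which is in the Gaffney/off-diagonal families of Lemmas \ref{l2.1} and \ref{l2.2}, so it enjoys $L^2$ Gaffney bounds; the parameter $t$ is fixed, which is what makes the constant allowed to depend on $t$ (and $R$).

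First I would split the analysis of a generic ball $B$ into two regimes according to its radius $r_B$ relative to $t$. When $r_B\le t$ (say), I would not expand $(I-e^{-r_B^2L})^M$ but instead just use that $(I-e^{-r_B^2L})^M$ is uniformly bounded on $L^2(\rn)$, together with Gaffney decay for $t^2Le^{-t^2L}$ from $\wz B$ to $B$: writing $B$ in the annular pieces $U_j(\wz B)$ around $\wz B$ (or, symmetrically, estimating $\|t^2Le^{-t^2L}\phi\|_{L^2(B)}$ via the distance from $B$ to $\wz B=\supp\phi$), Lemma \ref{l2.2} gives
$$\|(I-e^{-r_B^2L})^M t^2Le^{-t^2L}\phi\|_{L^2(B)}\ls e^{-c\,\dist(B,\wz B)^2/t^2}\|\phi\|_{L^2(\wz B)}\le\|\phi\|_{L^2(\wz B)}.$$
When $r_B>t$, I would instead expand $(I-e^{-r_B^2L})^M=\sum_{k=0}^M\binom{M}{k}(-1)^k e^{-kr_B^2L}$ and apply Lemma \ref{l2.1} to the composed family $\{e^{-kr_B^2L}t^2Le^{-t^2L}\}$, which again satisfies Gaffney estimates (now with $\max\{kr_B^2,t^2\}\sim r_B^2$ in the exponent), so one picks up a factor $e^{-c\,\dist(B,\wz B)^2/r_B^2}$; combined with the uniform $L^2$-boundedness this gives $\|(I-e^{-r_B^2L})^Mu\|_{L^2(B)}\ls\|\phi\|_{L^2(\wz B)}$ as well. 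In both regimes one therefore controls the numerator by $|B|^{0}\cdot\|\phi\|_{L^2(\wz B)}$ up to harmless constants, after noting $\lf(\frac{1}{|B|}\int_B|\cdot|^2\r)^{1/2}\le|B|^{-1/2}\|\cdot\|_{L^2(B)}$.

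It then remains to absorb the factor $1/\ro(|B|)$ and the normalization $|B|^{-1/2}$. Here I would use that $\ro$ is of positive lower type $1/\wz p_\oz-1$ (Proposition \ref{p2.1}, Convention (C)), so $1/\ro(|B|)$ grows at most polynomially as $|B|\to\fz$ and $1/(|B|^{1/2}\ro(|B|))$ is bounded below away from $0$ for $|B|$ small and decays as $|B|\to\fz$; since $\wz B$ has a fixed radius $R$, in the large-ball regime the extra Gaffney decay $e^{-c\,\dist(B,\wz B)^2/r_B^2}$ versus $\dist(B,\wz B)\sim r_B$ gives room to kill any polynomial factor, while in the small-ball regime $1/(|B|^{1/2}\ro(|B|))$ is uniformly bounded by a constant depending only on $R$ and $t$. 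Carefully packaging these two cases yields the bound with a single constant $C(t,R)$, proving $t^2Le^{-t^2L}\phi\in\bmo$ with the stated norm estimate.

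The main obstacle I anticipate is bookkeeping the interaction between the Gaffney exponential decay and the growth of $1/\ro(|B|)$ for balls $B$ that are very large compared with $t$ and $R$ — one needs the decay to be in terms of $\dist(B,\wz B)$ (genuinely large, since $\wz B$ is bounded) rather than merely in terms of $r_B$, and to check that $\max\{kr_B^2,t^2\}$ in the composed-family Gaffney estimate does not spoil this. A secondary, minor technical point is justifying that $(I-e^{-r_B^2L})^Mu$ agrees with the pointwise $L^2$ function produced by the Gaffney estimates (so that Definition \ref{d4.4} applies), which follows from the fact that $\phi\in L^2(\rn)$ and all operators in sight are bounded on $L^2(\rn)$.
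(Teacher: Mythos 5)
There is a genuine gap, and it is exactly at the point you flagged as a ``minor'' bookkeeping issue: the small--ball regime. Your plan bounds
$\|(I-e^{-r_B^2L})^M t^2Le^{-t^2L}\phi\|_{L^2(B)}\ls\|\phi\|_{L^2(\wz B)}$
uniformly in $B$ (via $L^2$-boundedness and Gaffney decay) and then tries to absorb the normalizing factor $\frac{1}{|B|^{1/2}\ro(|B|)}$. But your claim that ``in the small-ball regime $1/(|B|^{1/2}\ro(|B|))$ is uniformly bounded'' is false. In the model case $\oz(t)=t^p$, $p\le1$, one has $\ro(s)=s^{1/p-1}$, so $|B|^{1/2}\ro(|B|)=|B|^{1/p-1/2}\to 0$ as $|B|\to 0$ (since $1/p-1/2>0$), and hence $\frac{1}{|B|^{1/2}\ro(|B|)}\to\fz$. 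The same happens for general $\oz$ satisfying Assumption (A): concavity forces $\oz^{-1}(u)\gtrsim u$ for large $u$, so $s^{1/2}\ro(s)\ls s^{1/2}\to 0$ as $s\to 0$. Consequently, a uniform $L^2$ bound on $\|(I-e^{-r_B^2L})^M t^2Le^{-t^2L}\phi\|_{L^2(B)}$ cannot possibly control $\mathrm{H}=\frac{1}{\ro(|B|)|B|^{1/2}}\|(I-e^{-r_B^2L})^M t^2Le^{-t^2L}\phi\|_{L^2(B)}$ for balls $B$ of small radius sitting inside or near $\wz B$, where Gaffney decay gives nothing either. Your case split $r_B\le t$ vs.\ $r_B>t$ also puts the difficulty on the wrong side: it is the small balls, not the large ones, that require extra input.

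The missing idea, which is the crux of the paper's proof, is to extract decay in $r_B$ from the cancellation operator $(I-e^{-r_B^2L})^M$ applied to the already-smoothed function $t^2Le^{-t^2L}\phi$ (with $t$ fixed). Writing $I-e^{-r_B^2L}=\int_0^{r_B^2}Le^{-rL}\,dr$ and using Minkowski plus the uniform $L^2$-bound for $\{s^{M+1}L^{M+1}e^{-sL}\}$ gives
$$\|(I-e^{-r_B^2L})^M t^2Le^{-t^2L}\phi\|_{L^2(\rn)}\ls\lf(\frac{r_B}{t}\r)^{2M}\|\phi\|_{L^2(\wz B)},$$
and the hypothesis $M>\frac n2(\frac1{p_\oz}-\frac12)$ is chosen precisely so that $(r_B/t)^{2M}$ dominates the polynomial growth of $\frac{1}{|B|^{1/2}\ro(|B|)}\sim r_B^{-n(1/p_\oz-1/2)}$ (up to the $\oz$-dependent exponents) as $r_B\to 0$. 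Without this factor the estimate genuinely fails. The large-ball regime $r_B\ge R$ is the easy one and needs only uniform $L^2$-boundedness, since $|B|^{1/2}\ro(|B|)$ is then at least $|\wz B|^{1/2}\ro(|\wz B|)$, a fixed constant; Gaffney decay, while true, is not actually needed for that direction.
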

\begin{proof}[\bf Proof.] Let $M> \frac n2 (\frac1{p_\oz}-\frac 12)$.
For any ball $B\equiv B(x_B,r_B)$, let
$$\mathrm{H}\equiv \frac{1}{\ro(|B|)}
\lf(\frac{1}{|B|}\int_{B}|(I-e^{-r_B^2L})^Mt^2L e^{-t^2L}
\phi(x)|^2\,dx\r)^{1/2}.$$

For the case when $r_B\ge R$, from the $L^2(\rn)$-boundedness of the
operators $e^{-r_B^2L}$ and $t^2L e^{-t^2L}$ (Lemma \ref{l2.2}), it follows that
$$\mathrm{H}\ls\frac{1}{|\wz B|^{1/2}\ro(|\wz B|)}\|\phi\|_{L^2(\wz B)}.$$

Let us consider the case when $r_B<R$. It follows from the upper type property
that
\begin{equation}\label{5.6}
|\wz B|^{1/2}\ro(|\wz B|)\ls \lf(\frac{R}{r_B}\r)^{n(1/p_\oz-1/2)}|B|^{1/2}\ro(|B|).
\end{equation}
On the other hand, noticing that $I-e^{-r^2_BL}=\int_0^{r_B^2}Le^{-rL}\,dr$, thus by
the Minkowski inequality and the $L^2(\rn)$-boundedness of $t^2Le^{-t^2L}$
(Lemma \ref{l2.2}), we have
\begin{eqnarray}\label{5.7}
&& \lf(\int_{B}|(I-e^{-r_B^2L})^Mt^2L e^{-t^2L}\phi(x)|^2\,dx\r)^{1/2}\nonumber\\
&&\hs=\lf(\int_{B}\lf|\int_0^{r_B^2}\cdots\int_0^{r_B^2} t^2L^{M+1}
e^{-(r_1+\cdots+r_M+t^2)L}\phi(x)\,dr_1\cdots\,dr_M\r|^2\,dx\r)^{1/2}\nonumber\\
&&\hs\ls\int_0^{r_B^2}\cdots\int_0^{r_B^2}\frac{t^2}{(r_1+\cdots+r_M+t^2)^{M+1}}
\|\phi\|_{L^2(\wz B)}\,dr_1\cdots\,dr_s
\ls \lf(\frac{r_B}{t}\r)^{2M}\|\phi\|_{L^2(\wz B)}.\quad\quad
\end{eqnarray}
By the fact that $M>\frac n2(\frac{1}{p_\oz}-\frac 12)$ and the estimates
\eqref{5.6} and \eqref{5.7},
 we obtain
$$\mathrm{H}\ls \lf(\frac{R}{t}\r)^{2M}\frac{\|\phi\|_{L^2(\wz B)}}
{|\wz B|^{1/2}\ro(|\wz B|)}.$$

Thus $\|t^2L e^{-t^2L}\phi\|_{\bmo}\le C(t,R)\|\phi\|_{L^2(\wz B)}$, which
completes the proof of Proposition \ref{p5.1}.
\end{proof}

\begin{thm}\label{t5.1}
Let $q\in(p_L,\wz p_L)$, $\oz$ satisfy Assumption (A),
$M> \frac n2 (\frac1{p_\oz}-\frac 12)$  and $\ez>n(1/p_\oz-1/p_\oz^+)$.
Then the spaces $H_{\oz,L}(\rn)$ and $H^{q,M,\ez}_{\oz,L}(\rn)$
coincide with equivalent norms.
\end{thm}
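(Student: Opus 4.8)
The plan is to show the two inclusions $H_{\oz,L}(\rn)\subset H^{q,M,\ez}_{\oz,L}(\rn)$ and $H^{q,M,\ez}_{\oz,L}(\rn)\subset H_{\oz,L}(\rn)$ with comparable norms, using the molecular decomposition already in hand from Proposition \ref{p4.2} and Corollary \ref{c4.1}, and the duality identification in Theorem \ref{t4.1}. First I would treat the easier direction: given $f\in H^{q,M,\ez}_{\oz,L}(\rn)$, write $f=\sum_j\lz_j\az_j$ in $(\bmoz)^\ast$ with $(\oz,q,M,\ez)$-molecules $\az_j$ adapted to balls $B_j$ and $\Lambda(\{\lz_j\az_j\}_j)<\fz$. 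Each molecule satisfies $\|\az_j\|_{H_{\oz,L}(\rn)}\ls 1$ by the estimate \eqref{4.3} established in the proof of Proposition \ref{p4.1} (the bound $\int_\rn\oz(|\lz|\cs_L\az)\,dx\ls|B|\oz(|\lz|/(|B|\ro(|B|)))$). Using the subadditivity and lower type $p_\oz$ of $\oz$ together with the continuity of $\cs_L$ (Lemma \ref{l2.4}), the partial sums $\sum_{j\le N}\lz_j\az_j$ form a Cauchy sequence in $H_{\oz,L}(\rn)$; one then checks, via Proposition \ref{p5.1} and the duality $(H_{\oz,L}(\rn))^\ast=\bmoz$ from Theorem \ref{t4.1}, that the $H_{\oz,L}(\rn)$-limit agrees with $f$ as an element of $(\bmoz)^\ast$, whence $f\in H_{\oz,L}(\rn)$ and $\|f\|_{H_{\oz,L}(\rn)}\ls\Lambda(\{\lz_j\az_j\}_j)$; taking the infimum gives $\|f\|_{H_{\oz,L}(\rn)}\ls\|f\|_{H^{q,M,\ez}_{\oz,L}(\rn)}$.

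For the reverse inclusion, I would first handle $f\in H_{\oz,L}(\rn)\cap L^2(\rn)$: Proposition \ref{p4.2} directly produces $(\oz,\fz,M,\ez)$-molecules $\{\az_j\}$ and coefficients $\{\lz_j\}$ with $f=\sum_j\lz_j\az_j$ in $H_{\oz,L}(\rn)$ (and in $L^p(\rn)$, $p\in(p_L,2]$) and $\Lambda(\{\lz_j\az_j\}_j)\ls\|f\|_{H_{\oz,L}(\rn)}$. Since convergence in $H_{\oz,L}(\rn)$ implies convergence in $(\bmoz)^\ast$ (again via Theorem \ref{t4.1} and Proposition \ref{p5.1}, which guarantees that testing against $\bmoz$-functions is controlled by the $H_{\oz,L}(\rn)$-norm on the dense class $\wz H_{\oz,L}(\rn)$), the decomposition also holds in $(\bmoz)^\ast$, so $f\in H^{q,M,\ez}_{\oz,L}(\rn)$ with $\|f\|_{H^{q,M,\ez}_{\oz,L}(\rn)}\ls\|f\|_{H_{\oz,L}(\rn)}$. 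For general $f\in H_{\oz,L}(\rn)$, I would use density of $\wz H_{\oz,L}(\rn)$ (Remark \ref{r4.1}(ii)) to pick $f_k\in H_{\oz,L}(\rn)\cap L^2(\rn)$ with $\|f-f_k\|_{H_{\oz,L}(\rn)}\le 2^{-k}\|f\|_{H_{\oz,L}(\rn)}$, write $f=\sum_k(f_k-f_{k-1})$ (with $f_0\equiv0$), decompose each difference by the $L^2$ case, and amalgamate; Remark \ref{r3.1}(ii) gives $[\Lambda(\{\lz_j^k\az_j^k\}_{k,j})]^{p_\oz}\le\sum_k[\Lambda(\{\lz_j^k\az_j^k\}_j)]^{p_\oz}\ls\sum_k\|f_k-f_{k-1}\|_{H_{\oz,L}(\rn)}^{p_\oz}\ls\|f\|_{H_{\oz,L}(\rn)}^{p_\oz}$, and the double sum converges to $f$ in $(\bmoz)^\ast$. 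This is essentially Corollary \ref{c4.1} repackaged.

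The main obstacle is the bookkeeping around the mode of convergence, namely reconciling the $H_{\oz,L}(\rn)$-convergence (resp. $L^p$-convergence) of the molecular sums produced by Proposition \ref{p4.2} with the $(\bmoz)^\ast$-convergence demanded by Definition \ref{d5.1}, and making sure the two notions of limit coincide as functionals on $\bmoz$. The key device is Proposition \ref{p5.1}: for $\phi\in L^2(\wz B)$ one has $t^2Le^{-t^2L}\phi\in\bmo$ with norm controlled by $\|\phi\|_{L^2(\wz B)}$, which lets one pair any $g\in\bmoz$ against $f\in\wz H_{\oz,L}(\rn)$ through the reproducing formula and bound $|\la g,f\ra|\ls\|g\|_{\bmoz}\|f\|_{H_{\oz,L}(\rn)}$ — exactly the computation in \eqref{4.10}. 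Hence $H_{\oz,L}(\rn)$-convergence forces $(\bmoz)^\ast$-convergence, and uniqueness of the functional limit closes the argument. Once this compatibility is in place, both inclusions follow by combining the already-established facts, and the proof of Theorem \ref{t5.1} is complete.
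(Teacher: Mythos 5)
Your proposal is correct and relies on the same ingredients as the paper --- Proposition \ref{p4.2} and Corollary \ref{c4.1} for producing molecular decompositions, Theorem \ref{t4.1} for passing between $H_{\oz,L}(\rn)$-convergence and $(\bmoz)^\ast$-convergence, Proposition \ref{p5.1}, and the key molecular estimate \eqref{4.3}. The genuine divergence is in the inclusion $H^{q,M,\ez}_{\oz,L}(\rn)\subset H_{\oz,L}(\rn)$: the paper uses Proposition \ref{p5.1} to make sense of $\cs_Lf$ \emph{directly on the distribution} $f\in(\bmoz)^\ast$, writing
\begin{equation*}
\cs_Lf(x)=\left\{\int_0^\fz\left(\sup_{\|\phi\|_{L^2(B(x,t))}\le 1}\left|\left\la f,\,t^2L^\ast e^{-t^2L^\ast}\phi\right\ra\right|\right)^2\frac{\,dt}{t^{n+1}}\right\}^{1/2},
\end{equation*}
and extracting the pointwise bound $\cs_Lf\le\sum_j\cs_L(\lz_j\az_j)$ by Minkowski's inequality before integrating against $\oz$; you instead run a Cauchy-sequence argument on the partial sums and then identify the $H_{\oz,L}(\rn)$-limit with $f$ inside $(\bmoz)^\ast$. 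Both routes work and are close in spirit, but the paper's is a bit tighter: it avoids separately proving that the tail quantities $\Lambda(\{\lz_j\az_j\}_{j>N})$ shrink to zero. On that tail step, note that what actually drives the Cauchy estimate is \eqref{4.3} combined with the \emph{upper}-type-$1$ property of $\oz$ (needed to shrink the scaling parameter in the Luxemburg-type norm), not the lower type $p_\oz$ you cite; also, Proposition \ref{p5.1} is not really what makes $H_{\oz,L}(\rn)$-convergence imply $(\bmoz)^\ast$-convergence --- that is exactly Theorem \ref{t4.1}(i) --- whereas its actual role in the paper's argument is precisely to define $\cs_L$ on $(\bmoz)^\ast$ for the pointwise estimate that you sidestep.
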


\begin{proof}[\bf Proof.]
By Corollary \ref{c4.1}, for all
 $f\in H_{\oz,L}(\rn)$, there exist $(\oz,q,M,\ez)$-molecules
 $\{\az_j\}_{j=1}^\fz$ adapted to balls $\{B_j\}_{j=1}^\fz$ and
numbers $\{\lz_j\}_{j=1}^\fz\subset \cc$ such that
$f=\sum_{j=1}^\fz\lz_j\az_j$ in $H_{\oz,L}(\rn)$ and
$\Lambda(\{\lz_j\az_j\}_j)\ls\|f\|_{H_{\oz,L}(\rn)}.$
Then Theorem \ref{t4.1} implies that
the decomposition also holds in $(\bmoz)^\ast$, and hence,
$H_{\oz,L}(\rn)\subset H^{q,M,\ez}_{\oz,L}(\rn)$.

Conversely, let $f\in H^{q,M,\ez}_{\oz,L}(\rn)$.
Then there exist $\{\lz_j\}_{j=1}^\fz\subset\cc$ and
$(\oz,q,M,\ez)$-molecules $\{\az_j\}_{j=1}^\fz$ such that
$f=\sum_{j=1}^\fz\lz_j\az_j$ in $(\bmoz)^\ast$ and
$$\Lambda(\{\lz_j\az_j\}_j)=\inf\lf\{\lz>0:\,\sum_{j=1}^\fz |B_j|\oz\lf(\frac{|\lz_j|}
{\lz |B_j|\ro(|B_j|)}\r)\le1\r\}<\fz,$$
where for each $j$, $\az_j$ is adapted to the ball $B_j$.

 For all $x\in \rn$, by Proposition \ref{p5.1}, we have
 \begin{eqnarray*}
   \cs_L f(x)&&
   =\lf\{\int_0^\fz \lf\|t^2Le^{-t^2L}(f)\r\|_{L^2(B(x,t))}^2
   \frac{\,dt}{t^{n+1}}\r\}^{1/2}\\
   &&=\lf\{\int_0^\fz \lf(\sup_{\|\phi\|_{L^2(B(x,t))}\le 1}
   \lf|\lf\la \sum_{j=1}^\fz\lz_j\az_j, t^2L^\ast e^{-t^2L^\ast}
   \phi\r\ra\r|\r)^2
   \frac{\,dt}{t^{n+1}}\r\}^{1/2}\\
   &&\le \sum_{j=1}^\fz\lf\{\int_0^\fz \lf(\sup_{\|\phi\|_{L^2(B(x,t))}\le 1}
   \lf|\lf\la t^2Le^{-t^2L}\lz_j\az_j, \phi\r\ra\r|\r)^2
   \frac{\,dt}{t^{n+1}}\r\}^{1/2}
   \le \sum_{j=1}^\fz\cs_L(\lz_j\az_j)(x).
 \end{eqnarray*}
 Then from \eqref{4.3} together with the continuity and the subadditivity of $\oz$,
  it follows that
\begin{eqnarray*}
  \int_{\rn}\oz\lf(\cs_L f(x)\r)\,dx&&\le
  \sum_{j=1}^\fz\int_{\rn}\oz\lf(\cs_L(\lz_j\az_j)(x)\r)\,dx
  \ls \sum_{j=1}^\fz |B_j|\oz\lf(\frac{|\lz_j|}{|B_j|\ro(|B_j|)}\r),
\end{eqnarray*}
which implies that  $\|f\|_{H_{\oz,L}(\rn)}\ls \Lambda(\{\lz_j\az_j\}_j)$.
By taking the infimum over all decompositions of $f$ as above, we obtain that
$\|f\|_{H_{\oz,L}(\rn)}\ls \|f\|_{H^{q,M,\ez}_{\oz,L}(\rn)}$,
which completes the proof of Theorem \ref{t5.1}.
\end{proof}

\subsection{Characterizations by the maximal functions \label{s5.2}}

\hskip\parindent In this subsection, we characterize the Orlicz-Hardy space
 via the Lusin-area function $\cs_P$ and the maximal functions $\nn_h$, $\nn_P$,
$\car_h$ and $\car_P$. Let us begin with the following very useful
auxiliary result on the boundedness of
 linear or non-negative sublinear operators from $H_{\oz,L}(\rn)$ to $L(\oz)$.

\begin{lem}\label{l5.1}
Let $q\in(p_L,2]$, $\oz$ satisfy Assumption (A),
$M> \frac n2 (\frac1{p_\oz}-\frac 12)$  and $\ez>n(1/p_\oz-1/p_\oz^+)$.
Suppose that $T$ is a non-negative sublinear (resp. linear) operator
which maps $L^q(\rn)$ continuously into weak-$L^q(\rn)$.  If there exists a
positive constant $C$ such that for all $(\oz,\fz,M,\ez)$-molecules
$\az$ adapted to balls $B$ and $\lz\in \cc$,
\begin{equation}\label{5.8}
\int_{\rn}\oz\lf(T(\lz\az)(x)\r)\,dx\le C|B|\oz\lf(\frac{|\lz|}{|B|\ro(|B|)}\r),
\end{equation}
then $T$ extends to a bounded sublinear (resp. linear) operator
from $H_{\oz,L}(\rn)$ to $L(\oz)$; moreover, there exists a positive constant $\wz C$
such that for all $f\in H_{\oz,L}(\rn)$, $\|Tf\|_{L(\oz)}
\le \wz C \|f\|_{H_{\oz,L}(\rn)}$.
\end{lem}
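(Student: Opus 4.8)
The plan is to combine the molecular decomposition of $H_{\oz,L}(\rn)$ from Corollary \ref{c4.1} (or Theorem \ref{t5.1}) with the hypothesis \eqref{5.8} on molecules, using the subadditivity and the strictly-lower-type property of $\oz$ to sum the pieces. First I would take $f\in H_{\oz,L}(\rn)$ and, by Corollary \ref{c4.1}, write $f=\sum_{j=1}^\fz\lz_j\az_j$ in $H_{\oz,L}(\rn)$, where each $\az_j$ is an $(\oz,q,M,\ez)$-molecule (one must check that the molecules produced there can be taken with the parameter $q$ of the lemma, or reduce to the $(\oz,\fz,M,\ez)$ case via Proposition \ref{p4.2}; since $q\le 2$ an $(\oz,2,M,\ez)$-molecule is an $(\oz,q,M,\ez)$-molecule, so this is harmless) and $\Lambda(\{\lz_j\az_j\}_j)\ls\|f\|_{H_{\oz,L}(\rn)}$. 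The first real point is to pass the operator $T$ through the (possibly infinite) sum. For $f\in H_{\oz,L}(\rn)\cap L^q(\rn)$ one has, by Proposition \ref{p4.2}, convergence of the decomposition in $L^q(\rn)$ (for $q\in(p_L,2]$), so the partial sums converge to $f$ in $L^q$; using that $T$ maps $L^q(\rn)$ continuously into weak-$L^q(\rn)$ together with non-negative sublinearity, one deduces the pointwise bound $T f(x)\le\sum_{j=1}^\fz T(\lz_j\az_j)(x)$ for a.e.\ $x$ (passing to a subsequence converging a.e.\ and using Fatou/sublinearity).

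Next I would estimate $\int_\rn\oz(Tf(x)/\lz)\,dx$ for a suitable $\lz$. By the subadditivity and continuity of $\oz$,
\begin{equation*}
\int_\rn\oz\lf(\frac{Tf(x)}{\lz}\r)\,dx\le\sum_{j=1}^\fz\int_\rn\oz\lf(\frac{T(\lz_j\az_j)(x)}{\lz}\r)\,dx,
\end{equation*}
and then I would apply \eqref{5.8}, rescaled appropriately, to each term. Choosing $\lz\equiv\Lambda(\{\lz_j\az_j\}_j)$ and writing $\mu_j\equiv|\lz_j|/(\lz|B_j|\ro(|B_j|))$, the hypothesis \eqref{5.8} applied to the molecule $\az_j/\lz$ (which is again a molecule of the same type up to the harmless scalar, since molecules are defined by homogeneous size conditions) gives
\begin{equation*}
\int_\rn\oz\lf(\frac{T(\lz_j\az_j)(x)}{\lz}\r)\,dx\ls|B_j|\,\oz(\mu_j),
\end{equation*}
so that $\int_\rn\oz(Tf(x)/\lz)\,dx\ls\sum_j|B_j|\oz(\mu_j)\le 1$ by the very definition of $\Lambda(\{\lz_j\az_j\}_j)$. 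This yields $\|Tf\|_{L(\oz)}\ls\Lambda(\{\lz_j\az_j\}_j)\ls\|f\|_{H_{\oz,L}(\rn)}$ for $f$ in the dense subclass $H_{\oz,L}(\rn)\cap L^q(\rn)$ (e.g.\ $H^{q,M,\ez}_{\oz,\rm fin}(\rn)$, dense by Corollary \ref{c4.2}).

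Finally I would extend $T$ to all of $H_{\oz,L}(\rn)$ by density: given $f\in H_{\oz,L}(\rn)$ pick $f_k\to f$ in $H_{\oz,L}(\rn)$ with $f_k$ in the dense subclass, note that $\{Tf_k\}$ is Cauchy in $L(\oz)$ because $\|Tf_k-Tf_\ell\|_{L(\oz)}\ls\|f_k-f_\ell\|_{H_{\oz,L}(\rn)}$ (using sublinearity, $|Tf_k-Tf_\ell|\le T(f_k-f_\ell)$ in the linear case, and the analogous reverse-triangle estimate in the sublinear case), and define $Tf$ as the limit; the bound $\|Tf\|_{L(\oz)}\ls\|f\|_{H_{\oz,L}(\rn)}$ passes to the limit. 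The main obstacle is the justification of the pointwise (a.e.) domination $Tf\le\sum_j T(\lz_j\az_j)$ and the interchange of $T$ with the infinite sum: this is where one genuinely needs the $L^q$-convergence of the molecular decomposition from Proposition \ref{p4.2} together with the weak-$L^q$ continuity of $T$, rather than just the $H_{\oz,L}$-convergence; a secondary technical point is checking that \eqref{5.8} is stable under the scalar rescaling $\az_j\mapsto\az_j/\lz$, which holds because the defining inequalities for molecules are scale-homogeneous of degree one in $\az$ and $\oz$ absorbs the scalar on both sides.
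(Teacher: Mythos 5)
Your proposal is correct and follows essentially the same route as the paper's own proof: invoke Proposition \ref{p4.2} to get a molecular decomposition converging simultaneously in $H_{\oz,L}(\rn)$ and $L^q(\rn)$, use the weak-$(q,q)$ continuity of $T$ (plus non-negativity and sublinearity) to obtain $Tf\le\sum_j T(\lz_j\az_j)$ almost everywhere, sum up via the subadditivity of $\oz$ and \eqref{5.8}, and conclude by density. Your explicit rescaling by $\lz=\Lambda(\{\lz_j\az_j\}_j)$ is a slightly more careful writeup of the step the paper states as an implication, and the remark about ``rescaling the molecule $\az_j/\lz$'' is unnecessary since \eqref{5.8} already builds in an arbitrary scalar $\lz\in\cc$; otherwise the two arguments coincide.
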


\begin{proof}[\bf Proof.]
It follows from Proposition \ref{p4.2} that for every
$f\in H_{\oz,L}(\rn)\cap L^2(\rn)$, $f\in L^q(\rn)$ with $q\in (p_L,2]$
and there exists $\{\lz_j\}_{j=1}^\fz\subset\cc$ and
$(\oz,\fz,M,\ez)$-molecules $\{\az_j\}_{j=1}^\fz$ such that
$f=\sum_{j=1}^\fz\lz_j\az_j$ in both $H_{\oz,L}(\rn)$ and $L^q(\rn)$;
moreover, $\Lambda(\{\lz_j\az_j\}_j)\ls \|f\|_{H_{\oz,L}(\rn)}$. Thus
if $T$ is linear, then it follows from the fact that  $T$ is of
weak type $(q,q)$ that $T(f)=\sum_{j=1}^\fz T(\lz_j\az_j)$
almost everywhere.

If $T$ is a non-negative sublinear operator, then
$$\sup_{t>0} t^{1/q}\lf|\lf\{x\in\rn:\, \lf|T(f)(x)-
T\lf(\sum_{j=1}^N\lz_j\az_j\r)(x)\r|>t\r\}\r|\ls
\lf\|f-\sum_{j=1}^N\lz_j\az_j\r\|_{L^q(\rn)}
\to 0,$$
as $N\to \fz$. Thus there exists a subsequence  $\{N_k\}_{k}\subset\cn$
such that
$$T\lf(\sum_{j=1}^{N_k}\lz_{j}\az_{j}\r)\to T(f)$$
almost everywhere, as $k\to\fz$, which together with the non-negativity
and the sublinearity of $T$ further implies that
\begin{eqnarray*}
T(f)-\sum_{j=1}^\fz T(\lz_j\az_j)&&=T(f)-T\lf(\sum_{j=1}^{N_k}\lz_{j}\az_{j}\r)
+T\lf(\sum_{j=1}^{N_k}\lz_{j}\az_{j}\r)-\sum_{j=1}^\fz T(\lz_j\az_j)\\
&& \le T(f)-T\lf(\sum_{j=1}^{N_k}\lz_{j}\az_{j}\r).
\end{eqnarray*}
By letting $k\to \fz$, we see that
$T(f)\le \sum_{j=1}^\fz T(\lz_j\az_j)$ almost everywhere.
Thus by the subadditivity and the continuity of $\oz$ and \eqref{5.8},
we finally obtain
\begin{eqnarray*}
\int_{\rn}\oz\lf(T(f)(x)\r)\,dx\ls \sum_{j=1}^\fz\int_{\rn}
\oz\lf(T(\lz_j\az_j)(x)\r)\,dx
\ls \sum_{j=1}^\fz|B_j|\oz\lf(\frac{|\lz_j|}{|B_j|\ro(|B_j|)}\r),
\end{eqnarray*}
which implies that $\|T(f)\|_{L(\oz)}\ls \Lambda(\{\lz_j\az_j\}_j)
\ls \|f\|_{H_{\oz,L}(\rn)}$.
 This, combined with the density of $H_{\oz,L}(\rn)\cap L^2(\rn)$ in
 $H_{\oz,L}(\rn)$, then finishes the proof of Lemma \ref{l5.1}.
\end{proof}

\begin{rem}\rm\label{r5.1} Let $p\in (0,1]$. We point out that the condition
\eqref{5.8} is also necessary, if $\oz(t)\equiv t^p$ for all $t\in(0,\fz)$.
However, for a general $\oz$ as in Lemma \ref{l5.1}, it is still unclear
whether \eqref{5.8} is necessary or not.
\end{rem}

\begin{thm}\label{t5.2}
 Let $\oz$ satisfy Assumption (A). Then the spaces $H_{\oz,L}(\rn)$,
 $H_{\oz,\cs_P}(\rn)$,
 $H_{\oz,\nn_h}(\rn)$ and $H_{\oz,\nn_P}(\rn)$ coincide with equivalent norms.
\end{thm}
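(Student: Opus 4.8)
The plan is to close the chain of continuous embeddings
$$H_{\oz,L}(\rn)\subseteq H_{\oz,\nn_h}(\rn)\subseteq H_{\oz,L}(\rn)\qquad\text{and}\qquad H_{\oz,L}(\rn)\subseteq H_{\oz,\nn_P}(\rn)\subseteq H_{\oz,\cs_P}(\rn)\subseteq H_{\oz,L}(\rn),$$
which gives all the asserted norm equivalences. The three embeddings of the form $H_{\oz,L}(\rn)\subseteq H_{\oz,T}(\rn)$ with $T\in\{\nn_h,\nn_P,\cs_P\}$ would all follow from Lemma \ref{l5.1}. Each such $T$ is bounded on $L^2(\rn)$ --- hence of weak type $(2,2)$ with $2\in(p_L,2]$ --- by the $L^2$ functional calculus for $L$ and the Gaffney estimates of Lemmas \ref{l2.1} and \ref{l2.2}, so it remains to verify the molecular condition \eqref{5.8}: for every $(\oz,\fz,M,\ez)$-molecule $\az$ adapted to a ball $B$ and every $\lz\in\cc$, $\int_\rn\oz(T(\lz\az))\,dx\ls|B|\oz(|\lz|/(|B|\ro(|B|)))$. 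This is proved just as in the proof of Proposition \ref{p4.1} (see \eqref{4.3} and \eqref{4.4}): split $\rn=\bigcup_{j\ge0}U_j(B)$, use the $L^2$-boundedness of $T$ on $U_0(B)\cup U_1(B)\cup U_2(B)$ and the Gaffney/off-diagonal estimates of Lemmas \ref{l2.1} and \ref{l2.2} to gain a rapidly decaying factor in $j$ on the outer annuli, and sum over $j$ using the strictly critical lower type $p_\oz$ of $\oz$ and the lower type of $\ro$ (Proposition \ref{p2.1}). For the Poisson operators $\nn_P$ and $\cs_P$ one first passes to the heat semigroup through the subordination formula $e^{-t\sqrt L}=\pi^{-1/2}\int_0^\fz u^{-1/2}e^{-u}e^{-\frac{t^2}{4u}L}\,du$ and its gradient, which transfers the Gaffney bounds of Lemma \ref{l2.2} to $\{e^{-t\sqrt L}\}_{t>0}$ and $\{t\nz e^{-t\sqrt L}\}_{t>0}$; since the Poisson semigroup has only first-order (polynomial) off-diagonal decay, the loss is compensated by borrowing the powers $L^{-k}$, $1\le k\le M$, available in Definition \ref{d4.2} (ii) and by taking $M$ and $\ez$ large, exactly as in \cite{hm1}. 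Together with the density of $\wz H_{\oz,L}(\rn)$ in $H_{\oz,L}(\rn)$, this gives the three ``easy'' embeddings.

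For the reverse embeddings I would establish, for all $f\in L^2(\rn)$, the three comparisons (a) $\|\cs_L f\|_{L(\oz)}\ls\|\nn_h f\|_{L(\oz)}$, (b) $\|\cs_P f\|_{L(\oz)}\ls\|\nn_P f\|_{L(\oz)}$ and (c) $\|\cs_L f\|_{L(\oz)}\ls\|\cs_P f\|_{L(\oz)}$. Granting them, if $f\in\wz H_{\oz,\nn_h}(\rn)$ then $f\in L^2(\rn)$ and, by (a), $\cs_L f\in L(\oz)$ with control of norms, so $f\in\wz H_{\oz,L}(\rn)$; passing to completions gives $H_{\oz,\nn_h}(\rn)\subseteq H_{\oz,L}(\rn)$, and similarly (b) and (c) give $H_{\oz,\nn_P}(\rn)\subseteq H_{\oz,\cs_P}(\rn)$ and $H_{\oz,\cs_P}(\rn)\subseteq H_{\oz,L}(\rn)$, closing the chain. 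Comparisons (a) and (b) rest on Fefferman--Stein type good-$\lz$ inequalities: for $\gz>0$ small enough and all $\lz>0$,
$$\big|\{x\in\rn:\ \cs_L f(x)>2\lz,\ \nn_h f(x)\le\gz\lz\}\big|\ls\gz^2\,\big|\{x\in\rn:\ \cs_L f(x)>\lz\}\big|$$
together with its analogue for the pair $(\cs_P,\nn_P)$. To pass to the Orlicz functional one first truncates $\cs_L f$ so that the integral in question is a priori finite, then writes $\int_\rn\oz(\cs_L f)\,dx\sim\int_0^\fz\sz_{\cs_L f}(\lz)\,\oz(\lz)/\lz\,d\lz$ via \eqref{3.2}, feeds in the good-$\lz$ inequality, absorbs the $\gz^2$-term using that $\oz$ is of upper type $1$ and of lower type $p_\oz$, removes the truncation by monotone convergence, and finally upgrades the resulting bound for $\int_\rn\oz(\cdot)$ to the stated quasi-norm inequality by scaling. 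The good-$\lz$ inequality itself follows the classical scheme --- Whitney-decompose $\{\cs_L f>\lz\}$ and, on each Whitney cube, split the conical integral defining $\cs_L f$ into a ``far'' part controlled by the value of $\cs_L f$ at a nearby point of the complement and a ``local'' part controlled by the hypothesis $\nn_h f\le\gz\lz$ --- but with the pointwise heat-kernel bounds of the classical setting replaced by the $L^2$ off-diagonal estimates of Lemma \ref{l2.2} and Caccioppoli-type inequalities, as in \cite{hm1}.

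The step I expect to be the main obstacle is comparison (c), $\|\cs_L f\|_{L(\oz)}\ls\|\cs_P f\|_{L(\oz)}$, since $\{t^2Le^{-t^2L}\}_{t>0}$ is not a positive superposition of $\{e^{-s\sqrt L}\}_{s>0}$ and so there is no subordination in this direction. I would exploit that $u(x,t)\equiv e^{-t\sqrt L}f(x)$ solves $\pa_t^2u=Lu$ on $\rnz$ and that $L=\mathop\mathrm{div}(A\nz)$: through the $L^2$ quadratic estimates for $L$ (transported to Whitney boxes) one first compares $\cs_L f$ with the Poisson-semigroup square function built from the full gradient $t\nz_{x,t}u$, and then absorbs the vertical component into $\cs_P f$ using the ellipticity \eqref{1.1} and Caccioppoli estimates for $\pa_t^2u=Lu$; the delicate point throughout is to carry these local $L^2$-comparisons through the Orlicz functional by means of \eqref{3.2} and the type properties of $\oz$. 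Alternatively, one can use a Calder\'on reproducing formula to write $t^2Le^{-t^2L}f=\int_0^\fz\Psi(t,s)\,[s\nz e^{-s\sqrt L}f]\,\frac{ds}{s}$ and estimate the resulting conical operator directly via the off-diagonal bounds. Assembling the three ``easy'' embeddings with comparisons (a)--(c), together with Theorem \ref{t5.1}, completes the proof.
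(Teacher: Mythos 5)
Your overall plan matches the paper's strategy in outline: the embeddings $H_{\oz,L}(\rn)\subseteq H_{\oz,T}(\rn)$ are obtained from Lemma~\ref{l5.1} by verifying the molecular estimate~\eqref{5.8} for each $T$, and the reverse embeddings come from distribution-function inequalities fed into the Orlicz functional via~\eqref{3.2}. For your comparison~(a) the paper uses the Coifman--Meyer--Stein type integrated inequality~\eqref{5.15} (imported from \cite{hm1}) rather than a multiplicative good-$\lambda$ inequality, but these are interchangeable; and for $H_{\oz,L}(\rn)\subseteq H_{\oz,\nn_h}(\rn)$ the paper first reduces to the radial maximal operator $\car_h$ via Lemma~\ref{l5.3} before verifying~\eqref{5.16}, whereas you propose to verify~\eqref{5.8} directly for $\nn_h$---both routes are workable. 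The paper also compares each of the three auxiliary spaces directly to $H_{\oz,L}(\rn)$ rather than chaining $H_{\oz,\nn_P}(\rn)\subseteq H_{\oz,\cs_P}(\rn)\subseteq H_{\oz,L}(\rn)$, but again this is inessential.

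The substantive issue is your item~(c). Your primary route---Caccioppoli and ellipticity for $\pa_t^2u=Lu$---cannot by itself produce $\|\cs_Lf\|_{L(\oz)}\ls\|\cs_Pf\|_{L(\oz)}$, because $\cs_L$ is built on the heat semigroup $e^{-t^2L}$ while $\cs_P$ is built on the Poisson semigroup $e^{-t\sqrt L}$: local PDE estimates compare quantities built from the \emph{same} extension $u=e^{-t\sqrt L}f$ and cannot change the semigroup. What Caccioppoli-type arguments do give is precisely the pointwise bound $\wz\cs_P\le C\cs_P$ of Lemma~\ref{l5.2}, i.e.\ a comparison of two Poisson-based square functions. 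The semigroup change is the missing ingredient, and the paper supplies it through the Calder\'on reproducing formula $f=\wz C\,C_M^{-1}\pi_{L,M}\bigl(t^2Le^{-t\sqrt L}f\bigr)$ in $L^2(\rn)$ together with the boundedness of $\pi_{L,M}$ from $T_\oz(\rnz)$ to $H_{\oz,L}(\rn)$ (Proposition~\ref{p4.1}(ii)); combining this with $\|\wz\cs_Pf\|_{L(\oz)}\ls\|\cs_Pf\|_{L(\oz)}$ yields $H_{\oz,\cs_P}(\rn)\subseteq H_{\oz,L}(\rn)$. Your ``alternative'' via a Calder\'on formula is thus the correct direction, but note two points: the natural intermediate object is $s^2Le^{-s\sqrt L}f$ (the kernel of $\wz\cs_P$), not $s\nz e^{-s\sqrt L}f$ (a scalar-to-vector mismatch); and the paper does not estimate the resulting ``conical operator directly''---instead it runs $t^2Le^{-t\sqrt L}f$ through the $T_\oz(\rnz)$ atomic decomposition of Theorem~\ref{t3.1} and the molecular bound~\eqref{4.3} established in the proof of Proposition~\ref{p4.1}. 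This machinery is what makes the estimate go through for a general concave $\oz$, and bypassing it leaves a genuine gap in your sketch.
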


Before we prove Theorem \ref{t5.2}, we recall some auxiliary operators
introduced in \cite{hm1}. Let $\bz\in(0,\fz)$.
For any $g\in L^2(\rn)$ and $x\in\rn$, let
\begin{equation*}
  \wz \cs_P^\bz g(x)\equiv \lf(\iint_{\Gamma_\bz (x)}|t^2L e^{-t\sqrt L}g(y)|^2
 \frac{\,dy\,dt}{t^{n+1}}\r)^{1/2}
  \end{equation*}
and
\begin{equation*}
\wz \cs_h^\bz g(x)\equiv \lf(\iint_{\Gamma_\bz(x)}|t\nabla e^{-t^2L}g(y)|^2
\frac{\,dy\,dt}{t^{n+1}}\r)^{1/2}.
\end{equation*}
We denote $\wz \cs_P^1 g$ and $\wz \cs_h^1 g$ simply by
$\wz \cs_P g$ and $\wz \cs_h g$, respectively.

The proof of the following lemma is
similar to that of \cite[Lemma 5.4]{hm1}. We omit the details.

\begin{lem}\label{l5.2}
There exists a positive constant $C$ such that for all $g\in L^2(\rn)$ and $x\in\rn$,
\begin{equation}\label{5.9}
\wz \cs_Pg(x)\le C \cs_Pg(x)
\end{equation}
and $\cs_Lg(x)\le C \wz \cs_h g(x).$
\end{lem}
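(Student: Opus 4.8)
\emph{Proof proposal.} Both inequalities compare the Lusin--area function built from the second-order operator $t^2L$ with the one built from the first-order operator $t\nz$: once for the Poisson semigroup $e^{-t\sqrt L}$ (the first estimate) and once for the heat semigroup $e^{-t^2L}$ (the second). The plan is to exploit, in each case, that the corresponding extension solves a second-order divergence-form equation whose coefficients do not depend on the extension variable, together with the $L^2$ off-diagonal (Gaffney) estimates of Lemma \ref{l2.2}. The two elementary identities behind the computation are
$$t^2Le^{-t^2L}g=-\tfrac t2\,\pa_t\!\big(e^{-t^2L}g\big),\qquad t^2Le^{-t\sqrt L}g=t^2\pa_t^2\!\big(e^{-t\sqrt L}g\big),$$
the second because $u(\cdot,t)\equiv e^{-t\sqrt L}g$ satisfies $\pa_t^2u=Lu$, which by \eqref{1.2} is a divergence-form equation on $\rnz$ with $t$-independent coefficients; consequently $\pa_tu$ solves the same equation. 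I would also use the ellipticity estimate $\|L^{1/2}h\|_{L^2(\rn)}\ls\|\nz h\|_{L^2(\rn)}$, immediate from \eqref{1.1}.

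For $\wz\cs_Pg(x)\le C\,\cs_Pg(x)$, I would work on the Whitney cylinders $Q$ of the cone $\Gamma(x)$, so that $2Q\subset\rnz$ and $t$ is comparable to a constant on $Q$. Since $\pa_tu$ solves the same second-order divergence-form equation as $u$, the interior Caccioppoli inequality gives $\|\nz_{y,t}(\pa_tu)\|_{L^2(Q)}\ls t^{-1}\|\pa_tu\|_{L^2(2Q)}$, whence, by the second identity above,
$$\big\|t^2Le^{-t\sqrt L}g\big\|_{L^2(Q)}=\big\|t^2\pa_t^2u\big\|_{L^2(Q)}\ls\big\|t\,\pa_tu\big\|_{L^2(2Q)}\ls\big\|t\nz e^{-t\sqrt L}g\big\|_{L^2(2Q)},$$
the last step because $\pa_tu$ is one component of $\nz e^{-t\sqrt L}g$. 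Squaring, integrating against $t^{-n-1}\,dy\,dt$ over $\Gamma(x)$, summing over the Whitney cylinders, and reassembling the cone integral from these Whitney averages then yields $\wz\cs_Pg(x)\ls\cs_Pg(x)$.

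For $\cs_Lg(x)\le C\,\wz\cs_hg(x)$ the same scheme applies to the heat semigroup. Writing $W(y,\sigma)\equiv e^{-\sigma L}g(y)$, so that $t^2Le^{-t^2L}g=-\sigma\,\pa_\sigma W$ with $\sigma=t^2$, the map $(y,\sigma)\mapsto W(y,\sigma)$ solves a parabolic divergence-form equation with $\sigma$-independent coefficients; the associated parabolic Caccioppoli inequality --- again a consequence of the Gaffney estimates of Lemma \ref{l2.2} --- dominates $\|\sigma\,\pa_\sigma W\|_{L^2(Q)}$ on a parabolic cylinder $Q$ of side $r\sim\sqrt\sigma\sim t$ by $\sigma r^{-1}\|\nz_yW\|_{L^2(2Q)}\sim\|t\nz e^{-t^2L}g\|_{L^2(2Q)}$. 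Reassembling the cone integral via Fubini then gives $\cs_Lg(x)\ls\wz\cs_hg(x)$. (Alternatively, one may transfer between the Poisson and heat quantities through the subordination formula $e^{-t\sqrt L}=\frac1{\sqrt\pi}\int_0^\fz\frac{e^{-u}}{\sqrt u}\,e^{-\frac{t^2}{4u}L}\,du$, which acts at the same spatial point and is therefore compatible with the cones.)

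The step I expect to be the main obstacle is the control of the cone geometry in the reassembly: the Whitney/Caccioppoli localization produces the dilated cylinders $2Q$, which protrude beyond $\Gamma(x)$, so the estimate obtained directly involves a cone of slightly larger aperture; handling this enlargement --- through a sufficiently fine Whitney decomposition together with the super-polynomial off-diagonal decay, so as to land back on the aperture-one area function on the right --- is the technical heart of the argument, and is precisely what is carried out in the proof of \cite[Lemma 5.4]{hm1}, whose argument one follows here. Once it is in place, the two pointwise bounds follow.
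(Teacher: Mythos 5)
Your proposal matches the route the paper actually takes: the paper simply refers to \cite[Lemma 5.4]{hm1} and omits details, and the ingredients you identify — the identities $t^2Le^{-t\sqrt L}g=t^2\pa_t^2(e^{-t\sqrt L}g)$ and $t^2Le^{-t^2L}g=-\tfrac t2\pa_t(e^{-t^2L}g)$, the elliptic (resp.\ parabolic) Caccioppoli inequality for the Poisson (resp.\ heat) extension on Whitney cylinders of the cone, and the observation that $\pa_t u$ solves the same $t$-independent equation as $u$ — are precisely the ones used there. You are also right to flag the aperture enlargement coming from the dilated Whitney cylinders as the delicate point. The one place where your sketch is slightly off the mark is in the proposed remedy: a ``sufficiently fine Whitney decomposition together with super-polynomial off-diagonal decay'' does not bring you back to the aperture-one cone pointwise — the Caccioppoli argument intrinsically changes the aperture, and no pointwise reassembly undoes that. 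What the paper actually does is accept the aperture change and compensate at the level of the $L(\oz)$-quasinorm via Lemma \ref{l3.2} (change-of-aperture for $\ca_\eta$ in $L(\oz)$); this is visible in \eqref{5.14}, which cites Lemmas \ref{l5.2} \emph{and} \ref{l3.2} together. So the correct reading of Lemma \ref{l5.2} is as a pointwise bound with a (harmless, fixed) aperture gap on the right, absorbed downstream by Lemma \ref{l3.2} — your Caccioppoli-on-Whitney-cylinders scheme is exactly what produces it.
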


\begin{proof}[\bf Equivalence of $H_{\oz,L}(\rn)$ and $H_{\oz,\cs_P}(\rn)$.]
Let $\ez>n(\frac{1}{p_\oz}-\frac{1}{p_\oz^+})$ and
$M> \frac n2 (\frac1{p_\oz}-\frac 12)$.
Suppose that $f\in H_{\oz,\cs_P}(\rn)\cap L^2(\rn)$. It follows from
\eqref{5.9} that
$\|\wz \cs_Pf\|_{L(\oz)}\ls \|f\|_{H_{\oz,\cs_P}(\rn)}.$
Moreover, since $\cs_P$ is bounded on $L^2(\rn)$ (see (5.15) in \cite{hm1}),
by \eqref{5.9}, we have
$$\|\wz \cs_Pf\|_{L^2(\rn)}\ls \|\cs_Pf\|_{L^2(\rn)}\ls \|f\|_{L^2(\rn)}.$$
Thus we obtain $t^2L e^{-t\sqrt L}f\in (T_\oz(\rnz)\cap T^2_2(\rnz))$.
Let $\wz C$ be a positive constant such that
$\wz C\int_0^\fz t^{2(s+1)}e^{-t^2}t^2e^{-t}\frac{\,dt}{t}=1.$
Then by the $L^2(\rn)$-functional calculi, we have
$$f=\frac{\wz C}{C_M}\pi_{L,M}(t^2L e^{-t\sqrt L}f)$$
in $L^2(\rn)$, where $C_M$ is the same as in \eqref{1.4}.

Since $t^2L e^{-t\sqrt L}f\in T_\oz(\rnz)$, by Proposition \ref{p4.1},
we obtain that $f\in H_{\oz,L}(\rn)$ and
$$\|f\|_{H_{\oz,L}(\rn)}\ls \|t^2L e^{-t\sqrt L}f\|_{T_\oz(\rnz)}\sim
\|\wz \cs_Pf\|_{L(\oz)}\ls \|f\|_{H_{\oz,\cs_P}(\rn)}.$$
Then a density argument yields that $H_{\oz,\cs_P}(\rn)\subset H_{\oz,L}(\rn)$.

Conversely, similarly to the proof of \eqref{4.3},
by using the estimates in the proof of \cite[Theorem 5.3]{hm1}, we have
\begin{eqnarray*}
  \int_{\rn}\oz\lf(|\lz| \cs_P(\az)(x)\r)\,dx&&\ls
  |B|\oz\lf(\frac{|\lz|}{|B|\ro(|B|)}\r),
\end{eqnarray*}
where $\az$ is an $(\oz,2,M,\ez)$-molecule adapted to the ball $B$
and $\lz\in\cc$. By the $L^2(\rn)$-boundedness of $\cs_P$
and  Lemma \ref{l5.1}, we have
$\|f\|_{H_{\oz,\cs_P}(\rn)}=\|\cs_P f\|_{L(\oz)}\ls \|f\|_{H_{\oz,L}(\rn)}$,
which implies that $H_{\oz,L}(\rn)\subset H_{\oz,\cs_P}(\rn)$. Thus, $H_{\oz,L}(\rn)$
and $H_{\oz,\cs_P}(\rn)$ coincide with equivalent norms.
 \end{proof}

In what follows, the operators $\nn_h^\bz$ and $\nn_P^\bz$ are as in \eqref{5.2}
and \eqref{5.3}, respectively.
\begin{lem}\label{l5.3}
  Let $0<\bz<\gz<\fz$. Then there exists a positive constant $C$,
depending on $\bz$ and $\gz$,  such that
  for all $g\in L^2(\rn)$,
\begin{equation}\label{5.10}
     C^{-1}\|\nn_h^\bz g\|_{L(\oz)}\le \|\nn_h^\gz g\|_{L(\oz)}
     \le C\|\nn_h^\bz g\|_{L(\oz)}
\end{equation}
  and
\begin{equation}\label{5.11}
   C^{-1}\|\nn_P^\bz g\|_{L(\oz)}\le \|\nn_P^\gz g\|_{L(\oz)}
   \le C\|\nn_P^\bz g\|_{L(\oz)}.
\end{equation}
\end{lem}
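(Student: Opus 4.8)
The plan is to prove the change-of-aperture estimate \eqref{5.10}; the estimate \eqref{5.11} is identical with $e^{-t^2L}$ replaced by $e^{-t\sqrt L}$, so I would only remark that the same argument applies. By symmetry of the two inequalities in \eqref{5.10} (enlarging or shrinking the aperture), it suffices to fix $0<\bz<\gz<\fz$ and show $\|\nn_h^\gz g\|_{L(\oz)}\ls\|\nn_h^\bz g\|_{L(\oz)}$; the reverse direction is trivial since $\Gamma_\bz(x)\subset\Gamma_\gz(x)$ forces $\nn_h^\bz g\le\nn_h^\gz g$ pointwise, and $\|\cdot\|_{L(\oz)}$ is monotone. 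The standard route, going back to Fefferman--Stein, is a good-$\lz$ / distribution-function comparison: for a suitable structural constant $\gz_0\in(0,1)$ (the $\gz$ from Lemma \ref{l3.1}, applied with the pair of apertures $\bz,\gz$) one shows the pointwise bound
\begin{equation*}
\{x\in\rn:\,\nn_h^\gz g(x)>\lz\}\subset\{x\in\rn:\,\cm(\chi_{\{\nn_h^\bz g>\lz\}})(x)>\gz_0\}
\end{equation*}
for every $\lz>0$, where $\cm$ is the Hardy--Littlewood maximal operator. Indeed, if $\nn_h^\gz g(x)>\lz$ there is $(y,t)\in\Gamma_\gz(x)$ with the $L^2$-average of $e^{-t^2L}g$ over $B(y,\gz t)$ exceeding $\lz$; then for every $z$ in a fixed proportion of $B(x,\gz t)$ one has $(y,t)\in\Gamma_\bz(z)$ up to harmless dilation of the ball, forcing $z\in\{\nn_h^\bz g>c\lz\}$, and a covering/volume comparison gives the density bound. (This is exactly the mechanism encoded in Lemma \ref{l3.1}, phrased there in tent-space language; I would invoke that lemma rather than redo the geometry.)

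Granting the inclusion above, the weak-type $(1,1)$ of $\cm$ yields $\sz_{\nn_h^\gz g}(\lz)\le C(\gz_0)\,\sz_{\nn_h^\bz g}(c\lz)$ for all $\lz>0$, where $\sz_g$ denotes the distribution function as in Section \ref{s3}. Now I would pass to the Orlicz norm exactly as in the proof of Lemma \ref{l3.2}: using $\oz(t)\sim\int_0^t\oz(u)/u\,du$ (which is \eqref{3.2}, valid since $\oz$ is of upper type $1$ and lower type $p_\oz$) and Fubini,
\begin{equation*}
\int_\rn\oz\lf(\frac{\nn_h^\gz g(x)}{\lz}\r)\,dx\sim\int_0^\fz\sz_{\nn_h^\gz g}(\lz t)\frac{\oz(t)}{t}\,dt\ls\int_0^\fz\sz_{\nn_h^\bz g}(c\lz t)\frac{\oz(t)}{t}\,dt\sim\int_\rn\oz\lf(\frac{c'\nn_h^\bz g(x)}{\lz}\r)\,dx,
\end{equation*}
and then the upper type $1$ property of $\oz$ absorbs the constant $c'$ into $\lz$, giving $\|\nn_h^\gz g\|_{L(\oz)}\le C\|\nn_h^\bz g\|_{L(\oz)}$. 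The same chain of inequalities with $e^{-t\sqrt L}$ in place of $e^{-t^2L}$ gives \eqref{5.11}.

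The main obstacle is the pointwise distribution-function comparison, i.e. making precise the claim that the set where $\nn_h^\gz g$ is large is contained in a maximal-function level set of the set where $\nn_h^\bz g$ is large; this is where one genuinely uses the geometry of cones and the $L^2$-averaging built into the definition \eqref{5.2} (as opposed to a pointwise kernel bound, which is not available here), and this is precisely the content that Lemma \ref{l3.1} was set up to deliver. Once that inclusion is in hand, everything else is the now-routine passage from distribution functions to Orlicz norms, copied verbatim from the proof of Lemma \ref{l3.2}. I would therefore present the argument compactly: state the set inclusion, cite Lemma \ref{l3.1} (together with the weak-$(1,1)$ bound for $\cm$) for its justification, and refer to the computation in Lemma \ref{l3.2} for the remaining steps, noting that the proof of \eqref{5.11} is entirely analogous.
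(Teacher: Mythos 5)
Your overall plan is the same as the paper's: a good-$\lambda$ comparison of distribution functions via the Hardy--Littlewood maximal function, then the passage to Orlicz quasi-norms using \eqref{3.2}, exactly as in the proof of Lemma~\ref{l3.2}. Two points, however, are wrong as stated.

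First, the ``trivial'' direction is not a pointwise monotonicity. Enlarging the aperture from $\bz$ to $\gz$ also enlarges the normalization $(\bz t)^n\to(\gz t)^n$ in \eqref{5.2}, so from $\Gamma_\bz(x)\subset\Gamma_\gz(x)$ one only gets $\nn_h^\bz g(x)\le(\gz/\bz)^{n/2}\nn_h^\gz g(x)$, not $\nn_h^\bz g\le\nn_h^\gz g$. The constant is of course harmless, but the claimed inequality is false without it.

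Second, and more seriously, Lemma~\ref{l3.1} does \emph{not} deliver the set inclusion you need and cannot simply be cited here. That lemma compares integrals of the \emph{area functional} (iterated $\frac{dy\,dt}{t^{n+1}}$ integrals over cones) across apertures; it says nothing about the nontangential maximal operator $\nn_h^\bz$, which is a supremum of $L^2$-averages. The paper does not invoke Lemma~\ref{l3.1} in this proof. Instead it argues by contrapositive: with $E_\sz=\{\nn_h^\bz g>\sz\}$ and $E_\sz^\ast=\{\cm(\chi_{E_\sz})>(\bz/(3\gz))^n\}$, if $x\notin E_\sz^\ast$ then for every $(y,t)\in\Gamma_{2\gz}(x)$ the ball $B(y,\bz t)$ is not contained in $E_\sz$, hence meets $E_\sz^\com$ at some $z$ with $(y,t)\in\Gamma_\bz(z)$, forcing the $\bz t$-average to be $\le\sz$; then for $(w,t)\in\Gamma_\gz(x)$ one covers $B(w,\gz t)$ by boundedly many balls $B(y_i,\bz t)$ with $(y_i,t)\in\Gamma_{2\gz}(x)$ and sums. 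Your informal geometric sketch gestures at this but is not a proof, and the appeal to Lemma~\ref{l3.1} would fail; you must carry out this covering argument explicitly.
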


\begin{proof}[\bf Proof.] We only prove \eqref{5.10}; the proof of \eqref{5.11}
is similar.

Since $\bz < \gz$, for any $x\in\rn$, it is easy to see
that $\nn_h^\bz g(x)\le (\frac{\gz}{\bz})^n \nn_h^\gz g(x)$, which implies the first
inequality.

To show the second inequality in \eqref{5.10}, without loss of generality,
we may assume that $\|\nn_h^\bz g\|_{L(\oz)}<\fz$.
Let $\sz\in(0,\fz)$,
\begin{equation}\label{5.12}
E_\sz\equiv \{x\in\rn:\,\nn_h^\bz g(x)>\sz\} \ \mbox{and} \
E_\sz^\ast\equiv \lf\{x\in\rn:\,\cm(\chi_{E_\sz})(x)>\lf(\frac{\bz}{3\gz}\r)^n\r\}.
\end{equation}
Suppose that $x\notin E_\sz^\ast$. Thus for any $(y,t)\in \Gamma_{2\gz}(x)$,
we have $B(y,\bz t)\nsubseteq E_\sz$; otherwise,
$$\cm(\chi_{E_\sz})(x)>\frac{|B(y,\bz t)|}{|B(x,3\gz t)|}=\lf(\frac{\bz}{3\gz}\r)^n,$$
which contradicts with $x\notin E_\sz^\ast$.
Thus there exists $z\in (B(y,\bz t)\cap (E_\sz)^\com)$, which further implies that
\begin{equation}\label{5.13}
\lf(\frac{1}{(\bz t)^n}
\int_{B(y,\bz t)}|e^{-t^2L}g(u)|^2\,du\r)^{1/2}\le \nn_h^\bz g(z)\le \sz.
\end{equation}
For every $(w,t)\in \Gamma_{\gz}(x)$, we cover the ball $B(w,\gz t)$ by no
more that $N(n,\bz,\gz)$ balls $\{B(y_i,\bz t)\}_{i=1}^{N(n,\bz,\gz)}$,
where $(y_i,t)\in \Gamma _{2\gz}(x)$ and
$N(n,\bz,\gz)$ depends only on $n,\,\bz,\,\gz$. Thus, by \eqref{5.13}, we obtain
\begin{eqnarray*}
&&\lf(\frac{1}{(\gz t)^n}\int_{B(w,\gz t)}|e^{-t^2L}g(z)|^2\,dz\r)^{1/2}\\
&&\hs\hs \le \lf(\frac{\bz}{\gz}\r)^{n/2}\sum_{i=1}^{N(n,\bz,\gz)}
\lf(\frac{1}{(\bz t)^n}
\int_{B(y_i,\bz t)}|e^{-t^2L}g(z)|^2\,dz\r)^{1/2}\le C(n,\bz,\gz) \sz,
\end{eqnarray*}
where $C(n,\bz,\gz)$ is a positive constant depending on $n,\,\bz,\,\gz$.
From this, it follows that for all $\sz>0$,
$\{x\in\rn:\,\nn_h^\gz g(x)>C(n,\bz,\gz)\sz\}\subset E_\sz^\ast$. This combined \eqref{3.2}
yields that
\begin{eqnarray*}
  \int_{\rn}\oz(\nn_h^\gz g(x))\,dx&&\sim\int_{\rn}\int_0^{\nn_h^\gz g(x)}
  \frac{\oz(t)}{t}\,dt\,dx
\sim \int_0^\fz \frac{\oz(t)}{t}|\{x\in\rn:\,\nn_h^\gz g(x)>t\}|\,dt\\
  &&\sim \int_0^\fz \frac{\oz(t)}{t}|\{x\in\rn:\,\nn_h^\gz g(x)>C(n,\bz,\gz)t\}|\,dt\\
  &&\ls\int_0^\fz \frac{\oz(t)}{t}|E_t^\ast|\,dt\ls
   \int_0^\fz \frac{\oz(t)}{t}|E_t|\,dt\sim \int_{\rn}\oz(\nn_h^\bz g(x))\,dx,
\end{eqnarray*}
which further implies that $\|\nn_h^\gz g\|_{L(\oz)}\ls \|\nn_h^\bz g\|_{L(\oz)}$,
and hence, completes the proof of Lemma \ref{l5.3}.
\end{proof}

\begin{proof}[\bf Equivalence of $H_{\oz,L}(\rn)$ and $H_{\oz,\nn_h}(\rn)$.]
By \eqref{3.2} and Lemmas \ref{l5.2} and \ref{l3.2}, we have
\begin{equation}\label{5.14}
\|\cs_L f\|_{L(\oz)}\ls \|\wz \cs_h f\|_{L(\oz)}\ls \|\wz \cs_h^{1/2} f\|_{L(\oz)}.
\end{equation}
Recall that  $\sz_{g}$ denote the distribution function of a function $g$.
The estimate (6.36) of \cite{hm1} says that for any $\lz\in(0,\fz)$,
\begin{equation}\label{5.15}
  \sz_{\wz \cs_h^{1/2} f}(\lz)\le \frac{1}{\lz^2}\int_0^\lz t
  \sz_{\nn_h^\bz f}(t)\,dt+\sz_{\nn_h^\bz f}(\lz),
\end{equation}
where $\bz\in(0,\fz)$ is large enough.

Since $\oz$ is of upper type 1, by \eqref{5.14}, \eqref{3.2}, \eqref{5.15}
and Lemma \ref{l5.3}, we obtain that
\begin{eqnarray*}
  \int_\rn \oz(\cs_L f(x))\,dx&&\ls \int_\rn \oz(\wz \cs_h^{1/2} f(x))\,dx
  \sim \int_\rn \int_0^{\wz \cs_h^{1/2} f(x)}\frac{\oz(u)}{u}\,du\,dx\\
  &&\sim \int_0^\fz \frac{\oz(u)}{u}\sz_{\wz \cs_h^{1/2} f}(u)\,du\\
  &&\ls \int_0^\fz \frac{\oz(u)}{u}\lf[\frac{1}{u^2}\int_0^u t
  \sz_{\nn_h^\bz f}(t)\,dt+\sz_{\nn_h^\bz f}(u)\r]\,du\\
  &&\ls \int_0^\fz t \sz_{\nn_h^\bz f}(t)\int_t^\fz \frac{\oz(t)}{u^2t}\,du\,dt
  +\int_{\rn}\oz(\nn_h^\bz f(x))\,dx\\
  &&\ls\int_{\rn}\oz(\nn_h^\bz f(x))\,dx \ls\int_{\rn}\oz(\nn_h f(x))\,dx,
\end{eqnarray*}
which implies that $\|f\|_{H_{\oz,L}(\rn)}\ls \|f\|_{H_{\oz,\nn_h}(\rn)}$, and
hence, $H_{\oz,\nn_h}(\rn)\subset H_{\oz,L}(\rn)$.

Conversely,  let $\car_h$ be as in \eqref{5.4}. For all $g\in L^2(\rn)$
and $x\in \rn$, we also define
\begin{equation*}
  \car_h^{M} g(x)\equiv \sup_{t>0}\lf(\frac{1}{t^n}\int_{B(x,t)}
  |(t^2L)^Me^{-t^2L}g(y)|^2\,dy\r)^{1/2}.
\end{equation*}
By the proof of \cite[Theorem 6.3]{hm1}, we know that
the operators $\car_h$ and $\car_h^{M}$ are bounded on $L^2(\rn)$.

Since Lemma \ref{l5.3} implies that for all $f\in L^2(\rn)\cap H_{\oz,L}(\rn)$,
$$\|\nn_{h}f\|_{L(\oz)}\ls \|\nn_{h}^{1/2}f\|_{L(\oz)}\ls \|\car_h f\|_{L(\oz)},$$
by Lemma \ref{l5.1} and a density argument, to show
$H_{\oz,L}(\rn)\subset H_{\oz,\nn_h}(\rn)$, it suffices to prove
that for all $(\oz,2,M,\ez)$-molecules $\az$ adapted to balls $B$ and $\lz\in \cc$,
 \begin{equation}\label{5.16}
\int_{\rn}\oz\lf(\car_h(\lz\az)(x)\r)\,dx\ls|B|\oz\lf(\frac{|\lz|}{|B|\ro(|B|)}\r).
 \end{equation}

Since $\oz$ is concave, by the Jensen inequality and the H\"older inequality, we obtain
\begin{eqnarray*}
\int_{\rn}\oz\lf(\car_h(\lz\az)(x)\r)\,dx&&\le
\sum_{j=0}^\fz \int_{U_j(B)}\oz\lf(\car_h(\lz\az)(x)\r)\,dx\\
&&\ls \sum_{j=0}^\fz |2^jB|\oz\lf(\frac{\|\car_h(\lz\az)
\|_{L^2(U_j(B))}}{|2^jB|^{1/2}}\r).
\end{eqnarray*}
For $j\in\zz_+$ and $j\le 10$, by the $L^2(\rn)$-boundedness of
the operator $\car_h$ and the definition of the molecule, we have
\begin{equation*}\sum_{j=0}^{10} |2^jB|\oz\lf(\frac{\|\car_h
(\lz\az)\|_{L^2(U_j(B))}}
{|2^jB|^{1/2}}\r)\le |B|\oz\lf(\frac{|\lz|}{|B|\ro(|B|)}\r).
\end{equation*}

Since $M>\frac n2(\frac 1{p_\oz}-\frac 12)$, we let $a\in (0,1)$
such that $a(2M+n/2)>n/p_\oz$. For $j\in\cn$ and $j>10$, write
\begin{eqnarray*}
\car_h(\lz\az)(x)&& \le \sup_{t\le 2^{aj-2}r_B}\lf(\frac{1}{t^n}\int_{B(x,t)}
  |e^{-t^2L}(\lz\az)(y)|^2\,dy\r)^{1/2}\\
  &&\hs +\sup_{t> 2^{aj-2}r_B}\lf(\frac{1}{t^n}\int_{B(x,t)}
  |e^{-t^2L}(\lz\az)(y)|^2\,dy\r)^{1/2}\equiv \mathrm{H}_j+\mathrm{I}_j.
\end{eqnarray*}

 For the case  $t\le 2^{aj-2}r_B$, let
\begin{equation}\label{5.17}
   V_j(B)\equiv 2^{j+3}B\setminus 2^{j-3}B, \  R_j(B)
   \equiv 2^{j+5}B\setminus 2^{j-5}B \
   \mbox{and} \ E_j(B)\equiv (R_j(B))^\com.
\end{equation}
If $x\in U_j(B)$ and $|x-y|<t$, then we have $y\in V_j(B)$ and $\dist (V_j(B),
E_j(B))\sim 2^jr_B$,
which together with Lemma \ref{l2.3} yields that
\begin{eqnarray*}
\|\mathrm{H}_j\|_{L^2(U_j(B))}&&\le \lf\|\sup_{t\le 2^{aj-2}r_B}
\lf(\frac{1}{t^n}\int_{B(\cdot,t)}
  |e^{-t^2L}(\lz\az\chi_{R_j(B)})(y)|^2\,dy\r)^{1/2}\r\|_{L^2(U_j(B)}\\
  &&\hs\hs+\lf\|\sup_{t\le 2^{aj-2}r_B}\lf(\frac{1}{t^n}\int_{B(\cdot,t)}
  |e^{-t^2L}(\lz\az\chi_{E_j(B)})(y)|^2\,dy\r)^{1/2}\r\|_{L^2(U_j(B)}\nonumber\\
  &&\ls \|\car_h(\lz\az\chi_{R_j(B)})\|_{L^2(\rn)}+
  |U_j(B)|^{1/2}\sup_{t\le 2^{aj-2}r_B}t^{-n/2}e^{-\frac{(2^jr_B)^2}
{ct^2}}\|\lz\az\|_{L^2(E_j(B))}  \nonumber\\
  &&\ls\|\lz\az\|_{L^2(R_j(B))}+|U_j(B)|^{1/2}\sup_{t\le 2^{aj-2}r_B}
  t^{-n/2}\lf(\frac{t}{2^jr_B}\r)^{N}\|\lz\az\|_{L^2(\rn)}\nonumber\\
  &&\ls |\lz|2^{-j\ez}[\ro(|2^jB|)]^{-1}|2^jB|^{-1/2}+|\lz|2^{j(1-a)(n/2-N)}
  [\ro(|B|)]^{-1}|B|^{-1/2},\nonumber
\end{eqnarray*}
where $c$ is a positive constant as in Lemma \ref{l2.3} and
$N\in\cn$ is large enough such that $(1-a)(N-n/2)p_\oz>n(1-p_\oz/2)$.
Then by an argument similar to the proof of \eqref{4.4} and
the fact that $\oz$ is of lower type $p_\oz$, we have
\begin{eqnarray}\label{5.18}
&& \sum_{j=11}^\fz|2^jB|\oz\lf(\frac{\|\mathrm{H}_j\|_{L^2(U_j(B))}}{|2^jB|^{1/2}}\r)
\nonumber\\
&&\hs\ls \sum_{j=11}^\fz |2^jB|\oz\lf(\frac{|\lz|2^{-j\ez}}{|2^jB|\ro(|2^jB|)}\r)+
\sum_{j=11}^\fz |2^jB|\oz\lf(\frac{|\lz|2^{j(1-a)(n/2-N)}
  }{|2^jB|^{1/2}\ro(|B|)|B|^{1/2}}\r)\nonumber\\
  &&\hs\ls |B|\oz\lf(\frac{|\lz|}{|B|\ro(|B|)}\r)+
\sum_{j=11}^\fz |B|2^{jn(1-p_\oz/2)}2^{j(1-a)(n/2-N)p_\oz}
  \oz\lf(\frac{|\lz| }{\ro(|B|)|B|}\r)\nonumber\\
&&\hs\ls |B|\oz\lf(\frac{|\lz|}{|B|\ro(|B|)}\r),
\end{eqnarray}
which is a desired estimate.

For the term $\mathrm{I}_j$, by the $L^2(\rn)$-boundedness of the
operator $\car_h^M$, we have
\begin{eqnarray*}
&&\|\mathrm{I}_j\|_{L^2(U_j(B))}\\
&&\hs\ls \lf\|\sup_{t> 2^{aj-2}r_B}\lf(\frac{1}{t^n}\int_{B(x,t)}
  \lf(\frac{r_B}{t}\r)^{4M}|(t^2L)^Me^{-t^2L}(\lz(r_B^2L)^{-M}\az)(y)
  |^2\,dy\r)^{1/2}\r\|_{L^2(U_j(B))}\nonumber\\
  &&\hs\ls 2^{-2aMj}\|\car_h^M(\lz(r_B^2L)^{-M}\az)\|_{L^2(U_j(B))}
  \ls |\lz|2^{-2aMj}[\ro(|B|)]^{-1}|B|^{-1/2},\nonumber
  \end{eqnarray*}
  which together with the fact that $ap_\oz(2M+n/2)>n$ implies that
\begin{eqnarray}\label{5.19}
 \sum_{j=11}^\fz|2^jB|\oz\lf(\frac{\|\mathrm{I}_j\|_{L^2(U_j(B))}}{|2^jB|^{1/2}}\r)
&&\ls \sum_{j=11}^\fz |2^jB|\oz\lf(\frac{|\lz|2^{-2aMj}}
{|2^jB|^{1/2}\ro(|B|)|B|^{1/2}}\r)\nonumber\\
&&\ls \sum_{j=11}^\fz 2^{jn(1-p_\oz/2)}2^{-2aMjp_\oz}|B|\oz
\lf(\frac{|\lz|}{|B|\ro(|B|)}\r)\nonumber\\
&&\ls|B|\oz\lf(\frac{|\lz|}{|B|\ro(|B|)}\r),
\end{eqnarray}
which is a desired estimate.

Combining the estimates \eqref{5.18} and \eqref{5.19} yields \eqref{5.16},
and hence, completes the proof of that $H_{\oz,L}(\rn)\subset H_{\oz,\nn_h}(\rn)$.
Therefore, $H_{\oz,L}(\rn)$ and $H_{\oz,\nn_h}(\rn)$ coincide with equivalent norms.
\end{proof}

\begin{proof}[\bf Equivalence of $H_{\oz,L}(\rn)$ and $H_{\oz,\nn_P}(\rn)$.]
The proof of the equivalence of $H_{\oz,L}(\rn)$ and $H_{\oz,\nn_P}(\rn)$
is similar to that of the equivalence of $H_{\oz,L}(\rn)$ and $H_{\oz,\nn_h}(\rn)$;
we omit the details.

This finishes the proof of Theorem \ref{t5.2}.
\end{proof}

From Theorem \ref{t5.2}, it is easy to deduce the following
radial maximal function characterizations of $H_{\oz, L}(\rn)$.
Recall that $\car_h$ and $\car_P$ are defined in \eqref{5.4}
and \eqref{5.5}, respectively.

\begin{cor}\label{c5.1}
Let $\oz$ satisfy Assumption (A). Then the spaces $H_{\oz,L}(\rn)$,
  $H_{\oz,\car_h}(\rn)$ and $H_{\oz,\car_P}(\rn)$ coincide with equivalent norms.
\end{cor}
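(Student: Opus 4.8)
The plan is to reduce the radial maximal function characterizations to the nontangential ones established in Theorem \ref{t5.2}, exploiting the obvious pointwise domination $\car_h f(x)\le \nn_h f(x)$ and $\car_P f(x)\le \nn_P f(x)$ together with a converse estimate obtained via Lemma \ref{l5.1}. First I would observe that the trivial inequalities $\car_h f\le\nn_h f$ and $\car_P f\le\nn_P f$ pointwise on $\rn$ (which follow immediately from the definitions \eqref{5.2}--\eqref{5.5}, since the supremum defining $\nn_h^1 f$ or $\nn_P^1 f$ includes the ``radial'' choice $y=x$) give at once, for all $f\in L^2(\rn)$,
\begin{equation*}
\|\car_h f\|_{L(\oz)}\le\|\nn_h f\|_{L(\oz)},\qquad
\|\car_P f\|_{L(\oz)}\le\|\nn_P f\|_{L(\oz)},
\end{equation*}
so that, after passing to the completions, $H_{\oz,\nn_h}(\rn)\subset H_{\oz,\car_h}(\rn)$ and $H_{\oz,\nn_P}(\rn)\subset H_{\oz,\car_P}(\rn)$ with the natural norm inequalities; combined with Theorem \ref{t5.2} this yields $H_{\oz,L}(\rn)\subset H_{\oz,\car_h}(\rn)$ and $H_{\oz,L}(\rn)\subset H_{\oz,\car_P}(\rn)$.

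For the reverse inclusions I would invoke Lemma \ref{l5.1} applied to the non-negative sublinear operators $T=\car_h$ and $T=\car_P$. Both operators are bounded on $L^2(\rn)$: for $\car_h$ this is recorded in the proof of the equivalence of $H_{\oz,L}(\rn)$ and $H_{\oz,\nn_h}(\rn)$ above (by \cite[Theorem 6.3]{hm1}), and for $\car_P$ the analogous $L^2(\rn)$-boundedness follows in the same way (or from $\car_P f\le\nn_P f$ and the $L^2$-boundedness of $\nn_P$, see \cite{hm1}). Hence, by Lemma \ref{l5.1}, it suffices to verify the molecular estimate \eqref{5.8} for these two operators, namely that there is a positive constant $C$ such that for every $(\oz,\fz,M,\ez)$-molecule $\az$ adapted to a ball $B$ and every $\lz\in\cc$,
\begin{equation*}
\int_{\rn}\oz\lf(\car_h(\lz\az)(x)\r)\,dx\le C|B|\oz\lf(\frac{|\lz|}{|B|\ro(|B|)}\r),
\end{equation*}
and likewise with $\car_h$ replaced by $\car_P$. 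But \eqref{5.16} is exactly this estimate for $\car_h$, and it was already proved above in the course of establishing $H_{\oz,L}(\rn)\subset H_{\oz,\nn_h}(\rn)$; the corresponding estimate for $\car_P$ is obtained by the same splitting of $\car_P(\lz\az)$ into a ``small $t$'' part and a ``large $t$'' part, using the Gaffney/off-diagonal estimates for the Poisson semigroup $e^{-t\sqrt L}$ from \cite{hm1} in place of those for $e^{-t^2L}$, together with the lower type $p_\oz$ and concavity of $\oz$ precisely as in the derivation of \eqref{5.18} and \eqref{5.19}. This gives $\|\car_h f\|_{L(\oz)}\ls\|f\|_{H_{\oz,L}(\rn)}$ and $\|\car_P f\|_{L(\oz)}\ls\|f\|_{H_{\oz,L}(\rn)}$ on $H_{\oz,L}(\rn)\cap L^2(\rn)$, and a density argument (Remark \ref{r4.1}(ii)) extends this to all of $H_{\oz,L}(\rn)$, giving $H_{\oz,\car_h}(\rn)\subset H_{\oz,L}(\rn)$ and $H_{\oz,\car_P}(\rn)\subset H_{\oz,L}(\rn)$.

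The only genuinely new computation is the molecular bound \eqref{5.8} for $\car_P$, and the main obstacle there is having the correct off-diagonal decay for the Poisson semigroup: unlike $e^{-t^2L}$, the family $\{e^{-t\sqrt L}\}_{t>0}$ satisfies only polynomial (not Gaussian) off-diagonal estimates, so one must check that the resulting decay in $j$ — combined with the factor $2^{jn(1-p_\oz/2)}$ coming from the volumes $|2^jB|$ and with the extra room in the parameter $N$ — still sums to a convergent series after applying the lower type $p_\oz$ of $\oz$; choosing $N$ (or, for the ``large $t$'' part, the regularization exponent $M$ with $M>\frac n2(\frac1{p_\oz}-\frac12)$) large enough exactly as in \eqref{5.18}--\eqref{5.19} handles this. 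I expect this to be routine in view of the estimates already assembled in the proof of Theorem \ref{t5.2}, so I would simply indicate the necessary modifications and omit the repeated details.
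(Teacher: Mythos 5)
Your argument runs into a direction error in the ``reverse inclusions'' step. Lemma \ref{l5.1} applied to $T=\car_h$ yields $\|\car_h f\|_{L(\oz)}\ls\|f\|_{H_{\oz,L}(\rn)}$, which is the inequality $\|f\|_{H_{\oz,\car_h}(\rn)}\ls\|f\|_{H_{\oz,L}(\rn)}$; this gives the inclusion $H_{\oz,L}(\rn)\subset H_{\oz,\car_h}(\rn)$, \emph{not} $H_{\oz,\car_h}(\rn)\subset H_{\oz,L}(\rn)$ as you claim. In other words, you have proved the easy containment twice (once via $\car_h f\le\nn_h f$ plus Theorem \ref{t5.2}, once via Lemma \ref{l5.1} and the molecular bound \eqref{5.16}), and the genuinely hard containment $H_{\oz,\car_h}(\rn)\subset H_{\oz,L}(\rn)$ — equivalently $\|\cs_L f\|_{L(\oz)}\ls\|\car_h f\|_{L(\oz)}$ — is never addressed.

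The missing ingredient is short: for $(y,t)\in\Gamma_{1/2}(x)$ one has $B(y,t/2)\subset B(x,t)$, hence the elementary pointwise estimate $\nn_h^{1/2}f(x)\ls\car_h f(x)$ (and likewise $\nn_P^{1/2}f\ls\car_P f$). Combined with the aperture-comparison Lemma \ref{l5.3}, this gives $\|\nn_h f\|_{L(\oz)}\ls\|\nn_h^{1/2}f\|_{L(\oz)}\ls\|\car_h f\|_{L(\oz)}$, and then Theorem \ref{t5.2} supplies $\|\cs_L f\|_{L(\oz)}\ls\|\nn_h f\|_{L(\oz)}$, yielding the containment $H_{\oz,\car_h}(\rn)\subset H_{\oz,\nn_h}(\rn)=H_{\oz,L}(\rn)$. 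This is exactly what the paper does. Once you notice this, the molecular computation for $\car_P$ that you flag as the ``only genuinely new computation'' is in fact unnecessary for the corollary; the whole statement reduces to the two pointwise observations $\car_h\le\nn_h$ and $\nn_h^{1/2}\ls\car_h$ (and the $P$-analogues), Lemma \ref{l5.3}, and Theorem \ref{t5.2}.
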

\begin{proof}[\bf Proof.]
We only give the proof of the equivalence between $H_{\oz,\car_h}(\rn)$
and $H_{\oz,L}(\rn)$, since the proof of the equivalence
between $H_{\oz,\car_P}(\rn)$ and $H_{\oz,L}(\rn)$ is similar.

For any $f\in (H_{\oz,L}(\rn)\cap L^2(\rn))$, by \eqref{5.2} and \eqref{5.4},
we obviously have $\car_hf\le \nn_hf$, which implies that
$H_{\oz,\nn_h}(\rn)\subset H_{\oz,\car_h}(\rn)$.

Conversely, since for all $f\in L^2(\rn)$,
we have $\nn_{h}^{1/2}f\ls \car_h f,$ where $\nn_{h}^{1/2}$
is as in \eqref{5.2}. Then by Lemma \ref{l5.3}, we obtain that for all
$f\in L^2(\rn)\cap H_{\oz,\car_h}(\rn)$,
$$\|\nn_{h}f\|_{L(\oz)}\ls \|\nn_{h}^{1/2}f\|_{L(\oz)}\ls \|\car_h f\|_{L(\oz)},$$
which implies that $H_{\oz,\car_h}(\rn)\subset H_{\oz,\nn_h}(\rn) $,
and hence, completes the proof of Corollary \ref{c5.1}.
\end{proof}

\section{The Carleson measure and the John-Nirenberg inequality}

\hskip\parindent In this section, we characterize the space $\bmoz$
via the $\ro$-Carleson measure and establish the John-Nirenberg
inequality for elements in $\bmoz$, where $L^\ast$ denotes the conjugate
operator of $L$ in $L^2(\rn)$.

Recall that a measure $d\mu$ on ${\rr}^{n+1}_+$ is called a $\ro$-Carleson measure if
\begin{equation*}
\|d\mu\|_\ro\equiv \sup_{B\subset
\rn}\lf\{\frac{1}{|B|[\ro(|B|)]^2}\iint_{\widehat{B}}|\,d\mu|\r\}^{1/2}<\fz,
\end{equation*}
where the supremum is taken over all balls $B$ of $\rn$ and
$\widehat{B}$ denotes the tent over $B$; see \cite{hsv}.

\begin{thm}\label{t6.1}
Let $\oz$ satisfy Assumption (A), $\ro$ be as in \eqref{2.4}
and $M> \frac n2 (\frac1{p_\oz}-\frac 12)$.

{\rm (i)} If $f\in \bmoz$, then $d\mu_f$ is a $\ro$-Carleson measure and there
exists a positive constant $C$ independent of $f$ such that
$\|d\mu_f\|_{\ro}\le C\|f\|^2_{\bmoz}$, where
\begin{equation}\label{6.1}
  d\mu_f\equiv \lf|(t^2L^\ast)^Me^{-t^2L^\ast}f(x)\r|^2 \frac{\,dx\,dt}{t}.
\end{equation}

{\rm (ii)} Conversely, if $f\in \cm_{\oz,L^\ast}^M(\rn)$ satisfies \eqref{4.8} with
certain $q\in (p_{L^\ast},2]$ and $\ez_1>0$, and $d\mu_f$ is
a $\ro$-Carleson measure, then $f\in \bmoz$
and there exists a positive constant $C$ independent of $f$ such that
$\|f\|^2_{\bmoz}\le C\|d\mu_f\|_{\ro}$, where $d\mu_f$ is as in \eqref{6.1}.
\end{thm}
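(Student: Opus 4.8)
The plan is to prove the two implications separately, exploiting the Calder\'on reproducing formula (Lemma \ref{l4.3}) together with the Gaffney/off-diagonal estimates of Lemmas \ref{l2.1}--\ref{l2.3} and the elementary relation \eqref{3.2} between $\oz$ and its integral average; throughout I work with $L^\ast$ in place of $L$, for which the indices satisfy $1/p_{L^\ast}+1/\wz p_L=1=1/p_L+1/\wz p_{L^\ast}$.

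\textbf{Part (i): $f\in\bmoz$ implies $d\mu_f$ is a $\ro$-Carleson measure.}
This is essentially a restatement of Lemma \ref{l4.2} (with $L^\ast$ in place of $L$), but I should spell it out. Fix a ball $B$. I would write
$$\frac{1}{|B|[\ro(|B|)]^2}\iint_{\widehat B}\lf|(t^2L^\ast)^Me^{-t^2L^\ast}f(x)\r|^2\frac{\,dx\,dt}{t}
=\frac{1}{\ro(|B|)^2}\cdot\frac{1}{|B|}\iint_{\widehat B}\lf|(t^2L^\ast)^Me^{-t^2L^\ast}f(x)\r|^2\frac{\,dx\,dt}{t},$$
and apply Lemma \ref{l4.2} directly to bound this by $C\|f\|_{\bmoz}^2$. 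Taking the supremum over all balls $B$ gives $\|d\mu_f\|_\ro\le C\|f\|_{\bmoz}^2$. Since $M>\frac n2(\frac1{p_\oz}-\frac12)$ is exactly the hypothesis under which Lemma \ref{l4.2} applies (and the spaces $\bmoo$ are independent of admissible $M$ by Remark \ref{r4.2}), this part is short.

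\textbf{Part (ii): the converse.}
Here I assume $f\in\cm_{\oz,L^\ast}^M(\rn)$ satisfies \eqref{4.8} with some $q\in(p_{L^\ast},2]$ and $\ez_1>0$, and that $d\mu_f$ is a $\ro$-Carleson measure, and I must show $f\in\bmoz$ with $\|f\|_{\bmoz}^2\le C\|d\mu_f\|_\ro$. The strategy is duality against $(\oz,q',\wz M,\ez)$-molecules via Lemma \ref{l4.3}: it suffices to show that for a suitable $\wz M$ and $\ez$, and for every such molecule $\az$ adapted to a ball $B$,
$$|\la f,\az\ra|\le C\,\|d\mu_f\|_\ro^{1/2},$$
since by the molecular characterization (Theorem \ref{t5.1}) and the duality $(H_{\oz,L^\ast}(\rn))^\ast=\bmo$ applied with roles of $L,L^\ast$ interchanged (Theorem \ref{t4.1}), control of the pairing against all molecules gives membership in $\bmoz$ with the stated norm bound. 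By Lemma \ref{l4.3},
$$\la f,\az\ra=\wz C_M\iint_{\rnz}(t^2L^\ast)^Me^{-t^2L^\ast}f(x)\,\overline{t^2Le^{-t^2L}\az(x)}\,\frac{\,dx\,dt}{t},$$
and I would split the $(x,t)$-integral into the ``local'' tent $\widehat{2B}$ and the annular pieces $\car(U_j(2B))$-type regions for $j\ge1$. On $\widehat{2B}$, apply Cauchy--Schwarz: one factor is $(\iint_{\widehat{2B}}|(t^2L^\ast)^Me^{-t^2L^\ast}f|^2\frac{dx\,dt}{t})^{1/2}\le\|d\mu_f\|_\ro^{1/2}|2B|^{1/2}\ro(|2B|)$, and the other is $(\iint_{\widehat{2B}}|t^2Le^{-t^2L}\az|^2\frac{dx\,dt}{t})^{1/2}\ls\|\az\|_{T_2^2}\ls|2B|^{-1/2}\ro(|2B|)^{-1}$, using the $L^2$-boundedness in Lemma \ref{l2.2} and Definition \ref{d4.2}(i); the product is $\ls\|d\mu_f\|_\ro^{1/2}$. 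For the far pieces I would exploit the decay built into the molecule: writing $\az=\sum_j\az\chi_{U_j(B)}$ and $\az=(r_B^2L)^{\wz M}[(r_B^2L)^{-\wz M}\az]$, the Gaffney estimates of Lemma \ref{l2.1} for $t^2Le^{-t^2L}$ against the spatially separated annuli give a factor like $(\min\{t,r_B\}/\dist)^{N}$ for arbitrarily large $N$ (or, when $t\lesssim r_B$, the extra $(t/r_B)^{2\wz M}$ from the $(r_B^2L)^{\wz M}$-cancellation), and then Cauchy--Schwarz against the Carleson measure on the tent over each annulus together with the size estimate $\|\az\|_{L^2(U_j(B))}\ls 2^{-j\ez}|2^jB|^{-1/2}\ro(|2^jB|)^{-1}$ lets me sum the geometric series in $j$ provided $\wz M>M+\ez_1+\frac n4$ and $\ez$ is large enough, exactly as in Lemmas \ref{l4.1}--\ref{l4.3}. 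A technical point: I must first check the pairing $\la f,\az\ra$ is well-defined, which follows from \eqref{4.8}, $f\in\cm_{\oz,L^\ast}^M(\rn)$ and the molecule's decay (this is where hypothesis \eqref{4.8} is used, just as in Lemma \ref{l4.3}).

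\textbf{Main obstacle.} The delicate part is Part (ii), and within it the bookkeeping of the annular decomposition: one has to choose $\wz M$ large relative to $M$, $\ez_1$, and $n$ (hence the hypothesis $\wz M>M+\frac n4$ in the surrounding results), track the interplay between the ``$t$ small'' regime (where the $(r_B^2L)^{-\wz M}$-regularity gives $(t/r_B)^{2\wz M}$ decay) and the ``$t$ large'' regime (where Gaffney decay in $\dist(U_j(B),\cdot)/t$ does the work), and verify that after Cauchy--Schwarz against $d\mu_f$ the resulting sum over $j$ of $2^{-j\ez}\,(\text{volume and }\ro\text{ ratios})$ converges; the conditions $M>\frac n2(\frac1{p_\oz}-\frac12)$ and $\ez>n(1/p_\oz-1/p_\oz^+)$ together with Proposition \ref{p2.1} on the type of $\ro$ are precisely what make the geometric series summable. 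I would lean heavily on the estimates already carried out in the proof of Lemma \ref{l4.3} (and the corresponding parts of \cite{hm1}) rather than redoing them from scratch.
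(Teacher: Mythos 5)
Your Part (i) is exactly the paper's argument: it is a direct application of Lemma \ref{l4.2}, and nothing more is needed.

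For Part (ii) your high-level strategy coincides with the paper's (use Lemma \ref{l4.3} to write the pairing as a cone integral, show $|\la f,g\ra|\ls\|d\mu_f\|_\ro\|g\|_{H_{\oz,L}(\rn)}$, then conclude via the duality Theorem \ref{t4.1}), but the middle estimate you propose is genuinely different from the paper's. The paper does \emph{not} pair $f$ against a single molecule and perform a tent-plus-annular spatial decomposition with Gaffney estimates; instead it applies Theorem \ref{t3.1} to the function $F\equiv t^2Le^{-t^2L}g\in T_\oz(\rnz)\cap T_2^2(\rnz)$, writing $F=\sum_j\lz_j a_j$ with tent atoms $a_j$ supported in $\widehat{B_j}$, and then runs the computation \eqref{4.10} with the BMO bound replaced by the Carleson bound:
\begin{align*}
|\la f,g\ra|&\ls\sum_j|\lz_j|\,\|a_j\|_{T_2^2(\rnz)}\lf(\iint_{\widehat{B_j}}|(t^2L^\ast)^Me^{-t^2L^\ast}f|^2\frac{\,dx\,dt}{t}\r)^{1/2}\\
&\ls\sum_j|\lz_j|\cdot|B_j|^{-1/2}[\ro(|B_j|)]^{-1}\cdot\|d\mu_f\|_\ro\,|B_j|^{1/2}\ro(|B_j|)\ls\|d\mu_f\|_\ro\,\|g\|_{H_{\oz,L}(\rn)},
\end{align*}
the last step using Remark \ref{r3.1}(iii). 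This is the slicker route: because the tent atoms $a_j$ are supported exactly in tents $\widehat{B_j}$, the Carleson condition is used in the form in which it is given, and no off-diagonal/Gaffney bookkeeping is required on the $f$-side. Your route --- estimating $|\la f,\az\ra|$ for a single molecule $\az$ by splitting $\rnz$ into $\widehat{2B}$ and far regions, using the Carleson bound on tents $\widehat{2^{j+1}B}$ and the molecule's decay plus Gaffney off-diagonal estimates to make the series over $j$ converge --- would also work (this is essentially the computation hidden inside Lemma \ref{l4.2} and \cite[Lemma 8.3]{hm1}), and when combined with a molecular decomposition $g=\sum_k\lz_k\az_k$ and $\sum_k|\lz_k|\ls\|g\|_{H_{\oz,L}(\rn)}$ it yields the same conclusion, but at the cost of redoing an annular-decay estimate that the tent-space atomic decomposition renders unnecessary.

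Two small corrections. First, on the tent $\widehat{2B}$ the Carleson definition gives
\begin{equation*}
\lf(\iint_{\widehat{2B}}|(t^2L^\ast)^Me^{-t^2L^\ast}f|^2\frac{\,dx\,dt}{t}\r)^{1/2}\le\|d\mu_f\|_\ro\,|2B|^{1/2}\ro(|2B|),
\end{equation*}
i.e.\ the power of $\|d\mu_f\|_\ro$ is $1$, not $1/2$; correspondingly the target bound is $|\la f,\az\ra|\ls\|d\mu_f\|_\ro$, and the conclusion one actually gets is $\|f\|_{\bmoz}\ls\|d\mu_f\|_\ro$ (the same as what the paper's own proof concludes). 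Second, note that the molecules against which you pair must be $(\oz,q',\wz M,\ez)$-molecules with $\wz M>M+\ez_1+\frac n4$, $q'=q/(q-1)$, and $\ez>n(1/p_\oz-1/p_\oz^+)$, precisely so that Lemma \ref{l4.3} is applicable and the molecular Hardy space $H_{\oz,\rm fin}^{q',\wz M,\ez}(\rn)$ is dense in $H_{\oz,L}(\rn)$; you mention these constraints but should make sure they are explicit, since they are the hypotheses under which Theorem \ref{t4.1} and Corollary \ref{c4.2} operate.
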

\begin{proof}[\bf Proof.]
It follows from Lemma \ref{l4.2} that (i) holds.

To show (ii), let $\wz M>M+\ez_1+\frac n4$ and
$\ez>n(\frac{1}{p_\oz}-\frac{1}{p_\oz^+})$. By Lemma \ref{l4.3},
we have
\begin{equation*}
\la f, g\ra =\wz C_M\iint_\rnz (t^2L^\ast)^Me^{-t^2L^\ast}f(x)
\overline{t^2Le^{-t^2L}g(x)}\frac{\,dx\,dt}{t},
\end{equation*}
where $g$ is a finite combination of $(\oz,q',\wz M,\ez)$-molecules and
$q'=\frac{q}{q-1}$. Then by \eqref{4.10}, we obtain that
$$|\la f, g\ra|\ls \|d\mu_f\|_{\ro}\|g\|_{H_{\oz,L}(\rn)}.$$
Since $H_{\oz,\rm fin}^{q',\wz M,\ez}$ is dense in $H_{\oz,L}(\rn)$,
we obtain that $f\in (H_{\oz,L}(\rn))^\ast,$
which combined with Theorem \ref{t4.1} implies that
 $f\in \bmoz$ and $\|f\|_{\bmoz}\ls \|\,du_f\|_{\ro}$.
This finishes the proof of Theorem \ref{t6.1}.
\end{proof}

Recall that for every cube $Q$, $\ell(Q)$ denotes its side-length.
\begin{lem}\label{l6.1}
Let $F\in L^2_{\loc}(\rnz)$. Suppose that there exist $\bz\in(0,1)$
and $N\in (0,\fz)$ such that for certain $a\in(\frac{5\sqrt
n}{2},\fz)$ and all cubes $Q\subset \rn$,
\begin{equation*}
  \lf|\lf\{x\in Q:\, \lf(\int_0^{\ell(Q)}\int_{B(x,3at)}|F(y,t)|^2\frac{\,dy\,dt}
  {t^{n+1}}\r)^{1/2}>N\ro(|Q|)\r\}\r|\le \bz|Q|.
\end{equation*}
Then
\begin{equation}\label{6.2}
  \sup_{\mathrm{cubes} \,Q\subset \rn}\frac{1}{|Q|\ro(|Q|)^{p}}\int_{Q}
  \lf(\int_0^{\ell(Q)}\int_{B(x,at)}|F(y,t)|^2\frac{\,dy\,dt}{t^{n+1}}
  \r)^{p/2}\,dx\le \frac{2N^p}{1-\bz}
\end{equation}
for all $p\in (1,\fz)$.
\end{lem}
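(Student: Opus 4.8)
The plan is to prove Lemma \ref{l6.1} by a standard good-$\lambda$/stopping-time argument in the spirit of the classical proof that a John-Nirenberg type condition upgrades an $L^\infty$-level bound to an $L^p$-bound. Fix a cube $Q\subset\rn$. For $x\in Q$, write
$$A_Q(F)(x)\equiv\lf(\int_0^{\ell(Q)}\int_{B(x,at)}|F(y,t)|^2\frac{\,dy\,dt}{t^{n+1}}\r)^{1/2},$$
and similarly $\wz A_Q(F)(x)$ with the enlarged aperture $3a$ in place of $a$. By homogeneity we may normalize so that $\ro(|Q|)=1$ (or, more precisely, carry $\ro(|Q|)$ through as a scalar; the hypothesis and conclusion are both homogeneous of degree $2$, resp. $p$, in $F$ relative to $\ro(|Q|)$). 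The hypothesis says $|\{x\in Q:\ \wz A_Q(F)(x)>N\}|\le\bz|Q|$; the goal \eqref{6.2} is equivalent to $\frac1{|Q|}\int_Q [A_Q(F)(x)]^p\,dx\le \frac{2N^p}{1-\bz}$.

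First I would set up a Calder\'on--Zygmund stopping-time decomposition of $Q$ at height $N$ relative to the function $x\mapsto A_Q(F)(x)$: subdivide $Q$ dyadically and select the maximal subcubes $\{Q_j\}$ on which the average of $[A_Q(F)]^2$ (or a suitable truncated version) first exceeds $N^2$; on $Q\setminus\bigcup_j Q_j$ one has pointwise control $A_Q(F)\le$ (a constant times) $N$ by the Lebesgue differentiation theorem. The key geometric observation — and this is where the restriction $a>\frac{5\sqrt n}{2}$ is used — is that for $x\in Q_j$ the truncated cone integral over heights $t\in(0,\ell(Q_j))$ with aperture $a$ is dominated by the analogous integral with the enlarged aperture $3a$ and base cube a fixed dilate of $Q_j$ contained in $Q$; concretely, $A_{Q_j}(F)(x)\le \wz A_{Q_j'}(F)(x')$ for an appropriate companion point $x'$ in a controlled dilate $Q_j'$ of $Q_j$, so that the hypothesis applies with $Q$ replaced by $Q_j$ (note $\ro(|Q_j|)\le\ro(|Q|)$ since $\ro$ is increasing, which is what lets the level $N\ro(|Q_j|)\le N\ro(|Q|)$ be absorbed). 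Then I would split
$$\int_Q [A_Q(F)(x)]^p\,dx\ls \int_{Q\setminus\cup_j Q_j}N^p\,dx+\sum_j\int_{Q_j}[A_{Q_j}(F)(x)]^p\,dx+\sum_j\int_{Q_j}\lf[\lf(\int_{\ell(Q_j)}^{\ell(Q)}\int_{B(x,at)}|F|^2\frac{\,dy\,dt}{t^{n+1}}\r)^{1/2}\r]^p\,dx,$$
where the first term is $\le N^p|Q|$, the middle term is handled by iterating the hypothesis on each $Q_j$ (each contributes $\ls N^p|Q_j|$ for the part of the cone integral over $t<\ell(Q_j)$), and the last term — the ``tail'' over heights between $\ell(Q_j)$ and $\ell(Q)$ — is controlled by $N^p|Q_j|$ using that on the parent of $Q_j$ the average of $[A_Q(F)]^2$ did not exceed $N^2$ together with $\sum_j|Q_j|\le\bz|Q|$. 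Summing a geometric-type series in $\bz$ produces the constant $\frac{2N^p}{1-\bz}$; taking the supremum over $Q$ gives \eqref{6.2}.

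The main obstacle I anticipate is the bookkeeping of the aperture enlargement: one must verify carefully that for every selected $Q_j$ and every $x\in Q_j$, the piece of the cone $\Gamma(x)$ with $0<t<\ell(Q_j)$ is captured by a cone of aperture $3a$ based at some point of $Q_j$ (so the hypothesis genuinely applies to the subcube $Q_j$), and that the complementary piece with $\ell(Q_j)\le t<\ell(Q)$ stays inside a ball where the parent-cube average bound is available; the threshold $a>\frac{5\sqrt n}{2}$ is exactly tuned so that $B(x,3at)$ for $x\in Q_j$, $t<\ell(Q_j)$, is swallowed by the relevant dilate of $Q_j$ and so the stopped cubes' enlargements still lie in $Q$. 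Everything else — the stopping time, the Lebesgue differentiation step, the geometric summation in $\bz$, and the use of the monotonicity of $\ro$ — is routine once this geometry is pinned down.
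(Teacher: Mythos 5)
Your plan is in the right family (stopping-time/good-$\lambda$), but two of the central steps do not go through as stated, and the paper's proof is organized precisely to avoid both problems.

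First, you propose a Calder\'on--Zygmund stopping-time decomposition of $Q$ with respect to $A_Q(F)$ and then assert $\sum_j|Q_j|\le\bz|Q|$. That bound does not follow from the CZ selection rule: what you get from stopping at height $N^2$ is $|Q_j|<N^{-2}\int_{Q_j}[A_Q(F)]^2$, hence $\sum_j|Q_j|<N^{-2}\int_Q[A_Q(F)]^2$, and there is no a priori control of $\int_Q[A_Q(F)]^2$. The hypothesis only controls the measure of the level set of the \emph{enlarged-aperture} functional $\wz A_Q(F)$, not of your stopped cubes. The paper avoids this by Whitney-decomposing the level set $\Omega=\{x\in Q: \wz A_Q(F)(x)>N\ro(|Q|)\}$ itself, so $\sum_j|Q_j|=|\Omega|\le\bz|Q|$ is immediate, and moreover the Whitney distance bound $\dist(Q_j,Q\setminus\Omega)\le 4\sqrt n\,\ell(Q_j)$ supplies, for each $x\in Q_j$, a nearby ``good'' point $\wz x\in Q\setminus\Omega$ with $|x-\wz x|\le 5\sqrt n\,\ell(Q_j)\le 2at$ when $t\ge\ell(Q_j)$; this is exactly how the tail over heights $t\in[\ell(Q_j),\ell(Q))$ is absorbed into the cone of aperture $3a$ at $\wz x$ and hence bounded by $N\ro(|Q|)$. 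Your tail estimate via ``the average on the parent of $Q_j$'' is not the right mechanism and, in any case, refers to $A_Q$ rather than $\wz A_Q$.

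Second, your iteration $\Phi(Q)\le AN^p|Q|+\sum_j\Phi(Q_j)$ with $\sum_j|Q_j|\le\bz|Q|$ does \emph{not} by itself yield $\Phi(Q)\le\frac{AN^p}{1-\bz}|Q|$: if $\Phi$ were identically $+\fz$ the inequality would hold vacuously, so one needs a priori finiteness to close the recursion. The paper's device for this is the truncation parameter $\dz>0$: it works with
\[
M(\dz)\equiv\sup_{\wz Q\subset Q}\frac{1}{|\wz Q|}\int_{\wz Q}\lf(\int_\dz^{\ell(\wz Q)}\int_{B(x,a(t-\dz))}\lf|\frac{F(y,t)}{\ro(|Q|)}\r|^2\frac{\,dy\,dt}{t^{n+1}}\r)^{p/2}dx,
\]
which is finite for each fixed $\dz>0$ (the region $\dz<t<\ell(Q)$, $y\in B(x,a(t-\dz))$, $x\in Q$ is compact and $t^{-n-1}$ is bounded there, so local $L^2$ integrability of $F$ suffices). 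Running the Whitney argument on every subcube $\wz Q\subset Q$ and taking the supremum gives $(1-\bz)M(\dz)\le 2N^p$; monotone convergence as $\dz\to0$ then recovers the untruncated estimate \eqref{6.2}. Without some such finiteness/truncation device, the ``geometric series in $\bz$'' you invoke does not yield a bound. In short: replace the CZ stopping time by a Whitney decomposition of the level set $\Omega$, and replace the naive iteration by the truncated supremum $M(\dz)$ — with those two changes the outline becomes the paper's proof.
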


\begin{proof}[\bf Proof.] Let $\Omega\equiv \{x\in Q:\,(\int_0^{\ell(Q)}
\int_{B(x,3at)}|F(y,t)|^2\frac{\,dy\,dt}
{t^{n+1}})^{1/2}>N\ro(|Q|)\}$. Applying the Whitney decomposition to
$\Omega$, we obtain a  family $\{Q_j\}_j$ of disjoint cubes such
that $(\cup_jQ_j)=\Omega$ and $\dist(Q_j, Q\setminus\Omega) \in
(\sqrt n\ell(Q_j), 4\sqrt n\ell(Q_j))$; see the proof of \cite[Lemma
10.1]{hm1}. For $\dz\in (0,\ell(Q))$, define
$$M(\dz)\equiv \sup_{\mathrm{cubes} \,\wz Q\subset Q}\frac{1}{|\wz Q|}
\int_{\wz Q}\lf(\int_\dz^{\ell(\wz Q)}
\int_{B(x,a(t-\dz))}\lf|\frac{F(y,t)}{\ro(|Q|)}\r|^2
\frac{\,dy\,dt}{t^{n+1}}\r)^{p/2}\,dx,$$
where $B(x,a(t-\dz))\equiv\emptyset$ if $\dz\ge t$. Now, observe that
\begin{eqnarray*}
&&\int_Q\lf(\int_\dz^{\ell( Q)}\int_{B(x,a(t-\dz))}\lf|\frac{F(y,t)}
{\ro(|Q|)}\r|^2\frac{\,dy\,dt}{t^{n+1}}\r)^{p/2}\,dx\\
&&\hs\le \int_{Q\setminus \Omega}\lf(\int_0^{\ell( Q)}\int_{B(x,3at)}
\lf|\frac{F(y,t)}
{\ro(|Q|)}\r|^2\frac{\,dy\,dt}  {t^{n+1}}\r)^{p/2}\,dx\nonumber\\
&&\hs\hs+\sum_{\{j: \,\ell(Q_j)>\dz\}}\int_{Q_j}\lf(\int_\dz^{\ell(Q_j)}
\int_{B(x,a(t-\dz))}\lf|\frac{F(y,t)}{\ro(|Q|)}\r|^2\frac{\,dy\,dt}
{t^{n+1}}\r)^{p/2}\,dx\nonumber\\
&&\hs\hs+\sum_j\int_{Q_j}\lf(\int_{\max\{\ell(Q_j),\dz\}}^{\ell(Q)}
\int_{B(x,a(t-\dz))}\lf|\frac{F(y,t)}{\ro(|Q|)}\r|^2\frac{\,dy\,dt}
{t^{n+1}}\r)^{p/2}\,dx\nonumber\\
&&\hs\le N^p|Q|+\bz |Q|M(\dz)+\sum_j\int_{Q_j}
\lf(\int_{\max\{\ell(Q_j),\dz\}}^{\ell(Q)}
\int_{B(x,a(t-\dz))}\lf|\frac{F(y,t)}{\ro(|Q|)}\r|^2\frac{\,dy\,dt}
{t^{n+1}}\r)^{p/2}\,dx\nonumber\\
&&\hs\equiv N^p|Q|+\bz |Q|M(\dz)+\mathrm{I}.
\end{eqnarray*}
Since $\dist (Q_j, Q\setminus\Omega)\in (\sqrt n\ell(Q_j), 4\sqrt n\ell(Q_j))$,
there exists $\wz x\in (Q\setminus\Omega)$ such that for all $x\in Q_j$,
$$|x-\wz x|\le |x-x_{Q_j}|+|x_{Q_j}-\wz x|\le 5\sqrt n \ell({Q_j}).$$
Then by the fact that $a\ge 5\sqrt n/2$, we obtain
$$\{(y,t):\,y\in B(x, a(t-\dz)),\,\max\{\ell(Q_j),\dz\}<t<\ell(Q)\}\subset
\{(y,t):\,y\in B(\wz x, 3at), \,t<\ell(Q)\},$$ which implies that
\begin{eqnarray*}
  \mathrm{I}\le\sum_j\int_{Q_j}\sup_{\wz x\in Q\setminus \Omega}\lf(\int_{0}^{\ell(Q)}
\int_{B(\wz x,3at)}\lf|\frac{F(y,t)}{\ro(|Q|)}\r|^2\frac{\,dy\,dt}
{t^{n+1}}\r)^{p/2}\,dx\le N^p |Q|.
\end{eqnarray*}
For every cube $\wz Q\subset Q$,
let $\wz \Omega\equiv \{x\in \wz Q:\,(\int_0^{\ell(\wz Q)}
\int_{B(x,3at)}|F(y,t)|^2\frac{\,dy\,dt}  {t^{n+1}})^{1/2}>N\ro(|Q|)\}$.
Then
$$|\wz \Omega|\le \lf|\lf\{x\in \wz Q:\,\lf(\int_0^{\ell(\wz Q)}
\int_{B(x,3at)}|F(y,t)|^2\frac{\,dy\,dt}  {t^{n+1}}\r)^{1/2}>N
\ro(|\wz Q|)\r\}\r|\le
\bz|\wz Q|.$$
Repeating the above estimates, we obtain
\begin{eqnarray*}
&&\int_{\wz Q}\lf(\int_\dz^{\ell(\wz Q)}\int_{B(x,a(t-\dz))}\lf|\frac{F(y,t)}
{\ro(|Q|)}\r|^2\frac{\,dy\,dt}{t^{n+1}}\r)^{p/2}\,dx\le 2N^p
|\wz Q|+\bz |\wz Q|M(\dz),
\end{eqnarray*}
which via taking the supremum on $\wz Q$ implies that
$(1-\bz)M(\dz)\le 2N^p.$
Letting $\dz\to 0$, we finally obtain
 \begin{eqnarray*}
 &&\frac{1}{|Q|\ro(|Q|)^{p}}\int_{Q}
  \lf(\int_0^{\ell(Q)}\int_{B(x,at)}|F(y,t)|^2\frac{\,dy\,dt}
  {t^{n+1}}\r)^{p/2}\,dx
  \le\overline{\lim_{\dz\to 0}}M(\dz)\le \frac{2N^p}{1-\bz},
    \end{eqnarray*}
which implies \eqref{6.2}, and hence, completes the proof of Lemma \ref{l6.1}.
\end{proof}

\begin{thm}\label{t6.2}
Let $\oz$ satisfy Assumption (A), $\ro$ be as in \eqref{2.4}
and $M> \frac n2 (\frac1{p_\oz}-\frac 12)$.
Then the spaces $\bmos$  for all $q\in (p_{L^\ast},\wz p_{L^\ast})$
coincide with equivalent norms.
\end{thm}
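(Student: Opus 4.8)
The goal is to prove that every $\bmos$ with $q\in(p_{L^\ast},\wz p_{L^\ast})$ coincides, with equivalent norms, with the member corresponding to $q=2$, namely $\bmol$ (note $2\in(p_{L^\ast},\wz p_{L^\ast})$ by Lemma \ref{l2.3}). One family of inequalities is immediate: for fixed $f$ and a fixed ball $B$, the H\"older inequality gives $(\frac1{|B|}\int_B|(I-e^{-r_B^2L^\ast})^Mf|^{q_1})^{1/q_1}\le(\frac1{|B|}\int_B|(I-e^{-r_B^2L^\ast})^Mf|^{q_2})^{1/q_2}$ whenever $q_1\le q_2$, hence $\|f\|_{\mathrm{BMO}^{q_1,M}_{\ro,L^\ast}(\rn)}\le\|f\|_{\mathrm{BMO}^{q_2,M}_{\ro,L^\ast}(\rn)}$. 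So it remains to prove (a) $\|f\|_{\bmos}\ls\|f\|_{\bmol}$ for $q\in(2,\wz p_{L^\ast})$, and (b) $\|f\|_{\bmol}\ls\|f\|_{\bmos}$ for $q\in(p_{L^\ast},2)$.

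For (b), let $f\in\bmos$ with $q\in(p_{L^\ast},2)$; then $f\in\cm^M_{\oz,L^\ast}(\rn)$ and, as in the remark after Lemma \ref{l4.3}, $f$ satisfies \eqref{4.8} with this $q$ and every $\ez_1>0$. I would then show that $d\mu_f$ from \eqref{6.1} is a $\ro$-Carleson measure with $\|d\mu_f\|_\ro\ls\|f\|_{\bmos}$, by repeating the argument of Lemma \ref{l4.2} (i.e.\ \cite[Lemma 8.3]{hm1}): split $f=(I-e^{-r_B^2L^\ast})^Mf+\sum_{j=1}^M c_j e^{-jr_B^2L^\ast}f$, decompose the first term as $\sum_{k\ge0}\chi_{U_k(B)}(I-e^{-r_B^2L^\ast})^Mf$, and replace the $L^2$-operator bounds used there by the $L^q$--$L^2$ off-diagonal estimates of Lemma \ref{l2.3}, which are available because $p_{L^\ast}<q<2<\wz p_{L^\ast}$; Definition \ref{d4.4} then supplies exactly the needed norms $\|(I-e^{-r_B^2L^\ast})^Mf\|_{L^q(U_k(B))}$, with the correct powers of $|2^kB|$ and $\ro$. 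With this in hand, Theorem \ref{t6.1}(ii) yields $f\in\bmol$ and $\|f\|_{\bmol}^2\ls\|d\mu_f\|_\ro\ls\|f\|_{\bmos}^2$.

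For (a), fix $f\in\bmol$, normalized so that $\|f\|_{\bmol}=1$; by Theorem \ref{t6.1}(i), $d\mu_f$ is a $\ro$-Carleson measure. Fix a cube $Q$, set $F(y,t)\equiv(t^2L^\ast)^Me^{-t^2L^\ast}f(y)$ and choose $a\equiv3\sqrt n>\frac{5\sqrt n}2$. By Fubini's theorem, $\int_Q\int_0^{\ell(Q)}\int_{B(x,3at)}|F(y,t)|^2\frac{\,dy\,dt}{t^{n+1}}\,dx\ls\iint_{\widehat{C_0Q}}|F(y,t)|^2\frac{\,dy\,dt}{t}$, where $C_0=C_0(n)$ is large enough that $\{(y,t):t<\ell(Q),\,\dist(y,Q)<3at\}$ lies in the tent over the dilated cube $C_0Q$; since $d\mu_f$ is $\ro$-Carleson and $\ro$ is of upper type $p_\oz^{-1}-1$ (Proposition \ref{p2.1}), this last quantity is $\ls|Q|[\ro(|Q|)]^2$. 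Hence, by Chebyshev's inequality, a sufficiently large choice of $N=N(n)$ forces the set $\{x\in Q:(\int_0^{\ell(Q)}\int_{B(x,3at)}|F(y,t)|^2\frac{\,dy\,dt}{t^{n+1}})^{1/2}>N\ro(|Q|)\}$ to have measure at most $\frac12|Q|$, so the hypothesis of Lemma \ref{l6.1} holds with $\bz=\frac12$. Applying Lemma \ref{l6.1} with $p=q$ gives, uniformly in $Q$,
\begin{equation*}
\frac1{|Q|[\ro(|Q|)]^q}\int_Q\lf(\int_0^{\ell(Q)}\int_{B(x,at)}|F(y,t)|^2\frac{\,dy\,dt}{t^{n+1}}\r)^{q/2}\,dx\ls1 .
\end{equation*}

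The remaining, and genuinely delicate, step is to deduce from the last estimate that $\|(I-e^{-\ell(Q)^2L^\ast})^Mf\|_{L^q(Q)}\ls|Q|^{1/q}\ro(|Q|)$, i.e.\ $\|f\|_{\bmos}\ls1$. I would do this by duality: since $(I-e^{-\ell(Q)^2L^\ast})^Mf\in L^2_{\loc}(\rn)$ and $L^2(Q)$ is dense in $L^{q'}(Q)$ with $q'=\frac q{q-1}\in(p_L,2)$, it suffices to bound $|\la(I-e^{-\ell(Q)^2L^\ast})^Mf,\phi\ra|=|\la f,(I-e^{-\ell(Q)^2L})^M\phi\ra|$ for $\phi\in L^2(Q)$ with $\|\phi\|_{L^{q'}(Q)}\le1$; inserting the Calder\'on reproducing identity $I=\wz c\int_0^\fz(t^2L)^Me^{-t^2L}\,t^2Le^{-t^2L}\frac{\,dt}t$ (in the $L^2$-functional calculus of $L$, justified after a truncation) gives
$$\la f,(I-e^{-\ell(Q)^2L})^M\phi\ra=\wz c\iint_\rnz F(x,t)\,\overline{t^2Le^{-t^2L}(I-e^{-\ell(Q)^2L})^M\phi(x)}\,\frac{\,dx\,dt}t .$$
Because $(I-e^{-\ell(Q)^2L})^M$ annihilates low frequencies and $t^2Le^{-t^2L}$ high ones, while $\phi$ is supported in $Q$, the function $t^2Le^{-t^2L}(I-e^{-\ell(Q)^2L})^M\phi$ concentrates, up to exponentially decaying tails, in the tent over a fixed dilate $C_1Q$ at heights $\ls\ell(Q)$; the bilinear pairing thus splits into a near part, handled by a Cauchy--Schwarz over that tent whose first factor is precisely the quantity controlled by the displayed estimate (applied to $C_1Q$) and whose second factor is $\ls\|\cs_L^1((I-e^{-\ell(Q)^2L})^M\phi)\|_{L^{q'}(\rn)}\ls\|\phi\|_{L^{q'}(Q)}\le1$ by the $L^{q'}$-boundedness of $\cs_L^1$ (Lemma \ref{l2.4}), plus spatial- and height-tails controlled by the Gaffney estimates of Lemmas \ref{l2.1} and \ref{l2.3} together with the $\ro$-Carleson bound; summing these yields $|\la(I-e^{-\ell(Q)^2L^\ast})^Mf,\phi\ra|\ls|Q|^{1/q}\ro(|Q|)$, and taking the supremum over $\phi$ gives (a). The main obstacle is exactly this last paragraph: justifying the reproducing identity and the interchange of $\la f,\cdot\ra$ with the $t$-integral (recalling that $f\in\bmol$ controls only Carleson-type, not pointwise or $L^2$, quantities, and that $\phi$ need not lie in $L^2$), carrying out the near-part/tail decomposition of the bilinear form with the correct decay rates (which is where $M>\frac n2(\frac1{p_\oz}-\frac12)$, and possibly an auxiliary higher power, are used), and keeping all operator bounds within the admissible range $(p_{L^\ast},\wz p_{L^\ast})$ furnished by Lemmas \ref{l2.3} and \ref{l2.4}.
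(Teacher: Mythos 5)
Your Hölder step and your split into directions (a) and (b) match the paper, but the proof of (b), which is where the real content lies, has a genuine gap, and (a) takes a much longer and only partially completed route than the paper's.

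\textbf{The gap in (b).} You propose to show directly that $d\mu_f$ is a $\ro$-Carleson measure by running the argument of Lemma \ref{l4.2} with the $L^2$-operator bounds replaced by the $L^q$--$L^2$ off-diagonal estimates of Lemma \ref{l2.3}. This fails precisely on the diagonal (near) part, i.e.\ the contribution to $\iint_{\widehat B}|(t^2L^\ast)^Me^{-t^2L^\ast}f|^2\frac{\,dx\,dt}{t}$ coming from $(I-e^{-r_B^2L^\ast})^Mf$ restricted to the first couple of annuli $U_k(B)$, $k=0,1,2$. Since $q<2$, the $L^q$-to-$L^2$ operator norm of $(t^2L^\ast)^Me^{-t^2L^\ast}$ provided by Lemma \ref{l2.3} is of order $t^{n(\frac12-\frac1q)}$, which is a \emph{negative} power of $t$; plugging this into the near term gives
\[
\int_0^{r_B}t^{2n(\frac12-\frac1q)}\frac{\,dt}{t}\cdot\|(I-e^{-r_B^2L^\ast})^Mf\|_{L^q(2B)}^2,
\]
and the $t$-integral diverges at $t=0$. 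There is no off-diagonal decay available for these annuli to compensate, because the relevant distances are comparable to $r_B$ whereas $t$ ranges down to $0$. So the substitution you propose does not produce the Carleson bound for the diagonal piece; an $L^q$ local bound with $q<2$ simply does not control a local $L^2$ tent integral without a self-improvement mechanism. This is exactly why the paper first establishes the (purely diagonal) $L^p$-level estimate
\[
\Bigl\{\int_{Q}\Bigl[\int_0^{\ell(Q)}\int_{B(x,9\sqrt n\,t)}|(t^2L^\ast)^Me^{-t^2L^\ast}f|^2\frac{\,dy\,dt}{t^{n+1}}\Bigr]^{p/2}\,dx\Bigr\}^{1/p}\ls \ro(|Q|)|Q|^{1/p}\|f\|_{\mathrm{BMO}^{p,M}_{\ro,L^\ast}(\rn)}
\]
using only $L^p$-boundedness of $\cs^M_{L^\ast}$ (Lemma \ref{l2.4}) and the tent-space aperture change, \emph{then} uses the John--Nirenberg self-improvement of Lemma \ref{l6.1} (with $p=2$) to upgrade the weak-type consequence of this to the $L^2$ Carleson bound, and only then invokes Theorem \ref{t6.1}. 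The Lemma \ref{l6.1} step cannot be skipped.

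\textbf{On (a).} Your approach for $q>2$ -- deduce Carleson control from Theorem \ref{t6.1}(i), self-improve via Lemma \ref{l6.1} with $p=q$, then reconstruct $(I-e^{-\ell(Q)^2L^\ast})^Mf$ by duality, a Calder\'on reproducing identity, and a near/far split -- is a genuinely different route and is only sketched; you yourself flag the justification of the reproducing identity for $f\in\bmoz$, the interchange of $\la f,\cdot\ra$ with the $t$-integral, and the tail estimates as open. The paper's proof is far shorter and bypasses all of this: for $h\in L^2(B)$ with $\|h\|_{L^{q'}(B)}\le|B|^{1/q'-1}[\ro(|B|)]^{-1}$ one shows (via Lemma \ref{l2.3}, using $q'\in(p_L,2)$) that $(I-e^{-r_B^2L})^Mh$ is a multiple of an $(\oz,q',M,\ez)$-molecule, hence $\|(I-e^{-r_B^2L})^Mh\|_{H_{\oz,L}(\rn)}\ls1$, and then the duality $\bmol=(H_{\oz,L}(\rn))^\ast$ (Theorem \ref{t4.1}) immediately gives $|\la(I-e^{-r_B^2L^\ast})^Mf,h\ra|\ls\|f\|_{\bmol}$; taking the supremum over such $h$ is the desired $L^q$ bound. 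This buys exactly the control you are after with no reproducing-formula machinery. I would advise adopting the paper's duality-with-molecules argument for (a) and reworking (b) to go through Lemma \ref{l6.1}.
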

\begin{proof}[\bf Proof.]
It follows from the H\"older inequality that
\begin{equation*}
\|f\|_{\mathrm{BMO}^{p,M}_{\ro,L^\ast}(\rn)}\le
\|f\|_{\mathrm{BMO}^{2,M}_{\ro,L^\ast}(\rn)}
\le \|f\|_{\mathrm{BMO}^{q,M}_{\ro,L^\ast}(\rn)},
\end{equation*}
where $p_{L^\ast}<p<2<q<\wz p_{L^\ast}$.

Let us now show that
$\|f\|_{\mathrm{BMO}^{2,M}_{\ro,L^\ast}(\rn)}\ls
\|f\|_{\mathrm{BMO}^{p,M}_{\ro,L^\ast}(\rn)}$
 for all $p\in (p_{L^\ast},2)$. Write
 \begin{eqnarray*}
   &&\lf\{\int_{Q}\lf[\int_0^{\ell(Q)}
   \int_{B(x,9\sqrt n t)}|(t^2L^\ast)^Me^{-t^2L^\ast}f(y)|^2\frac{\,dy\,dt}{t^{n+1}}
   \r]^{p/2}\,dx\r\}^{1/p}\\
   &&\hs\le\lf\{\int_{Q}\lf[\int_0^{\ell(Q)}
   \int_{B(x,9\sqrt n t)}|(t^2L^\ast)^Me^{-t^2L^\ast}\r.\r.\\
   &&\hs\hs\lf.\lf.\times[I-(I+[ 9\sqrt n \ell(Q)]^2L^\ast)^{-1}]^M f(y)|^2
   \frac{\,dy\,dt}{t^{n+1}}\r]^{p/2}\,dx\r\}^{1/p}\\
   &&\hs\hs+\lf\{\int_{Q}\lf[\int_0^{\ell(Q)}
   \int_{B(x, 9\sqrt n t)}|(t^2L^\ast)^Me^{-t^2L^\ast}\r.\r.\\
   &&\hs\hs\lf.\lf.\times[I-(I-(I+[9\sqrt n \ell(Q)]^2L^\ast)^{-1})^M]f(y)|^2
   \frac{\,dy\,dt}{t^{n+1}}\r]^{p/2}\,dx\r\}^{1/p}
   \equiv  \mathrm{H}+\mathrm{I}.
 \end{eqnarray*}
Let $9\sqrt nQ$ denote the cube with the same center as $Q$ and
side-length $9\sqrt n$ times $\ell(Q)$. Then by the H\"older inequality,
Lemmas \ref{l2.1} and \ref{l2.3}, we have
\begin{eqnarray*}
   \mathrm{H}&&\le \sum_{j=0}^2\lf\{\int_{Q}\lf[\int_0^{\ell(Q)}
   \int_{B(x,9\sqrt nt)} |(t^2L^\ast)^Me^{-t^2L^\ast}\r.\r.\\
 &&\hs\hs\lf.\lf.\times [\chi_{U_j(9\sqrt n Q)}(I-(I+[9\sqrt n\ell(Q)]^2
 L^\ast)^{-1})^M f](y)|^2
   \frac{\,dy\,dt}{t^{n+1}}\r]^{p/2}\,dx\r\}^{1/p}+\sum_{j=3}^\fz \cdots\\
&&\equiv \mathrm{H}_1+\mathrm{H}_2.
 \end{eqnarray*}
By Lemma \ref{l2.4} and Proposition 4 in \cite{cms},
we obtain
\begin{eqnarray*}
\mathrm{H}_1&&\le\sum_{j=0}^2 \lf\|\ca_{9\sqrt n}\lf\{(t^2L^\ast)^Me^{-t^2L^\ast}
[\chi_{U_j(9\sqrt n Q)}(I-(I+[9\sqrt n\ell(Q)]^2L^\ast)^{-1})^M f\r\}\r\|_{L^p(\rn)}\\
&&\ls \sum_{j=0}^2\lf\|\ca \lf\{(t^2L^\ast)^Me^{-t^2L^\ast}
[\chi_{U_j(9\sqrt n Q)}(I-(I+[9\sqrt n\ell(Q)]^2L^\ast)^{-1})^M f\r\}\r\|_{L^p(\rn)}\\
&&\ls \sum_{j=0}^2\|(I-(I+[9\sqrt n\ell(Q)]^2L^\ast)^{-1})^M
f\|_{L^p(\chi_{U_j(9\sqrt n Q)})}\\
&&\ls \ro(|Q|)|Q|^{1/p}\|f\|_{\mathrm{BMO}^{p,M}_{\ro,L^\ast}(\rn)}.
 \end{eqnarray*}
For the term $\mathrm{H}_2$, noticing that $U_j(9\sqrt nQ)$ can be covered
by $2^{jn}$ cubes of side-length $9\sqrt n\ell(Q)$, which together with
Lemma \ref{l2.3} and the H\"older inequality implies that
\begin{eqnarray*}
\mathrm{H}_2&&\le\sum_{j=3}^\fz |Q|^{1/p-1/2}\lf\{\int_0^{\ell(Q)}
\int_{10\sqrt nQ}\lf|(t^2L^\ast)^Me^{-t^2L^\ast}\r.\r.\\
&&\hs\lf.\lf.\times \lf[\chi_{U_j(9\sqrt n Q)}(I-(I+[9\sqrt n\ell(Q)]^2
   L^\ast)^{-1})^M f\r](x)\r|^2
 \frac{\,dx\,dt}{t}\r\}^{1/2}\\
 &&\ls |Q|^{1/p-1/2}\sum_{j=3}^\fz \lf\{\int_0^{\ell(Q)}
 e^{-\frac{(2^jn\ell(Q))^2}{ct^2}}\r.\\
 &&\hs\lf.\times \lf\|(I-(I+[9\sqrt n\ell(Q)]^2L^\ast)^{-1})^M
 f\r\|_{L^p(U_j(9\sqrt n Q))}^2
\frac{\,dt}{t^{1+n/p-n/2}}\r\}^{1/2}\\
&&\ls \ro(|Q|)|Q|^{1/p}\|f\|_{\mathrm{BMO}^{p,M}_{\ro,L^\ast}(\rn)}\\
&&\hs\times\lf\{|Q|^{1/p-1/2}\sum_{j=3}^\fz\lf[\int_0^{\ell(Q)}
 \lf(\frac{t}{2^j\ell(Q)}\r)^{N}
   2^{2jn}\frac{\,dt}{t^{1+n/p-n/2}}\r]^{1/2}\r\}\\
   &&\ls \ro(|Q|)|Q|^{1/p}\|f\|_{\mathrm{BMO}^{p,M}_{\ro,L^\ast}(\rn)},
\end{eqnarray*}
where $c$ is the positive constant as in Lemma \ref{l2.3} and
$N\in\cn$ is large enough. Thus, $$\mathrm{H}\le \mathrm{H}_1+\mathrm{H}_2\ls
\ro(|Q|)|Q|^{1/p}\|f\|_{\mathrm{BMO}^{p,M}_{\ro,L^\ast}(\rn)} .$$

Applying the formula that
\begin{eqnarray*}
&&\lf[I-\lf(I-[I+(9\sqrt n\ell(Q))^2L^\ast]^{-1}\r)^M\r]f\\
&&\hs\hs=\sum_{k=1}^M C_{M}^k ([9\sqrt n\ell(Q)]^2L^\ast)^{-k}
\lf[I-(I+[9\sqrt n\ell(Q)]^2L^\ast)^{-1}\r]^Mf,
\end{eqnarray*}
where $C_M^k$ denotes the combinatorial number, by a way similar to
the estimate of $\mathrm{H}$, we also have
$\mathrm{I}\ls \ro(|Q|)|Q|^{1/p}\|f\|_{\mathrm{BMO}^{p,M}_{\ro,L^\ast}(\rn)}.$

Combing the estimates of $\mathrm{H}$ and $\mathrm{I}$ yields that
 \begin{eqnarray*}
   &&\lf\{\int_{Q}\lf[\int_0^{\ell(Q)}
   \int_{B(x,9\sqrt n t)}|(t^2L^\ast)^Me^{-t^2L^\ast}f(y)|^2\frac{\,dy\,dt}{t^{n+1}}
   \r]^{p/2}\,dx\r\}^{1/p}\\
   &&\hs\hs\hs\hs\hspace{5cm}\ls \ro(|Q|)|Q|^{1/p}
   \|f\|_{\mathrm{BMO}^{p,M}_{\ro,L^\ast}(\rn)},
 \end{eqnarray*}
which together with Lemma \ref{l6.1} implies that
\begin{eqnarray*}
&&\sup_{\mathrm{balls}\, B\subset \rn}\frac{1}{\ro(|B|)}\lf[\frac{1}{|B|}\iint_{\widehat B}
|(t^2L^\ast)^Me^{-t^2L^\ast}f(x)|^2 \frac{\,dx\,dt}{t}\r]^{1/2}\\
&&\hs\hs\sim\sup_{\mathrm{cubes}\, Q\subset \rn}\frac{1}{\ro(|Q|)}
\lf[\frac{1}{|Q|}\iint_{\widehat Q}
|(t^2L^\ast)^Me^{-t^2L^\ast}f(x)|^2 \frac{\,dx\,dt}{t}\r]^{1/2}\\
&&\hs\hs\ls\sup_{\mathrm{cubes}\, Q\subset \rn} \lf(\frac{1}{|Q|\ro(|Q|)^{2}}\int_{Q}
\int_0^{\ell(Q)}\int_{B(x,3\sqrt n t)}|(t^2L^\ast)^Me^{-t^2L^\ast}f(y)|^2
\frac{\,dy\,dt}{t^{n+1}}\,dx\r)^{1/2}\\
&&\hs\hs\ls \|f\|_{\mathrm{BMO}^{p,M}_{\ro,L^\ast}(\rn)}.
\end{eqnarray*}
Then by Theorem \ref{t6.1}, we obtain
$\|f\|_{\mathrm{BMO}^{2,M}_{\ro,L^\ast}(\rn)}\ls
\|f\|_{\mathrm{BMO}^{p,M}_{\ro,L^\ast}(\rn)}$.

Finally, let us show that
$\|f\|_{\mathrm{BMO}^{q,M}_{\ro,L^\ast}(\rn)}
\ls\|f\|_{\mathrm{BMO}^{2,M}_{\ro,L^\ast}(\rn)}$
for all $q\in (2,\wz p_{L^\ast})$. Let $q'$ be the conjugate index $q$.
For any ball $B$, let $h\in L^2(B)\subset L^{q'}(B)$ such that
$\|h\|_{L^{q'}(B)}\le \frac{1}{|B|^{1-1/q'}\ro(|B|)}$.
From Lemma \ref{l2.3}, similarly to the proof of Theorem \ref{t4.1}, it is easy to follow
that $(I-e^{-r_B^2L})^Mh$ is a multiple of an $(\oz,q',M,\ez)$-molecule,
and hence,
$$\|(I-e^{-r_B^2L})^Mh\|_{H_{\oz,L}(\rn)}\ls 1.$$

Now let $f\in \mathrm{BMO}^{2,M}_{\ro,L^\ast}(\rn)$. By Theorem \ref{t4.1},
$f\in (H_{\oz,L}(\rn))^\ast$, and hence,
\begin{equation*}
|\la (I-e^{-r_B^2L^\ast})^Mf, h\ra|=|\la f, (I-e^{-r_B^2L})^Mh\ra|\ls
\|f\|_{\mathrm{BMO}^{2,M}_{\ro,L^\ast}(\rn)}.
\end{equation*}
Taking supremum over all such $h$ yields that
$\|f\|_{\mathrm{BMO}^{q,M}_{\ro,L^\ast}(\rn)}\ls
\|f\|_{\mathrm{BMO}^{2,M}_{\ro,L^\ast}(\rn)}$,
which completes the proof of Theorem \ref{t6.2}.
\end{proof}

\section{Some applications\label{s7}}

\hskip\parindent In this section, we establish the boundedness
on Orlicz-Hardy spaces of the Riesz
transform and the fractional integral associated with the operator $L$
as in \eqref{1.2}.

Recall that the Littlewood-Paley $g$-function $g_L$ is defined by setting,
for all $f\in L^2(\rn)$ and $x\in\rn$,
\begin{equation*}
     g_L f(x)\equiv \lf(\int_0^\fz  |t^2L e^{-t^2L}f(x)|^2\frac{\,dt}{t}\r)^{1/2}.
\end{equation*}
By the proof of Theorem 3.4 in \cite{hm1}, we know that $g_L$ is bounded on $L^2(\rn)$.

Similarly to Theorems 3.2 and 3.4 in \cite{hm1}, we have the following conclusion.

\begin{thm}\label{t7.1}
Let $\oz$ satisfy Assumption (A) and $p\in (p_L,2]$. Suppose that the non-negative
sublinear operator or linear operator $T$ is bounded on $L^p(\rn)$ and there exist
$C>0$, $M\in\cn$ and $M>\frac n2(\frac 1{p_\oz}-\frac 12)$
such that for all closed sets $E,\,F$ in $\rn$ with $\dist(E,F)>0$ and all
 $f\in L^p(\rn)$ supported in $E$,
 \begin{equation}\label{7.1}
   \|T(I-e^{-tL})^Mf\|_{L^p(F)}\le C\lf(\frac{t}{\dist(E,F)^2}\r)^M\|f\|_{L^p(E)}
 \end{equation}
 and
 \begin{equation}\label{7.2}
   \|T(tLe^{-tL})^Mf\|_{L^p(F)}\le C\lf(\frac{t}{\dist(E,F)^2}\r)^M\|f\|_{L^p(E)}
 \end{equation}
for all $t>0$. Then $T$ extends to a bounded sublinear or linear operator from $H_{\oz,L}(\rn)$ to
$L(\oz)$. In particular, the Riesz transform $\nz L^{-1/2}$ and
the Littlewood-Paley $g$-function $g_L$ is bounded from $H_{\oz,L}(\rn)$ to $L(\oz)$.
\end{thm}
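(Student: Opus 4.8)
The plan is to reduce the statement to the molecular estimate \eqref{5.8} and then invoke Lemma \ref{l5.1}. Since $T$ is bounded on $L^p(\rn)$ with $p\in(p_L,2]$, it maps $L^p(\rn)$ continuously into weak-$L^p(\rn)$, so by Lemma \ref{l5.1} (applied with $q=p$) it suffices to produce a positive constant $C$ such that, for every $(\oz,\fz,M,\ez)$-molecule $\az$ adapted to a ball $B\equiv B(x_B,r_B)$ with $\ez>n(1/p_\oz-1/p_\oz^+)$ and every $\lz\in\cc$,
$$\int_{\rn}\oz(T(\lz\az)(x))\,dx\le C|B|\oz\lf(\frac{|\lz|}{|B|\ro(|B|)}\r).$$
The whole argument is modeled on the proof of \eqref{4.3} in Proposition \ref{p4.1}, with $\cs_L$ replaced by $T$ and with the Gaffney estimates for $\cs_L$ replaced by the hypotheses \eqref{7.1} and \eqref{7.2}; the extra off-diagonal decay those hypotheses supply is exactly what makes the summations converge.

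First I would use the identity $\az=[I-e^{-r_B^2L}]^M\az+(I-[I-e^{-r_B^2L}]^M)\az$ together with the sublinearity (resp. linearity) of $T$, the monotonicity of $\oz$, and the subadditivity of $\oz$, to split $\int_{\rn}\oz(|\lz|T(\az)(x))\,dx$ into the two contributions coming from $[I-e^{-r_B^2L}]^M\az$ and from $(I-[I-e^{-r_B^2L}]^M)\az$. For the first contribution I would write $\az=\sum_{j\ge0}\az\chi_{U_j(B)}$, decompose the domain of integration into the annuli $\{U_k(2^jB)\}_{k\ge0}$, and apply the Jensen inequality (concavity of $\oz$) exactly as in the estimate of $\mathrm{H}_j$ in Proposition \ref{p4.1}, reducing matters to bounding
$$\sum_{j,k\ge0}|2^kB_j|\,\oz\lf(\frac{|\lz|}{|2^kB_j|^{1/p}}\bigl\|T\bigl([I-e^{-r_B^2L}]^M(\az\chi_{U_j(B)})\bigr)\bigr\|_{L^p(U_k(2^jB))}\r),\qquad B_j\equiv2^jB.$$
For $k\le2$ I would use the $L^p(\rn)$-boundedness of $T$ and of $[I-e^{-r_B^2L}]^M$ (the latter a consequence of Lemma \ref{l2.3}) to get the bound $\|\az\|_{L^p(U_j(B))}$, while for $k\ge3$ the sets $U_j(B)$ and $U_k(2^jB)$ are separated with distance $\gtrsim2^{j+k}r_B$, so hypothesis \eqref{7.1} with $t=r_B^2$ supplies the additional factor $2^{-2M(j+k)}$; in both cases Definition \ref{d4.2}(i) controls $\|\az\|_{L^p(U_j(B))}$ by $2^{-j\ez}|2^jB|^{1/p-1}[\ro(|2^jB|)]^{-1}$. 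For the second contribution I would expand $I-[I-e^{-r_B^2L}]^M=\sum_{k=1}^M\binom{M}{k}(-1)^{k+1}e^{-kr_B^2L}$ and write $e^{-kr_B^2L}\az=(M/k)^M[\frac kM r_B^2Le^{-\frac kM r_B^2L}]^M(r_B^{-2}L^{-1})^M\az$, so that hypothesis \eqref{7.2} (now with $t=\frac kM r_B^2\sim r_B^2$) plays the role of \eqref{7.1} and Definition \ref{d4.2}(ii) plays the role of Definition \ref{d4.2}(i), applied to $(r_B^{-2}L^{-1})^M\az$; the resulting double sum has exactly the same shape.

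Summing the geometric series in $j$ and $k$ then uses the lower type $p_\oz$ of $\oz$, the lower type $1/\wz p_\oz-1$ of $\ro$ (Proposition \ref{p2.1}), and the elementary conversion $|2^jB|\oz(|\lz|/(|2^jB|\ro(|2^jB|)))\ls2^{jn(1-p_\oz/\wz p_\oz)}|B|\oz(|\lz|/(|B|\ro(|B|)))$, exactly as in \eqref{4.4}; the $j$-sum converges because $\ez>n(1/p_\oz-1/\wz p_\oz)$ (which follows from $\ez>n(1/p_\oz-1/p_\oz^+)$ and the choice of $\wz p_\oz$ in Convention (C)) together with $M>0$, and the $k$-sum converges because $2M>n/p_\oz-n/p$, which holds since $M>\frac n2(\frac1{p_\oz}-\frac12)$ and $p\le2$. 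This gives \eqref{5.8}, and Lemma \ref{l5.1} then yields that $T$ extends to a bounded operator from $H_{\oz,L}(\rn)$ to $L(\oz)$. For the two concrete applications, $\nz L^{-1/2}$ is a linear operator and $g_L$ is a non-negative sublinear operator; both are bounded on $L^p(\rn)$ for $p\in(p_L,2]$, and the off-diagonal bounds \eqref{7.1} and \eqref{7.2} for them follow from the computations in \cite[Theorems 3.2 and 3.4]{hm1} together with Lemma \ref{l2.3}, so they fall under the scope of the theorem. The only real obstacle is organizational: correctly recasting the second contribution $(I-[I-e^{-r_B^2L}]^M)\az$ in the form to which \eqref{7.2} applies and checking that the bounds of Definition \ref{d4.2}(ii) feed into the same summation scheme as in the proof of \eqref{4.3}; no new idea is needed beyond that template.
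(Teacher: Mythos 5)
Your proof proposal is correct and follows essentially the same route as the paper's: reduce via Lemma \ref{l5.1} to the molecular estimate \eqref{7.3}, split $\az$ via $[I-e^{-r_B^2L}]^M$ and its complement, decompose over the annuli $U_j(B)$ and $U_k(2^jB)$, apply Jensen together with \eqref{7.1} (resp.\ \eqref{7.2} and Definition \ref{d4.2}(ii)) for the far annuli, and sum exactly as in \eqref{4.4}. The paper's own proof is a near-verbatim instance of the template you describe, including the final appeal to \cite[Theorem 3.4]{hm1} for $\nz L^{-1/2}$ and $g_L$.
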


\begin{proof}[\bf Proof.]
Let $\ez>n(1/ p_\oz-1/ \wz p_\oz)$, where $\wz p_\oz$ is as in
Convention (C). Since $T$ is bounded on $L^p(\rn)$, by Lemma \ref{l5.1},
to show that $T$ is bounded from $H_{\oz,L}(\rn)$ to $L(\oz)$, it suffices
to show that for all $\lz\in\cc$ and
$(\oz,\fz,M,\ez)$-molecules $\az$ adapted to balls $B$,
\begin{equation}\label{7.3}
  \int_\rn \oz[T(\lz\az)(x)]\,dx\ls |B|\oz\lf(\frac{|\lz|}{|B|\ro(|B|)}\r).
\end{equation}

To prove \eqref{7.3}, we write
\begin{eqnarray*}&&\int_{\rn}\oz(T(\lz\az)(x))\,dx\\
&&\hs\le \int_{\rn}\oz(|\lz|T([I-e^{-r^2_BL}]^M\az)(x))\,dx
+\int_{\rn}\oz(|\lz|T((I-[I-e^{-r^2_BL}]^M)\az)(x))\,dx\\
&&\hs\ls\sum_{j=0}^\fz \int_{\rn}\oz(|\lz|T([I-e^{-r^2_BL}]^M
(\az\chi_{U_j(B)}))(x))\,dx\\
&&\hs\hs+\sum_{j=0}^\fz \sup_{1\le k\le M}
\int_{\rn}\oz\lf(|\lz|T\lf\{\lf[\frac{k}{M}r_B^{2}Le^{-\frac{k}{M}r^2_BL}\r]^M
(\chi_{U_j(B)}(r_B^{-2}L^{-1})^{M}\az)\r\}(x)\r)\,dx\\
&&\hs\equiv\sum_{j=0}^\fz \mathrm{H}_j+\sum_{j=0}^\fz\mathrm{I}_j.
\end{eqnarray*}

For each $j\ge 0$, let $B_j\equiv 2^jB$.
Since $\oz$ is concave, by the Jensen inequality and the H\"older inequality, we obtain
\begin{eqnarray*}
\mathrm{H}_j&&\ls \sum_{k=0}^\fz \int_{U_k(B_j)}\oz(|\lz|
T([I-e^{-r^2_BL}]^M(\az\chi_{U_j(B)}))(x))\,dx\\
&&\sim \sum_{k=0}^\fz \int_{2^kB_j}\oz(|\lz|\chi_{U_k(B_j)}(x)
T([I-e^{-r^2_BL}]^M(\az\chi_{U_j(B)}))(x))\,dx\\
&&\ls \sum_{k=0}^\fz |2^kB_j|\oz\lf(\frac{|\lz|}{|2^kB_j|}\int_{U_k(B_j)}
T([I-e^{-r^2_BL}]^M(\az\chi_{U_j(B)}))(x)\,dx\r)\\
&&\ls \sum_{k=0}^\fz |2^kB_j|\oz\lf(\frac{|\lz|}{|2^kB_j|^{1/p}}
\|T([I-e^{-r^2_BL}]^M(\az\chi_{U_j(B)}))\|_{L^p(U_k(B_j))}\r).
\end{eqnarray*}
By the $L^p(\rn)$-boundedness of $T$, Lemma \ref{l2.3} and \eqref{7.1},
we have that for $k=0,\,1,\,2$,
$$\|T([I-e^{-r^2_BL}]^M(\az\chi_{U_j(B)}))\|_{L^p(U_k(B_j))}\ls
\|\az\|_{L^p(U_j(B))},$$
and that for $k\ge 3$,
$$\|T([I-e^{-r^2_BL}]^M(\az\chi_{U_j(B)}))\|_{L^p(U_k(B_j))}\ls
\lf(\frac{1}{2^{k+j}}\r)^{2M}\|\az\|_{L^p(U_j(B))}^2,$$
which, together with Definition \ref{d4.2}, $2Mp_\oz>n(1-p_\oz/2)$ and Assumption (A),
implies that
\begin{eqnarray*}
\mathrm{H}_j&&\ls |B_j|\oz\lf(\frac{|\lz|2^{-j\ez}}{|B_j|\ro(|B_j|) }\r)+
\sum_{k=3}^\fz |2^kB_j|\oz\lf(\frac{|\lz|2^{-{(2M)(j+k)}-j\ez}}
{|2^kB_j|^{1/p} |B_j|^{1-1/p}\ro(|B_j|) }\r)\\
&&\ls 2^{-jp_\oz\ez}\lf\{1+\sum_{k=3}^\fz 2^{kn(1-p_\oz/p)}2^{-2Mp_\oz(j+k)}\r\}
|B_j|\oz\lf(\frac{|\lz|}{|B_j|\ro(|B_j|) }\r)\\
&&\ls 2^{-jp_\oz\ez}
|B_j|\oz\lf(\frac{|\lz|}{|B_j|\ro(|B_j|) }\r).
\end{eqnarray*}
Since $\ro$ is of lower type $1/\wz p_\oz-1$ and $\ez>n(1/p_\oz-1/\wz p_\oz)$, we further have
\begin{eqnarray*}
\sum_{j=0}^\fz \mathrm{H}_j&&\ls\sum_{j=0}^\fz 2^{-jp_\oz\ez}
|B_j|\lf\{\frac{|B|\ro(|B|)}{|B_j|\ro(|B_j|)}\r\}^{p_\oz}
\oz\lf(\frac{|\lz|}{|B|\ro(|B|) }\r)\\
&& \ls\sum_{j=0}^\fz 2^{-jp_\oz\ez}|B_j|\lf\{\frac{|B|}{|B_j|}\r\}^{p_\oz/\wz p_\oz}
\oz\lf(\frac{|\lz|}{|B|\ro(|B|) }\r)\nonumber\\
&&\ls\sum_{j=0}^\fz 2^{-jp_\oz\ez} 2^{jn(1-p_\oz/\wz p_\oz)}|B|
\oz\lf(\frac{|\lz|}{|B|\ro(|B|) }\r)
\ls |B|\oz\lf(\frac{|\lz|}{|B|\ro(|B|) }\r).\nonumber
\end{eqnarray*}

Similarly, we have
$$\sum_{j=0}^\fz \mathrm{I}_j\ls |B|
\oz\lf(\frac{|\lz|}{|B|\ro(|B|) }\r).$$

Thus, \eqref{7.3} holds, and hence,
$T$ is bounded from $H_{\oz,L}(\rn)$ to $L(\oz)$.

It was proved in \cite[Theorem 3.4]{hm1} that operators $g_L$
and $\nz L^{-1/2}$ satisfy
\eqref{7.1} and \eqref{7.2}; thus $g_L$ and $\nz L^{-1/2}$
are bounded from $H_{\oz,L}(\rn)$ to $L(\oz)$, which completes the
proof of Theorem \ref{t7.1}.
\end{proof}

We now give a fractional variant of Theorem \ref{t7.1}.
To this end, we first make some assumptions.

Let $\oz$ and $p_\oz$ satisfy Assumption (A) and $q\in [p_\oz, 1]$.
In what follows, for all $t\in (0,\fz)$, define
\begin{equation}\label{7.4}
v(t)\equiv \oz^{-1}(t)t^{1/q-1/p_\oz}.
\end{equation}

\begin{proof}[\bf Assumption (B)]
Let $\oz$ satisfy Assumption (A), $q\in [p_\oz, 1]$ and
 $p_L<r_1\le \min\{2,r_2\}\le r_2<\wz p_L$ satisfying that $1/p_\oz-1/q=1/r_1-1/r_2$.
Suppose that $v$ as in \eqref{7.4} is convex and $v(0)\equiv \lim_{t\to 0^+}v(t)=0$.
Then for all $t\in (0,\fz)$, let $\wz\oz (t)\equiv v^{-1}(t)$ and
$\wz\rho(t)\equiv \frac{t^{-1}}{\wz\oz^{-1}(t^{-1})}.$
\end{proof}

\begin{rem}\label{r7.1}\rm
(i) It is easy to see that if $\wz\oz$ is as in Assumption (B), then $\wz\oz$ also
satisfies Assumption (A) with $p_{\wz\oz}=q$ and
$p_{\wz\oz}^+=\frac{1}{1/p_\oz^++1/q-1/p_\oz}$. Moreover,
$$\wz\rho(t)\equiv \frac{t^{-1}}{\wz\oz^{-1}(t^{-1})}=
\frac{t^{-1}}{\oz^{-1}(t^{-1})t^{1/p_\oz-1/q}}=\ro(t)t^{-1/p_\oz+1/q}.$$

(ii) Let $p\in (0,1]$ and $\oz(t)\equiv t^p$ for all $t\in(0,\fz)$. In this
case, $p_\oz=p=p_\oz^+$, $q\in [p,1]$ and $\wz\oz(t)\equiv t^q$ for all
$t\in (0,\fz)$.
\end{rem}

\begin{thm}\label{t7.2}
 Let $q,\,r_1, \,r_2$, $\oz$ and $\wz\oz$ satisfy Assumption (B).
 Suppose that the linear operator $T$ is bounded from $L^{r_1}(\rn)$ to $L^{r_2}(\rn)$
 and there exist  $C>0$, $M\in\cn$ and $M>\frac n2(\frac 1{p_\oz}-\frac 12)
 +n(\frac 1{p_\oz}-\frac 1{p_\oz^+})$
satisfying that for all closed sets $E,\,F$ in $\rn$ with $\dist(E,F)>0$, all
 $f\in L^{r_1}(\rn)$ supported in $E$, and all $t>0$,
 \begin{equation}\label{7.5}
   \|T(I-e^{-tL})^Mf\|_{L^{r_2}(F)}\le
    C\lf(\frac{t}{\dist(E,F)^2}\r)^M\|f\|_{L^{r_1}(E)}
 \end{equation}
 and
 \begin{equation}\label{7.6}
   \|T(tLe^{-tL})^Mf\|_{L^{r_2}(F)}\le C
   \lf(\frac{t}{\dist(E,F)^2}\r)^M\|f\|_{L^{r_1}(E)}.
 \end{equation}
If $T$ is commutative with $L$, then $T$
extends to a bounded linear operator
from $H_{\oz,L}(\rn)$ to $H_{\wz\oz,L}(\rn)$.
\end{thm}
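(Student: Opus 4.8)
The plan is to mimic the structure of the proof of Theorem \ref{t7.1}, replacing the endpoint $L(\oz)$ by the Orlicz-Hardy space $H_{\wz\oz,L}(\rn)$ and exploiting the molecular characterization (Theorem \ref{t5.1}) of the latter. Since $T$ commutes with $L$, the operators $\cs_L$ and $T$ interact well: for $f\in H_{\oz,L}(\rn)\cap L^2(\rn)$ we have $\cs_L(Tf)(x)=(\iint_{\Gamma(x)}|T(t^2Le^{-t^2L}f)(y)|^2\,dy\,dt/t^{n+1})^{1/2}$, so controlling $T$ on $H_{\oz,L}(\rn)$ reduces to controlling the image of molecules. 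By Proposition \ref{p4.2}, any $f\in H_{\oz,L}(\rn)\cap L^2(\rn)$ decomposes as $f=\sum_j\lz_j\az_j$ with $(\oz,\fz,M,\ez)$-molecules $\az_j$ adapted to balls $B_j$ and $\Lambda(\{\lz_j\az_j\}_j)\ls\|f\|_{H_{\oz,L}(\rn)}$, where we fix $\ez>n(1/p_\oz-1/p_\oz^+)$. The first step is therefore to show that $T(\az)$ is (a fixed multiple of) an $(\wz\oz,r_2,\wz M,\wz\ez)$-molecule adapted to $B$, for suitable $\wz M$ and $\wz\ez$, whenever $\az$ is an $(\oz,\fz,M,\ez)$-molecule.

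To do this I would estimate $\|T\az\|_{L^{r_2}(U_k(B))}$ and $\|(r_B^{-2}L^{-1})^kT\az\|_{L^{r_2}(U_k(B))}=\|T(r_B^{-2}L^{-1})^k\az\|_{L^{r_2}(U_k(B))}$ (using commutativity) by splitting $\az=\sum_j\az\chi_{U_j(B)}$ and, as in the estimate of $\mathrm H_j$ and $\mathrm I_j$ in Theorem \ref{t7.1}, writing $\az=(I-e^{-r_B^2L})^M\az+(I-(I-e^{-r_B^2L})^M)\az$. The off-diagonal hypotheses \eqref{7.5} and \eqref{7.6} together with the $L^{r_1}\to L^{r_2}$ boundedness of $T$ and Lemma \ref{l2.3} give, for $k\ge 3$, a gain of $(2^{-(j+k)})^{2M}$ times $\|\az\|_{L^{r_1}(U_j(B))}$; the molecular bound on $\az$ in $L^{r_1}$ converts this, after summing in $j$ and $k$ (here one uses $2Mp_\oz>n(1-p_\oz/r_1)$ and the lower-type exponent $1/\wz p_\oz-1$ of $\ro$, exactly as in \eqref{4.4}), into a decay $2^{-k\wz\ez}|2^kB|^{1/r_2-1}[\wz\ro(|2^kB|)]^{-1}$, provided $M$ is large enough — this is where the extra requirement $M>\frac n2(\frac1{p_\oz}-\frac12)+n(\frac1{p_\oz}-\frac1{p_\oz^+})$ is consumed. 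The key numerical bookkeeping is that, by Remark \ref{r7.1}(i), $\wz\ro(|2^kB|)=\ro(|2^kB|)|2^kB|^{-1/p_\oz+1/q}$ and $1/p_\oz-1/q=1/r_1-1/r_2$, so the change of Lebesgue exponent $1/r_1\to1/r_2$ is precisely matched by the change of gauge $\ro\to\wz\ro$; the convexity of $v$ and $v(0)=0$ in Assumption (B) guarantee $\wz\oz$ satisfies Assumption (A), so the target molecular class is legitimate.

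Once this claim is in hand, I would argue as follows. For $f\in H_{\oz,L}(\rn)\cap L^2(\rn)$ with molecular decomposition $f=\sum_j\lz_j\az_j$ converging in $L^p(\rn)$ for $p\in(p_L,2]$ (in particular for $p=r_1$), the $L^{r_1}\to L^{r_2}$ boundedness of $T$ yields $Tf=\sum_j\lz_jT\az_j$ in $L^{r_2}(\rn)$, hence in $(\bmo_{\wz\ro,L^\ast}(\rn))^\ast$ after pairing against molecules (using Theorem \ref{t4.1} for $\wz\oz$). Since each $T\az_j$ is a multiple — say $\mu_j$ — of an $(\wz\oz,r_2,\wz M,\wz\ez)$-molecule adapted to $B_j$, and $\wz\ro(|B_j|)=\ro(|B_j|)|B_j|^{-1/p_\oz+1/q}$, one checks that $\Lambda_{\wz\oz}(\{\lz_j\mu_j T\az_j\}_j)\ls\Lambda_\oz(\{\lz_j\az_j\}_j)$; this uses the defining relation $\wz\oz^{-1}(t)=\oz^{-1}(t)\,t^{1/q-1/p_\oz}$ and the scaling $|B_j|\wz\oz(\,\cdot\,)=|B_j|\oz(\,\cdot\,)^{\text{adjusted}}$ in a direct computation with the functional $\Lambda$ of Theorem \ref{t3.1}. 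By the molecular characterization Theorem \ref{t5.1} applied to $\wz\oz$ (valid since $\wz M>\frac n2(1/p_{\wz\oz}-1/2)$ and $\wz\ez>n(1/p_{\wz\oz}-1/p_{\wz\oz}^+)$, both arranged by choosing $M$, $\ez$ large), it follows that $Tf\in H_{\wz\oz,L}(\rn)$ with $\|Tf\|_{H_{\wz\oz,L}(\rn)}\ls\Lambda_\oz(\{\lz_j\az_j\}_j)\ls\|f\|_{H_{\oz,L}(\rn)}$. Finally, a density argument using Remark \ref{r4.1}(ii) (density of $H_{\oz,L}(\rn)\cap L^2(\rn)$ in $H_{\oz,L}(\rn)$) extends $T$ to all of $H_{\oz,L}(\rn)$.

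The main obstacle I anticipate is the bookkeeping in the first step: verifying that $T$ maps an $(\oz,\fz,M,\ez)$-molecule to a multiple of an $(\wz\oz,r_2,\wz M,\wz\ez)$-molecule with the right adapted ball and decay, while tracking simultaneously the change of integrability exponent $r_1\to r_2$, the change of gauge $\ro\to\wz\ro$, and the requirement that the target decay exponent $\wz\ez$ still exceed $n(1/p_{\wz\oz}-1/p_{\wz\oz}^+)$. This forces the precise lower bound on $M$ in the hypothesis, and the cancellation identity $1/p_\oz-1/q=1/r_1-1/r_2$ from Assumption (B) is exactly what makes all the exponents line up; getting the constants in the geometric series to converge (as in \eqref{4.4} and the $\mathrm H_j$, $\mathrm I_j$ sums of Theorem \ref{t7.1}) is routine once the exponent inequalities are checked.
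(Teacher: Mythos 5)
Your proposal follows the same overall architecture as the paper's proof (molecular decomposition via Proposition \ref{p4.2}, showing $T$ maps molecules to $\wz\oz$-molecules using \eqref{7.5}--\eqref{7.6}, and a $\Lambda_\oz$--$\Lambda_{\wz\oz}$ comparison), but there are two gaps. First, a technical but real one: you start from $(\oz,\fz,M,\ez)$-molecules, but to verify Definition \ref{d4.2}(ii) for $T\az$ up to order $\wz M$ you must control $T[(r_B^2L/M)\,e^{-(r_B^2L)/M}]^M$ acting on $(r_B^2L)^{-(M+k)}\az$ for $k=1,\dots,\wz M$, because $I-(I-e^{-r_B^2L})^M$ is expanded via \eqref{7.6} after peeling off $M$ powers of $L^{-1}$. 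If the input molecule is only of order $M$, the quantity $(r_B^2L)^{-(M+k)}\az$ for $k\ge 1$ is not controlled by the molecular definition, and the estimate breaks. The paper circumvents this by decomposing $f$ into $(\oz,\fz,2M,\ez)$-molecules (available from Proposition \ref{p4.2} since the order there may be chosen freely subject only to the lower bound), and obtains $(\wz\oz,r_2,M,\ez)$-molecules as output.

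Second, and more substantively: you assert that $\Lambda_{\wz\oz}(\{\lz_j\mu_jT\az_j\}_j)\ls\Lambda_\oz(\{\lz_j\az_j\}_j)$ follows from a ``direct computation'' using $\wz\oz^{-1}(t)=\oz^{-1}(t)t^{1/q-1/p_\oz}$. This comparison is not automatic and the missing ingredient is the paper's key observation: for any $\gz$ slightly larger than $\Lambda_\oz(\{\lz_j\az_j\}_j)$, one necessarily has $\gz\ge|\lz_j|$ for all $j$. Indeed, if $\gz<|\lz_i|$ for some $i$, then by strict monotonicity $|B_i|\,\oz\bigl(\tfrac{|\lz_i|}{\gz|B_i|\ro(|B_i|)}\bigr)>|B_i|\,\oz\bigl(\tfrac{1}{|B_i|\ro(|B_i|)}\bigr)=1$, contradicting the definition of $\Lambda_\oz$. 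Only because of this bound can one conclude $\oz\bigl(\tfrac{|\lz_j|}{\gz|B_j|\ro(|B_j|)}\bigr)\le|B_j|^{-1}$, so that raising to the negative power $1/q-1/p_\oz$ reverses the inequality and yields $\tfrac{|\lz_j|}{\gz|B_j|\wz\ro(|B_j|)}\le\wz\oz^{-1}\bigl[\oz\bigl(\tfrac{|\lz_j|}{\gz|B_j|\ro(|B_j|)}\bigr)\bigr]$, whence $\wz\oz\bigl(\tfrac{|\lz_j|}{\gz|B_j|\wz\ro(|B_j|)}\bigr)\le\oz\bigl(\tfrac{|\lz_j|}{\gz|B_j|\ro(|B_j|)}\bigr)$. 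Without the pointwise control $\gz\ge|\lz_j|$, the exponent gymnastics you sketch do not close; this is the heart of the proof, not bookkeeping. A smaller point: the paper concludes by directly estimating $\int_\rn\wz\oz(\cs_L(Tf)(x))\,dx$ via \eqref{4.3} with $\wz\oz,\wz\ro$ rather than routing through Theorem \ref{t5.1} and convergence in $(\bmoz)^\ast$ as you propose; both routes are legitimate, but the direct one is shorter.
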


\begin{proof}[\bf Proof.]
Let $\ez\in (n(\frac 1{p_\oz}-\frac 1{p_\oz^+}),M-\frac n2(\frac 1{p_\oz}-\frac 12))$.
 It follows from Proposition \ref{p4.2} that for every
$f\in (H_{\oz,L}(\rn)\cap L^2(\rn))$, $f\in L^{r_1}(\rn)$ and there
exist $\{\lz_j\}_{j=1}^\fz\subset\cc$ and
$(\oz,\fz,2M,\ez)$-molecules $\{\az_j\}_{j=1}^\fz$ adapted to balls
$\{B_j\}_{j=1}^\fz$  such that $f=\sum_{j=1}^\fz\lz_j\az_j$ holds in
both $H_{\oz,L}(\rn)$ and $L^{r_1}(\rn)$; moreover,
$\Lambda(\{\lz_j\az_j\}_j)\ls \|f\|_{H_{\oz,L}(\rn)}.$

We first show that $T$ maps each $(\oz,\fz,2M,\ez)$-molecule
into a multiple of an $(\wz\oz,r_2,M,\ez)$-molecule. To this end,
assume that $\az$ is an $(\oz,\fz,2M,\ez)$-molecule adapted to
a ball $B\equiv B(x_B,r_B)$. By the boundedness of $T$ from
$L^{r_1}(\rn)$ to $L^{r_2}(\rn)$ and Remark \ref{r7.1}, we have
$T\az\in L^{r_2}(\rn)$ and for
any $k\in\{0,\,\cdots,\,M\}$ and $j\in\{0,\cdots,10\}$,
\begin{eqnarray*}
\|(r_B^2L)^{-k}T\az\|_{L^{r_2}(U_j(B))}&&\ls\|(r_B^2L)^{-k}\az\|_{L^{r_1}(\rn)}
\ls 2^{-j\ez}|2^jB|^{1/r_1-1}[\ro(|2^jB|)]^{-1}\\
&&\sim  2^{-j\ez}|2^jB|^{1/r_2-1}[\wz\ro(|2^jB|)]^{-1}.
\end{eqnarray*}
For $j\ge 11$, let $W_j(B)\equiv (2^{j+3}B\setminus 2^{j-3}B)$
and $E_j(B)\equiv (W_j(B))^\com$. Thus,
\begin{eqnarray*}
\|(r_B^2L)^{-k}T\az\|_{L^{r_2}(U_j(B))}&&\le\|T(I-e^{-r_B^2L})^M[(r_B^2L)^{-k}\az]
\|_{L^{r_2}(U_j(B))}\\
&&\hs+\|T[I-(I-e^{-r_B^2L})^M][(r_B^2L)^{-k}\az]\|_{L^{r_2}(U_j(B))}
\equiv \mathrm{H}+\mathrm{I}.
\end{eqnarray*}
By the boundedness from $L^{r_1}(\rn)$ to $L^{r_2}(\rn)$ of $T$,
Lemma \ref{l2.3}, \eqref{7.5}, the choice of $\ez$ and Remark \ref{r7.1}, we have
\begin{eqnarray*}
\mathrm{H}&&\le \|T(I-e^{-r_B^2L})^M[\chi_{W_j(B)}(r_B^2L)^{-k}\az]
\|_{L^{r_2}(U_j(B))}\\
&&\hs+ \|T(I-e^{-r_B^2L})^M[\chi_{E_j(B)}(r_B^2L)^{-k}\az]
\|_{L^{r_2}(U_j(B))}\\
&&\ls \|(r_B^2L)^{-k}\az\|_{L^{r_1}(W_j(B))}+
\lf(\frac{r_B^2}{\dist(U_j(B),E_j(B))^2}\r)^M
\|(r_B^2L)^{-k}\az\|_{L^{r_1}(E_j(B))}\\
&&\ls 2^{-j\ez}|2^jB|^{1/r_1-1}[\ro(|2^jB|)]^{-1}+2^{-2jM}
|B|^{1/r_1-1}[\ro(|B|)]^{-1}\\
&&\ls 2^{-j\ez}|2^jB|^{1/r_2-1}[\wz\ro(|2^jB|)]^{-1}.
\end{eqnarray*}
Similarly, by \eqref{7.6}, we have
\begin{eqnarray*}
\mathrm{I}&&\ls \sup_{1\le k\le M}\lf\|
T\lf[\frac{kr_B^2}{M}Le^{-\frac{kr_B^2}{M}L}\r]^M
[\chi_{W_j(B)}(r_B^2L)^{-k-M}\az]\r\|_{L^{r_2}(U_j(B))}\\
&&\hs+\sup_{1\le k\le M}\lf\|
T\lf[\frac{kr_B^2}{M}Le^{-\frac{kr_B^2}{M}L}\r]^M
[\chi_{E_j(B)}(r_B^2L)^{-k-M}\az]\r\|_{L^{r_2}(U_j(B))}\\
&&\ls \|(r_B^2L)^{-k-M}\az\|_{L^{r_1}(W_j(B))}+
\lf(\frac{r_B^2}{\dist(U_j(B),E_j(B))^2}\r)^M
\|(r_B^2L)^{-k-M}\az\|_{L^{r_1}(E_j(B))}\\
&&\ls 2^{-j\ez}|2^jB|^{1/r_2-1}[\wz\ro(|2^jB|)]^{-1}.
\end{eqnarray*}
Combining the above estimates, we finally obtain that $T\az$ is
a multiple of an $(\wz\oz,r_2,M,\ez)$-molecule.

Since $T$ is bounded from $L^{r_1}(\rn)$ to $L^{r_2}(\rn)$, we have
$Tf=\sum_{j=1}^\fz\lz_jT(\az_j)$ in $L^{r_2}(\rn)$. To finish the
proof, it remains to show that $\|Tf\|_{H_{\wz\oz,L}(\rn)}\ls
\|f\|_{H_{\oz,L}(\rn)}$. To this end, by Lemma \ref{l2.4}, the
subadditivity and the continuity of $\oz$ and \eqref{4.3} with
$\oz$ and $\ro$ replaced respectively by  $\wz\oz$ and $\wz\ro$, we obtain
\begin{equation}\label{7.7}
\int_\rn \wz\oz(\cs_L(Tf)(x))\,dx\le \sum_{j=1}^\fz\int_\rn
\wz\oz(|\lz_j|\cs_L(T\az_j)(x))\,dx\ls \sum_{j=1}^\fz
|B_j|\wz\oz\lf(\frac{|\lz_j|} { |B_j|\wz\ro(|B_j|)}\r).
\end{equation}

Choose $\gz\in (\Lambda(\{\lz_j\az_j\}_j),2\Lambda(\{\lz_j\az_j\}_j)]$. Then
for each $j\in \cn$, we have $\gz\ge |\lz_j|$; otherwise, there
exists $i\in\cn$ such that $\gz< |\lz_i|$, which together with the strictly
increasing property of $\oz$ further implies that
$$\sum_{j=1}^\fz |B_j|\oz\lf(\frac{|\lz_j|}
{\gz |B_j|\ro(|B_j|)}\r)\ge |B_i|\oz\lf(\frac{|\lz_i|}
{\gz |B_i|\ro(|B_i|)}\r)>|B_i|\oz\lf(\frac{1}
{|B_i|\ro(|B_i|)}\r)=1.$$
This contradicts to the assumption $\lz>\Lambda(\{\lz_j\az_j\}_j)$.
Thus, the claim is true. Therefore, by this claim and the strictly
increasing property of $\oz$, for each $j\in\cn$, we have
\begin{eqnarray*}
\lf[|B_j|\oz\lf(\frac{|\lz_j|}
{\gz|B_j|\ro(|B_j|)}\r)\r]^{1/p_\oz-1/q}
\le\lf[|B_j|\oz\lf(\oz^{-1}(|B_j|^{-1})\r)\r]^{1/p_\oz-1/q}\le 1,
\end{eqnarray*}
which implies that
\begin{eqnarray*}
\frac{|\lz_j|}{\gz|B_j|\wz\ro(|B_j|^{-1})}&&=
\frac{|\lz_j|}{\gz}\wz\oz^{-1}(|B_j|^{-1})
=\frac{|\lz_j|}{\gz}\oz^{-1}(|B_j|^{-1})|B_j|^{1/p_\oz-1/q}\\
&&\le \frac{|\lz_j|}{\gz}\oz^{-1}(|B_j|^{-1})
\lf[\oz\lf(\frac{|\lz_j|}
{\gz|B_j|\ro(|B_j|)}\r)\r]^{1/q-1/p_\oz}\\
&&=\wz\oz^{-1}\lf[\oz\lf(\frac{|\lz_j|}
{\gz|B_j|\ro(|B_j|)}\r)\r].
\end{eqnarray*}
Since $\wz\oz$ satisfies Assumption (B), we further obtain
\begin{eqnarray*}
\wz\oz\lf(\frac{|\lz_j|}
{\gz|B_j|\wz\ro(|B_j|)}\r)\le \oz\lf(\frac{|\lz_j|}
{\gz|B_j|\ro(|B_j|)}\r),
\end{eqnarray*}
and hence, by \eqref{7.7},
\begin{eqnarray*}
\int_\rn \wz\oz\lf(\frac{\cs_L(Tf)(x)}{\gz}\r)\,dx
\ls\sum_{j=1}^\fz |B_j|\wz\oz\lf(\frac{|\lz_j|}
{\gz|B_j|\wz\ro(|B_j|)}\r)\ls \sum_{j=1}^\fz |B_j|\oz\lf(\frac{|\lz_j|}
{\gz|B_j|\ro(|B_j|)}\r)\ls 1.
\end{eqnarray*}
Thus $\|Tf\|_{H_{\wz\oz,L}(\rn)}\ls \gz\ls \|f\|_{H_{\oz,L}(\rn)}$,
which together with a standard density argument
completes the proof of Theorem \ref{t7.2}.
\end{proof}

\begin{rem}\label{r7.2}\rm If we let $p\in (0,1]$ and $\oz(t)\equiv t^p$
for all $t\in (0,\fz)$, by Remark \ref{r7.1}(ii) and Theorem \ref{t7.2},
we know that the operator $T$ of Theorem \ref{t7.2}
in this case extends to a bounded linear operator from $H_L^p(\rn)$
to $H_L^q(\rn)$.
\end{rem}

In what follows, let $\gz\in(0,\frac n2(\frac {1}{p_L}-\frac {1}{\wz p_L}))$.
Recall that the generalized fractional integral $L^{-\gz}$ is given by setting,
for all $f\in L^2(\rn)$ and $x\in\rn$,
\begin{equation*}
L^{-\gz}f(x)\equiv \frac{1}{\Gamma(\gz)}\int^\fz_0 t^\gz
e^{-tL}f(x)\frac{\,dt}{t}.
\end{equation*}

Applying Theorem \ref{t7.2}, we obtain the boundedness of $L^{-\gz}$
from $H_{\oz,L}(\rn)$ to $H_{\wz\oz,L}(\rn)$ as follows.

\begin{thm}\label{t7.3}
Let $q,\,r_1, \,r_2$, $\oz$ and $\wz\oz$ satisfy Assumption (B)
and $\gz\in(0,\frac n2(\frac {1}{p_L}-\frac {1}{\wz p_L}))$
satisfying that $n(1/p_\oz-1/q)=2\gz$. Then the operator $L^{-\gz}$
satisfies \eqref{7.5} and \eqref{7.6} and hence, is bounded from
$H_{\oz,L}(\rn)$ to $H_{\wz\oz,L}(\rn)$.
\end{thm}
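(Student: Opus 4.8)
The plan is to verify that $T\equiv L^{-\gz}$ satisfies all the hypotheses of Theorem \ref{t7.2} and then invoke that theorem. Three things must be checked: (a) $L^{-\gz}$ is bounded from $L^{r_1}(\rn)$ to $L^{r_2}(\rn)$; (b) $L^{-\gz}$ commutes with $L$; and (c) $L^{-\gz}$ satisfies the off-diagonal estimates \eqref{7.5} and \eqref{7.6} with the same $M$. Item (b) is immediate from the $L^2(\rn)$-functional calculus, since $L^{-\gz}$, $(I-e^{-tL})^M$, $(tLe^{-tL})^M$ and $L$ are all functions of $L$. For item (a), observe that Assumption (B) together with the hypothesis $n(1/p_\oz-1/q)=2\gz$ forces $\gz=\frac n2(\frac1{r_1}-\frac1{r_2})$ with $p_L<r_1\le r_2<\wz p_L$; the $L^{r_1}$--$L^{r_2}$ boundedness of $L^{-\gz}$ is then the Hardy--Littlewood--Sobolev type estimate for fractional powers of $L$, which I would quote from \cite{a1}, noting that it follows from the $L^{r_1}$--$L^{r_2}$ off-diagonal bounds of $\{e^{-tL}\}_{t>0}$ in Lemma \ref{l2.3}. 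Hence the real work is item (c).

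For (c), I would start from the subordination formula $L^{-\gz}=\frac1{\Gamma(\gz)}\int_0^\fz s^\gz e^{-sL}\frac{\,ds}{s}$, together with the auxiliary fact that $\{(uL)^Me^{-uL}\}_{u>0}$ satisfies $L^{r_1}$--$L^{r_2}$ off-diagonal estimates with norm $\ls u^{-\gz}$, i.e., for all closed $E,F\subset\rn$ with $\dist(E,F)>0$ and all $g\in L^{r_1}(\rn)$ supported in $E$,
\begin{equation*}
\|L^Me^{-uL}g\|_{L^{r_2}(F)}\ls u^{-M-\gz}e^{-\frac{\dist(E,F)^2}{cu}}\|g\|_{L^{r_1}(E)}.
\end{equation*}
This auxiliary estimate is obtained by splitting $L^Me^{-uL}$ into $M$ factors of the form $Le^{-cvL}$ and one factor $e^{-cvL}$ with $v\sim u$, applying Lemma \ref{l2.3} to each $vLe^{-vL}$ through suitably interpolated intermediate exponents, and composing via Lemma \ref{l2.1} in its $L^p$--$L^q$ form. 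To prove \eqref{7.6}, I would write $(tLe^{-tL})^Mf=t^ML^Me^{-MtL}f$, commute $L^{-\gz}$ inside the integral (justified first on a dense class and then by density) to get $L^{-\gz}(tLe^{-tL})^Mf=\frac{t^M}{\Gamma(\gz)}\int_0^\fz s^\gz L^Me^{-(s+Mt)L}f\frac{\,ds}{s}$, apply the auxiliary bound with $u=s+Mt$, and be left with the scalar integral $t^M\int_0^\fz s^{\gz-1}(s+Mt)^{-M-\gz}e^{-\dist(E,F)^2/(c(s+Mt))}\,ds$. Splitting this at $s=Mt$, using $e^{-x}\le C_Mx^{-M}$ on $(0,Mt]$ and the substitution $s=\dist(E,F)^2/(cw)$ on $(Mt,\fz)$, the $s$-integral is seen to be $\ls\dist(E,F)^{-2M}$, which yields \eqref{7.6}.

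The estimate \eqref{7.5} follows the same pattern: write $(I-e^{-tL})^M=\int_{[0,t]^M}L^Me^{-(r_1+\cdots+r_M)L}\,dr_1\cdots dr_M$, commute $L^{-\gz}$ in, apply the auxiliary bound with $u=s+|r|$ where $|r|=r_1+\cdots+r_M$, and reduce to $\int_{[0,t]^M}\int_0^\fz s^{\gz-1}(s+|r|)^{-M-\gz}e^{-\dist(E,F)^2/(c(s+|r|))}\,ds\,dr$; the inner $s$-integral is again $\ls\dist(E,F)^{-2M}$ by the same splitting (now at $s=|r|$), and $\int_{[0,t]^M}\,dr=t^M$, giving the desired factor $(t/\dist(E,F)^2)^M$. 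With (a), (b) and (c) in hand --- the latter valid for every $M\in\cn$ --- an application of Theorem \ref{t7.2} with $M$ (and then $\ez$) chosen in the range permitted there completes the proof; note also that $\gz>0$ is precisely what ensures convergence of the $s$-integral near $0$.

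I expect the main obstacle to be the careful justification of the auxiliary $L^{r_1}$--$L^{r_2}$ off-diagonal estimate for $\{(uL)^Me^{-uL}\}_{u>0}$ (chaining Lemma \ref{l2.3} through $M$ interpolated intermediate exponents and composing) and the rigorous interchange of the subordination integral with the operators $(I-e^{-tL})^M$ and $(tLe^{-tL})^M$ and with the finite-dimensional $r$-integral; once these structural points are secured, the remaining estimates are elementary scalar integrals, handled by trading the exponential for a power via $e^{-x}\le C_Mx^{-M}$ and splitting the range of integration.
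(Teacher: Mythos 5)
Your proposal is correct and reaches the same conclusion by essentially the same underlying ingredients (subordination formula, the $L^{r_1}$--$L^{r_2}$ off-diagonal estimates from Lemma \ref{l2.3}, the integral representation $(I-e^{-tL})^M=\int_{[0,t]^M}L^Me^{-(r_1+\cdots+r_M)L}\,dr$, and the elementary bound $e^{-x}\le C_Mx^{-M}$), but the bookkeeping is organized differently, and the difference is worth noting. The paper splits the subordination integral $\int_0^\infty s^{\gamma-1}e^{-sL}(I-e^{-tL})^Mf\,ds$ at $s=t$: for $s\le t$ it leaves $(I-e^{-tL})^M$ in binomial form and bounds $e^{-sL}e^{-ktL}$ directly by Lemmas \ref{l2.1} and \ref{l2.3}, while for $s>t$ it passes to the integral form of $(I-e^{-tL})^M$ and bounds $(sL)^Me^{-sL}e^{-|r|L}$. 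You instead pass uniformly to the integral form, commute $L^{-\gz}$ inside to get a single scalar kernel $s^{\gamma-1}(s+|r|)^{-M-\gamma}e^{-\dist^2/(c(s+|r|))}$ via an explicitly stated auxiliary off-diagonal estimate for $\{L^Me^{-uL}\}_{u>0}$, and split that scalar integral at $s\sim|r|$ (or $s\sim Mt$ in the cleaner case of \eqref{7.6}, where $(tLe^{-tL})^M=t^ML^Me^{-MtL}$). Your version is somewhat more uniform and isolates the auxiliary $L^{r_1}$--$L^{r_2}$ off-diagonal bound as a reusable lemma, at the modest cost of having to justify the interchange of the subordination integral and the $r$-integral with the operator and of chaining Lemma \ref{l2.3} through intermediate exponents; the paper's version avoids the explicit auxiliary lemma by treating the two regimes of $s$ asymmetrically. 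Both give the required bound $(t/\dist(E,F)^2)^M$; the rest of your proof (boundedness $L^{r_1}\to L^{r_2}$ quoted from \cite{a1}, commutation with $L$ from the functional calculus, then invoking Theorem \ref{t7.2}) matches the paper's reduction exactly.
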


\begin{proof}[\bf Proof.]
Let $M\in\cn$ and $M>\frac n2(\frac 1{p_\oz}-\frac 12)
 +n(\frac 1{p_\oz}-\frac 1{p_\oz^+})$.
By \cite[Proposition 5.3]{a1}, the operator $L^{-\gz}$ is bounded from $L^{r_1}(\rn)$
to $L^{r_2}(\rn)$. Thus, by Theorem \ref{t7.2}, to show Theorem \ref{t7.3},
we only need to prove that $L^{-\gz}$ satisfies \eqref{7.5} and \eqref{7.6}.
We only give the proof of the former one, since \eqref{7.6}
can be proved in a similar way.

Let $E,\,F$ be closed sets in $\rn$ with $\dist(E,F)>0$ and
$f\in L^{r_1}(\rn)$ supported in $E$. Write
\begin{eqnarray*}
  \|L^{-\gz}(I-e^{-tL})^Mf\|_{L^{r_2}(\rn)}&&=
  \frac{1}{\Gamma (\gz)}\lf\|\int_0^\fz s^{\gz-1}e^{-sL}
  (I-e^{-tL})^Mf\,ds\r\|_{L^{r_2}(\rn)}\\
  &&\ls \lf\|\int_0^t s^{\gz-1}e^{-sL}(I-e^{-tL})^Mf\,ds\r\|_{L^{r_2}(\rn)}+
\lf\|\int_t^\fz \cdots\r\|_{L^{r_2}(\rn)}\\
&&\equiv \mathrm{H}_1+\mathrm{H}_2.
\end{eqnarray*}

It follows from Lemma \ref{l2.3} that
\begin{eqnarray*}
\mathrm{H}_1&&\ls \int_0^t \lf\|s^{\gz-1}e^{-sL}f\r\|_{L^{r_2}(\rn)}\,ds
+\sup_{1\le k\le M}\int_0^t \lf\|s^{\gz-1}e^{-sL}e^{-ktL}f
\r\|_{L^{r_2}(\rn)}\,ds\\
&&\ls \lf\{\int_0^t s^{\gz-1}s^{\frac n2 (\frac{1}{r_2}-\frac{1}{r_1})}
e^{-\frac{\dist(E,F)^2}{cs}}\,ds+
\int_0^t s^{\gz-1}t^{\frac n2 (\frac{1}{r_2}-\frac{1}{r_1})}
e^{-\frac{\dist(E,F)^2}{ct}}\,ds\r\}\|f\|_{L^{r_1}(\rn)}\\
&&\ls \lf(\frac{t}{\dist(E,F)^2}\r)^M\|f\|_{L^{r_1}(\rn)},
\end{eqnarray*}
here and in what follows, $c$ is the positive constant as in Lemma \ref{l2.3}.

For the term $\mathrm{H}_2$, since $I-e^{-tL}=\int_0^t Le^{-rL}\,dr$, by Lemmas
\ref{l2.1} and \ref{l2.3}, and the Minkowski inequality, we obtain
\begin{eqnarray*}
\mathrm{H}_2&&\ls \int_t^\fz
\lf\|s^{\gz-1}e^{-sL}\lf(\int_0^t Le^{-rL}\,dr\r)^Mf\r\|_{L^{r_2}(\rn)}\,ds\\
&&\ls  \int_t^\fz\frac{s^{\gz-1}}{s^M}\int_0^t\cdots\int_0^t
\lf\|(sL)^Me^{-sL}e^{-(r_1+\cdots+r_M)L}f
\r\|_{L^{r_2}(\rn)}\,dr_1\cdots\,dr_M\,ds\\
&&\ls \int_t^\fz\frac{s^{\gz-1}}{s^M} s^{\frac n2 (\frac{1}{r_2}-\frac{1}{r_1})}t^M
e^{-\frac{\dist(E,F)^2}{cs}}\,ds\|f\|_{L^{r_1}(\rn)}\\
&&\ls \int_0^\fz\lf(\frac{rt}{\dist(E,F)^2}\r)^Me^{-r}
\frac{\,dr}{r}\|f\|_{L^{r_1}(\rn)}
\ls \lf(\frac{t}{\dist(E,F)^2}\r)^M\|f\|_{L^{r_1}(\rn)},
\end{eqnarray*}
which implies that \eqref{7.5} holds for the operator $L^{-\gz}$, and hence, completes
the proof of Theorem \ref{t7.3}.
\end{proof}

\begin{rem}\label{r7.3}\rm
Similarly to Remark \ref{r7.2}, as a special case of Theorem \ref{t7.3},
we know that the operator
$L^{-\gz}$ maps $H_L^p(\rn)$ continuously into $H_L^p(\rn)$,
where $\gz,\,p,\,q$ satisfy $0<p\le q\le1$ and $n(1/p-1/q)=2\gz$.

\end{rem}

Using Theorems \ref{t7.1} and \ref{t7.3}, we further obtain
the following boundedness of the Riesz transform $\nz L^{-1/2}$
from $H_{\oz,L}(\rn)$ to $H_\oz(\rn)$. We first recall some
notions; see \cite{v,se96,hsv}.

In what follows, let $\cs(\rn)$ denote the space of all Schwartz functions and
$\cs'(\rn)$ the space of all Schwartz distributions.

\begin{defn}\label{d7.1} Let $\oz$ satisfy Assumption (A) and $p_{\oz}\in
(\frac {n}{n+1},1]$.  A function $a$ is called a
$(\ro,2)$-atom if

 (i) $\supp a\subseteq B$, where $B$ is a ball of $\rn$;

(ii)  $\|a\|_{L^2(\rn)}\le |B|^{-1/2}[\ro(|B|)]^{-1}$;

 (iii) $\int_{\rn} a(x)\,dx=0$.
\end{defn}

 \begin{defn}\label{d7.2} Let $\oz$ satisfy Assumption (A) and $p_{\oz}\in
(\frac {n}{n+1},1]$. The Orlicz-Hardy space
$H_{\oz}(\rn)$ is defined to be the set of all distributions
$f\in \cs'(\rn)$ that can be written as $f=\sum_{j=1}^\fz b_j$ in
$\cs'(\rn)$, where $\{b_j\}_{j=1}^\fz$ is a sequence of multiples of
$(\ro,2)$-atoms such that
$$\sum_{j=1}^\fz|B_j|\oz\lf(\frac{\|b_j\|_{L^2(\rn)}}{|B_j|^{1/2}}\r)<\fz,$$
where $\supp b_j\subset B_j.$ Moreover, define
$$\|f\|_{H_{\oz}(\rn)}\equiv \inf\lf\{\lz>0: \,\sum_{j=1}^\fz|B_j|
\oz\lf(\frac{\|b_j\|_{L^2(\rn)}}{\lz|B_j|^{1/2}}\r)\le 1\r\},$$
where the infimum is taken over all decompositions of $f$ as above.
\end{defn}

It is well known that the classical Orlicz-Hardy space defined by
using grand maximal functions is equivalent to the above atomic
Orlicz-Hardy space $H_\oz(\rn)$ as in Definition \ref{d7.2};
see \cite{v,se96}. Based on this fact, in what follows, we denote
both spaces by the same notation. Recall that $H_\oz(\rn)$ is
complete.

\begin{thm}\label{t7.4}
Let $\oz$ satisfy Assumption (A) and $p_\oz\in(\frac{n}{n+1},1]$.
Then the Riesz transform $\nz L^{-1/2}$
is bounded from $H_{\oz,L}(\rn)$ to $H_\oz(\rn)$. In particular,
$\nz L^{-1/2}$ is bounded from $H_{L}^p(\rn)$ to $H^p(\rn)$ for all
$p\in(\frac{n}{n+1},1]$
\end{thm}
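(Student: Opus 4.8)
## Proof Proposal for Theorem \ref{t7.4}

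The plan is to deduce Theorem \ref{t7.4} by composing two facts already available in the paper: that $\nz L^{-1/2}$ maps $H_{\oz,L}(\rn)$ continuously into the Orlicz space $L(\oz)$ (Theorem \ref{t7.1}), and that when $p_\oz\in(\frac{n}{n+1},1]$ the Orlicz-Hardy membership of $\nz L^{-1/2}f$ can be recovered from size control of the form $\nz L^{-1/2}f\in L(\oz)$ together with an approximation/atomic argument on a dense subspace. More precisely, by Corollary \ref{c4.1} it suffices to prove the estimate $\|\nz L^{-1/2}\az\|_{H_\oz(\rn)}\ls 1$ for every $(\oz,q,M,\ez)$-molecule $\az$ adapted to a ball $B$, with the appropriate choices of $q\in(p_L,\wz p_L)$, $M>\frac n2(\frac1{p_\oz}-\frac12)$ and $\ez>n(1/p_\oz-1/p_\oz^+)$; the general case then follows by the subadditivity property in Remark \ref{r3.1}(ii) and a density argument as in the proof of Corollary \ref{c4.1}, using that $H_\oz(\rn)$ is complete.

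First I would reduce to molecules and then to the annular pieces $\az\chi_{U_j(B)}$, writing $\nz L^{-1/2}\az=\sum_{j\ge0}\nz L^{-1/2}(\az\chi_{U_j(B)})$. For each $j$, the function $b_{j}\equiv\nz L^{-1/2}(\az\chi_{U_j(B)})$ lives in $L^{r}(\rn)$ for suitable $r$ by the $L^{r_1}\!\to\!L^{r_2}$ boundedness of $L^{-1/2}$ together with the Riesz transform bound of Lemma \ref{l2.2} (the composition $\nz L^{-1/2}=(\,t^{1/2}\nz e^{-tL}\,)\cdots$ type estimates), and it satisfies the Gaffney-type decay $\|b_j\|_{L^2(2^{k}U_j(B))}\ls 2^{-k\sigma}$ away from $U_j(B)$ by Lemmas \ref{l2.1} and \ref{l2.3}. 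The crucial new ingredient compared with Theorem \ref{t7.1} is the \emph{cancellation}: because $\int_{\rn}\nz L^{-1/2}(\az\chi_{U_j(B)})\,dx=0$ (the Riesz transform of an $L^1$ function has vanishing integral, which is where $p_\oz>\frac{n}{n+1}$ enters to guarantee $\az\chi_{U_j(B)}\in L^1$ with good bounds), one can split each $b_j$ into a sum of classical $(\ro,2)$-atoms supported on dyadic dilates of $B$, by the standard "annuli + cancellation correction" decomposition used in atomic Hardy space theory. Summing the resulting coefficients and invoking the lower type $p_\oz$ of $\oz$ and the type relation of $\ro$ (Proposition \ref{p2.1}), one checks $\sum|B_{j,k}|\oz(\|b_{j,k}\|_{L^2}/|B_{j,k}|^{1/2})\ls|B|\oz(1/(|B|\ro(|B|)))$, exactly the molecular estimate needed, and this yields $\|\nz L^{-1/2}\az\|_{H_\oz(\rn)}\ls 1$.

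The main obstacle I anticipate is the convergence/identification issue: one must verify that the series $\sum_j\lz_j\nz L^{-1/2}\az_j$ representing $\nz L^{-1/2}f$ converges in $H_\oz(\rn)$ (equivalently in $\cs'(\rn)$ with control of the atomic quasi-norm) to the \emph{same} object that Theorem \ref{t7.1} produces in $L(\oz)$, so that the $H_\oz$-membership is genuine and not merely a formal atomic sum. This is handled by working first on $f\in H_{\oz,L}(\rn)\cap L^2(\rn)$, where Proposition \ref{p4.2} gives convergence of $f=\sum_j\lz_j\az_j$ in $L^2(\rn)$ (indeed in $L^p$ for $p\in(p_L,2]$), hence $\nz L^{-1/2}f=\sum_j\lz_j\nz L^{-1/2}\az_j$ in $L^2(\rn)$ by boundedness of $\nz L^{-1/2}$ on $L^2$; comparing this with the atomic expansion and using that $L^2$-convergence plus uniform control of atomic norms forces $H_\oz$-convergence (a standard completeness argument for $H_\oz(\rn)$). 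The particular case $H_L^p(\rn)\to H^p(\rn)$ for $p\in(\frac{n}{n+1},1]$ is then immediate upon taking $\oz(t)=t^p$, since then $H_\oz(\rn)=H^p(\rn)$ with equivalent quasi-norms. The second obstacle, more technical than conceptual, is bookkeeping the exponents so that the geometric series over the annuli $U_j(B)$ and over the sub-annuli in the cancellation step both converge; this requires $\ez>n(1/p_\oz-1/p_\oz^+)$ together with $M$ large, exactly as in the proof of \eqref{4.3}, and I would model the estimate directly on \eqref{4.4}.
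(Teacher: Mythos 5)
Your overall strategy (reduce to molecules via Proposition \ref{p4.2}, handle convergence through $L^2$ and density, specialize to $\oz(t)=t^p$ at the end) matches the paper's structure, but the central cancellation step as you describe it does not work. You propose to decompose the \emph{source} as $\az=\sum_{j\ge0}\az\chi_{U_j(B)}$ and to claim that each $\nz L^{-1/2}(\az\chi_{U_j(B)})$ has vanishing integral ``because the Riesz transform of an $L^1$ function has vanishing integral.'' That heuristic is unavailable here, and it fails even as a heuristic for the individual annular pieces. Since $L$ has merely bounded measurable complex coefficients, there is no Fourier multiplier argument; the paper establishes $\int_\rn \nz L^{-1/2}\az\,dx=0$ only for the \emph{full molecule} $\az$, and the proof needs \emph{both} $\nz L^{-1/2}\az\in L^1(\rn)$ \emph{and} $L^{-1/2}\az\in L^1(\rn)$, after which a partition of unity $\{\vz_j\}$ and the divergence theorem give the vanishing mean. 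The integrability of $L^{-1/2}\az$ is proved using the molecular conditions on $(r_B^{-2}L^{-1})^k\az$ through the $(I-e^{-r_B^2L})^M$ splitting. For a bare truncation $\az\chi_{U_j(B)}$ these conditions are gone: the kernel of $L^{-1/2}$ behaves like $|x-y|^{1-n}$ (in an averaged sense), so $L^{-1/2}(\az\chi_{U_j(B)})$ is generically \emph{not} in $L^1(\rn)$, and the cut-off integration-by-parts argument collapses (the term $(\nz\vz_j)L^{-1/2}(\az\chi_{U_j(B)})$ cannot be summed). Consequently there is no reason to expect $\int_\rn\nz L^{-1/2}(\az\chi_{U_j(B)})\,dx=0$.

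The fix is precisely what the paper does and you should replace the ``annuli + cancellation correction on each source piece'' step with it: first prove $\nz L^{-1/2}\az\in L^1$ and $L^{-1/2}\az\in L^1$ from the $L^2$-boundedness, the off-diagonal estimates of Lemmas \ref{l2.1}--\ref{l2.3}, the $L^s\to L^t$ boundedness of $L^{-1/2}$ from \cite[Proposition 5.3]{a1}, and the full set of molecular decay conditions; then obtain $\int_\rn\nz L^{-1/2}\az\,dx=0$ by the partition-of-unity argument; then decompose the \emph{output} $\nz L^{-1/2}\az$ over annuli with the Taibleson--Weiss telescoping device $M_k=\nz L^{-1/2}\az\,\chi_k-m_k\wz\chi_k$, $N_{k+1}(\wz\chi_{k+1}-\wz\chi_k)$, which manufactures genuine $(\ro,2)$-atoms without asking for piecewise cancellation. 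Your exponent bookkeeping and the final summation modeled on \eqref{4.4} are then fine, as is your treatment of convergence via $L^2$ plus density.
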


\begin{proof}[\bf Proof.]
Let $\ez>1+n(\frac 1{p_\oz}-\frac 1{\wz p_\oz})$ and $M>
\frac{n}{2}(\frac 1{p_\oz}-\frac 12) +\ez$, where $\wz p_\oz$ is as
in Convention (C). Suppose that $\az$ is an
$(\oz,\fz,M,\ez)$-molecule associated to a ball $B\equiv
B(x_B,r_B)$. We first show that $\int_\rn \nz L^{-1/2}\az(x)\,dx=0$.

From the $L^2(\rn)$-boundedness of $\nz L^{-1/2}$
(see \cite[Theorem 4.1]{a1}), it follows that for $j=0,1,\cdots,10$,
\begin{equation}\label{7.8}
\|\nz L^{-1/2}\az\|_{L^2(U_j(B))}\le\|\nz L^{-1/2}\az\|_{L^2(\rn)}
\ls \|\az\|_{L^2(\rn)}\ls |B|^{-1/2}\ro(|B|)^{-1}.
\end{equation}
For $j\ge 11$, let $W_j(B)\equiv (2^{j+3}B\setminus 2^{j-3}B)$
and $E_j(B)\equiv(W_j(B))^\com$.
By the fact that  $\nz L^{-1/2}$ satisfies \eqref{7.1} and \eqref{7.2}
(see Theorem 3.4 in \cite{hm1}) together with the $L^2(\rn)$-boundedness
of $\nz L^{-1/2}$ and Lemma \ref{l2.2}, we have
\begin{eqnarray}\label{7.9}
&&\|\nz L^{-1/2}\az\|_{L^2(U_j(B))}\nonumber\\
&&\hs\le
\lf\|\nz L^{-1/2}(I-e^{-r_B^2L})^M \az\r\|_{L^{2}(U_j(B))}+\lf\|\nz L^{-1/2}\lf[I-(I-e^{-r_B^2L})^M\r]
\az\r\|_{L^{2}(U_j(B))}\nonumber\\
&&\hs\ls \lf\|\nz L^{-1/2}(I-e^{-r_B^2L})^M \lf[(\chi_{W_j(B)}
+\chi_{E_j(B)})\az\r]\r\|_{L^{2}(U_j(B))}\nonumber\\
&&\hs\hs+\sup_{1\le k\le M}\lf\|\nz L^{-1/2}\lf[
\lf(\frac{ k r_B^2L}{M}\r)e^{-\frac{k r_B^2L}{M}}\r]^M
\lf[(\chi_{W_j(B)}+\chi_{E_j(B)})(r_B^2L)^{-M}\az\r]
\r\|_{L^{2}(U_j(B))}\nonumber\\
&&\hs\ls \|\az\|_{L^2(W_j(B))}
+2^{-2jM}\|\az\|_{L^2(\rn)}\nonumber\\
&&\hs\hs+\|(r_B^2L)^{-M}\az\|_{L^2(W_j(B))}+2^{-2jM}
\|(r_B^2L)^{-M}\az\|_{L^2(\rn)}\nonumber\\
&&\hs\ls [2^{-j\ez}+2^{-j(2M-n/p_\oz+n/2)}]|2^jB|^{-1/2}[\ro(|2^jB|)]^{-1}.
\end{eqnarray}
Combining the above estimates and using the H\"older inequality, we see that $\nz L^{-1/2}\az\in L^1(\rn)$.

We now choose $p_L<s\le\min\{t,2\}\le t<\wz p_L$ such that $1/s-1/t=1/n$.
Since an $(\oz,\fz,M,\ez)$-molecule is also an $(\oz,s,M,\ez)$-molecule,
by the fact that $L^{-1/2}$ is bounded from $L^s(\rn)$ to $L^t(\rn)$
 (see \cite[Proposition 5.3]{a1}) and the H\"older inequality,
we have that for $j=0,1,\cdots,10$,
\begin{eqnarray*}
\||L^{-1/2}\az\|_{L^1(U_j(B))}&\le&|U_j(B)|^{1-1/t}\|L^{-1/2}\az\|_{L^t(\rn)}
\ls (2^{j}r_B)^{n(1-1/t)}\|\az\|_{L^{s}(\rn)}\\
&\ls& |B|^{1-1/t+1/s-1}[\ro(|B|)]^{-1}\sim |B|^{1/n}[\ro(|B|)]^{-1}.
\end{eqnarray*}
For $j\ge 11$, let $W_j(B)\equiv (2^{j+3}B\setminus 2^{j-3}B)$
and $E_j(B)\equiv(W_j(B))^\com$.
By Theorem \ref{t7.3}, we have that $L^{-1/2}$ satisfies \eqref{7.5} and \eqref{7.6},
which together with Lemma \ref{l2.3} and the H\"older inequality yields that
\begin{eqnarray*}
&&\| L^{-1/2}\az\|_{L^1(U_j(B))}\\
&&\hs\le |U_j(B)|^{1-1/t}\lf\{\lf\| L^{-1/2}(I-e^{-r_B^2L})^M
\az\r\|_{L^{t}(U_j(B))}\r.\\
&&\hs\hs\lf.+\lf\| L^{-1/2}\lf[I-(I-e^{-r_B^2L})^M\r]
\az\r\|_{L^{t}(U_j(B))}\r\}\nonumber\\
&&\hs\ls |U_j(B)|^{1-1/t}
\Bigg\{\lf\| L^{-1/2}(I-e^{-r_B^2L})^M \lf[(\chi_{W_j(B)}
+\chi_{E_j(B)})\az\r]\r\|_{L^{t}(U_j(B))}\nonumber\\
&&\hs\hs\lf.+\sup_{1\le k\le M}\lf\|L^{-1/2}\lf[
\lf(\frac{ k r_B^2L}{M}\r)e^{-\frac{k r_B^2L}{M}}\r]^M
\lf[(\chi_{W_j(B)}+\chi_{E_j(B)})(r_B^2L)^{-M}\az\r]
\r\|_{L^{t}(U_j(B))}\r\}\nonumber\\
&&\hs\ls  |U_j(B)|^{1-1/t} \lf\{\|\az\|_{L^s(W_j(B))}
+2^{-2jM}\|\az\|_{L^s(\rn)}\r.\nonumber\\
&&\hs\hs\lf.+\|(r_B^2L)^{-M}\az\|_{L^s(W_j(B))}+2^{-2jM}
\|(r_B^2L)^{-M}\az\|_{L^s(\rn)}\r\}\nonumber\\
&&\hs\ls [2^{-j(\ez-1)}+2^{-j(2M-n[1-1/t])}]|B|^{1/n}[\ro(|B|)]^{-1}.\nonumber
\end{eqnarray*}
Since $\ez>1+n(1/p_\oz-\wz p_\oz)$ and $M> \frac{n}{2p_\oz}$,
we obtain that $L^{-1/2}\az\in L^1(\rn)$.

Now we choose $\{\vz_j\}_{j=0}^\fz\subset C_0^\fz(\rn)$
such that

(i) $\sum_{j=0}^\fz \vz_j(x)=1$ for almost every $x\in\rn$;

(ii) for each $j\in\zz_+$, $\supp \vz_j\subset 2B_j$,
$\vz_j=1$ on $B_j$ and $0\le \vz_j\le 1$;

(iii) there exists $C_\vz>0$ such that for all $j\in\zz_+$ and $x\in\rn$,
$|\vz_j(x)|+|\nz \vz_j(x)|\le C_\vz$;

(iv) there exists $N_\vz\in \cn$ such that $\sum_{j=0}^\fz \chi_{2B_j}\le N_\vz$.

Using the properties of $\{\vz_j\}_{j=0}^\fz$ and
the facts that $L^{-1/2}\az,\ \nz L^{-1/2}\az\in L^1(\rn)$, we obtain
$$\int_\rn \nz L^{-1/2}\az(x)\,dx=\sum_{j=0}^\fz \int_{\rn}
\nz (\vz_j L^{-1/2}\az)(x)\,dx.$$
For each $j$, let $\eta_j\in C_0^\fz(\rn)$ such that $\eta_j=1$ on $2B_j$
 and $\supp \eta_j\subset 4B_j$. Then for each $i=1,2,\cdots,n$, we have
\begin{eqnarray*}
  \int_{\rn}\frac{\pa}{\pa x_i} (\vz_j L^{-1/2}\az)(x)\,dx&&=
  \int_{\rn}\eta_j(x)\frac{\pa}{\pa x_i} (\vz_j L^{-1/2}\az)(x)\,dx\\
 &&=-\int_{\rn} \vz_j (L^{-1/2}\az)(x)\frac{\pa}{\pa x_i}\eta_j(x)\,dx=0,
\end{eqnarray*}
which implies that $\int_\rn \nz L^{-1/2}\az(x)\,dx=0$.

To finish the proof, we borrow some ideas from \cite{tw}.
For $k\in \zz_+$, let
$\chi_k\equiv\chi_{U_k(B)}$,
$\widetilde{\chi}_k\equiv|U_k(B)|^{-1}\chi_k$,
$m_k\equiv\int_{U_k(B)}\nz L^{-1/2}\az(x)\,dx$
 and
$M_k\equiv \nz L^{-1/2}\az\chi_k-m_k\widetilde{\chi}_k.$ Then we have
\begin{equation*} \nz L^{-1/2}\az=\sum_{k=0}^\fz M_k+\sum_{k=0}^\fz
m_k\widetilde{\chi}_k
\end{equation*}
in $L^2(\rn)$. Now let $N_j\equiv\sum_{k=j}^\fz m_k.$  Since
$\int_\rn\nz L^{-1/2}\az(x)\,dx=0$, we have
\begin{equation}\label{7.10}
\nz L^{-1/2}\az=\sum_{k=0}^\fz M_k+\sum_{k=0}^\fz
N_{k+1}(\widetilde{\chi}_{k+1}-\widetilde{\chi}_k).
\end{equation}
Obviously, for all $k\in \zz_+$, $\int_{\rn}M_k(x)\,dx=0.$ Furthermore,
by \eqref{7.8}, \eqref{7.9} and the H\"older inequality, we have that
for all $k\in\zz_+$,
$$\|M_k\|_{L^2(\rn)}\ls
\|\nz L^{-1/2}\az\|_{L^2(U_k(B))}\ls 2^{-k\ez}|2^kB|^{-1/2}[\ro(|2^kB|)]^{-1},$$
which implies that
$\{2^{k\ez}M_k\}_{k\in\zz_+}$ is a family of $(\ro,2)$-atoms up to a harmless constant.

To deal with the second sum in \eqref{7.10},  by \eqref{7.8}, \eqref{7.9},
$|\widetilde{\chi}_{k+1}-\widetilde{\chi}_{k}|\ls |2^kB|^{-1}$ and the H\"older
inequality, we have that for all $k\in\zz_+$,
\begin{eqnarray*}
\|N_{k+1}(\widetilde{\chi}_{k+1}-\widetilde{\chi}_{k})\|_{L^2(\rn)}
&\ls&
\sum_{j=k}^\fz \frac{|2^jB|^{1/2}}{|2^kB|^{1/2}}|
\|\nz L^{-1/2}\az\|_{L^2(U_j(B))}\\
&\ls& 2^{-k\ez}|2^kB|^{-1/2}[\ro(|2^kB|)]^{-1}.
\end{eqnarray*}
This, together with
$\int_{\rn}[\widetilde{\chi}_{k+1}-\widetilde{\chi}_{k}]\,dx=0$,
implies that for each $k\in\zz_+$,
$2^{k\ez}N_{k+1}(\widetilde{\chi}_{k+1}-\widetilde{\chi}_{k})$ is a
$(\ro,2)$-atom up to a harmless constant.

By Assumption (A) and Convention (C),
$\ro$ is of lower type $1/\wz p_\oz-1$, which implies that
\begin{eqnarray}\label{7.11}
&&\sum_{j=0}^\fz |2^jB|\oz\lf(\frac{\|M_j\|_{L^2(\rn)}}{\lz|2^jB|^{1/2}}\r)
+\sum_{j=0}^\fz |2^jB|\oz\lf(\frac{\|N_{j+1}(\wz\chi_{j+1}-\wz\chi_{j})
\|_{L^2(\rn)}}{\lz|2^jB|^{1/2}}\r)\nonumber\\
&&\hs\ls\sum_{j=0}^\fz 2^{-jp_\oz\ez+jn(1-p_\oz/\wz p_\oz)}
|B|\oz\lf(\frac{1}{\lz|B|\ro(|B|) }\r)\sim
|B|\oz\lf(\frac{1}{\lz|B|\ro(|B|) }\r).
\end{eqnarray}

Now, suppose that $f\in H_{\oz,L}(\rn)\cap L^2(\rn)$.
By Proposition \ref{p4.2}, there exist
$(\oz,\fz,M,\ez)$-molecules $\{\az_k\}_{k=1}^\fz$ adapted
to balls $\{B_k\}_{k=1}^\fz$ and
numbers $\{\lz_k\}_{k=1}^\fz\subset \cc$ such that
$f=\sum_{k=1}^\fz\lz_k\az_k$ in both $H_{\oz,L}(\rn)$ and $L^2(\rn)$
with $\Lambda(\{\lz_k \az_k\}_k)\ls \|f\|_{H_{\oz,L}(\rn)}$.

For each $(\oz,\fz,M,\ez)$-molecule $\az_k$, by the above
argument, we decompose $\nz L^{-1/2}\az_k$ into a summation of
multiples of $(\ro,2)$-atoms with harmless constants,
which converges in $L^2(\rn)$.
For simplicity, we write it as $\nz L^{-1/2}\az_k=\sum_{j=1}^\fz b_{k,j}$,
where $b_{k,j}$ is a multiple of a $(\ro,2)$-atom supported in $B_{k,j}$
with a harmless constant.
Thus, by \eqref{7.11}, we obtain
\begin{eqnarray*}
\|\nz L^{-1/2}f\|_{H_{\oz}(\rn)}&&
=\inf\lf\{\lz>0:\, \sum_{k=1}^\fz\sum_{j=1}^\fz |B_{k,j}|
\oz\lf(\frac{|\lz_k|\|b_{k,j}\|_{L^2(\rn)}}{\lz|B_{k,j}|^{1/2}}\r)
\le1\r\}\\
&&\ls \inf\lf\{\lz>0:\, \sum_{k=1}^\fz|B_{k}|
\oz\lf(\frac{|\lz_k|}{\lz|B_{k}|\ro(|B_k|)}\r)
\le1\r\}\\
&&\sim \Lambda(\{\lz_k\az_k\}_{k})\ls \|f\|_{H_{\oz,L}(\rn)}.
\end{eqnarray*}
Then, by a standard density argument, we see that $\nz L^{-1/2}$
extends to a bounded linear operator from $H_{\oz,L}(\rn)$ to $H_{\oz}(\rn)$.
 This finishes the proof of Theorem \ref{t7.4}.
\end{proof}

\begin{rem}\rm \label{r7.4} Let $\oz$ satisfy Assumption (A) and
$p_\oz\in (\frac{n}{n+1},1]$. We claim that the Orlicz-Hardy
spaces $H_{\oz,L}(\rn)\subset H_{\oz}(\rn)$. In particular,
$H_L^p(\rn)\subset H^p(\rn)$ for all $p\in (\frac{n}{n+1},1]$.

Let $\ez\in (n(1/p_\oz-1/p_\oz^+),\fz)$, $M\in \cn$ and
$M>\frac n2 (\frac 1{p_\oz}-\frac{1}2)$.
For all $(\oz,\fz,M,\ez)$-molecules $\az$, we claim that
$\int_\rn \az(x)\,dx=0$. To show this, write
$$\az=\mathop\mathrm{div}(A\nz L^{-1}\az)=
r_B\lf\{\mathop\mathrm{div}(A[r_B\nz (I+r_B^2L)^{-1}(r_B^2L)^{-1}\az+r_B
\nz (I+r_B^2L)^{-1} \az])\r\},$$
where $\az$ is adapted to the ball $B\equiv B(x_B,r_B)$.

From the H\"older inequality, Lemma \ref{l2.2} and Definition \ref{d4.2},
it follows that for $j=0,1,\cdots,10$,
\begin{eqnarray*}
\|r_B\nz (I+r_B^2L)^{-1}(r_B^2L)^{-1}\az\|_{L^1(U_j(B))}&& \le
|U_j(B)|^{1/2}\|r_B\nz (I+r_B^2L)^{-1}(r_B^2L)^{-1}\az\|_{L^2(\rn)} \\
&& \ls |B|^{1/2}\|(r_B^2L)^{-1}\az\|_{L^{2}(\rn)}\ls [\ro(|B|)]^{-1}.
\end{eqnarray*}
For $j\ge 11$, let $W_j(B)\equiv (2^{j+3}B\setminus 2^{j-3}B)$
and $E_j(B)\equiv (W_j(B))^\com$.
By Lemma \ref{l2.2} and the H\"older inequality, we have
\begin{eqnarray*}
&&\| r_B\nz (I+r_B^2L)^{-1}(r_B^2L)^{-1}\az\|_{L^1(U_j(B))}\\
&&\hs\le |U_j(B)|^{1/2}\lf\|r_B\nz (I+r_B^2L)^{-1}
\lf[(\chi_{W_j(B)}+\chi_{E_j(B)})(r_B^2L)^{-1}\az\r]
\r\|_{L^{2}(U_j(B))}\nonumber\\
&&\hs\ls  |U_j(B)|^{1/2}\lf\{\|(r_B^2L)^{-1}\az\|_{L^2(W_j(B))}
+\exp\lf\{-\frac{\dist(U_j(B),E_j(B))}{cr_B}\r\}\|\az\|_{L^s(\rn)}\r\}\\
&&\hs\ls 2^{-j\ez} [\ro(|2^jB|)]^{-1}+2^{jn/2}\lf(\frac{r_B}{2^jr_B}\r)^{n/2+\ez}
[\ro(|B|)]^{-1}\ls 2^{-j\ez}[\ro(|B|)]^{-1}.
\end{eqnarray*}
The above two estimates imply that
$r_B\nz (I+r_B^2L)^{-1}(r_B^2L)^{-1}\az\in L^1(\rn)$.

Similarly, we have that $r_B\nz (I+r_B^2L)^{-1} \az\in L^1(\rn)$,
and hence, $\nz L^{-1}\az\in L^1(\rn)$.

Let $\{\vz_j\}_{j=0}^\fz$ be as in the proof of Theorem \ref{t7.4}.
Using the properties of $\{\vz_j\}_{j=0}^\fz$ and the facts
that $\az,\ \nz L^{-1}\az\in L^1(\rn)$ together with the divergence
theorem, we obtain
\begin{eqnarray*}\int_\rn \az(x)\,dx&&=\sum_{j=0}^\fz \int_{\rn}
\mathop\mathrm{div}(\vz_j A\nz L^{-1}\az)(x)\,dx\\
&&=\sum_{j=0}^\fz\int_{\pa (2B_j)}\langle\vz_j(x)\vec N_{2B_j}(x),
A\nz L^{-1}\az(x)\rangle\,d\sz_{2B_j}{x}=0,
\end{eqnarray*}
where $\vec N_{2B_j}$ denotes the outward unit norm vector to $2B_j$
and $\sz_{2B_j}$ the surface measure over $\pa (2B_j)$.

Then following the proof of Theorem \ref{t7.4}, we obtain that
 for all $f\in H_{\oz,L}(\rn)\cap L^2(\rn)$,
 $\|f\|_{H_{\oz}(\rn)}\ls\|f\|_{H_{\oz,L}(\rn)}$.
By a density argument, we obtain that $H_{\oz,L}(\rn)\subset H_{\oz}(\rn)$,
which completes the proof of the above claim.
\end{rem}

\medskip

{\bf Acknowledgements.} Dachun Yang would like to thank Professor
Steve Hofmann, Professor Pascal Auscher
and Professor Lixin Yan very much for some helpful discussions
on the subject of this paper. The authors would also like
to thank the referee very much for his many valuable remarks
which made this article more readable. Dachun Yang is supported by the National
Natural Science Foundation (Grant No. 10871025) of China.


\begin{thebibliography}{999}

\bibitem{aikm} K. Astala, T. Iwaniec, P. Koskela and G. Martin,
Mappings of BMO-bounded distortion, Math. Ann. 317 (2000), 703-726.

\vspace{-0.3cm}
\bibitem{a1} P. Auscher, On necessary and sufficient conditions for
$L^p$-estimates of Riesz transforms associated to elliptic operators
on $\mathbb R\sp n$ and related estimates,  Mem. Amer. Math. Soc.  186
(2007), 1-75.

\vspace{-0.3cm}
\bibitem{adm} P. Auscher, X. T. Duong and A. McIntosh, Boundedness of Banach
space valued singular integral operators and Hardy spaces, Unpublished manuscript,
2005.

\vspace{-0.3cm}
\bibitem{ahlmt} P. Auscher, S. Hofmann, M. Lacey, A. McIntosh and Ph. Tchamitchian,
The solution of the Kato square root problem for second order elliptic operators
on $\rn$, Ann. of Math. (2) 156 (2002),  633-654.

\vspace{-0.3cm}
\bibitem{ahlt} P. Auscher, S. Hofmann, J. L. Lewis and Ph. Tchamitchian,
Extrapolation of Carleson measures and the analyticity of Kato's
square-root operators, Acta Math. 187 (2001), 161-190.

\vspace{-0.3cm}
\bibitem{amr} P. Auscher, A. McIntosh and E. Russ, Hardy spaces of
differential forms on Riemannian manifolds, J. Geom. Anal. 18 (2008),
192-248.

\vspace{-0.3cm}
\bibitem{ar} P. Auscher and E. Russ, Hardy spaces and divergence
operators on strongly Lipschitz domains of $\rn$, J. Funct. Anal.
201 (2003), 148-184.

\vspace{-0.3cm}
\bibitem{bo} Z. Birnbaum and W. Orlicz, \"Uber die verallgemeinerung des begriffes
der zueinander konjugierten potenzen, Studia Math. 3 (1931), 1-67.

\vspace{-0.3cm}
\bibitem{byz08} S.-S. Byun, F. Yao and S. Zhou, Gradient estimates
in Orlicz space for nonlinear elliptic equations, J. Funct. Anal.
255 (2008), 1851-1873.

\vspace{-0.3cm}
\bibitem{co} R. R. Coifman, A real variable characterization of $H^p$,
Studia Math. 51 (1974), 269-274.

\vspace{-0.3cm}
\bibitem{clms} R. R. Coifman,  P.-L. Lions, Y. Meyer and P. Semmes,
Compensated compactness and Hardy spaces,  J. Math. Pures Appl. (9)
72 (1993), 247-286.

\vspace{-0.3cm}
\bibitem{cms} R. R. Coifman, Y. Meyer and E. M. Stein, Some new functions
and their applications to harmonic analysis, J. Funct. Anal. 62
(1985), 304-335.

\vspace{-0.3cm}
\bibitem{cms2} R. R. Coifman, Y. Meyer and E. M. Stein, Un nouvel \'espace
fonctionnel adapt\'e
\`a l'\'{e}tude des op\'erateurs d\'efinis par des int\'egrales
singuli\`eres, Lecture Notes in Math. 992, Springer, Berlin, 1983, 1-15.

\vspace{-0.3cm}
\bibitem{cw} R. R. Coifman and G. Weiss,  Analyse harmonique non-commutative
sur certains espaces homog\`enes, Lecture Notes in Math. 242,
Springer, Berlin-New York, 1971.


\vspace{-0.3cm}
\bibitem{dxy} X. T. Duong, J. Xiao and L. Yan, Old and new Morrey
spaces with heat kernel bounds, J. Fourier Anal. Appl. 13 (2007),
87-111.

\vspace{-0.3cm}
\bibitem{dy1} X. T. Duong and L. Yan, New function
spaces of $\mathrm{BMO}$ type, the John-Nirenberg inequality,
interpolation, and applications, Comm. Pure Appl. Math. 58 (2005),
1375-1420.

\vspace{-0.3cm}
\bibitem{dy2} X. T. Duong and L. Yan, Duality of
Hardy and $\mathrm{BMO}$ spaces associated with operators with heat
kernel bounds, J. Amer. Math. Soc. 18 (2005), 943-973.

\vspace{-0.3cm}
\bibitem{fs} C. Fefferman and E. M. Stein, $H^p$ spaces of several
 variables, Acta Math. 129 (1972), 137-193.

\vspace{-0.3cm}
\bibitem{f} J. Frehse, An irregular complex valued solution to a
scalar uniformly elliptic equation, Calc. Var. Partial Differential
Equations 33 (2008), 263-266.

\vspace{-0.3cm}
\bibitem{g2} L. Grafakos,  Modern Fourier Analysis,
Second Edition, Graduate Texts in Math., No. 250, Springer, New
York, 2008.

\vspace{-0.3cm}
\bibitem{hlmmy} S. Hofmann, G. Lu, D. Mitrea, M. Mitrea and L. Yan,
Hardy spaces associated to non-negative self-adjoint operators satisfying
Davies-Gaffney estimates, Submitted.

\vspace{-0.3cm}
\bibitem{hm1} S. Hofmann and S. Mayboroda,
Hardy and BMO spaces associated to divergence form elliptic
operators, Math. Ann. 344 (2009), 37-116.

\vspace{-0.3cm}
\bibitem{hm1c} S. Hofmann and S. Mayboroda,
Correction to ``Hardy and BMO spaces associated to divergence form
elliptic operators'', arXiv: 0907.0129v2.

\vspace{-0.3cm}
\bibitem{hm2} S. Hofmann and J. M. Martell, $L^p$ bounds for
Riesz transforms and square roots associated
to second order elliptic operators, Publ. Mat. 47 (2003), 497-515.

\vspace{-0.3cm}
\bibitem{hsv} E. Harboure, O. Salinas and B. Viviani,
 A look at $\mathrm{BMO}_{\varphi}(\oz)$ through
Carleson measures, J. Fourier Anal. Appl.  13 (2007), 267-284.

\vspace{-0.3cm}
\bibitem{io} T. Iwaniec and J. Onninen, $\mathscr{H}^1$-estimates of
Jacobians by subdeterminants, Math. Ann. 324 (2002), 341-358.

\vspace{-0.3cm}
\bibitem{ja} S. Janson, Generalizations of Lipschitz spaces
and an application to Hardy spaces and bounded mean oscillation,
Duke Math. J. 47 (1980), 959-982.

\vspace{-0.3cm}
\bibitem{jyz} R. Jiang, D. Yang and Y. Zhou,
Orlicz-Hardy spaces associated with operators, Sci. China Ser. A
52 (2009), 1042-1080.

\vspace{-0.3cm}
\bibitem{jn} F. John and L. Nirenberg, On functions of bounded
mean oscillation, Comm. Pure Appl. Math. 14 (1961), 415-426.

\vspace{-0.3cm}
\bibitem{la} R. H. Latter, A characterization of $H^p(\rn)$ in terms
of atoms, Studia Math. 62 (1978), 93-101.

\vspace{-0.3cm}
\bibitem{mw} S. Mart\'inez and N. Wolanski, A minimum problem with
free boundary in Orlicz spaces, Adv. Math. 218 (2008), 1914-1971.

\vspace{-0.3cm}
\bibitem{m94} S. M\"uller, Hardy space methods for
nonlinear partial differential equations, Tatra Mt. Math. Publ. 4
(1994),  159-168.

\vspace{-0.3cm}
\bibitem{o32} W. Orlicz, \"Uber eine gewisse Klasse von R\"aumen vom Typus B,
Bull. Int. Acad. Pol. Ser. A 8 (1932), 207-220.

\vspace{-0.3cm}
\bibitem{rr91} M. Rao and Z. Ren, Theory of Orlicz spaces, Dekker, New York, 1991

\vspace{-0.3cm}
\bibitem{rr00} M. Rao and Z. Ren, Applications of Orlicz Spaces, Dekker,
New York, 2000.

\vspace{-0.3cm}
\bibitem{s94} S. Semmes, A primer on Hardy spaces,
and some remarks on a theorem of Evans and M\"uller, Comm. Partial
Differential Equations, 19 (1994), 277-319.

\vspace{-0.3cm}
\bibitem{se96} C. F. Serra, Molecular characterization of Hardy-Orlicz
spaces, Rev. Un. Mat. Argentina 40 (1996), 203-217.

\vspace{-0.3cm}
\bibitem{s93} E. M. Stein, Harmonic Analysis: Real-Variable
Methods, Orthogonality, and Oscillatory Integrals, Princeton Univ.
Press, Princeton, N. J., 1993.

\vspace{-0.3cm}
\bibitem{sw} E. M. Stein and G. Weiss, On the theory of harmonic functions of
several variables. I. The theory of $H^p$-spaces, Acta Math. 103
(1960), 25-62.

\vspace{-0.3cm}
\bibitem{s79} J.-O. Str\"omberg,  Bounded mean oscillation with
Orlicz norms and duality of Hardy spaces, Indiana Univ. Math. J.
28 (1979), 511-544.

\vspace{-0.3cm}
\bibitem{tw} M. H. Taibleson and G. Weiss, The molecular characterization of
certain Hardy spaces, Ast\'{e}risque 77 (1980),
67-149.

\vspace{-0.3cm}
\bibitem{v} B. E. Viviani, An atomic decomposition of
the predual of $\mathrm{BMO}(\ro)$, Rev. Mat. Iber. 3
(1987), 401-425.

\vspace{-0.3cm}
\bibitem{ya1} L. Yan, Classes of Hardy spaces
associated with operators, duality theorem and applications, Trans.
Amer. Math. Soc. 360 (2008), 4383-4408.

\vspace{-0.3cm}
\bibitem{yo} K. Yosida, Functional Analysis, Sixth Edition,
Spring-Verlag, Berlin, 1978.

\end{thebibliography}
\end{document}